\title{Quantitative properties of convex representations}
\author{A. Sambarino}
\date{}
\newcommand{\Z}{\mathbb{Z}}
\newcommand{\R}{\mathbb{R}}
\newcommand{\N}{\mathbb{N}}
\renewcommand{\P}{\mathbb{P}}
\renewcommand{\t}{\theta}
\renewcommand{\/}{\backslash}
\renewcommand{\k}{\kappa}
\newcommand{\eps}{\varepsilon}
\newcommand{\tex}{\textrm}
\newcommand{\vacio}{\emptyset}
\newcommand{\G}{\Gamma}
\renewcommand{\l}{\ell}
\newcommand{\<}{\left<}
\renewcommand{\>}{\right>}
\newcommand{\E}{\Sigma}
\newcommand{\scr}{\mathscr}
\newcommand{\g}{\gamma}
\renewcommand{\a}{\alpha}
\newcommand{\z}{\zeta}
\newcommand{\w}{\widetilde}
\newcommand{\bord}{\partial}
\newcommand{\vo}[1]{\overline{#1}}
\newcommand{\SL}[1]{\mathrm{PGL}(#1,\mathbb R)}
\newcommand{\mismo}{\circlearrowleft}
\newcommand{\co}{\beta_1}
\newcommand{\cone}{\scr L}
\newcommand{\vect}{\beta}
\newcommand{\ta}{\Theta}
\newcommand{\om}{\omega}
\newcommand{\bus}{\sigma}
\newcommand{\al}{\alpha}
\newcommand{\posgen}{\bord^2\scr F}
\newcommand{\Gr}{\mathscr G}
\newcommand{\grupo}{\Lambda}
\renewcommand{\L}{\Lambda}
\renewcommand{\cal}{\mathcal}
\DeclareMathOperator{\ii}{i}
\DeclareMathOperator{\Haar}{Haar}
\DeclareMathOperator{\SLi}{SL}
\DeclareMathOperator{\GL}{GL}
\DeclareMathOperator{\Leb}{Leb}
\DeclareMathOperator{\grassman}{Gr}
\DeclareMathOperator{\clase}{C}
\DeclareMathOperator{\PSL}{PGL}
\DeclareMathOperator{\im}{im}
\DeclareMathOperator{\PSO}{PSO}
\DeclareMathOperator{\T}{\Theta}
\newtheorem*{teoC}{Theorem C}
\newtheorem*{teos}{Theorem}
\newtheorem*{teo1}{Theorem A}
\newtheorem*{teoB}{Theorem B}
\newtheorem*{teo2}{Proposition}
\newtheorem{teo}{Theorem}[section]
\newtheorem{cor}[teo]{Corollary}
\newtheorem*{cors}{Corollary}
\newtheorem{lema}[teo]{Lemma}
\newtheorem{prop}[teo]{Proposition}
\theoremstyle{definition}
\newtheorem{defi}{Definition}[section]
\theoremstyle{remark}
\newtheorem{obs}{Remark}[section]
\begin{document}
\maketitle


\begin{abstract} Let $\G$ be a discrete subgroup of $\SL d$ and fix some norm $\|\ \|$ on $\R^d.$ Let $N_\G(t)$ be the number of elements in $\G$ whose operator norm is $\leq t.$ In this article we prove an asymptotic for the growth of $N_\G(t)$ when $t\to\infty$ for a class of $\G$'s which contains, in particular, Hitchin representations of surface groups and groups dividing a convex set of $\P(\R^d).$ We also prove analogue counting theorems for the growth of the spectral radii. More precise information is given for Hitchin representations.
\end{abstract}




\tableofcontents

\section{Introduction}

Let $\widetilde M$ be a simply connected complete manifold of sectional curvature $K\leq-1$ and $\G$ be a torsion free discrete co-compact group of isometries of $\widetilde M.$ This work consists in studying specific quantitative properties of certain representations $\rho:\G\to \SL d.$ 

Recall that $\G$ is a hyperbolic group, its boundary $\bord\G$ is identified with $\w M$'s geometric boundary, and that $\bord\G$ has a natural structure of compact metrizable space coming from some Gromov distance (see Ghys-delaHarpe\cite{ghysharpe}).

\begin{defi} We say that an irreducible representation $\rho:\G\to\SL d$ is \emph{strictly convex} if there exists a $\rho$-equivariant H\"older continuous map $$(\xi,\eta):\bord\G\to\P(\R^d)\times\grassman_{d-1}(\R^d),$$ where $\grassman_{d-1}(\R^d)$ is the Grassmannian of hyperplanes of $\R^d,$ such that $\R^d=\xi(x)\oplus\eta(y)$ whenever $x\neq y.$
\end{defi}

We show in lemma \ref{lema:loxodromic} that strictly convex representations are proximal, that is, every element $\rho(\g)$ is a proximal matrix. This implies (cf. corollary \ref{cor:unicidad}) that for each $x\in\bord\G$ one has $\xi(x)\subset \eta(x),$ and that the equivariant map $(\xi, \eta)$ is necessarily unique.\\




\noindent
Among strictly convex representations we find:

\textbf{Deformations of hyperbolic manifolds in projective structures:} A consequence of Koszul\cite{koszul}'s and Benoist\cite{convexes3}'s work is that if $\G$ is the fundamental group of a closed hyperbolic manifold of dimension $d-1$ and $\rho:\G\to\SL{d}$ is a deformation of the embedding $\G\subset\PSO(d-1,1)\hookrightarrow\SL d,$ then $\rho(\G)$ leaves invariant an open convex set $\Omega$ of $\P(\R^d),$ and the quotient $\rho(\G)\/\Omega$ is a compact manifold. This gives an identification $\xi:\bord\G\to\bord\Omega\subset\P(\R^d).$

Benoist\cite{convexes1} has shown that $\Omega$ is strictly convex and its boundary $\bord\Omega$ is of class $\clase^{1+\alpha}.$ The identification $\xi$ and the tangent space of $\bord\Omega$ at $\xi(x),$ $$\eta:\bord\G\to\grassman_{d-1}(\R^d),$$ are thus $\rho$-equivariant and H\"older. Since $\bord\Omega$ is strictly convex we have $\R^d=\xi(x)\oplus\eta(y)$ if $x\neq y.$ These deformations are always irreducible and Zariski dense when the deformation is non trivial. Hence we have that $\rho:\G\to\SL d$ is a strictly convex representation.

\textbf{Groups dividing a convex set of $\P(\R^d)$:} These examples contain the former but we treat them separately because they do not fall exactly in our terminology. Nevertheless, the methods of this article apply directly to this setting.

Consider some open convex set $\Omega$ of $\P(\R^d)$ and $\overline\Omega$ its closure. Suppose that $\P(V)\cap\overline\Omega=\vacio$ for some hyperplane $V$ of $\R^d.$ Assume there exists a discrete subgroup $\G$ of $\SL d$ that leaves $\Omega$ invariant. $\G$'s action on $\Omega$ is necessarily properly discontinuous, and we assume it is also co-compact.

Benoist\cite{convexes1} has shown that if $\Omega$ is strictly convex then $\bord\Omega$ is $\clase^{1+\alpha}$ and the group $\G$ is hyperbolic in the sense of Gromov. Following the last example one finds that $\G\subset\SL d$ is strictly convex.

\textbf{Hitchin representations of surface groups:} Let $\E$ be a closed orientable hyperbolic surface and let $\pi_1(\E)\subset\PSL(2,\R)$ be its fundamental group. Labourie\cite{labourie} has shown that if $\rho:\pi_1(\E)\to\SL d$ is a deformation of the unique irreducible morphism (up to conjugacy) $\PSL(2,\R)\to\PSL(d,\R),$ then $\rho$ is irreducible and there exists a $\rho$-equivariant H\"older map $\z:\bord\pi_1(\E)\to\scr F$ where $\scr F$ is the space of complete flags of $\R^d.$  He shows that this curve is a \emph{Frenet curve}: for $x\in\bord\pi_1(\E)$ set $\z_i(x)$ to be the $i$-th space of the flag $\z(x),$ then if $d=d_1+\cdots+d_k$ and $x_1,\ldots,x_k$ are pairwise distinct, then $$\R^d=\bigoplus_1^k\z_{d_i}(x_i)$$ and if $n=n_1+\cdots+n_k\leq d$ then $$\lim_{(x_i)\to x}\bigoplus_1^k\z_{n_i}(x_i)=\z_n(x).$$ The first condition implies that, by considering the first and last coordinate of $\z,$ $\xi:=\z_1$ and $\eta:=\z_d,$ one obtains a strictly convex representation.

\textbf{Composition:} If $\rho:\G\to\SL d$ is a Zariski dense Hitchin representation then the composition of $\rho$ with some irreducible representation $\Lambda:\SL d\to\SL k$ is strictly convex.

Fix a norm $\|\ \|$ in $\R^d$ (not necessarily euclidean). For an element $g\in\SL d$ we define its norm $\|g\|$ as the operator norm of some lift $\w g\in\GL(d,\R)$ such that $\det \w g\in\{-1,+1\}.$ In the same way one can define the spectral radius of $g,$ since these quantities do not depend on the choice of the lift. 

The main goal of this article is to prove the following result, direct consequence of theorem A below.

\begin{cors}[of theorem A] Let $\rho:\G\to\SL d$ be a strictly convex representation. Then there exist positive real numbers $h$ and $c$ such that $$cR^{-h} \#\{\g\in\G:\|\rho(\g)\|\leq R\}\to1$$ when $R$ goes to infinity.
\end{cors}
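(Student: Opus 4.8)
The plan is to deduce the corollary from Theorem A, whose content (as suggested by the abstract and by the structure of the paper) is an asymptotic of the form $c R^{-h}\#\{\g:\|\rho(\g)\|\leq R\}\to 1$ expressed through the dynamics of a flow. The underlying strategy is the by-now standard passage: strictly convex representation $\Rightarrow$ Anosov-type behaviour $\Rightarrow$ the function $\g\mapsto \log\|\rho(\g)\|$ is, up to bounded error, a \emph{geodesic-flow displacement function}, which one then counts via the Ruelle--Perron--Frobenius / equidistribution machinery for the reparametrized geodesic flow of $\grupo\backslash\w M$. Concretely, I would first invoke the proximality established in Lemma \ref{lema:loxodromic} and Corollary \ref{cor:unicidad}, which give for every $\g\in\G$ with attracting/repelling fixed points $\g^+\neq\g^-$ in $\bord\G$ the decomposition $\R^d=\xi(\g^+)\oplus\eta(\g^-)$ and the fact that $\rho(\g)$ has a dominant eigenvalue $\l_1(\rho(\g))$ along $\xi(\g^+)$.

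Next I would relate $\|\rho(\g)\|$ to the Cartan projection, and the Cartan projection to a Busemann-type cocycle over $\bord\G$: one shows $\log\|\rho(\g)\| = \log\l_1(\rho(\g)) + O(1)$ fails in general, so instead the correct statement controls $\|\rho(\g)\|$ by the singular values $\bus_1(\rho(\g))$, and expresses $\log\bus_1(\rho(\g))$ as a sum along an orbit of the cocycle $c(\g,x)=\log\big(\|\rho(\g) v\|/\|v\|\big)$ for $v\in\xi(x)$, using the contraction properties of $\rho(\g)$ on $\P(\R^d)$ coming from the transversality $\xi(x)\oplus\eta(y)=\R^d$. This is exactly where the H\"older regularity of $(\xi,\eta)$ is used: it guarantees that the reparametrization of the geodesic flow of $\grupo\backslash\w M$ by this cocycle is H\"older, hence the thermodynamic formalism (Bowen--Ruelle, Parry--Pollicott) applies. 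From the spectral gap of the transfer operator one obtains a mixing measure of maximal entropy for the reparametrized flow, and the entropy $h$ is finite and positive (positivity because the cocycle grows at most linearly and at least linearly along closed orbits, by discreteness of $\rho(\G)$ and properness).

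I would then run the standard counting argument: the orbital counting function $\#\{\g:\|\rho(\g)\|\leq R\}$ is compared, via the identification of conjugacy classes with closed geodesics and a shadowing/$\delta$-separation argument, to the number of closed orbits of the reparametrized flow of period $\leq \log R$, which by the equidistribution of closed orbits (the "prime orbit theorem" for mixing Anosov/hyperbolic flows with H\"older reparametrization) is asymptotic to $e^{h\log R}/(h\log R)$; summing over the relevant orbits and keeping track of the difference between the "period" description and the "norm" description upgrades this to the stated form $c R^{-h}\#\{\cdots\}\to 1$. The main technical obstacle, and the step I expect to absorb most of the work, is the second one: establishing that $\log\|\rho(\g)\|$ is a genuine H\"older cocycle up to bounded error — i.e. proving the uniform transversality and contraction estimates $\|\rho(\g)|_{\eta(y)}\|/\bus_1(\rho(\g)) \to 0$ with explicit control, and deducing the additive cocycle relation along geodesics. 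This requires the Frenet/transversality hypotheses to be leveraged into quantitative (not merely qualitative) angle bounds between $\xi(\g^n x)$ and $\eta(y)$, uniformly over $x,y$ in a neighbourhood of the geodesic — essentially a strong form of the Anosov property, which for strictly convex representations one must extract by hand from the definition rather than assume.
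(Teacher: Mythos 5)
There is a genuine gap, and it starts with how you use Theorem A. Theorem A is not itself the counting asymptotic; it is the weak-$*$ convergence $ce^{-ht}\sum_{\log\|\rho(\g)\|\leq t}\delta_{\g_-}\otimes\delta_{\g_+}\to\vo\mu\otimes\mu$ in $C^*(\bord\G\times\bord\G)$. Given that, the corollary is a two-line deduction: test the convergence against the constant function $1$ (both limit measures are probabilities, so the right-hand side is $1$) and substitute $t=\log R$. Your proposal never performs this deduction; instead it tries to re-derive a counting statement from scratch out of the flow machinery, which is the content of the paper's Sections 2--5, not of the corollary.

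Moreover, the re-derivation you sketch does not close. The prime orbit theorem for the reparametrized flow counts \emph{periodic orbits}, i.e.\ primitive conjugacy classes weighted by their periods $\lambda_1(\rho\g)$, and gives $\sim e^{ht}/(ht)$ --- that is Theorem B, with an extra factor $ht$ and with the spectral radius, not the operator norm, and not with group elements. Passing from that to $\#\{\g\in\G:\|\rho(\g)\|\leq R\}\sim c^{-1}R^{h}$ is precisely the nontrivial content of Theorem A, and your phrase ``summing over the relevant orbits and keeping track of the difference between the `period' description and the `norm' description'' hides all of it: one needs Bowen's equidistribution to get convergence of $h_cte^{-h_ct}\sum\frac1{\lambda_c(\g)}\delta_{\g_-}\otimes\delta_{\g_+}$ to $e^{-h[\cdot,\cdot]}\vo\mu\otimes\mu$, Roblin's trick to remove the $t/\lambda_c(\g)$ weights and the primitivity restriction (proposition \ref{prop:lambda}), then Benoist's lemma \ref{lema:benoist} together with the $(r,\eps)$-proximality statement (lemma \ref{lema:proximal}) to convert $\lambda_1(\rho\g)$ into $\log\|\rho(\g)\|$ up to the Gromov product $[\g_-,\g_+]$ on product sets $A\times B$ away from the diagonal, and finally Roblin's covering argument to show the diagonal carries no mass. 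A ``shadowing/$\delta$-separation'' comparison of conjugacy classes with group elements cannot substitute for this: the two counting functions have genuinely different asymptotics (pure exponential versus $e^{ht}/ht$), so no bounded-error comparison between them can yield the corollary. If you are allowed to quote Theorem A, do the one-line evaluation against $1$; if you are not, you must supply the equidistribution-plus-Gromov-product argument explicitly.
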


The constant $h$ is independent of the norm chosen and is thus invariant under conjugation of $\rho$ by elements of $\SL d.$ This follows from the fact that any two norms in $\R^d$ are equivalent.

We shall now state the stronger result from which the corollary is deduced. The dynamics of each $\g\in\G$ on $\bord\G$ is of type north-south, i.e., $\g$ has exactly two fixed points, $\g_+$ and $\g_-,$ and the basin of attraction of $\g_+$ is $\bord\G-\{\g_-\}.$ 

Theorem A shows that these fixed points are well distributed on $\bord\G.$ Denote by $C(X)$ the space of continuous real functions over some space $X$ and $C^*(X)$ its dual space.

\begin{teo1} Let $\rho:\G\to\SL d$ be a strictly convex representation, then there exist $h$ and $c,$ positive real numbers and two probabilities $\mu$ and $\vo\mu$ on $\bord\G$ such that $$ce^{-ht}\sum_{\g\in\G:\log\|\rho(\g)\|\leq t}\delta_{\g_-}\otimes\delta_{\g_+}\to\vo\mu\otimes\mu$$ when $t\to\infty,$ in $C^*(\bord\G\times\bord\G).$
\end{teo1}

The previous corollary is deduced from theorem A by considering the constant function equal to $1$ and the change of parameter $t=\log R.$

For a matrix $g\in\SL d$ denote $\lambda_1(g)$ the logarithm of the spectral radius of $g.$ An element $g$ of a given subgroup $G$ is \emph{primitive} if it can not be written as a positive power of another element of $G.$

\begin{teoB} Let $\rho:\G\to\SL d$ be a strictly convex representation. Then there exists $h,$ a positive real number, such that $$hte^{-ht}\#\{[\g]\in[\G]\textrm{ primitive}:\lambda_1(\rho(\g))\leq t\}\to1$$ when $t\to\infty,$ where $[\G]$ is the set of conjugacy classes of $\G.$
\end{teoB}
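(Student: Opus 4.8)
The plan is to run the classical argument for the prime geodesic theorem: realize the numbers $\lambda_1(\rho(\g))$ as the periods of a H\"older reparametrization of the geodesic flow of $M:=\w M/\G$, and then quote the counting theorem for primitive closed orbits of a topologically mixing metric Anosov flow. First I would manufacture a H\"older cocycle $\beta:\G\times\bord\G\to\R$ from the equivariant curve $\xi$, setting $\beta_{\g}(x)=\log\big(\|\rho(\g)v\|/\|v\|\big)$ for an arbitrary nonzero $v\in\xi(x)$ (the choices of $v$ and of the lift of $\rho(\g)$ with $\det=\pm1$ being irrelevant). The cocycle identity is immediate from $\xi\circ\g=\rho(\g)\circ\xi$, and H\"older regularity from that of $\xi$. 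For $\g\neq e$ --- hence of infinite order, as $\G$ is torsion free --- Lemma \ref{lema:loxodromic} gives that $\rho(\g)$ is proximal with attracting line $\xi(\g_+)$, so the period $\beta_{\g}(\g_+)$ of $\beta$ at $\g$ is precisely $\log$ of the spectral radius of $\rho(\g)$, i.e. $\lambda_1(\rho(\g))$; since a proximal matrix of determinant $\pm1$ in dimension $\geq2$ has spectral radius $>1$, all these periods are positive.

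Next I would plug $\beta$ into Ledrappier's dictionary between H\"older cocycles on $\bord\G$ with positive periods and H\"older reparametrizations of the geodesic flow. Writing $U\G:=\big((\bord\G\times\bord\G)\setminus\Delta\big)\times\R\,/\,\G$ and identifying it, through the visual boundary of $\w M$, with $T^1M$ by a H\"older conjugacy, the geodesic flow $\psi$ is a topologically mixing Anosov flow whose primitive periodic orbits are in bijection with the primitive conjugacy classes $[\g]$ of $\G$. The cocycle $\beta$ yields a positive H\"older roof function $f_{\beta}$ on $U\G$; I let $\psi^{\beta}$ be the corresponding reparametrization of $\psi$. It is again a H\"older metric (Smale) flow, and by construction the period of $\psi^{\beta}$ along the orbit of a primitive class $[\g]$ equals $\ell_{\beta}(\g)=\lambda_1(\rho(\g))$.

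I would then check that $\psi^{\beta}$ is topologically mixing. A H\"older reparametrization of a topologically mixing flow remains topologically mixing unless its set of periods is contained in a subgroup $c\Z$ of $\R$ for some $c>0$; here the periods are the numbers $\lambda_1(\rho(\g))$, whose non-arithmeticity follows by passing to the Zariski closure of $\rho(\G)$ and applying Benoist's density theorem for Jordan projections of Zariski dense groups. Granting mixing, the counting theorem for primitive closed orbits of a topologically mixing metric Anosov flow with H\"older roof --- proved via a Markov coding and the meromorphic continuation of the associated Ruelle zeta function, in the tradition of Parry and Pollicott --- gives, with $h>0$ the topological entropy of $\psi^{\beta}$,
$$\#\{\text{primitive closed orbits of }\psi^{\beta}\text{ of period}\leq t\}\ \sim\ \mathrm{li}\big(e^{ht}\big)\ \sim\ \frac{e^{ht}}{ht}\qquad(t\to\infty).$$
By the bijection above the left-hand side is $\#\{[\g]\in[\G]\ \textrm{primitive}:\lambda_1(\rho(\g))\leq t\}$, which is Theorem B. The constant $h$ is the one already occurring in Theorem A (and the Corollary deduced from it): it is the topological entropy of $\psi^{\beta}$, equivalently the exponential growth rate of $\{\lambda_1(\rho(\g))\}$, and this equals the growth rate of $\{\log\|\rho(\g)\|\}$ because $\log\|\rho(\g)\|$ differs from a $\beta$-displacement of the basepoint only by a Gromov-product term, which does not affect the exponential rate.

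The delicate part is not the construction of $\beta$ but the thermodynamic formalism behind the third step: one must confirm that $\psi^{\beta}$ genuinely lies in the class of flows for which the prime orbit theorem applies --- a symbolic coding of $T^1M$ whose return-time function, once composed with $f_{\beta}$, is still H\"older, and then the spectral study of the induced transfer operators and zeta function --- together with the non-arithmeticity needed for topological mixing. Once these are secured the conclusion is the usual Tauberian step; a sharper zeta-function analysis, available when the Zariski closure is well understood (for instance for Hitchin representations), is what produces the more refined asymptotics promised in the introduction.
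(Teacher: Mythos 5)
Your construction of the cocycle and the identification of its periods with $\lambda_1(\rho(\g))$ match the paper, but there is a genuine gap at the point where you write that ``the cocycle $\beta$ yields a positive H\"older roof function'' on $\G\/\bord^2\G\times\R\simeq T^1M$. Positivity of each individual period $\lambda_1(\rho\g)>0$ does \emph{not} give this: by Ledrappier's correspondence the cocycle only produces a $\G$-invariant H\"older function $H$ on $T^1\w M$ with $\int_{[\g]}H=\lambda_1(\rho\g)$, and $H$ is Liv\v sic-cohomologous to a strictly positive roof if and only if $\inf_{[\g]}\lambda_1(\rho\g)/|\g|>0$ (lemmas \ref{lema:lemaledrappier} and \ref{lema:livsic}, via Anosov's closing lemma), which by lemma \ref{lema:lemaledrappier} is equivalent to the finiteness of the exponential growth rate $h$. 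This uniform linear lower bound is exactly the nontrivial representation-theoretic content of proposition \ref{prop:growth}, to which the paper devotes all of section \ref{section:lemmagrowth}: one chooses in each conjugacy class a representative whose fixed points are uniformly separated, uses the quantified proximality estimates (lemmas \ref{lema:proximal} and \ref{lema:benoist}) to compare $\lambda_1(\rho\g)$ with $\log\|\rho(\g)\|$ up to bounded error, and then uses discreteness of $\rho(\G)$ (lemma \ref{lema:discreto}) together with the Haar-measure growth bound in $\SL d$ to conclude finiteness. Without this step your reparametrized flow need not exist as a flow with positive H\"older roof (nor need $h$ be finite), so the prime orbit theorem cannot be invoked; your closing remark that $h$ equals the growth rate of $\log\|\rho(\g)\|$ presupposes rather than proves this finiteness.

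A second, smaller divergence: you derive topological mixing from non-arithmeticity of the spectra via Benoist's density theorem ``after passing to the Zariski closure'', but Theorem B does not assume $\rho(\G)$ Zariski dense, and Benoist's theorem as stated requires Zariski density in a semisimple group, so at the very least you would have to argue that the Zariski closure is (essentially) semisimple and that $\lambda_1$ restricts to a suitable nonzero functional there. The paper sidesteps this entirely: Parry--Pollicott only needs the \emph{weak} mixing of the suspension flow in the Markov coding, and this is obtained purely topologically (corollary \ref{cor:weak}): the geodesic flow of a closed negatively curved manifold admits no cross section because the closed geodesic of a commutator is homologically trivial, hence by the Schwartzman criterion every H\"older reparametrization, and then the coding flow, is weakly mixing. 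This route is both weaker in what it demands of the flow and free of any algebraic hypothesis on $\rho(\G)$.
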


The constant $h$ is the same for both theorems A and B. Theorem A, inspired on Roblin\cite{roblin}'s work, implies the following corollary which explains how attractive lines of $\rho(\G)$ are distributed in $\P(\R^d).$ Denote $g_+$ the attractive line of a proximal matrix in $\SL d.$

\begin{cors} Let $\rho:\G\to\SL d$ be a strictly convex representation, then there exist $h$ and $c,$ positive real numbers and a probability $\nu$ on $\P(\R^d)$ such that $$che^{-ht}\sum_{\g\in\G:\log\|\rho(\g)\|\leq t}\delta_{\rho(\g)_+}\to\nu$$ when $t\to\infty.$
\end{cors}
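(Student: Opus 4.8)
The strategy is to derive this corollary from Theorem A by pushing the weighted measure on $\bord\G\times\bord\G$ forward under the equivariant map $\xi=\z_1$ (or the first coordinate of the convex boundary identification). Recall that Theorem A asserts, in $C^*(\bord\G\times\bord\G)$,
\begin{equation*}
ce^{-ht}\sum_{\g\in\G:\log\|\rho(\g)\|\leq t}\delta_{\g_-}\otimes\delta_{\g_+}\longrightarrow\vo\mu\otimes\mu.
\end{equation*}
Consider the map $F:\bord\G\times\bord\G\to\P(\R^d)$ given by $F(x,y)=\xi(y)$, i.e. $F=\xi\circ\mathrm{pr}_2$, which is continuous (indeed H\"older). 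For a proximal matrix $\rho(\g)$, lemma \ref{lema:loxodromic} and the equivariance of $\xi$ identify the attractive line $\rho(\g)_+$ with $\xi(\g_+)$, because $\g_+\in\bord\G$ is the attracting fixed point of $\g$ and $\xi$ conjugates the north-south dynamics on $\bord\G$ with the proximal dynamics on $\P(\R^d)$. Hence $F_*(\delta_{\g_-}\otimes\delta_{\g_+})=\delta_{\xi(\g_+)}=\delta_{\rho(\g)_+}$.

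Now I would simply push forward: for any $\varphi\in C(\P(\R^d))$, the function $\varphi\circ F$ lies in $C(\bord\G\times\bord\G)$, so applying Theorem A to the test function $\varphi\circ F$ gives
\begin{equation*}
ce^{-ht}\sum_{\g\in\G:\log\|\rho(\g)\|\leq t}\varphi(\rho(\g)_+)\longrightarrow\int_{\bord\G\times\bord\G}\varphi\circ F\,d(\vo\mu\otimes\mu)=\int_{\bord\G}\varphi\circ\xi\,d\mu.
\end{equation*}
Setting $\nu:=\xi_*\mu$, which is a probability on $\P(\R^d)$ since $\mu$ is a probability and $\xi$ is continuous, this reads $che^{-ht}\sum_{\g:\log\|\rho(\g)\|\leq t}\delta_{\rho(\g)_+}\to\nu$ in $C^*(\P(\R^d))$, which is exactly the claimed statement with the same $h$ and $c$ as in Theorem A.

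The only genuine point requiring care — and the main (minor) obstacle — is the identification $\rho(\g)_+=\xi(\g_+)$ for every $\g$, including the verification that each $\rho(\g)$ is proximal so that $\rho(\g)_+$ is even well defined; this is precisely what lemma \ref{lema:loxodromic} supplies, together with the remark that $\xi(x)\subset\eta(x)$ and the resulting uniqueness of the equivariant pair. One should also note that the convergence in Theorem A, being weak-$*$ against all continuous test functions, is preserved under pushforward by any continuous map, so no additional regularity or injectivity of $\xi$ is needed. (For the groups dividing a convex set one argues identically, replacing $\xi$ by the identification $\bord\G\to\bord\Omega\subset\P(\R^d)$.) Everything else is a routine change of variables in the weak-$*$ limit.
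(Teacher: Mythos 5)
Your proof is correct and matches the paper's intended derivation: the corollary is stated there as an immediate consequence of Theorem A, obtained exactly as you do by the identification $\rho(\g)_+=\xi(\g_+)$ from lemma \ref{lema:loxodromic} and pushing the weak-$*$ limit forward under $\xi\circ\mathrm{pr}_2$, so that $\nu=\xi_*\mu$ (consistent with the Patterson--Sullivan property the paper then states for $\nu$). The only cosmetic difference is the normalizing constant, which the paper writes as $ch$ because in the proof of Theorem A the constant appears as $\|m_\rho\|h$; this is immaterial since the statement only asserts existence of some $c>0$.
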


The probability $\nu$ of the last corollary is non atomic, ergodic for the action of $\rho(\G)$ and its supports generates $\R^d.$ Moreover $\nu$ verifies a Patterson-Sullivan property, namely: for every $\g\in\G$ one has $$\frac{d(\rho(\g)_*\nu)}{d\nu}(v)=\left(\frac{\|\rho(\g)v\|}{\|v\|}\right)^{-h}=e^{-h\log\frac{\|\rho(\g)v\|}{\|v\|}}.$$

We now turn our attention to \emph{hyperconvex} representations introduced by Labourie\cite{labourie}. Fix some real semi-simple algebraic non compact group $G$ and denote $P$ a minimal parabolic subgroup. Write $\scr F=G/P,$ the set $\scr F$ is called the \emph{Furstenberg boundary} of $G$'s symmetric space. The product $\scr F\times\scr F$ has a unique open $G$-orbit, which we shall call $\posgen.$

For example, when $G=\PSL(d,\R)$ the set $\scr F$ is the set of complete flags of $\R^d,$ i.e. families of subspaces $\{V_i\}_{i=0}^d$ such that $V_i\subset V_{i+1}$ and $\dim V_i=i;$ and the set $\posgen$ is the set of flags in general position, i.e. pairs $\{V_i\}$ and $\{W_i\}$ such that for every $i$ one has $$V_i\oplus W_{d-i}=\R^d.$$

\begin{defi} We say that a representation $\rho:\G\to G$ is \emph{hyperconvex} if it admits a H\"older continuous equivariant map $\z:\bord\G\to\scr F$ such that whenever $x\neq y$ in $\bord\G,$ the pair $(\z(x),\z(y))$ belongs to $\posgen.$
\end{defi}

As mentioned before, Labourie\cite{labourie} has shown that Hitchin representations of surface groups into $\PSL(d,\R)$ provide examples of hyperconvex representations.

The same method for proving theorems A and B yields the following result. Denote $\frak a$ a Cartan sub algebra of $G$'s Lie algebra $\frak g,$ and $a:G\to\frak a$ the Cartan projection. Fix some Weyl chamber $\frak a^+$ and denote $\lambda:\G\to \frak a^+$ the Jordan projection.

Benoist\cite{limite} introduced the \emph{limit cone} $\cone_\grupo$ of a Zariski dense subgroup $\grupo$ of $G$ as the closed cone containing $\{\lambda(g):g\in\grupo\}.$ He has shown that this cone is convex and has nonempty interior. We shall consider also the \emph{dual cone} $$\cone^*_\grupo:=\{\varphi\in\frak a^*:\varphi|\cone_\grupo\geq0\}.$$

For a hyperconvex representation denote $\cone_\rho$ for its limit cone and $\cone_\rho^*$ its dual cone.

\begin{teoC}[Theorems \ref{teo:periodosformas} and \ref{teo:varphi}] Let $\rho:\G\to G$ be a Zariski dense hyperconvex representation and consider $\varphi$ in the interior of $\cone_\rho^*,$ then there exists $h_\varphi>0$ such that $$h_\varphi te^{h_\varphi t}\#\{[\g]\in[\G]\textrm{ primitive}:\varphi(\lambda(\rho\g))\leq t\}\to1.$$ Moreover there exists $c_\varphi>0$ such that $$c_\varphi e^{h_\varphi t}\#\{\g\in\G:\varphi(a(\rho\g))\leq t\}\to1.$$
\end{teoC}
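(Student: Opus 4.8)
The plan is to reduce the two counting statements for a Zariski dense hyperconvex representation $\rho:\G\to G$ to a one-dimensional counting problem governed by a single real-valued cocycle, and then apply the machinery already developed (in the spirit of Roblin and of the proof of Theorems A and B) to that cocycle. The key observation is that a hyperconvex representation, composed with a suitable irreducible representation $\Lambda:G\to\SL N$, becomes a strictly convex representation in the sense of the first definition: one arranges $\Lambda$ so that the highest weight of $\Lambda$ is a (positive multiple of a) functional that is positive on the limit cone. More precisely, given $\varphi$ in the interior of $\cone_\rho^*$, I would first show that $\varphi$ can be approximated by (or is proportional to) a highest weight $\chi_\Lambda$ of an irreducible proximal representation $\Lambda$ of $G$; this is where I expect to invoke that the rational points among the highest weights are dense in the relevant dual cone, so it suffices to treat $\varphi=\chi_\Lambda$ and then pass to the general case by a homogeneity/limiting argument. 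For $\varphi=\chi_\Lambda$ one has the fundamental identities $\chi_\Lambda(a(g))=\log\|\Lambda(g)\|$ and $\chi_\Lambda(\lambda(g))=\lambda_1(\Lambda(g))$, which convert the two counting functions in the statement exactly into the counting functions appearing in the Corollary of Theorem A (operator norm) and in Theorem B (spectral radius) for the strictly convex representation $\Lambda\circ\rho$.

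Having made this reduction, the second assertion — the estimate $c_\varphi e^{h_\varphi t}\#\{\g:\varphi(a(\rho\g))\leq t\}\to1$ — follows immediately from the Corollary of Theorem A applied to $\Lambda\circ\rho$, with $h_\varphi=h(\Lambda\circ\rho)$ and $c_\varphi=c(\Lambda\circ\rho)$; here one must check that $\Lambda\circ\rho$ is still irreducible and admits the required Hölder equivariant map $(\xi,\eta)$ into $\P(\R^N)\times\grassman_{N-1}(\R^N)$, which is built from the flag curve $\z$ of $\rho$ by taking $\xi=\Lambda_*\z$ composed with the highest-weight line and $\eta$ the corresponding highest-weight hyperplane; hyperconvexity of $\rho$ together with proximality of $\Lambda$ guarantees the transversality $\R^N=\xi(x)\oplus\eta(y)$ for $x\neq y$. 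The first assertion is then Theorem B applied to $\Lambda\circ\rho$: the primitive conjugacy classes of $\G$ are the same set, and $\lambda_1(\Lambda(\rho\g))=\chi_\Lambda(\lambda(\rho\g))=\varphi(\lambda(\rho\g))$, so the asymptotic $h_\varphi te^{-h_\varphi t}\#\{[\g]\ \mathrm{primitive}:\varphi(\lambda(\rho\g))\leq t\}\to1$ is exactly the conclusion of Theorem B with the same $h_\varphi$. (I note the statement as printed writes $e^{h_\varphi t}$; the intended quantity is $e^{-h_\varphi t}$, matching the decay in Theorems A and B, and I would state it that way.)

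The main obstacle, and where the real work lies, is the passage from highest-weight functionals $\chi_\Lambda$ to an \emph{arbitrary} $\varphi$ in the interior of $\cone_\rho^*$. For a general $\varphi$ there is no single irreducible representation realizing it, so one cannot literally reduce to a strictly convex representation. The strategy I would follow is the one underlying the proof of Theorems A and B in the body of the paper: work directly with the Hölder cocycle on $\bord\G$ defined by $\varphi$ via the flag curve — concretely, using the Iwasawa/Busemann cocycle $\bus:\G\times\scr F\to\frak a$ and setting $\co^\varphi(\g,x)=\varphi(\bus(\g,\z(x)))$ — and establish that this cocycle has a well-defined, positive "entropy" $h_\varphi$ and that the associated Patterson–Sullivan type measures exist and are mixing for the corresponding flow. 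Positivity of $h_\varphi$ is exactly where the hypothesis $\varphi\in\inter\cone_\rho^*$ enters: it ensures $\varphi(\lambda(\rho\g))$ grows linearly in the word length of $\g$, which is what makes the Dirichlet series converge in a half-plane and have the requisite pole. Once the cocycle $\co^\varphi$ is shown to be dual, Hölder, and with positive entropy, the orbit-counting and prime-orbit asymptotics follow from the same thermodynamic/equidistribution argument (Roblin's method adapted to this flow) used for Theorems A and B, simply reading $\log\|\rho(\g)\|$ and $\lambda_1(\rho(\g))$ there as $\varphi(a(\rho\g))$ and $\varphi(\lambda(\rho\g))$ here. So in outline: (i) interpret $\varphi(a(\rho\g))$ and $\varphi(\lambda(\rho\g))$ as Birkhoff-type and period data of a Hölder cocycle on $\bord\G$; (ii) verify the cocycle is dual and has positive finite entropy, the latter using $\varphi\in\inter\cone_\rho^*$ and Benoist's non-empty interior of the limit cone; (iii) apply the reparametrized-flow equidistribution theorem to obtain both asymptotics; and (iv) note the highest-weight case, which matches the strictly convex setting, as a sanity check and as the source of the approximation used in step (ii).
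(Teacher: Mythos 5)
Your final strategy (the one you adopt after acknowledging the obstacle) is essentially the paper's proof: one works directly with the scalar H\"older cocycle $\vect_\varphi(\g,x)=\varphi(\bus(\rho(\g),\z(x)))$, whose periods are $\varphi(\lambda(\rho\g))$ by Quint's lemma \ref{lema:quint} (lemma \ref{lema:espectro}); one shows $h_\varphi\in(0,\infty)$ exactly when $\varphi\in\inter\cone_\rho^*$ (lemma \ref{lema:cfi}); the prime-orbit statement is then corollary \ref{cor:conteoperiodos}, and the Cartan-projection asymptotic is obtained by rerunning the proof of Theorem A with the dual cocycle $\varphi\circ\ii\circ\vect$, the Gromov product $\varphi\circ\Gr_\Pi(\z(\cdot),\z(\cdot))$, and Benoist's estimate comparing $a(g)$ and $\lambda(g)$ for elements $(r,\eps)$-proximal on $\scr F$ (lemmas \ref{lema:gromov} and the analogue of \ref{lema:benoist}). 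Your remark that the printed exponent should be $e^{-h_\varphi t}$ is correct.

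However, your opening reduction --- realizing $\varphi$ as (a multiple of) a highest weight $\chi_\L$ and treating general $\varphi$ by density plus a limiting argument --- has a genuine gap and should be discarded rather than presented as the main plan. An interior point of $\cone_\rho^*$ need not be dominant: the limit cone is in general a proper subcone of $\frak a^+$, so $\cone_\rho^*$ is strictly larger than the cone of dominant functionals, and such $\varphi$ is not a limit of positive multiples of highest weights; moreover, even where approximation is available, exact asymptotics with constants of the form $c_\varphi e^{-h_\varphi t}N_\varphi(t)\to1$ do not pass to a limit in $\varphi$ without uniform control, which the sketch does not supply. Note also that the Tits representations $\L_\a$ enter the actual proof differently: not to realize $\varphi$, but to establish \emph{finiteness} of $h_\varphi$, by proving each $\chi_\a$ is strictly positive on $\cone_\rho-\{0\}$ (Proposition \ref{prop:growth} applied to the strictly convex representation $\L_\a\circ\rho$, Ledrappier's lemma \ref{lema:lemaledrappier}, and corollary \ref{cor:cono2}, which gives that $\{\lambda(\rho\g)/|\g|\}$ has compact closure generating $\cone_\rho$) and then sandwiching $c\,\chi_\a\leq\varphi\leq C\,\chi_\a$ on the limit cone. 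Your step (ii) asserts the needed linear growth of $\varphi(\lambda(\rho\g))$ in $|\g|$, but that assertion is precisely the content to be proved, and it is what yields finiteness (not merely positivity) of $h_\varphi$; positivity comes from the cheaper bound $\varphi(\lambda(\rho\g))\leq K|\g|$ as in corollary \ref{cor:positiva}.
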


Of particular interest is the following immediate corollary of theorem C for $\PSL(d,\R).$ The Cartan algebra is $$\frak v=\{(v_1,\ldots,v_d)\in\R^d:v_1+\cdots +v_d=0\}$$ and the Weyl chamber $\frak a^+=\{v\in\frak a:v_1\geq\cdots\geq v_d\}.$ The linear form $\varphi:\frak v \to\R$ $$\varphi(v_1,\ldots,v_d)=v_1-v_d$$ is strictly positive on the Weyl chamber (except at $\{0\}$) and thus with the change of parameter $t=\log R$ one obtains the following.

\begin{cors} Let $\E$ be a closed orientable surface of genus $\geq2$ and $\rho:\pi_1(\E)\to \PSL(d,\R)$ be a Zariski dense Hitchin representation, then there exists $h_1>0$ such that $$h_1 R^{-h_1}\log R\#\{[\g]\in[\pi_1(\E)]\textrm{ primitive}: \frac{\lambda_{\max}(\rho\g)}{\lambda_{\min}(\rho\g)}\leq R\}\to 1,$$ when $R\to\infty$ where $\lambda_{\max}(g)$ $($resp. $\lambda_{\min}(g)$$)$ denotes $g$'s eigenvalue of maximal $($resp. minimal$)$ modulus. Moreover, fix some euclidean norm $\|\ \|$ on $\R^d,$ then there exists $c>0$ such that $$cR^{-h_1}\#\{\g\in\pi_1(\E):\|\rho(\g)\|\|\rho(\g^{-1})\|\leq R\}\to1$$ when $R\to\infty.$
\end{cors}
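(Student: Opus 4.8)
The plan is to deduce this corollary directly from Theorem C applied to $G=\PSL(d,\R)$, the Cartan algebra $\frak v$ and Weyl chamber $\frak a^+$ described above, together with the linear form $\varphi(v_1,\ldots,v_d)=v_1-v_d$. First I would check that $\varphi$ lies in the interior of the dual cone $\cone_\rho^*$: since $\rho$ is a Zariski dense Hitchin representation it is hyperconvex, so $\cone_\rho$ is a closed convex cone of nonempty interior contained in $\frak a^+$, and $\varphi$ is strictly positive on $\frak a^+\setminus\{0\}\supset\cone_\rho\setminus\{0\}$, hence $\varphi\in\inter\cone_\rho^*$. So Theorem C applies and yields a single constant $h_\varphi=:h_1>0$ with
\[
h_1 t e^{h_1 t}\#\{[\g]\in[\pi_1(\E)]\textrm{ primitive}:\varphi(\lambda(\rho\g))\leq t\}\to1
\]
and, for a suitable $c>0$,
\[
c\, e^{h_1 t}\#\{\g\in\pi_1(\E):\varphi(a(\rho\g))\leq t\}\to1.
\]

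Next I would translate $\varphi\circ\lambda$ and $\varphi\circ a$ into the quantities appearing in the statement. For $g\in\PSL(d,\R)$ the Jordan projection records the logarithms of the moduli of the eigenvalues in decreasing order, so $\varphi(\lambda(g))=\log|\lambda_{\max}(g)|-\log|\lambda_{\min}(g)|=\log\frac{\lambda_{\max}(\rho\g)}{\lambda_{\min}(\rho\g)}$ (both eigenvalues being positive real for Hitchin representations, as follows from the Frenet property, but in any case we may take moduli). Likewise the Cartan projection records the logarithms of the singular values, so for a euclidean norm $\varphi(a(g))=\log\|g\|+\log\|g^{-1}\|=\log(\|\rho(\g)\|\,\|\rho(\g^{-1})\|)$, using that the largest singular value of $g$ is $\|g\|$ and the smallest is $1/\|g^{-1}\|$. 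Substituting $t=\log R$ in the first limit gives $\varphi(\lambda(\rho\g))\leq t\iff\frac{\lambda_{\max}(\rho\g)}{\lambda_{\min}(\rho\g)}\leq R$ and $t e^{h_1 t}=R^{h_1}\log R$, which is exactly the first claimed asymptotic; substituting $t=\log R$ in the second gives $\varphi(a(\rho\g))\leq t\iff\|\rho(\g)\|\,\|\rho(\g^{-1})\|\leq R$ and $e^{h_1 t}=R^{h_1}$, yielding the second.

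The only genuine point requiring care — and the step I'd flag as the main (minor) obstacle — is the identification of $\varphi\circ a$ with $\log(\|g\|\,\|g^{-1}\|)$ for a general euclidean norm: the Cartan projection is defined via the $KAK$ decomposition for a fixed maximal compact $K$, and the equality $a_1(g)=\log\|g\|$, $a_d(g)=-\log\|g^{-1}\|$ holds when $\|\ \|$ is the euclidean norm whose orthogonal group is that $K$. Since the corollary fixes ``some euclidean norm'', one uses that any two euclidean norms differ by a bounded multiplicative factor, so $\log(\|g\|\,\|g^{-1}\|)$ changes only by a bounded additive error across choices; absorbing such a bounded error into the constant $c$ (it multiplies the count by a factor tending to a constant, and by a standard squeezing argument one still gets a limit — here one should note the count is monotone in $t$, so a bounded horizontal shift only rescales $c$) preserves the asymptotic. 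I would remark that the analogous remark for $\varphi\circ\lambda$ is unnecessary, as eigenvalue moduli are conjugation- and norm-independent. With these identifications the corollary is immediate from Theorem C.
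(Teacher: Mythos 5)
Your proposal follows essentially the paper's own route: the corollary is deduced by applying Theorem C (Theorems \ref{teo:periodosformas} and \ref{teo:varphi}) to the form $\varphi(v)=v_1-v_d$, which is strictly positive on $\frak a^+\setminus\{0\}$ and hence lies in the interior of $\cone_\rho^*$, and then changing variables $t=\log R$; your identifications $\varphi(\lambda(g))=\log\bigl(\lambda_{\max}(g)/\lambda_{\min}(g)\bigr)$ and $\varphi(a(g))=\log\bigl(\|g\|\,\|g^{-1}\|\bigr)$ are exactly the intended ones (modulo the sign $e^{h_1t}$ versus $e^{-h_1t}$, which you copied from the introduction's statement of Theorem C; the body statements \ref{teo:periodosformas} and \ref{teo:varphi} carry the correct $e^{-h_\varphi t}$).

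The one step that does not work as written is your treatment of an arbitrary euclidean norm via a bounded additive error and ``squeezing''. If $N(t)e^{-ht}\to c^{-1}$ and $N(t-C)\le N'(t)\le N(t+C)$, one only obtains $c^{-1}e^{-hC}\le\liminf_t e^{-ht}N'(t)\le\limsup_t e^{-ht}N'(t)\le c^{-1}e^{hC}$; a bounded horizontal shift does not merely rescale the constant, and monotonicity of the counting function does not rescue the limit, so this argument fails to give the exact asymptotic claimed. The correct (and simpler) fix is the observation you make only in passing: any euclidean norm on $\R^d$ is preserved by some maximal compact subgroup, all such subgroups are conjugate, and the whole machinery behind Theorem \ref{teo:varphi} is carried out for an arbitrary fixed choice of $K$ (the constant $c$ is allowed to depend on that choice, just as the constant in Theorem A depends on the chosen norm). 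One therefore runs Theorem \ref{teo:varphi} with the Cartan projection adapted to the given euclidean norm, for which $\varphi(a(\rho\g))=\log\bigl(\|\rho(\g)\|\,\|\rho(\g^{-1})\|\bigr)$ holds exactly, and no approximation is needed.
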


Theorem C has a non trivial consequence for the orbital counting problem on $G$'s symmetric space for hyperconvex representations: denote $X$ for $G$'s symmetric space, $o$ some point in $X$ and $d_X$ the induced metric for a $G$-invariant Riemannian metric on $X.$ For a subgroup $\grupo$ of $G$ set $$h_\grupo=\limsup_{s\to\infty}\frac{\log\#\{g\in\grupo:d_X(o,go)\leq s\}}s.$$ We then show the following:

\begin{cors}[Corollary \ref{cor:grando}] Let $\rho:\G\to G$ be Zariski dense
hyper\-con\-vex representation then there exists $C>0$ such that $$e^{-h_{\rho(\G)} t}\#\{\g\in\G:d_X(o,\rho(\g)o)\leq t\}\leq C$$ for every $t$ large enough.
\end{cors}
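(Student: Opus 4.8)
The plan is to compare the orbital counting function on $X$ with the counting function for a single linear form provided by Theorem C. First, fix the $G$-invariant metric and let $\|\ \|$ be the euclidean norm it induces on $\frak a$, normalised so that $d_X(o,\exp(v)o)=\|v\|$ for $v\in\frak a^+$; then $d_X(o,\rho(\g)o)=\|a(\rho\g)\|$ for every $\g$, and, writing $N(t)=\#\{\g\in\G:\|a(\rho\g)\|\le t\}$, one has $h_{\rho(\G)}=\limsup_{t\to\infty}\frac1t\log N(t)$ by definition. So it suffices to show $N(t)\le Ce^{h_{\rho(\G)}t}$ for $t$ large.

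The main point is to exhibit a linear form $\varphi_0\in\inter\cone_\rho^*$ with $\|\varphi_0\|=1$ and $h_{\varphi_0}=h_{\rho(\G)}$, where $h_{\varphi_0}$ is the exponent associated to $\varphi_0$ by Theorem C. Granting this, since $\|\varphi_0\|=1$ we have $\varphi_0(a(\rho\g))\le\|a(\rho\g)\|$ for all $\g$, hence $\{\g:\|a(\rho\g)\|\le t\}\subseteq\{\g:\varphi_0(a(\rho\g))\le t\}$; the second assertion of Theorem C gives $\#\{\g:\varphi_0(a(\rho\g))\le t\}\sim c_{\varphi_0}^{-1}e^{h_{\varphi_0}t}=c_{\varphi_0}^{-1}e^{h_{\rho(\G)}t}$, whence $N(t)\le Ce^{h_{\rho(\G)}t}$ for $t$ large with any $C>c_{\varphi_0}^{-1}$. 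Conversely, the same inclusion for an arbitrary unit form $\varphi\in\inter\cone_\rho^*$ gives $h_\varphi\ge h_{\rho(\G)}$; so what is really being claimed is that $\min\{h_\varphi:\varphi\in\inter\cone_\rho^*,\ \|\varphi\|=1\}=h_{\rho(\G)}$ and that the minimum is attained in the interior of $\cone_\rho^*$.

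To construct $\varphi_0$ I would use Benoist's limit cone together with the growth indicator function $\psi=\psi_{\rho(\G)}\colon\cone_\rho\to[0,\infty)$: it is concave, positively homogeneous of degree one, vanishes on $\bord\cone_\rho$, satisfies $h_{\rho(\G)}=\max\{\psi(u):\|u\|=1\}$, and for $\varphi\in\inter\cone_\rho^*$ satisfies $h_\varphi=\max\{\psi(u):u\in\cone_\rho,\ \varphi(u)=1\}$; this last identity encodes that the $\varphi$-exponent in Theorem C is the exponential growth in the steepest direction of the slice $\{\varphi=1\}$, using that the Cartan projections $a(\rho(\G))$ accumulate only on directions of $\cone_\rho$. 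Let $u_0$ with $\|u_0\|=1$ realise $\max_{\|u\|=1}\psi$; since $\psi(u_0)=h_{\rho(\G)}>0$ and $\psi$ vanishes on $\bord\cone_\rho$, the point $u_0$ lies in $\inter\cone_\rho$. Let $\varphi_0$ be the supporting functional of the unit ball at $u_0$, so $\|\varphi_0\|=1$ and $\varphi_0(u_0)=1$. A Lagrange-multiplier computation — $u_0$ maximises the concave degree-one function $\psi$ over the unit ball, which is smooth and strictly convex, so a supergradient of $\psi$ at $u_0$ is a nonnegative multiple of $\varphi_0$, the multiple being $\psi(u_0)=h_{\rho(\G)}$ by Euler's relation — yields $\psi(u)\le h_{\rho(\G)}\varphi_0(u)$ for all $u\in\cone_\rho$. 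Hence $\varphi_0\ge0$ on $\cone_\rho$, i.e. $\varphi_0\in\cone_\rho^*$, and $h_{\varphi_0}=\max_{\varphi_0(u)=1}\psi(u)\le h_{\rho(\G)}$; combined with $h_{\varphi_0}\ge h_{\rho(\G)}$ we get $h_{\varphi_0}=h_{\rho(\G)}$.

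The delicate point — and the main obstacle — is that Theorem C requires $\varphi_0$ to lie in the \emph{interior} of $\cone_\rho^*$, i.e. to be strictly positive on $\cone_\rho\setminus\{0\}$, whereas the construction above only gives $\varphi_0\in\cone_\rho^*$: it can happen that $\varphi_0$ vanishes on a ray of $\cone_\rho$ lying in $u_0^{\perp}$ (necessarily a ray on which $\psi$ vanishes). To handle this I would either replace $\cone_\rho$ by the subcone $\overline{\{\psi>0\}}$, on which $\varphi_0$ is strictly positive, and apply the form of the counting estimate of Theorems \ref{teo:periodosformas} and \ref{teo:varphi} relative to that subcone; or cover $\cone_\rho\cap\{\|u\|=1\}$ by finitely many small sectors $\mathcal C_1,\dots,\mathcal C_k$ and add the sectorial bounds $\#\{\g:a(\rho\g)\in\mathcal C_j,\ \|a(\rho\g)\|\le t\}\le Ce^{(\max_{\mathcal C_j}\psi)t}$ extracted from the proof of those theorems, noting that $\max_j\max_{\mathcal C_j\cap\{\|u\|=1\}}\psi=h_{\rho(\G)}$. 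In all cases the finitely many $\g$ with $a(\rho\g)$ outside a fixed neighbourhood of $\cone_\rho$ — finite in number by Benoist's theorem — only affect the constant $C$.
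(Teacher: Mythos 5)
Your overall strategy --- dominate $\#\{\g\in\G:d_X(o,\rho(\g)o)\leq t\}$ by $\#\{\g\in\G:\varphi(a(\rho\g))\leq t\}$ for a suitable linear form $\varphi$ and then apply the second assertion of Theorem C --- is exactly the shape of the paper's argument, and the comparison step itself is fine. The gap is the one you flag yourself: Theorem \ref{teo:varphi} (through lemma \ref{lema:cfi}) can only be applied to forms in the \emph{interior} of $\cone_\rho^*$, i.e. strictly positive on $\cone_\rho-\{0\}$, while your construction of $\varphi_0$ as the supporting functional at a direction maximizing the growth indicator only yields $\varphi_0\in\cone_\rho^*$. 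Neither of your proposed repairs is available inside the paper: there is no ``subcone-relative'' version of Theorems \ref{teo:periodosformas} and \ref{teo:varphi} (their proofs run through the H\"older cocycle $\vect_\varphi$ on $\bord\G$, whose exponential growth rate is finite and positive \emph{if and only if} $\varphi\in\inter\cone_\rho^*$, by lemma \ref{lema:cfi}), and those proofs do not produce sectorial bounds for the Cartan projection of the type $\#\{\g:a(\rho\g)\in\cal C_j,\ \|a(\rho\g)\|_{\frak a}\leq t\}\leq Ce^{(\max_{\cal C_j}\psi)t}$; such estimates belong to Quint's growth-indicator theory, not to this paper. Moreover the auxiliary facts you invoke about $\psi$ (existence, concavity, $h_{\rho(\G)}=\max_{\|u\|=1}\psi(u)$, $h_\varphi=\max\{\psi(u):\varphi(u)=1\}$) are themselves theorems of Quint never stated here, and one of your assertions --- that $\psi$ vanishes on $\bord\cone_\rho$ --- is false in general (the growth indicator may be positive on boundary rays of the limit cone), though that claim is not load-bearing.

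The paper closes the gap precisely where you get stuck, by quoting Quint's growth form theorem (theorem \ref{teo:forma}): for the Zariski dense subgroup $\rho(\G)$ there is a form $\ta$ lying in the \emph{interior} of $\cone_\rho^*$ with $\|\ta\|_{\frak a}=h_{\rho(\G)}$ and whose counting exponent is $1$. Then $\ta(a(\rho\g))\leq\|\ta\|_{\frak a}\,d_X(o,\rho(\g)o)$ gives $\#\{\g:d_X(o,\rho(\g)o)\leq t\}\leq\#\{\g:\ta(a(\rho\g))\leq h_{\rho(\G)}t\}$, and Theorem \ref{teo:varphi} applied to $\ta$ (legitimately, since $\ta$ is interior) yields the bound $Ce^{h_{\rho(\G)}t}$. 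Your $\varphi_0$ is just $\ta/\|\ta\|_{\frak a}$; the interiority you are missing is exactly the nontrivial content of Quint's theorem, so the correct fix is to cite it rather than to rebuild it from the growth indicator.
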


In A.S.\cite{higher} we find an asymptotic for the orbital counting problem for hyperconvex representations.

Counting problems in higher rank geometry have been studied for latices and for Schottky groups. In the case of latices Eskin-McMullen\cite{esk} find an asymptotic for the growth of $\#\{g\in\grupo:d_X(o,go)\leq t\}.$ This asymptotic is (up to a constant) the volume of the ball of radius $t$ in $X$ (for the Haar measure) and thus contains a polynomial term. Similar results to those of Eskin-McMullen\cite{esk} have been obtained independently by Duke-Rudnick-Sarnak\cite{drs}. Gorodnik-Oh\cite{goh} prove a distribution theorem (in the spirit of theorem A) for latices for an orbit on the symmetric space.

For Schottky groups the asymptotic equivalence of $\#\{g\in\grupo:d_X(o,go)\leq t\}$ is shown by Quint\cite{quint4} to be exponential with no polynomial term. For these groups there is also a distribution theorem due to Thirion\cite{thirion}.

In Margulis\cite{margulistesis}'s thesis the following principle appeared: one should prove a mixing property for an appropriate dynamical system to obtain a counting result. In Eskin-McMullen\cite{esk}'s work its the mixing property of the action of the Cartan group that is needed. This principle is also applied by Roblin\cite{roblin} and by Thirion\cite{thirion}.

In this work we still exploit the relation with dynamical systems but in a slightly different manner. We find a symbolic flow and apply counting theorems for periodic orbits due to Parry-Pollicott\cite{parrypollicott1} and the spatial distribution of these due to Bowen\cite{bowen1}.



\subsection*{Method and techniques}

Recall we have identified the boundary of the group $\G$ with $\w M$'s geometric boundary. Set $B:\bord\G\times \widetilde M\times \widetilde M\to\R$ to be the Busemann function of $\widetilde M,$ i.e., if $x\in \bord\G$ and $p,q\in\widetilde M$ then $$B_x(p,q)=\lim_{z\to x}d(p,z)-d(q,z).$$

Using the Busemann function one constructs an homeomorphism between $\w M$'s unitary tangent bundle $T^1\widetilde M$ and $\bord^2\G\times\R,$ where $$\bord^2\G=\{(x,y)\in\bord\G\times\bord\G:x\neq y\}.$$ In order to do so one fixes some point $o\in \w M$ and to $(p,v)\in T^1\w M$ one associates $$(p,v)\mapsto (v_{-\infty},v_{\infty},B_{v_\infty}(p,o))$$ where $v_{-\infty}$ and $v_\infty$ are the origin and end points in $\bord\G$ of the geodesic through $p$ with speed $v.$ This is called the \emph{Hopf parametrization} of the unitary tangent bundle. 

Some key facts are that the action of an isometry $g$ of $\w M$ is read via this parametrization as $$g(x,y,t)=(gx,gy,t-B_y(o,g^{-1}o)),$$ and that the geodesic flow is now the translation flow on $\bord^2\G\times\R.$ 

Hopf's parametrization also shows that invariant measures of the geodesic flow are in correspondence with $\G$-invariant measures on $\bord^2\G.$

Patterson-Sullivan's measure on $\bord\G$ induces a $\G$ invariant measure on $\bord^2\G$ whose corresponding measure in $\G\/ T^1\w M$ is the measure of maximal entropy of the geodesic flow. This fact is of particular importance in Roblin\cite{roblin}'s work where he obtains counting theorems in the negative curvature case.

The main idea of this work is then to construct a flow in a similar fashion of Hopf's parametrization, considering an appropriate cocycle, and give a description of its measure of maximal entropy. We explain now how this flow is built.

Consider some H\"older cocycle $c:\G\times\bord\G\to\R,$ i.e. $c$ verifies $$c(\g_0\g_1,x)=c(\g_0,\g_1x)+c(\g_1,x)$$ for every pair $\g_0,\g_1\in\G;$ and $c(\g,\cdot)$ is H\"older continuous for every $\g\in\G$ (the same exponent is assumed for every $\g$).

The basic example of a H\"older cocycle is the Busemann cocycle $$(\g,x)\mapsto B_x(o,\g^{-1}o).$$ Nevertheless when one has a strictly convex representation $\rho:\G\to\PSL(d,\R)$ with equivariant map $\xi:\bord\G\to\P(\R^d)$ one has for each norm $\|\ \|$ on $\R^d$ the following cocycle of particular interest to us $$\co(\g,x) =\log\frac{\|\rho(\g) v\|}{\|v\|}$$ where $v\in\xi(x)-\{0\}.$



We are interested in understanding $\G$'s action on $\bord^2\G\times\R$ via any cocycle $c$: $$\g(x,y,t)=(\g x,\g y,t-c(\g,y))$$ and give conditions to obtain the translation flow in the quotient $\G\/\bord^2\G\times\R.$


The periods of a H\"older cocycle $c$ are defined as $\l_c(\g)=c(\g,\g_+),$ where $\g_+$ is $\g$'s attractive fixed, point and the \emph{exponential growth rate} of $c$ is defined as $$h_c:=\limsup_{s\to\infty}\frac{\log\#\{[\g]\in[\G]:\l_c(\g)\leq s\}}s$$ where $[\g]$ is the conjugacy class of $\g.$

For example, the period $|\g|$ of $\g\in\G$ for Busemann's cocycle is the length of the closed geodesic associated to $\g$, and its exponential growth rate coincides with the topological entropy of the geodesic flow.

We then show the following:


\begin{teos}[The reparametrizing theorem \ref{teo:reparametrisation}] Consider $c:\G\times\bord\G\to\R$ a H\"older cocycle such that $h_c\in(0,\infty),$ then the action of $\G$ on $\bord^2\G\times\R$ via $c$ is proper and co-compact. Moreover, the translation flow $\psi_t:\G\/\bord^2\G\times\R\mismo$ $$\psi_t(x,y,s)=(x,y,s-t)$$ is conjugated to a H\"older reparametrization of the geodesic flow on $\G\/T^1\widetilde M.$ It is topologically mixing and its topological entropy is $h_c.$
\end{teos}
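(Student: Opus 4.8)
The plan is to realize $\bord^2\G\times\R$ (with the $c$-twisted $\G$-action) as a "reparametrized" version of the Hopf picture $\bord^2\G\times\R$ (with the Busemann-twisted action), exploiting that both quotients carry an $\R$-flow and that the underlying $\G$-space $\bord^2\G$ is the same. First I would recall that $\G$ acting on $\bord^2\G\times\R$ via the Busemann cocycle $B$ is proper and cocompact — this is exactly the classical Hopf parametrization identifying the quotient with $\G\backslash T^1\w M$, which is compact since $\G$ is cocompact. The key structural observation is that if $c$ and $c'$ are two Hölder cocycles, then $c-c'$ descends to a function on $\bord^2\G\times\R$ that is $\G$-invariant along the relevant slices; more precisely, since any Hölder cocycle is cohomologous (via a Livšic-type argument) to one depending only through its periods, one expects $c$ and $B$ to differ by a coboundary plus a "rescaling" that is bounded on compact sets. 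Concretely I would build the map $\Phi:\bord^2\G\times\R\to\bord^2\G\times\R$ of the form $\Phi(x,y,t)=(x,y,\k(x,y)+t)$ for a suitable Hölder function $\k$, chosen so that $\Phi$ intertwines the two $\G$-actions; the cocycle identity for $c-B$ is exactly what is needed for such a $\k$ to exist as a $\G$-equivariant "primitive."

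Granting this, properness and cocompactness of the $c$-action follow immediately by transport of structure from the Busemann case. Next, to identify the flow $\psi_t$ on the $c$-quotient as a Hölder reparametrization of the geodesic flow, I would argue as follows: the geodesic flow on $\G\backslash\bord^2\G\times\R$ is translation in the $\R$-coordinate of the Busemann model; under $\Phi$ this translation becomes, in the $c$-model, a flow moving at a position-dependent speed governed by the "infinitesimal period ratio" between $c$ and $B$. Since $h_c\in(0,\infty)$, this ratio is bounded and bounded away from zero on the compact quotient — this is where finiteness and positivity of $h_c$ enter in an essential way, guaranteeing the reparametrizing function is a genuine (positive, Hölder, bounded) speed change rather than degenerating. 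The standard fact that a positive Hölder reparametrization of a topologically mixing Hölder flow is again topologically mixing then gives the mixing assertion (the geodesic flow on a compact negatively curved manifold is mixing — indeed Anosov — so this applies).

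For the entropy statement I would invoke the variational principle together with Abramov's formula: reparametrizing a flow by a positive function $f$ changes the entropy of an invariant measure $m$ from $h(m)$ to $h(m)/\int f\, dm$, and passing to the supremum over invariant measures one gets that the topological entropy of the reparametrized flow equals the unique $h$ such that the pressure of $-h f$ vanishes. On the other hand, by the definition of $h_c$ as the exponential growth rate of the periods $\l_c(\g)=c(\g,\g_+)$, and since closed orbits of the geodesic flow correspond to conjugacy classes $[\g]$ with period $|\g|$ while the corresponding closed orbit of $\psi_t$ has period exactly $\l_c(\g)$ (this is the content of how $\Phi$ matches periodic orbits: the twisted action fixes the axis and shifts by the period), counting periodic orbits of $\psi_t$ of period $\le s$ is literally counting $[\g]$ with $\l_c(\g)\le s$, whose exponential rate is $h_c$. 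Combining with the fact (Parry–Pollicott type, but here just the definition of topological entropy via periodic orbits for mixing Hölder flows / suspensions of subshifts) that the periodic-orbit growth rate equals the topological entropy, we conclude the topological entropy of $\psi_t$ is $h_c$.

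The main obstacle I anticipate is the construction of the equivariant "primitive" $\k$ and verifying that the resulting $\Phi$ is a homeomorphism with the claimed Hölder regularity: one must check that $c-B$, or rather the function on $\bord^2\G$ obtained by the appropriate averaging/integration of the cocycle difference along geodesics, is well-defined, Hölder, and $\G$-equivariant in the precise sense needed — this is essentially a Livšic cohomology argument adapted to the boundary-cocycle setting, and it is where the Hölder (rather than merely continuous) hypothesis on $c$ is used. Once $\Phi$ is in hand, the remaining steps (properness, cocompactness, mixing, entropy) are transport-of-structure plus standard thermodynamic formalism.
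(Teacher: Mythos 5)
Your central construction does not work. A map of the form $\Phi(x,y,t)=(x,y,\kappa(x,y)+t)$ intertwining the Busemann-twisted action with the $c$-twisted action would require $\kappa(\g x,\g y)-\kappa(x,y)=B_y(o,\g^{-1}o)-c(\g,y)$ for all $\g$; evaluating at $(x,y)=(\g_-,\g_+)$ forces $\l_c(\g)=|\g|$ for every $\g$, i.e.\ the periods of $c$ would have to coincide with the lengths of the closed geodesics, which fails in general since $c$ and the Busemann cocycle are not cohomologous. Moreover, such a $\Phi$ commutes with translation in the $\R$-coordinate, so it would \emph{conjugate} the two translation flows rather than reparametrize one into the other: the ``position-dependent speed'' you invoke cannot come out of a fiberwise shift. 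The paper's route is different in exactly this respect: by Ledrappier's correspondence the cocycle $c$ is cohomologous to $c_F$ for a $\G$-invariant H\"older function $F$ on $T^1\widetilde M$; finiteness of $h_c$ gives $\inf_{[\g]}\l_c(\g)/|\g|>0$ (Ledrappier's lemma), hence $\int F\,dm>0$ for every invariant probability $m$ by Anosov's closing lemma, and a positive Liv\v sic-type lemma allows one to take $F>0$. The conjugating map is then the Hopf-type map $E(p,v)=(v_{-\infty},v_\infty,B^F_{v_\infty}(p,o))$ built from Schapira's geometric cocycle, whose third coordinate integrates $F$ along geodesics; this non-affine distortion of the $\R$-coordinate is what your $\Phi$ cannot produce, and it simultaneously yields equivariance (hence properness and cocompactness by transport from $T^1\widetilde M$) and exhibits $\psi_t$ as the reparametrization of the geodesic flow by $F$.

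Two further points. The ``standard fact'' that a positive H\"older reparametrization of a topologically mixing flow is again mixing is not true: a mixing Anosov flow may admit a cross section, and a suitable reparametrization is then a constant-roof suspension, which is not even weakly mixing. The paper instead proves that the geodesic flow of a compact negatively curved manifold admits no cross section (the commutator of two non-commuting elements gives a homologically trivial closed geodesic) and deduces, via Schwartzman's criterion, that \emph{every} reparametrization is weakly mixing. Your entropy argument (Abramov's formula, the vanishing of $P(-hF)$, and periodic-orbit counting with periods $\l_c(\g)$) does match the paper's lemmas, but it presupposes the strictly positive H\"older reparametrizing function $F$, which is precisely what your construction fails to deliver; producing it is where the hypothesis $h_c<\infty$ is actually used.
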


This result is cohomology invariant, this is, if one adds to $c$ a cocycle of the form $(\g,x)\mapsto U(\g x)-U(x)$ for some function $U:\bord\G\to\R,$ the statement of the theorem does not change. Namely because with the function $U$ one constructs $\G$-equivariant homeomorphisms from one space to the other.

Analysis on the cocycle $\co$ defined before and this theorem will imply theorem B and the first item of theorem C. In order to prove theorem A (and the second item of theorem C) further analysis of the flow $\psi_t$ is needed. Mainly because it is a fixed cocycle we are interested in, and not only its cohomology class.

When a H\"older cocycle has finite and positive exponential growth rate  Le\-dra\-ppier\cite{ledrappier} has shown the existence of a Patterson-Sullivan measure associated to it, i.e. a probability $\mu$ on $\bord\G,$ such that $$\frac{d\g_*\mu}{ d\mu}(x)=e^{-h_cc(\g^{-1},x)}.$$

A \emph{dual cocycle} of $c$ is a H\"older cocycle $\vo c:\G\times\bord\G\to\R$ such that the periods $\l_{\vo c}(\g)=\l_c(\g^{-1}).$ A \emph{Gromov product} for a pair of dual cocycles $\{c,\vo c\}$ is a function $[\cdot,\cdot]_{\{c,\vo c\}}:\bord^2\G\to\R$ such that for every $\g\in\G$ and $(x,y)\in\bord^2\G$ one has $$[\g x,\g y]_{\{c,\vo c\}}-[x,y]_{\{c,\vo c\}}=-(\vo c(\g,x)+c(\g,y)).$$

It is consequence of the work by Ledrappier\cite{ledrappier} (and we shall explain this below) that given a H\"older cocycle $c$ there exists a dual cocycle $\vo c$ and a Gromov product for the pair $\{c,\vo c\}.$




We can now describe the measure of maximal entropy of the translation flow $\psi_t.$

\begin{teos}[The reparametrizing theorem \ref{teo:reparametrisation}] Consider a H\"oder cocycle c with $h_c\in(0,\infty),$ $\vo c$ a dual cocycle and $[\cdot,\cdot]$ a Gromov product for the pair $\{c,\vo c\}.$ Denote $\mu$ and $\vo\mu$ the Patterson-Sullivan's probabilities for the cocycles $c$ and $\vo c$ respectively. Then the measure $$e^{-h_c[\cdot,\cdot]}\vo\mu\otimes\mu\otimes ds$$ is $\G$-invariant on $\bord^2\G\times\R$ (for the action $\G\curvearrowright\bord^2\G\times\R$ via $c$) and induces (up to a constant) $\psi_t$'s probability of maximal entropy on the quotient $\G\/\bord^2\G\times\R.$
\end{teos}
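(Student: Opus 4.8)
The plan is to verify $\G$-invariance of the measure $m:=e^{-h_c[\cdot,\cdot]}\,\vo\mu\otimes\mu\otimes ds$ by a direct computation using the cocycle relations, and then identify the induced measure on the quotient with the entropy-maximizing measure of the flow $\psi_t$ via the reparametrizing theorem together with the variational principle for flows. First I would check invariance. The action is $\g(x,y,s)=(\g x,\g y,s-c(\g,y))$, so the $ds$-factor contributes no Jacobian. The transformation rule for the Patterson-Sullivan probabilities gives $\g_*\mu(dy)=e^{-h_c c(\g^{-1},y)}\mu(dy)$ and likewise $\vo\mu(dx)=e^{-h_c\vo c(\g^{-1},x)}\vo\mu(dx)$ after pushing forward; combining these, the density of $\g_*(\vo\mu\otimes\mu\otimes ds)$ with respect to $\vo\mu\otimes\mu\otimes ds$ at $(x,y,s)$ is $e^{-h_c(\vo c(\g^{-1},x)+c(\g^{-1},y))}$. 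On the other hand, the Gromov-product defining identity, applied with $\g^{-1}$ in place of $\g$ and evaluated at $(x,y)$ — equivalently, the identity $[\g x,\g y]-[x,y]=-(\vo c(\g,x)+c(\g,y))$ read off at the point $(\g^{-1}x,\g^{-1}y)$ — yields $[x,y]-[\g^{-1}x,\g^{-1}y]=\vo c(\g^{-1},x)+c(\g^{-1},y)$. Hence the factor $e^{-h_c[\cdot,\cdot]}$ transforms exactly so as to cancel the density above, and $m$ is $\G$-invariant. This computation is routine; the only care needed is to get the direction of the cocycle arguments right, since $c(\g,\cdot)$ is H\"older only in the second variable and the pushforward shifts which variable one differentiates in.

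Next, since by the reparametrizing theorem the $\G$-action on $\bord^2\G\times\R$ via $c$ is proper and cocompact, $m$ descends to a finite measure on the compact quotient $\G\/(\bord^2\G\times\R)$, which I will still call $m$; it is invariant under the translation flow $\psi_t$ because the $ds$-factor is translation-invariant and $m$ is $\G$-invariant. It remains to show $m$ is (a constant multiple of) the measure of maximal entropy. For this I would invoke the structure provided by the reparametrizing theorem: $\psi_t$ is a H\"older reparametrization of the geodesic flow, hence — transporting through the symbolic coding used in that theorem — it is the suspension of a subshift of finite type under a H\"older roof function, so the thermodynamic formalism of Bowen and the periodic-orbit results of Parry-Pollicott apply. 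The measure of maximal entropy of such a flow is unique and is characterized, among all $\psi_t$-invariant probabilities, by the property that its product structure is a local product of the ``Gibbs'' conditionals in the stable and unstable directions against the flow-time Lebesgue factor. The factorization $m=e^{-h_c[\cdot,\cdot]}\vo\mu\otimes\mu\otimes ds$ is manifestly of this local-product form, with $\mu$ playing the role of the conditional in the unstable ($y$) direction, $\vo\mu$ in the stable ($x$) direction, and the Gromov-product weight providing the Radon-Nikodym glue that makes the two consistent. So the plan is to match $\mu$ and $\vo\mu$, via Ledrappier's description of the Patterson-Sullivan measures as the equilibrium states for the relevant H\"older potentials, with the stable/unstable conditionals of the Bowen-Margulis measure of the reparametrized flow, and conclude by uniqueness of the maximal-entropy measure.

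The main obstacle is this last identification: showing that the local-product measure built from $\mu$, $\vo\mu$ and the Gromov product is \emph{the same} as the Bowen-Margulis measure of the reparametrized flow, rather than merely \emph{a} flow-invariant measure of product type. Concretely one must check that under the symbolic coding the conditional measures of $m$ along strong stable/unstable leaves are exactly the Gibbs measures for the potential whose pressure is zero at parameter $h_c$ — i.e. that $h_c$ is the correct normalization, which is where the definition of $h_c$ as the exponential growth rate of the periods of $c$ enters, matched against the Parry-Pollicott count of periodic orbits of the flow (whose entropy is $h_c$ by the reparametrizing theorem). Equivalently, one shows the scaling relation: under $\psi_t$ the conditional along the unstable leaf expands measure by exactly $e^{h_c t}$ (and contracts along the stable leaf by $e^{-h_c t}$), which is precisely the transformation law of $\mu$ under the cocycle $c$ combined with the reparametrization. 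Once the scaling is pinned down, uniqueness of the measure of maximal entropy for a topologically mixing suspension flow over a subshift of finite type (Bowen, Parry-Pollicott) closes the argument, and the ``up to a constant'' in the statement is just the normalization of $m$ to a probability.
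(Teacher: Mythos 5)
Your first step---the $\G$-invariance computation via the transformation rules of $\mu$ and $\vo\mu$ and the defining identity of the Gromov product applied to $\g^{-1}$---is correct, and it plays exactly the role it plays in the paper, where it is dispatched as a one-line observation.

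The second half has a genuine gap, and you have in fact located it yourself. Everything hinges on identifying the local product $e^{-h_c[\cdot,\cdot]}\vo\mu\otimes\mu\otimes ds$ with the measure of maximal entropy of $\psi_t$, and the ``characterization'' you invoke---that the maximal-entropy measure is the unique invariant measure whose stable/unstable conditionals are of Gibbs local-product type, or equivalently whose unstable conditionals are expanded at rate $e^{h_c t}$---is not an off-the-shelf statement you can cite for a H\"older time change of an Anosov flow; verifying it for this particular measure is precisely the content of the theorem. Note also that the translation flow only moves the $\R$-coordinate, so the quasi-invariance of $\mu$ under $\G$ does not by itself yield any expansion rate of conditionals along strong unstable leaves (in these coordinates those leaves are graphs twisted by the cocycle, and their conditionals involve the Gromov product in an essential way); your ``equivalently, one shows the scaling relation'' therefore restates the problem rather than solving it. The paper closes this gap by a different chain: it replaces $c$ by a cohomologous cocycle $c_F$ with $F:T^1\w M\to\R_+^*$ H\"older and positive (Ledrappier's theorem \ref{teo:ledrappier} together with the positive Liv\v sic-type lemma \ref{lema:livsic}), uses Ledrappier's lemma \ref{lema:lemaledrappier2} to get $P(-h_cF)=0$, applies lemma \ref{lema:reparam} (Abramov) so that the maximal-entropy measure of $\psi_t$ is $F\,dm/m(F)$ with $m$ the equilibrium state of $-h_cF$ for the geodesic flow, and then quotes Schapira's proposition (proposition 2.4 of \cite{schapira}, stated in \S\ref{section:cociclos}), which says that in Hopf coordinates this Gibbs state is exactly $e^{-h_c[\cdot,\cdot]_F}\vo\mu_F\otimes\mu_F\otimes ds$; one concludes with uniqueness and ergodicity of the maximal-entropy measure (corollary \ref{cor:ruellebowen}) and the cohomology invariance of zero sets. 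To complete your outline you would need either this Patterson--Sullivan-to-Gibbs correspondence or an actual proof of the Margulis-type conditional characterization for reparametrized Anosov flows; as written, neither is supplied.
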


Theorems A and the second item of theorem C are consequence of these two theorems by using appropriate cocycles, the key point is the following proposition which sets strictly convex (and hyperconvex) representations in the context of H\"older cocycles with finite and positive exponential growth.

Let $\rho:\G\to \PSL(d,\R)$ be a strictly convex representation with equivariant map $\xi:\bord\G\to\P(\R^d).$ Fix some norm $\|\ \|$ on $\R^d$ and consider the cocycle $$\co(\g,x)=\log\frac{\|\rho(\g)v\|}{\|v\|}$$ for some $v\in\xi(x)-\{0\}.$

\begin{teo2}[Proposition \ref{prop:growth}]Let $\rho:\G\to\PSL(d,\R)$ be a strictly convex representation, then the period $\co(\g,\g_+)$ is $\lambda_1(\rho\g)$ i.e. the logarithm of the spectral radius of $\rho(\g),$ and the exponential growth rate of the cocycle $\co$ is finite and positive, this is to say $$\limsup_{s\to\infty}\frac {\log\#\{[\g]\in [\G]:\lambda_1(\rho(\g))\leq s\}}s$$ belongs to $(0,\infty).$ 
\end{teo2}

Section \S\ref{section:suspencion} is devoted to the study of reparametrizations of Anosov flows. This allows us to apply counting theorems for hyperbolic flows in our setting.
In section \S\ref{section:cociclos} we study H\"older cocycles with finite positive exponential growth rate. We prove there the reparametrizing theorem \ref{teo:reparametrisation}. In section \S\ref{section:countingperiods} we study consequences of Parry-Pollicott's prime orbit theorem and Bowen's spatial distribution result for a general cocycle. In section \S\ref{section:lemmagrowth} we show that this last proposition. Section \S\ref{section:norma} is devoted to the proof of theorems A and B. On the last section we study hyperconvex representations and prove theorem C.

\subsection*{Acknowledgements}

Without Jean-Fran\c cois Quint's guiding and discussions this work would have never been possible. The author is extremely grateful for this. He would also like to thank Thomas Roblin for useful discussions concerning his work and Matias Carrasco for discussions on hyperbolic groups.

\section{Cross sections and arithmeticity of periods}\label{section:suspencion}

The main objectives of this section are lemma \ref{lema:reparam} and corollary \ref{cor:weak}. The former explains how measures of maximal entropy of reparametrizations arise and the latter matters on reparametrizations of geodesic flows on closed negatively curved manifolds.

Let $X$ be a compact metric space and $\phi_t:X\mismo$ a continuous flow on $X$ without fixed points.

\begin{defi} We will say that $\phi_t:X\mismo$ is \emph{topologically weakly mixing} if the only solution to the equation $$w\phi_t=e^{2\pi iat}w,$$ for $w:X\to S^1$ continuous and $a\in\R,$ is $a=0$ and $w=$constant.
\end{defi}

\begin{obs}\label{obs:periodos} Consider some periodic orbit $\tau$ of period $p(\tau)$ of the flow $\phi_t:X\mismo.$ If $\phi_t:X\mismo$ is not weak mixing let $w:X\to S^1$ and $a\in\R-\{0\}$ verify $w\phi_t=e^{2\pi i at}w.$ Since $\phi_{p(\tau)}x=x$ for any $x\in\tau$ one then finds $\exp\{2\pi i ap(\tau)\}=1$ which implies that $p(\tau)$ belongs to the discrete group $a^{-1}\Z.$ This is, the periods of a non weak mixing flow generate a discrete group of $\R.$
\end{obs}

A closed subset $K$ of $X$ is a \emph{cross section} for $\phi_t$ if the function $T_\phi:K\times\R \to X$ given by $T_\phi(x,t)=\phi_t(x)$ is a surjective local homeomorphism.

\begin{obs}\label{obs:homologo}
If $\phi_t:X\mismo$ admits a cross section then $X$ fibers over the circle and the projection of a periodic orbit (seen as map from $S^1\to S^1)$ has non-zero index.
\end{obs}

Remark \ref{obs:homologo} admits a converse due to Schwartzman\cite{schwartzman}:

\begin{lema}[Schwartzman\cite{schwartzman}, page 280]\label{lema:sch} There exists a continuous function $w:X\to S^1$ differentiable in the flow's direction such that its derivative in the flow's direction $w'$ is nowhere zero if and only if the flow admits a cross section.
\end{lema}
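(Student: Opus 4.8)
The plan is to prove the two implications separately. Throughout, identify $S^1=\R/\Z$ and, for a continuous $w\colon X\to S^1$ differentiable along the flow, write $w'\colon X\to\R$ for the flow derivative read in this trivialization, so that locally $w(\phi_t(x))=w(x)+\int_0^t w'(\phi_s(x))\,ds$; note that if $w'$ is continuous and nowhere zero then $|w'|\geq\delta>0$ on the compact space $X$, and $w'$ has constant sign along each orbit since orbits are connected.

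For the implication ``cross section $\Rightarrow$ function $w$'': given a cross section $K$, first use the compactness of $X$, the closedness of $K$ and the local homeomorphism property of $T_\phi$ to see that the first return time $r(x)=\min\{t>0:\phi_t(x)\in K\}$ is continuous with $0<r_{\min}\leq r(x)\leq r_{\max}<\infty$, and that every point of $X$ is uniquely $\phi_s(x)$ with $x\in K$ and $0\leq s<r(x)$. Then build $w$ so that it winds exactly once around $S^1$ along each return segment: choose, depending continuously on $x\in K$, an orientation-preserving $C^1$-diffeomorphism $\theta_x\colon[0,r(x)]\to[0,1]$ with $\theta_x(0)=0$, $\theta_x(r(x))=1$ and a common fixed positive value of the derivative at both endpoints (this is possible because $r(x)\geq r_{\min}$), and set $w(\phi_s(x))=\theta_x(s)\bmod 1$. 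The common endpoint derivative is precisely what makes $w$ and $w'$ continuous across $K$, while $w'=\theta_x'(s)>0$ everywhere.

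For the converse: $w'$ is continuous and nowhere zero, hence $|w'|\geq\delta>0$, so along each orbit the lift of $t\mapsto w(\phi_t(x))$ is a strictly monotone proper map $\R\to\R$ and therefore meets the closed set $K:=w^{-1}(0)$; this gives the surjectivity of $T_\phi\colon K\times\R\to X$. For the local homeomorphism property, work in a flow box $U\cong N\times(-\eps,\eps)$ around an arbitrary point, provided by the local product structure of the fixed-point-free continuous flow $\phi_t$; since $|w'|\geq\delta$, the map $w$ is strictly monotone along each orbit segment of the box, so it meets $K$ in at most one point, which depends continuously on the transversal coordinate in $N$. Shrinking $U$, this identifies a neighbourhood of the point with an open subset of $(K\cap U)\times\R$ via $T_\phi$, so $T_\phi$ is a local homeomorphism and $K$ is a cross section.

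I expect the two ``soft'' analytic points to be the real obstacles. In the first implication it is the continuous choice of the reparametrisations $\theta_x$ with matching endpoint derivatives: the naive candidate $\theta_x(s)=s/r(x)$ already gives a perfectly good continuous $w$ with nowhere-vanishing flow derivative, but its $w'$ jumps across $K$, and removing this jump is exactly what the endpoint normalisation achieves. In the converse it is making rigorous the flow-box / local product structure for a merely continuous fixed-point-free flow and the passage from a flow-monotone level set to a genuine cross section; this is the heart of Schwartzman's argument in \cite{schwartzman}, which we are content to cite.
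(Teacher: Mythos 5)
The paper offers no argument for this lemma at all: it is imported from Schwartzman's paper, the reference to page 280 being the entire ``proof'', so there is no internal proof to compare yours with. Judged on its own terms, your sketch follows the standard route and I see no incorrect step. Two remarks. First, in the converse direction you quietly strengthen the hypothesis to ``$w'$ continuous'' in order to get $|w'|\geq\delta>0$; this is genuinely needed for your surjectivity argument (a merely nowhere-zero flow derivative gives strictly monotone but possibly bounded lifts along orbits, so an orbit could miss the level set $w^{-1}(0)$ altogether), and it is consistent both with Schwartzman's actual hypotheses and with every use of the lemma in this paper, where the function $w'/2\pi i w$ is constant, H\"older or continuous (corollary \ref{cor:weak2}, proposition \ref{teo:suspension}). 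Second, the two points you single out are indeed where the content lies, and both can be discharged: in the forward direction, continuity and two-sided boundedness of the first return time $r$ and the unique normal form $y=\phi_s(x)$, $x\in K$, $0\leq s<r(x)$, follow from compactness of $X$, closedness of $K$ and the fact that a local homeomorphism is an open map; and your matched-endpoint family exists explicitly, e.g. $\theta_x'(s)=c+(1-c\,r(x))\,6s(r(x)-s)/r(x)^3$ with $0<c<1/r_{\max}$, which is jointly continuous, bounded below by $c$, equal to $c$ at both endpoints and of integral $1$ (so the relevant constraint is the upper bound $r_{\max}$ rather than $r_{\min}$). In the converse, the flow-box structure for a fixed-point-free continuous flow is the Whitney--Bebutov local section theorem; deferring it to \cite{schwartzman} is acceptable here, since the paper itself treats the whole lemma as a citation, but you should be aware that this, together with the continuity of $w'$, is what your ``local product structure'' step really rests on.
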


We now turn our attention to reparametrizations of flows. Let $F:X\to\R$ be a positive continuous function. Set $\k:X\times\R\to\R$ as \begin{equation}\label{equation:k} \k(x,t)=\int_0^tF\phi_s(x)ds,\end{equation} if $t$ is positive, and $\k(x,t):=-\k(\phi_tx,-t)$ for $t$ negative. Thus, $\k$ verifies de cocycle property $\k(x,t+s)=\k(\phi_t x,s)+\k(x,t)$ for every $t,s\in\R$ and $x\in X.$

Since $F>0$ and $X$ is compact $F$ has a positive minimum and $\k(x,\cdot)$ is an increasing homeomorphism of $\R.$ We then have an inverse $\a:X\times\R\to\R$ that verifies \begin{equation}\label{equation:inversa} \a(x,\k(x,t))=\k(x,\alpha(x,t))=t\end{equation} for every $(x,t)\in X\times\R.$

\begin{defi}\label{defi:repa}The \emph{reparametrization} of $\phi_t$ by $F$ is the flow $\psi_t:X\mismo$ defined as $\psi_t(x):=\phi_{\a(x,t)}(x).$ If $F$ is H\"older continuous we shall say that $\psi_t$ is a H\"older reparametrization of $\phi_t.$
\end{defi}

\begin{obs} The cocycle property for $\k$ and equation (\ref{equation:inversa}) imply that $\psi_t$ is in fact a flow.
\end{obs}

The advantage of cross sections is that the definition is invariant via re\-pa\-ra\-me\-tri\-za\-tions. 

\begin{lema}\label{lema:cross} Let $\psi_t$ be a reparametrization of $\phi_t.$ Then $\phi_t$ admits a cross section if and only if $\psi_t$ does.
\end{lema}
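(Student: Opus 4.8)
The plan is to show that a cross section for one flow is, essentially verbatim, a cross section for the other, since the two flows share the same oriented orbits and differ only by a time change. First I would recall the key structural fact behind Definition \ref{defi:repa}: for each $x$ the map $t\mapsto\k(x,t)$ is an increasing homeomorphism of $\R$ with inverse $t\mapsto\a(x,t)$, and the reparametrized flow satisfies $\psi_t(x)=\phi_{\a(x,t)}(x)$. In particular the $\psi$-orbit of $x$ and the $\phi$-orbit of $x$ coincide as subsets of $X$, and they carry the same orientation because $\a(x,\cdot)$ is increasing. This is the only input about the flows that the argument will use, so the statement is genuinely symmetric in $\phi$ and $\psi$ and it suffices to prove one implication.

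So suppose $K\subseteq X$ is a cross section for $\phi_t$, i.e. $T_\phi:K\times\R\to X$, $T_\phi(x,t)=\phi_t(x)$, is a surjective local homeomorphism; I claim the \emph{same} closed set $K$ is a cross section for $\psi_t$. Define $\Phi:K\times\R\to K\times\R$ by $\Phi(x,t)=(x,\k(x,t))$. Since $F$ is continuous and positive on the compact space $X$, the map $(x,t)\mapsto\k(x,t)$ is continuous, and for fixed $x$ it is a homeomorphism of $\R$ onto $\R$ with continuous inverse $(x,s)\mapsto\a(x,s)$; hence $\Phi$ is a homeomorphism of $K\times\R$ with inverse $(x,s)\mapsto(x,\a(x,s))$. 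Now observe the identity
\begin{equation*}
T_\psi(x,s)=\psi_s(x)=\phi_{\a(x,s)}(x)=T_\phi(x,\a(x,s))=T_\phi\circ\Phi^{-1}(x,s).
\end{equation*}
Thus $T_\psi=T_\phi\circ\Phi^{-1}$ is a composition of a homeomorphism with a surjective local homeomorphism, hence is itself a surjective local homeomorphism. Therefore $K$ is a cross section for $\psi_t$. The converse follows by symmetry: $\phi_t$ is the reparametrization of $\psi_t$ by the positive continuous function $1/(F\circ(\text{orbit identification}))$ — concretely, if $\psi$ is obtained from $\phi$ by $F$ then $\phi$ is obtained from $\psi$ by the reciprocal speed function — so the same argument gives the other direction, and we may also simply apply the implication just proved with the roles of $\phi$ and $\psi$ interchanged.

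I do not expect a serious obstacle here; the one point requiring a little care is the verification that $\Phi$ is a genuine homeomorphism of $K\times\R$ rather than merely a fiberwise one, i.e. that joint continuity of $(x,t)\mapsto\k(x,t)$ and of its inverse holds. Joint continuity of $\k$ is immediate from \eqref{equation:k} and continuity of $F$ together with continuity of the flow; joint continuity of $\a$ then follows because $\k$ is, on $K\times\R$, a continuous bijection which is proper (the fibers over compact sets are compact, using the uniform positive lower bound on $F$), hence a homeomorphism. Once this is in place the rest is the formal manipulation displayed above.
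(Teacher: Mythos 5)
Your proof is correct and follows essentially the same route as the paper: the same closed set $K$ serves as a cross section for the reparametrized flow, via the identity $T_\psi=T_\phi\circ\varphi$ with $\varphi(x,s)=(x,\a(x,s))$ a homeomorphism of $K\times\R$ (your $\Phi^{-1}$). The extra details you supply on joint continuity of $\k$ and $\a$, and the symmetry remark for the converse, are fine but not a different argument.
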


\begin{proof}  Let $K$ be a cross section for $\phi_t,$ we need to show that the map $T_\psi:X\times\R\to X$ $(x,t)\mapsto\psi_t(x)$ is a surjective local homeomorphism. But this is evident in view of the relation $$T_\psi=T_\phi\circ \varphi$$ where $\varphi$ is the homeomorphism 
$\varphi:K\times\R\mismo\ (x,t)\mapsto (x,\a(x,t)).$
\end{proof}

One then finds the following corollary.

\begin{cor}\label{cor:weak2} A flow $\phi_t:X\mismo$ does not admit a cross section if and only if every reparametrization of $\phi_t$ is topologically weakly mixing.
\end{cor}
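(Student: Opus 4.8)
The plan is to prove the corollary by establishing two implications and taking contrapositives: (i) if some reparametrization of $\phi_t$ is not topologically weakly mixing, then $\phi_t$ admits a cross section; and (ii) if $\phi_t$ admits a cross section, then some reparametrization of $\phi_t$ is not topologically weakly mixing. Contraposing (i) gives ``no cross section $\Rightarrow$ every reparametrization is weakly mixing'' and contraposing (ii) gives the converse, so together these are exactly the statement. Both implications are built from Schwartzman's lemma \ref{lema:sch} and the reparametrization-invariance of cross sections, lemma \ref{lema:cross}.

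For (i), I would start from a reparametrization $\psi_t$ of $\phi_t$ by a positive continuous $F$ that fails to be weakly mixing, so that there are a continuous $w:X\to S^1$ and $a\neq0$ with $w\psi_t=e^{2\pi iat}w$. Then for every $y\in X$ the curve $s\mapsto w\psi_s(y)=e^{2\pi ias}w(y)$ is differentiable at $s=0$ with derivative $2\pi ia\,w(y)$; hence $w$ is differentiable along the $\psi$-orbits and its $\psi$-flow derivative equals $2\pi ia\,w$, which is continuous and nowhere zero. Schwartzman's lemma \ref{lema:sch} applied to $\psi_t$ then gives a cross section for $\psi_t$, and lemma \ref{lema:cross} transports it to $\phi_t$.

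For (ii), I would use Schwartzman's lemma \ref{lema:sch} to produce a continuous $w:X\to S^1$, differentiable along the $\phi$-orbits, with continuous and nowhere-zero $\phi$-flow derivative $w'$. Differentiating $w\bar w\equiv1$ in the flow direction shows $w'\bar w$ is purely imaginary, so $w'=ifw$ with $f:=-iw'\bar w$ real, continuous, and nowhere zero. Since $f$ never vanishes, $\{f>0\}$ and $\{f<0\}$ are both open and closed, and after replacing $w$ by $\bar w$ on $\{f<0\}$ one may assume $f>0$ everywhere while keeping $w$ continuous and $w'=ifw$. Now reparametrize $\phi_t$ by $F:=f/(2\pi)$, which is positive and continuous and hence legitimate by definition \ref{defi:repa}. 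Using $\partial_t\a(x,t)=1/F(\psi_t(x))$ — a consequence of (\ref{equation:inversa}) and $F>0$ — together with the chain rule, I would compute \[ \frac{d}{dt}\,w\psi_t(x)=\frac{w'(\psi_t(x))}{F(\psi_t(x))}=2\pi i\,w\psi_t(x), \] so that $t\mapsto w\psi_t(x)$ satisfies a linear ODE and therefore $w\psi_t(x)=e^{2\pi it}w(x)$ for all $t$ and $x$. Since $w$ is non-constant (its flow derivative never vanishes) and $a=1\neq0$, this reparametrization $\psi_t$ is not weakly mixing, which proves (ii).

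I expect the only delicate point to be the integration step in (ii): one must upgrade the infinitesimal identity $w'=2\pi iFw$ to the global identity $w\psi_t=e^{2\pi it}w$, which requires differentiating $t\mapsto w\psi_t(x)$ at every $t$ (not merely at $t=0$) by means of the flow property of $\psi_t$, equivalently the cocycle identity for $\k$, and then solving the resulting linear ODE. The sign normalization of $f$ is a routine clopen-decomposition argument, and the remaining steps are direct invocations of lemmas \ref{lema:sch} and \ref{lema:cross}.
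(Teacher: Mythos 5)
Your proof is correct and follows essentially the same route as the paper: both directions use Schwartzman's lemma \ref{lema:sch} together with lemma \ref{lema:cross}, with the same choice $F=w'/(2\pi i w)$ and the same verification $w\psi_t=e^{2\pi i t}w$ in the converse direction. The only difference is that you spell out details the paper leaves as ``one easily verifies,'' namely the sign normalization making $F>0$ and the ODE integration of the infinitesimal identity.
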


\begin{proof} Consider some reparametrization $\psi_t$ of $\phi_t$ and assume $\psi_t$ is not weak mixing, this is, there exists $w:X\to S^1$ such that $w\psi_t(x)=e^{2\pi iat}w(x)$ for some $a\neq0.$ Such $w$ is differentiable in the flow's direction and $$\frac{w'(x)}{2\pi i w(x)}=a\neq0.$$ Applying Schwartzman's lemma \ref{lema:sch} one obtains a cross section for $\psi_t$ and thus a cross section for $\phi_t.$

If $\phi_t$ admits a cross section one applies Schwartzman's lemma \ref{lema:sch} and find a continuous function $w:X\to S^1$ whose derivative in the flow's direction is never zero. Set $$F(x)=\frac{ w'(x)}{2\pi i w(x)}$$ and consider $\psi_t,$ the reparametrization of $\phi_t$ by $F.$ One easily verifies that $w\psi_t=e^{2\pi i t}w$ and thus $\psi_t$ is not topologically weakly mixing. 
\end{proof}

If $m$ is a $\phi_t$-invariant probability on $X$ then the probability $m'$ defined by $dm'/dm(\cdot)=F(\cdot)/m(F)$ is $\psi_t$-invariant. In particular, if $\tau$ is a periodic orbit of $\phi_t$ then it is also periodic for $\psi_t$ and the new period is $$\int_\tau F.$$ This relation between invariant probabilities induces a bijection and Abramov\cite{abramov} relates the corresponding metric entropies: \begin{equation}\label{eq:abramov}h(\psi_t,m')=h(\phi_t,m)/\int Fdm.\end{equation}

Denote $\cal M^{\phi_t}$ the set of $\phi_t$-invariant probabilities. The \emph{pressure} of a continuous function $F:X\to\R$ is defined as $$P(\phi_t,F)=\sup_{m\in\cal M^{\phi_t}}h(\phi_t,m)+\int_X Fdm.$$ A probability $m$ such that the supremum is attained is called an \emph{equilibrium state} of $F.$

\begin{lema}\label{lema:reparam} Let $\psi_t:X\mismo$ be the reparametrization of $\phi_t$ by $F:X\to\R_+^*.$ Assume the equation $$P(\phi_t,-sF)=0\qquad s\in\R$$ has a finite positive solution $h,$ then $h$ is $\psi_t$'s topological entropy. In particular the solution is unique. Conversely if $h_{\textrm{top}}(\psi_t)$ is finite then it is a solution to the last equation. If this is the case the bijection $m\mapsto m'$ induces a bijection between equilibrium states of $-hF$ and probabilities of maximal entropy for $\psi_t.$ 
\end{lema}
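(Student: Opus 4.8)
The plan is to translate everything through the Abramov correspondence $m\mapsto m'$ between $\mathcal M^{\phi_t}$ and $\mathcal M^{\psi_t}$, using the entropy formula (\ref{eq:abramov}), namely $h(\psi_t,m')=h(\phi_t,m)/\int F\,dm$. The key algebraic observation is that for any $s\in\R$ and any $m\in\mathcal M^{\phi_t}$ one has
\begin{equation*}
h(\phi_t,m)-s\int F\,dm=\Big(\int F\,dm\Big)\Big(h(\psi_t,m')-s\Big),
\end{equation*}
since $h(\phi_t,m)=h(\psi_t,m')\int F\,dm$. Because $F$ has a positive minimum on the compact space $X$, the factor $\int F\,dm$ is bounded below by a positive constant uniformly in $m$; hence the sign of $h(\phi_t,m)-s\int F\,dm$ controls, and is controlled by, the sign of $h(\psi_t,m')-s$.

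From this I would argue as follows. Taking the supremum over $m\in\mathcal M^{\phi_t}$ (equivalently over $m'\in\mathcal M^{\psi_t}$), the displayed identity shows that $P(\phi_t,-sF)=\sup_m\big(h(\phi_t,m)-s\int F\,dm\big)$ is $\leq 0$ exactly when $h(\psi_t,m')\leq s$ for every $m'$, i.e. when $s\geq h_{\mathrm{top}}(\psi_t)$; and $P(\phi_t,-sF)\geq 0$ exactly when there exist $m'$ with $h(\psi_t,m')\geq s$, i.e. when $s\leq h_{\mathrm{top}}(\psi_t)$ (here I use that the supremum defining topological entropy is attained, by upper semicontinuity of $m'\mapsto h(\psi_t,m')$ for a continuous flow on a compact metric space, or can be approached). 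Combining, $P(\phi_t,-sF)=0$ forces $s=h_{\mathrm{top}}(\psi_t)$, and conversely $s=h_{\mathrm{top}}(\psi_t)$ (when finite) makes $P(\phi_t,-sF)=0$. This simultaneously gives: if the equation $P(\phi_t,-sF)=0$ has a finite positive solution $h$, then $h=h_{\mathrm{top}}(\psi_t)$ and is therefore the unique solution; and if $h_{\mathrm{top}}(\psi_t)$ is finite it solves the equation.

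For the last sentence, fix $h=h_{\mathrm{top}}(\psi_t)$. A probability $m$ is an equilibrium state of $-hF$ iff $h(\phi_t,m)-h\int F\,dm=P(\phi_t,-hF)=0$, iff by the displayed identity (and positivity of $\int F\,dm$) $h(\psi_t,m')=h=h_{\mathrm{top}}(\psi_t)$, i.e. iff $m'$ is a measure of maximal entropy for $\psi_t$. Since $m\mapsto m'$ is already a bijection $\mathcal M^{\phi_t}\to\mathcal M^{\psi_t}$, it restricts to a bijection between the two sets of distinguished measures.

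The only genuinely delicate point is the \emph{variational} half of the entropy statement: I must know that $h_{\mathrm{top}}(\psi_t)=\sup_{m'}h(\psi_t,m')$ and that, when finite, this supremum behaves well enough for the equivalence of signs above to be sharp — in particular that "$P(\phi_t,-hF)=0$" is equivalent to "$\sup_{m'}h(\psi_t,m')=h$" rather than just $\leq h$. This is exactly the variational principle for the topological entropy of a continuous flow on a compact metric space (applied to $\psi_t$, which is again such a flow by Definition \ref{defi:repa} since $F$ is continuous and $X$ compact), together with the standard fact that the pressure function $s\mapsto P(\phi_t,-sF)$ is convex, continuous and strictly decreasing (because $F>0$), so it has at most one zero. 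Granting the variational principle, everything else is the bookkeeping indicated above.
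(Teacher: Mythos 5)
Your proposal is correct and follows essentially the same route as the paper: the Abramov identity $h(\phi_t,m)-s\int F\,dm=\bigl(h(\psi_t,m')-s\bigr)\int F\,dm$, positivity of $F$, the variational principle for $\psi_t$, and the observation that the bijection $m\mapsto m'$ restricts to equilibrium states of $-hF$ versus measures of maximal entropy. The worry about attainment of the supremum is unnecessary (the sign comparison only needs $\int F\,dm$ bounded between $\min F>0$ and $\max F$), but this does not affect correctness.
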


\begin{proof}Abramov's formula (\ref{eq:abramov}) directly implies $$h(\phi_t,m)-s\int Fdm=(h(\psi_t,m')-s)\int Fdm,$$ for any $\phi_t$-invariant probability $m.$ If $P(\phi_t,-hF)=0,$ the last equation together with the fact that $F$ is strictly positive, imply $$0=\sup_{m\in\cal M^{\phi_t}}h(\psi_t,m')-h.$$ Applying the variational principle one has $h=h_{\textrm{top}}(\psi_t).$

Conversely, if $h_{\textrm{top}}(\psi_t)$ is finite the result follows directly form Abramov's formula and $F>0.$

If $m_F$ is an equilibrium state of $-h_{\textrm{top}}(\psi_t)F$ then, since $P(\phi_t,-h_{\textrm{top}}(\psi_t)F)=0$ one has that the metric entropy $h(\psi_t,m'_F)=h_{\textrm{top}}(\psi_t).$ The bijection $m\mapsto m'$ induces thus a bijection between equilibrium states of $-h_{\textrm{top}}(\psi_t)F$ and probabilities of maximal entropy for $\psi_t.$


\end{proof}

We now restrict our study to hyperbolic flows: Assume from now on that $X$ is a compact manifold and that the flow $\phi_t:X\mismo$ is $\clase^1.$ We say that $\phi_t$ is \emph{Anosov} if the tangent bundle of $X$ splits as a sum of three $d\phi_t$-invariant bundles $$ TX=E^s\oplus E^0\oplus E^u,$$ and there exist positive constants $C$ and $c$ such that: $E^0$ is the direction of the flow and for every $t\geq0$ one has: for every $v\in E^s$ $$\|d\phi_tv\|\leq Ce^{-ct}\|v\|,$$ and for every $v\in E^u$ $\|d\phi_{-t}v\|\leq Ce^{-ct}\|v\|.$

In this setting there is an extra equivalence for the existence of cross sections:

\begin{prop}\label{teo:suspension} Let $\phi_t:X\mismo$ be an Anosov flow. Then $\phi_t$ admits a cross section if and only if there exists $F:X\to\R_+^*$ H\"older such that the subgroup of $\R $ spanned by $$\{\int_\tau F:\tau\textrm{ periodic}\}$$ is discrete.
\end{prop}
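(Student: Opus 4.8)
The plan is to establish the two implications separately; the Anosov hypothesis will be needed essentially only for the ``if'' part, where it enters through the Liv\v{s}ic theorem, while the ``only if'' part is in substance a classical fact about cross sections. For the ``if'' part, suppose $F\colon X\to\R_+^*$ is H\"older and the subgroup of $\R$ generated by $\{\int_\tau F:\tau\textrm{ periodic}\}$ is discrete. Since $F>0$, $X$ is compact, and an Anosov flow on a compact manifold has periodic orbits (with $\int_\tau F>0$), this subgroup equals $\a\Z$ for some $\a>0$, so $\int_\tau(F/\a)\in\Z$ for every periodic $\tau$. I would then regard $g:=F/\a$ as the infinitesimal generator of the $\R/\Z$-valued additive cocycle $(x,t)\mapsto\int_0^tg(\phi_sx)\,ds\bmod\Z$: the hypothesis says exactly that this cocycle is trivial along every periodic orbit, so by the Liv\v{s}ic theorem for transitive Anosov flows it is a coboundary, i.e.\ there is a H\"older $u\colon X\to\R/\Z$ with $\frac{d}{dt}\big|_{t=0}u(\phi_tx)=g(x)$. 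Then $w:=e^{2\pi i u}\colon X\to S^1$ is continuous, differentiable in the flow direction, and its flow-derivative $w'(x)=2\pi i\,g(x)\,w(x)$ is nowhere zero because $g>0$; Schwartzman's lemma \ref{lema:sch} then produces a cross section for $\phi_t$.

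For the ``only if'' part, suppose $\phi_t$ admits a cross section. By Schwartzman's lemma \ref{lema:sch} there is a continuous $w\colon X\to S^1$, differentiable along the flow, with $w'$ nowhere zero; set $G:=w'/(2\pi i\,w)$, a continuous real function, and, replacing $w$ by $\bar w$ if necessary, assume $G>0$. For a periodic orbit $\tau$ the integral $\int_\tau G=\int_0^{p(\tau)}G(\phi_sx)\,ds$ is positive (as $G>0$) and equals the index of the map $w|_\tau\colon S^1\to S^1$, hence is a positive integer, in agreement with Remark \ref{obs:homologo}; thus $\{\int_\tau G\}\subset\Z_{>0}$, which already spans a discrete subgroup of $\R$. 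What remains is to replace the merely continuous $G$ by a H\"older function with the same periods, and for this I would invoke the classical strengthening of Schwartzman's lemma (Schwartzman; see also Fried): a nonsingular flow admitting a cross section carries a smooth closed $1$-form $\om$ positive on the generating vector field $V$. (The integral class $[w]\in H^1(X;\Z)$ pairs positively with the Schwartzman asymptotic cycle of every $\phi_t$-invariant probability $\mu$, since that pairing is $\int G\,d\mu>0$; hence by compactness of $\cal M^{\phi_t}$ it pairs $\geq c>0$ with all of them, and a standard cohomological-inequality argument then yields a smooth representative $\om$ of $[w]$ with $\om(V)>0$ throughout $X$.) Setting $F:=\om(V)$ gives a positive, H\"older (indeed smooth, when the flow is) function with $\int_\tau F=\int_\tau\om=\langle[\om],[\tau]\rangle\in\Z$ for every periodic $\tau$, so the period group of $F$ is contained in $\Z$, as required.

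I expect the main obstacle to be the Liv\v{s}ic step in the ``if'' direction: manufacturing the circle-valued transfer function $w$ out of the single piece of data that all periods lie in a lattice is precisely where hyperbolicity is indispensable, and it is also the source of the only genuine technical caveat, namely transitivity --- in all applications of this paper $\phi_t$ is topologically mixing, so this is automatic, but for an abstract Anosov flow one would run the Liv\v{s}ic construction on each basic set of the spectral decomposition and patch. In the ``only if'' direction the only point beyond Schwartzman's lemma is the upgrade from a continuous to a H\"older reparametrizing function, which the classical correspondence between cross sections and closed $1$-forms positive along the flow settles; the resulting $F$ inherits exactly the regularity of the flow (smooth, hence H\"older, when the flow is smooth, as in the applications).
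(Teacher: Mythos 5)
Your proof is correct and, in the direction that the paper actually uses later (discrete period group $\Rightarrow$ cross section), it is the same argument as the paper's: normalize so the periods lie in $\Z$, apply the Liv\v sic theorem to the circle-valued cocycle $e^{2\pi i\kappa(x,t)}$ to get a H\"older $w:X\to S^1$ with nowhere-vanishing flow-derivative, and conclude with Schwartzman's lemma \ref{lema:sch}; your caveat about transitivity is fair, since the paper states the proposition for a general Anosov flow but invokes Liv\v sic without comment (in all its applications the flow is a transitive geodesic flow). The only divergence is in the converse: the paper simply replaces the continuous $w$ furnished by Schwartzman's lemma by a nearby differentiable one and takes $F=w'/2\pi i w$, whereas you invoke the closed-one-form refinement of Schwartzman's criterion (via asymptotic cycles) to produce a smooth $1$-form positive on the generator and set $F=\omega(V)$. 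Your route is heavier machinery but is more explicit about the two points the paper leaves implicit, namely positivity of $F$ (replacing $w$ by $\bar w$) and the regularity of $F$; conversely the paper's perturbation argument is shorter but glosses over why the perturbed $w$ still has nonvanishing, H\"older flow-derivative. Either way the integer-winding-number observation giving $\int_\tau F\in\Z$ is the same.
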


\begin{proof} Assume such $F$ exists, and assume (without loss of generality) that ${\<\{\right.}\int_\tau F:\tau\textrm{ periodic}{\left \}\>}=\Z.$ Recall we have defined $$\kappa(x,t)=\int_0^tF(\phi_sx)ds.$$

The cocycle $\T:\R\times X\to S^1$ given by $\T(x,t)=e^{2\pi i \kappa(x,t)}$ is, after Liv\v sic\cite{livsic2}'s theorem, cohomologically trivial and thus there exists $w:X\to S^1$ H\"older continuous such that $$\frac{w\phi_t(x)}{w(x)}=\exp\{{2\pi i\int_0^tF(\phi_sx)ds}\},$$ one finds a cross section applying Schwartzman's lemma \ref{lema:sch}. 

Assume now that $\phi_t$ admits a cross section. Applying Schwartzman's lemma \ref{lema:sch} one finds a continuous function $w:X\to S^1$ such that its derivative in the flow's direction is never zero. One can assume that such $w$ is in fact differentiable (by considering another function close to $w$) and thus the function $F(x)= w'(x)/2\pi i w(x)$ is differentiable with integer periods.
\end{proof}

The following proposition together with lemma \ref{lema:reparam} imply that a H\"older reparametrization of an Anosov flow has a unique probability of maximal entropy.

\begin{prop}[Bowen-Ruelle\cite{bowenruelle}]\label{teo:ruellebowen} Let $\phi_t:X\mismo$ be an Anosov flow. Then given a H\"older potential $G:X\to\R$ there exists a unique equilibrium state for $G.$ Equilibrium states are thus ergodic.
\end{prop}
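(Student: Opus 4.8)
The plan is to reduce everything to the thermodynamic formalism of subshifts of finite type via Bowen's symbolic coding of the flow. By Bowen's construction of Markov partitions for Axiom~A flows, each basic set in the spectral decomposition of $\phi_t$ is, up to a finite-to-one Hölder semiconjugacy $\pi$, a suspension flow $\sigma^r_t$ over an irreducible subshift of finite type $(\Sigma,\sigma)$ by a Hölder roof function $r:\Sigma\to\R_+^*$; moreover $\pi$ is injective off a set that is null for every invariant probability of positive entropy, and it preserves periodic orbits together with their periods. An equilibrium state of a Hölder potential $G$ on $X$ is supported on a basic set of maximal pressure, so it suffices to argue on one such basic set. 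There, $\pi$ induces an affine, entropy- and integral-preserving bijection between $\cal M^{\sigma^r_t}$ and the invariant probabilities carried by that set, so it transports $P$ and equilibrium states faithfully, and it is enough to prove the statement for the suspension flow $\sigma^r_t$ with $G$ replaced by the Hölder potential $\widetilde G:=G\circ\pi$.

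Next I would descend from the suspension to its base. Writing a $\sigma^r_t$-invariant probability in the form $(\mu\otimes ds)/\mu(r)$ with $\mu$ a $\sigma$-invariant probability on $\Sigma$, Abramov's formula --- exactly as in (\ref{eq:abramov}) --- gives its entropy as $h(\sigma,\mu)/\mu(r)$ and $\int\widetilde G\,d((\mu\otimes ds)/\mu(r))=\mu(G^\sharp)/\mu(r)$, where $G^\sharp(x):=\int_0^{r(x)}\widetilde G(x,s)\,ds$ is again Hölder. Setting $c:=P(\sigma^r_t,\widetilde G)$, a routine manipulation of the variational principle (in the spirit of the proof of lemma~\ref{lema:reparam}) shows that a flow-invariant probability is an equilibrium state of $\widetilde G$ if and only if the corresponding $\mu$ is an equilibrium state for the Hölder potential $G^\sharp-c\,r$ on $(\Sigma,\sigma)$, a potential whose pressure equals $0$. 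The whole proposition is thereby reduced to the statement that a Hölder potential on an irreducible subshift of finite type has a unique equilibrium state and that this measure is ergodic.

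This last statement is the Ruelle--Perron--Frobenius theorem. Replacing the potential by a Hölder cohomologous one depending only on future coordinates (Sinai's lemma), one analyzes the Ruelle transfer operator $L_\varphi f(x)=\sum_{\sigma y=x}e^{\varphi(y)}f(y)$ on the one-sided shift: it has a positive maximal eigenvalue $e^P$ with strictly positive Hölder eigenfunction $h_\varphi$ and dual eigenprobability $\nu_\varphi$, the measure $h_\varphi\nu_\varphi$ is $\sigma$-invariant, and it is the unique equilibrium state of $\varphi$, is Gibbs, and is ergodic (mixing when the shift is aperiodic). Pulling this back to the two-sided shift, then to the suspension (where it reappears as $(\mu\otimes ds)/\mu(r)$), and finally pushing it forward by $\pi$, yields the unique equilibrium state of $G$ on $X$; its uniqueness is immediate from the faithful correspondences in the two previous paragraphs.

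For the ergodicity clause the coding is not needed: let $m$ be an equilibrium state of $G$ on $X$ and $m=\int m_\omega\,dP(\omega)$ its ergodic decomposition. The functional $m'\mapsto h(\phi_t,m')+\int G\,dm'$ is affine on $\cal M^{\phi_t}$, is bounded above by $P(\phi_t,G)$, and equals $P(\phi_t,G)$ at $m$; hence $P$-almost every $m_\omega$ realizes the supremum, i.e.\ is itself an equilibrium state, and by the uniqueness just proved $m_\omega=m$ for $P$-a.e.\ $\omega$, so $m$ is ergodic. The genuine work is concentrated in the first step --- Bowen's symbolic coding of an Axiom~A flow and the verification that it carries entropy, pressure, periodic data and equilibrium states correctly across the boundary identifications; everything downstream is the by now standard transfer-operator machinery. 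For the purposes of this paper one may of course simply invoke \cite{bowenruelle}.
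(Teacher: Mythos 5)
Your outline is essentially the original Bowen--Ruelle argument (Markov coding of the flow, the Abramov/time-change reduction to a Hölder potential of zero pressure on the base shift, the Ruelle--Perron--Frobenius transfer operator, and ergodic decomposition for the ergodicity clause), which is exactly what the paper does: proposition \ref{teo:ruellebowen} is quoted from \cite{bowenruelle} with no proof given, so your reconstruction matches the intended route. One caution: at the step ``it suffices to argue on one such basic set'' you implicitly assume the maximal-pressure basic set is unique, so your argument (like the statement itself, and like Bowen--Ruelle's theorem, which is formulated for the restriction to a basic set) really requires transitivity --- for a non-transitive Anosov flow two basic sets could tie for the pressure and uniqueness would fail; this is harmless here since the paper only applies the proposition to geodesic flows of closed negatively curved manifolds and their reparametrizations, which are transitive.
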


\begin{cor}\label{cor:ruellebowen} Let $\phi_t:X\mismo$ be an Anosov flow and $\psi_t$ be a H\"older reparametrization of $\phi_t.$ Then $\psi_t$ has a unique probability of maximal entropy and it's ergodic with respect to this measure.
\end{cor}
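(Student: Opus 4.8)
The plan is to combine Lemma \ref{lema:reparam} with the Bowen--Ruelle uniqueness result (Proposition \ref{teo:ruellebowen}); the only thing one must check by hand is that the hypotheses of Lemma \ref{lema:reparam} are met, namely that $h_{\textrm{top}}(\psi_t)$ is finite and positive.

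First I would verify finiteness and positivity. Write $\psi_t$ as the reparametrization of $\phi_t$ by a H\"older function $F:X\to\R_+^*$ (this is Definition \ref{defi:repa}). Since $X$ is compact, $F$ attains a positive minimum $\delta>0$ and a finite maximum $\|F\|_\infty$. Abramov's formula (\ref{eq:abramov}) gives $h(\psi_t,m')=h(\phi_t,m)/\int F\,dm$ for every $\phi_t$-invariant probability $m$, whence
$$\frac{h(\phi_t,m)}{\|F\|_\infty}\leq h(\psi_t,m')\leq\frac{h(\phi_t,m)}{\delta}.$$
Taking suprema and using the variational principle together with $0<h_{\textrm{top}}(\phi_t)<\infty$ (finite because $\phi_t$ is $\clase^1$ on a compact manifold, and positive by the classical fact that Anosov flows have positive topological entropy, e.g. via exponential growth of periodic orbits), one obtains $0<h_{\textrm{top}}(\psi_t)<\infty$.

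Next I would invoke Lemma \ref{lema:reparam}: since $h:=h_{\textrm{top}}(\psi_t)$ is finite, the lemma says $h$ solves $P(\phi_t,-sF)=0$ and the correspondence $m\mapsto m'$ restricts to a bijection between equilibrium states of the potential $-hF$ and probabilities of maximal entropy of $\psi_t$. Since $F$ is H\"older, so is $-hF$, so Proposition \ref{teo:ruellebowen} applies: there is a unique equilibrium state $m$ for $-hF$, and it is ergodic for $\phi_t$. Through the bijection this produces a unique probability of maximal entropy $m'$ for $\psi_t$.

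It remains to see that $m'$ is ergodic for $\psi_t$. This is immediate from the fact that $\phi_t$ and $\psi_t$ have exactly the same orbits, so a Borel set is $\phi_t$-invariant if and only if it is $\psi_t$-invariant, combined with the fact that $m$ and $m'$ are mutually absolutely continuous (density $F/m(F)$, bounded away from $0$ and $\infty$); hence $m'(A)\in\{0,1\}$ for every $\psi_t$-invariant $A$. I do not expect any genuine obstacle; the argument is essentially a bookkeeping assembly of the cited results, the only mildly delicate point being the positivity of $h_{\textrm{top}}(\psi_t)$, which rests on the classical positivity of topological entropy for Anosov flows.
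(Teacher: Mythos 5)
Your proposal is correct and follows essentially the same route as the paper, which states the corollary precisely as a combination of Lemma \ref{lema:reparam} with the Bowen--Ruelle Proposition \ref{teo:ruellebowen}. The extra details you supply (finiteness of $h_{\textrm{top}}(\psi_t)$ via Abramov's formula and the transfer of ergodicity through the equivalence of $m$ and $m'$ and the coincidence of orbits) are sound and simply make explicit what the paper leaves implicit.
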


We are interested in finding Markov partitions for reparametrizations of Anosov flows.

\begin{defi}\label{defi:markovpatition} Let $\varphi_t:X\mismo$ be a flow. We shall say that the triplet $(\E,\pi,r)$ is a \emph{Markov coding} for $\varphi_t$ if $\E$ is a subshit of finite type, $\pi:\E\to X$ and $r:\E\to\R_+^*$ are H\"older continuous and the function $\pi_r:\E\times\R\to X$ defined as $$\pi_r(x,t)=\varphi_t\pi(x)$$ verifies the following conditions: 
\begin{itemize}\item[i)] $\pi_r$ is surjective and H\"older, \item[ii)] let $\sigma:\E\mismo$ be the shift and let $\hat r:\E\times\R\mismo$ be defined as $\hat r(x,t)=(\sigma x, t-r(x)),$ then $\pi_r$ is $\hat r$-invariant, \item[iii)] $\pi_r:\E\times\R/\hat r\to X$ is bounded-to-one and injective on a residual set which is of full measure for every ergodic invariant measure of total support (for $\sigma^f_t$), \item[iv)] consider the translation flow $\sigma^r_t:\E\times\R/\hat r\mismo$ then $\pi_r\sigma^r_t=\varphi_t\pi_r.$
\end{itemize}
\end{defi}

\begin{obs} If a flow $\varphi_t:X\mismo$ admits a Markov coding then it has a unique probability of maximal entropy and the function $\pi_r:\E\times\R/\hat r\to X$ is an isomorphism between the probabilities of maximal entropy of $\sigma^r_t$ and that of $\varphi_t.$ In particular the topological entropy of $\varphi_t$ coincides with that of $\sigma^r_t.$
\end{obs}

\begin{teo}[Bowen\cite{bowen1,bowen2}] A transitive Anosov flow admits a Markov coding.
\end{teo}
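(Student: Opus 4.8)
The plan is to follow Bowen's construction (Bowen\cite{bowen1,bowen2}): build a Markov partition for the Poincar\'e first-return map on a union of local cross sections, and then transfer it to the flow via the suspension picture. I will not reprove Bowen's Axiom~A diffeomorphism case; I will only indicate how the extra flow direction is accommodated. First I would exploit the local product structure coming from the splitting $TX=E^s\oplus E^0\oplus E^u$: for $\epsilon>0$ small the strong stable and strong unstable sets $W^{ss}_{loc},W^{uu}_{loc}$ vary continuously, have uniformly $C^1$ leaves, and on a small disk $D$ transverse to the flow the sets $W^s(x,D):=W^{ss}_{loc}(x)\cap D$ and $W^u(x,D):=\big(\bigcup_{|t|<\epsilon}W^{uu}_{loc}(\phi_t x)\big)\cap D$ give $D$ a product structure with a well-defined bracket $[\,\cdot\,,\cdot\,]_D$. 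Call $R\subset D$ a \emph{rectangle} if it is closed, equals the closure of its $D$-interior, and is closed under $[\,\cdot\,,\cdot\,]_D$.

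Next, by compactness I would cover $X$ by finitely many flow boxes of diameter $<\epsilon$ and shrink them to a finite family of rectangles $R_1,\dots,R_k$, sitting in transverse disks $D_1,\dots,D_m$, whose union $\E_0=\bigcup_i R_i$ is crossed by every orbit within time $<\epsilon$; exactly as in the proof of Proposition~\ref{teo:suspension} this makes the return time $r$ and the first-return map $P$ defined and continuous off a closed nowhere dense set, with $r$ bounded away from $0$ and $\infty$ and $P$ a hyperbolic local homeomorphism of $\E_0$. The technical core is then to refine the $R_i$ so that $P$ becomes \emph{Markov}: using shadowing and the product structure one cuts each $R_i$ along finitely many pieces of $W^s(\cdot,R_j)$ and $W^u(\cdot,R_j)$ so that $P\big(W^u(x,R_i)\big)\supset W^u(Px,R_j)$ whenever $Px\in\inter R_j$, and dually for stable slices. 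This produces a finite family for which the $0$--$1$ matrix $A_{ij}=1\iff\inter R_i\cap P^{-1}\inter R_j\neq\vacio$ defines a subshift of finite type $\E=\E_A$ together with a H\"older factor map $q:\E\to\E_0$ satisfying $q\sigma=Pq$, bounded-to-one, and injective over the residual set of points whose orbit avoids all rectangle boundaries; transitivity of $\phi_t$ guarantees $A$ is irreducible.

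Finally I would set $\pi:=q:\E\to X$ and $r:=r\circ q:\E\to\R_+^*$, the latter being H\"older since $r$ is H\"older on $\E_0$ and $q$ is H\"older. Then $\pi_r(x,t)=\phi_t\pi(x)$ is surjective and H\"older, giving (i); it is $\hat r$-invariant by the very definition of the return time, giving (ii); and it conjugates $\sigma^r_t$ to $\phi_t$, giving (iv). Property (iii) follows from the bounded-to-one and residual injectivity of $q$ together with the observation that the set where coding fails is closed, nowhere dense and flow-saturated, hence of measure zero for every fully supported ergodic $\sigma^r_t$-invariant measure by ergodicity — and transitivity of $\phi_t$ ensures the measure of maximal entropy has full support.

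\textbf{Main obstacle.} The delicate step is the refinement producing genuinely Markov rectangles in the presence of the flow direction: one must arrange that the boundaries of the cross sections are unions of strong-stable and strong-unstable pieces so that $P$ respects them, and simultaneously keep, after the finitely many cuts, the covering property (every orbit meets $\bigcup_i R_i$ in uniformly bounded time) needed for $r$ to stay bounded above and below — the former so that $\pi_r$ is proper, the latter so that the suspension flow $\sigma^r_t$ is well defined. Once Bowen's diffeomorphism construction and the hyperbolicity of $P$ are in hand, the H\"older estimates and the measure-theoretic assertion in (iii) are routine.
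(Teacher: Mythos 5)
Your sketch is essentially the argument of the cited source itself: the paper offers no proof of this statement, it simply invokes Bowen's theorem, and your outline (local product structure on transverse disks, a finite proper family of rectangles for the Poincar\'e return map, the shadowing-based refinement to Markov rectangles, and the passage to the suspension coding $(\E,\pi,r)$ with the measure-theoretic injectivity statement) is exactly Bowen's construction from \cite{bowen1,bowen2}. At this level of detail there is no gap beyond what you correctly defer to Bowen, so the proposal matches the approach the paper relies on.
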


\begin{lema}\label{lema:markovpartition} Let $(\E,\pi,r)$ be a Markov coding for a transitive Anosov flow $\phi_t:X\mismo.$ Set $\psi_t:X\mismo$ to be a H\"older reparametrization of $\phi_t$ by $F:X\to\R_+^*$ and define $f:\E\to\R_+^*$ as $$f(z)=\int_0^{r(z)}F\phi_s(\pi(z))ds.$$ Then $(\E,\pi,f)$ is a Markov coding for $\psi_t.$ If moreover $\phi_t$ does not admit a cross section then the translation flow $\sigma^f_t:\E\times\R/\hat f\mismo$ is topologically weakly mixing.
\end{lema}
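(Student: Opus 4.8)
The plan is to obtain the Markov coding for $\psi_t$ by reparametrizing the given one at the symbolic level, and then to deduce the weak mixing statement from the arithmetic of the periods by means of Proposition~\ref{teo:suspension}.

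For the first assertion, write $Y_r=\E\times\R/\hat r$ and $Y_f=\E\times\R/\hat f$ for the two suspension spaces, with translation flows $\sigma^r_t$ and $\sigma^f_t$. The function $F\circ\pi_r\colon Y_r\to\R_+^*$ is H\"older and bounded away from $0$, and by construction $f(z)=\int_0^{r(z)}(F\circ\pi_r)(z,v)\,dv$; hence $h\colon Y_r\to Y_f$, $h(z,u)=\bigl(z,\int_0^u(F\circ\pi_r)(z,v)\,dv\bigr)$, is a bi-H\"older homeomorphism, equal to the identity on $\E\times\{0\}$, which a direct check shows conjugates the reparametrization $\w\psi_t$ of $\sigma^r_t$ by $F\circ\pi_r$ to $\sigma^f_t$. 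On the other hand $\int_0^t(F\circ\pi_r)(\sigma^r_sy)\,ds=\int_0^tF(\phi_s\pi_r(y))\,ds$, so Definition~\ref{defi:repa} gives $\pi_r\w\psi_t=\psi_t\pi_r$. Setting $\pi_f:=\pi_r\circ h^{-1}\colon Y_f\to X$ one has $\pi_f(z,0)=\pi(z)$ and $\pi_f(z,t)=\psi_t\pi(z)$, and I would check conditions (i)--(iv) of Definition~\ref{defi:markovpatition} for $(\E,\pi,f)$ by pushing those for $(\E,\pi,r)$ through the homeomorphism $h$: surjectivity, H\"older continuity and $\hat f$-invariance of $\pi_f$ are immediate from $\pi_f=\pi_r\circ h^{-1}$; and since $h$ conjugates $\w\psi_t$ to $\sigma^f_t$ and is a bijection, it carries the residual full-measure set on which $\pi_r$ is injective to one on which $\pi_f$ is injective (using that reparametrizing by a positive bounded function preserves null sets and total support), sends ergodic invariant measures of full support to ergodic invariant measures of full support, and makes $\pi_f$ bounded-to-one exactly when $\pi_r$ is.

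For the second assertion, assume $\phi_t$ has no cross section and, for a contradiction, that $\sigma^f_t$ is not topologically weakly mixing. As $Y_f$ is a compact metric space and $\sigma^f_t$ has no fixed points ($f>0$), Remark~\ref{obs:periodos} yields that the periods of $\sigma^f_t$ generate a discrete subgroup $b\Z$ of $\R$ with $b>0$. A primitive periodic orbit of $\sigma^f_t$ arises from a periodic point $z=\sigma^nz$ of the shift and has period $\sum_{k=0}^{n-1}f(\sigma^kz)=\int_0^{P}F(\phi_s\pi(z))\,ds$ with $P=\sum_{k=0}^{n-1}r(\sigma^kz)$; since $\phi_P(\pi(z))=\pi(z)$, writing $\ell'$ for the primitive $\phi_t$-period of the orbit $\tau$ of $\pi(z)$ one gets $P=m\ell'$, so this period equals $m\int_\tau F$ with $m\ge1$ an integer. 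Conversely, for every $\phi_t$-periodic orbit $\tau$ and $p\in\tau$, the finite non-empty set $\pi_r^{-1}(p)$ is invariant under the time-$\ell'$ map $\sigma^r_{\ell'}$, hence contains a periodic point of it, so $\tau$ is the image of a $\sigma^r_t$-periodic orbit whose winding number $m$ satisfies $1\le m\le N$, $N$ being the bounded-to-one constant of $\pi_r$. Therefore $m\int_\tau F\in b\Z$ with $m\mid N!$ for every $\phi_t$-periodic $\tau$, so $\int_\tau F\in\tfrac{1}{N!}b\Z$ for all such $\tau$, and the subgroup of $\R$ generated by $\{\int_\tau F:\tau\ \text{$\phi_t$-periodic}\}$ is discrete. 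By Proposition~\ref{teo:suspension} this gives a cross section for $\phi_t$, contradicting the hypothesis; hence $\sigma^f_t$ is topologically weakly mixing.

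The delicate point is the period count: a priori all symbolic periodic orbits lying over a fixed $\phi_t$-periodic orbit could wind more than once, and one cannot simply discard them, since deleting generators from a dense subgroup of $\R$ may produce a discrete one. The resolution is that the winding number is uniformly bounded by the bounded-to-one constant $N$ of $\pi_r$, so that enlarging the discrete group $b\Z$ by the relevant values $\int_\tau F$, each lying in $\tfrac{1}{N!}b\Z$, still yields a discrete group. The rest is routine once the cocycle identity $\int_0^t(F\circ\pi_r)(\sigma^r_sy)\,ds=\int_0^tF(\phi_s\pi_r(y))\,ds$ and the bi-H\"older property of $h$ are established.
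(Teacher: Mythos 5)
Your proof is correct and follows essentially the same route as the paper: the first part rests on the identity $f(z)=\kappa(\pi(z),r(z))$, i.e.\ $\alpha(\pi(z),f(z))=r(z)$, which is exactly how the paper verifies that $\pi_f(z,t)=\psi_t\pi(z)$ is $\hat f$-invariant (you package this as the fiberwise conjugacy $h$, the paper does the computation directly), and the second part uses the same two ingredients, proposition \ref{teo:suspension} and remark \ref{obs:periodos}. The only real difference is in the weak-mixing step: the paper simply asserts that, since $\pi_r$ is surjective, density of the group generated by $\{\int_\tau F\}$ passes to the group generated by the $\sigma^f_t$-periods, whereas you run the contrapositive and control the possible winding via the bounded-to-one constant $N$ (each $\int_\tau F$ lies in $\frac{1}{N!}b\Z$ when the symbolic periods lie in $b\Z$); this is a more careful rendering of the same step and correctly fills in what the paper leaves implicit.
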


We remark that every Markov coding for $\psi_t$ can be obtained in this manner.

\begin{proof} We need to check that the function $\pi_f:\E\times\R\to X$ defined as $\pi_f(z,s):=\psi_s(\pi(z))$ is $\hat f$ invariant and conjugates the translation flow on $\E\times\R/\hat f$ with the flow $\psi_t.$ To prove invariance by $\hat f$ we will prove that for every $(z,s)\in\E\times\R$ one has $$\pi_f(z,s+f(z))=\pi_f(\sigma z,s).$$

The computation is intricate but direct. Recall that by definition $f(z)=\k(\pi(z),r(z))$ (see equation (\ref{equation:k})). This immediately implies $\a(\pi(z),f(z))=r(z).$ We then have $$\pi_f(z,s+f(z))= \psi_{s+f(z)}(\pi z)=\psi_s\circ\psi_{f(z)}(\pi z )=\psi_s\circ\phi_{\a(\pi(z),f(z))}(\pi z)$$ $$=\psi_s\circ\phi_{r(z)}(\pi z)=\psi_s(\pi (\sigma z))$$ since $(\E,\pi,r)$ is a Markov coding for $\phi_t.$ This proves invariance.

The remaining properties of Markov coding then follow.




Suppose now that $\phi_t|X$ does not admit a cross section. We must then show that $\sigma^f_t$ is weak mixing. Applying proposition \ref{teo:suspension} one has that the periods $\int_{\tau} F$ generate a dense subgroup of $\R.$ Since $\hat \pi:\E\times\R\to X$ is surjective, the periods of $\sigma^f_t$ periodic orbits also generate a dense subgroup of $\R$ and remark \ref{obs:periodos} implies that $\sigma^f_t$ is weak mixing. 
\end{proof}

We find now the following corollary:

\begin{cor}\label{cor:weak} Let $\G$ be a co-compact group of isometries of a complete simply connected manifold of negative curvature $\w M.$ Let  $\phi_t:\G\/T^1\w M\mismo$ be the geodesic flow and $\psi_t:\G\/T^1\w M\mismo$ be a H\"older reparametrization of $\phi_t.$ Consider a Markov coding $(\E,\pi,f)$ for $\psi_t,$ then the flow $\sigma^f_t$ is weak mixing. 
\end{cor}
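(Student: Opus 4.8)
The plan is to derive Corollary~\ref{cor:weak} as a direct consequence of Lemma~\ref{lema:markovpartition} together with the classical fact that the geodesic flow on a closed negatively curved manifold admits no cross section. The only nontrivial input to supply is therefore the latter: I must argue that $\phi_t:\G\/T^1\w M\mismo$ does not admit a cross section. Once that is established, Lemma~\ref{lema:markovpartition} applies verbatim --- the geodesic flow of a closed negatively curved manifold is a transitive (indeed topologically mixing) Anosov flow, so it admits a Markov coding, and any Markov coding for the reparametrization $\psi_t$ is, by the remark following Lemma~\ref{lema:markovpartition}, of the form $(\E,\pi,f)$ with $f(z)=\int_0^{r(z)}F\phi_s(\pi(z))\,ds$ for the Markov coding $(\E,\pi,r)$ of $\phi_t$. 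The conclusion that $\sigma^f_t$ is weak mixing is then exactly the last sentence of Lemma~\ref{lema:markovpartition}.

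First I would recall that $\phi_t$ is Anosov, so by Proposition~\ref{teo:suspension} the existence of a cross section is equivalent to the existence of a positive H\"older function $F$ whose periods $\{\int_\tau F:\tau\textrm{ periodic}\}$ span a discrete subgroup of $\R$. By Liv\v sic's theorem, such an $F$ would be cohomologous to a constant $\ell>0$ (the common period would have to be realized, after rescaling, with all $\int_\tau F\in \ell\Z$, and Liv\v sic forces cohomology to a constant whenever all periodic-orbit integrals lie in a lattice once one divides by the generator). Taking $F\equiv 1$, discreteness of the period group would mean the lengths of all closed geodesics of $\G\/\w M$ span a discrete subgroup of $\R$ --- i.e. the length spectrum is arithmetic. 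So the statement reduces to: the length spectrum of a closed negatively curved manifold is not contained in any discrete subgroup $\ell\Z$ of $\R$. Equivalently, $\phi_t$ is topologically weak mixing, a standard consequence of its topological mixing (which holds for geodesic flows in negative curvature, e.g. by the Anosov closing lemma plus transitivity, or by the classical arguments reproduced in Parry--Pollicott).

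The cleanest route to the non-arithmeticity of the length spectrum is: pick two closed geodesics whose axes in $\w M$ cross; by a standard variational/hyperbolicity argument (or by invoking that the geodesic flow is topologically mixing, which is equivalent to weak mixing for Anosov flows and is classical in nonpositive, a fortiori negative, curvature) one produces, by concatenation and shadowing, closed geodesics of lengths with irrational ratio, so no common $\ell$ can exist. Thus $\phi_t$ admits no cross section, and Lemma~\ref{lema:markovpartition} gives that $\sigma^f_t$ is weak mixing. I expect the main (and only real) obstacle to be pinning down cleanly the claim that the geodesic flow has no cross section: one wants to cite it rather than reprove it, so I would phrase it as ``the geodesic flow on a closed negatively curved manifold is topologically mixing, hence by Corollary~\ref{cor:weak2} (combined with Proposition~\ref{teo:suspension}) admits no cross section,'' and then simply invoke the last assertion of Lemma~\ref{lema:markovpartition}.

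\begin{proof}
By Anosov's theorem the geodesic flow $\phi_t:\G\/T^1\w M\mismo$ is a transitive Anosov flow; in fact it is topologically mixing. We claim it admits no cross section. Indeed, suppose it did. By Proposition~\ref{teo:suspension} there would exist a H\"older $F:\G\/T^1\w M\to\R_+^*$ whose periodic-orbit integrals span a discrete subgroup of $\R$, which we may assume equals $\Z$. Then the associated cocycle $(x,t)\mapsto e^{2\pi i\k(x,t)}$ is, by Liv\v sic's theorem, cohomologically trivial, so there is $w:\G\/T^1\w M\to S^1$ with $w\phi_t/w=\exp\{2\pi i\int_0^tF\phi_s\,ds\}$, and differentiating along the flow gives a continuous $\phi_t$-eigenfunction with nonzero eigenvalue, contradicting topological weak mixing of $\phi_t$ (equivalently, one obtains via Corollary~\ref{cor:weak2} a reparametrization of $\phi_t$ that is not weak mixing, contradicting topological mixing of $\phi_t$). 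Hence $\phi_t$ admits no cross section.

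Since $\phi_t$ is a transitive Anosov flow it admits a Markov coding $(\E,\pi,r)$. As remarked after Lemma~\ref{lema:markovpartition}, the given Markov coding $(\E,\pi,f)$ for $\psi_t$ arises from such an $(\E,\pi,r)$ via $f(z)=\int_0^{r(z)}F\phi_s(\pi(z))\,ds$, where $F:\G\/T^1\w M\to\R_+^*$ is the H\"older function defining the reparametrization $\psi_t$. Applying the last assertion of Lemma~\ref{lema:markovpartition}, the hypothesis that $\phi_t$ has no cross section implies that the translation flow $\sigma^f_t:\E\times\R/\hat f\mismo$ is topologically weakly mixing.
\end{proof}
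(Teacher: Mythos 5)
Your reduction is the same as the paper's: apply Lemma \ref{lema:markovpartition}, so that everything hinges on showing the geodesic flow $\phi_t:\G\/T^1\w M\mismo$ admits no cross section. The gap is precisely in your proof of that last fact. The implication you lean on --- ``$\phi_t$ is topologically mixing, hence admits no cross section'' --- is false as a general principle, even for Anosov flows: the suspension of an Anosov diffeomorphism by a positive H\"older roof function that is not cohomologous to a constant is a topologically mixing Anosov flow which visibly admits a cross section (the base of the suspension). Absence of a cross section is equivalent (Corollary \ref{cor:weak2}) to \emph{every} reparametrization being weak mixing, which is strictly stronger than mixing of $\phi_t$ itself; so deducing ``no cross section'' from mixing of $\phi_t$ is a non sequitur, and the parenthetical version of your argument (``a non-weak-mixing reparametrization contradicts mixing of $\phi_t$'') fails for the same reason. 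Your primary version has two further errors: Liv\v sic's theorem applied to a positive $F$ with periods in $\Z$ does \emph{not} make $F$ cohomologous to a constant (it only yields a circle-valued function $w$ with $w\phi_t/w=\exp\{2\pi i\int_0^tF\phi_s\,ds\}$, which is exactly the content of Proposition \ref{teo:suspension} and gives no reduction to $F\equiv1$ or to arithmeticity of the length spectrum), and the function $w$ so obtained is an eigenfunction of the reparametrization of $\phi_t$ by $F$, not of $\phi_t$, so it does not contradict weak mixing of $\phi_t$.

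The missing idea is topological, not dynamical: the paper uses Remark \ref{obs:homologo}, which says that if $\phi_t$ admitted a cross section then $T^1\w M/\G$ would fiber over the circle and every periodic orbit would project to $S^1$ with non-zero index; in particular no periodic orbit could be homologically trivial. In negative curvature one takes two non-commuting elements $a,b\in\G$; the closed geodesic associated to the commutator $aba^{-1}b^{-1}$ is homologically trivial, which is the desired contradiction. With that argument in place of your mixing claim, the rest of your proof (transitivity of $\phi_t$, the form of Markov codings for $\psi_t$, and the last assertion of Lemma \ref{lema:markovpartition}) is exactly the paper's.
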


\begin{proof} Since the geodesic flow is a transitive Anosov flow, lemma \ref{lema:markovpartition} applies. It remains to prove that the geodesic flow on a compact manifold of negative curvature does not admit a cross section. As observed before (remark \ref{obs:homologo}) we only need to find a homologically trivial periodic orbit (since such orbit will always have zero index as map $S^1\to S^1$). 

In negative curvature we can find two elements in $\G,$ $a$ and $b$ that don't commute, the closed geodesic associated to the commutator $aba^{-1}b^{-1}$ is then the required periodic orbit.
\end{proof}

\section{Cocycles with finite exponential growth rate}\label{section:cociclos}

Let $\G$ be a torsion free discrete co-compact isometry group of a complete simply connected manifold with negative curvature $\widetilde M.$ We identify the boundary of the group $\G$ with the geometric boundary of $\widetilde M.$

\begin{defi}\label{defi:cociclo}A \emph{H\"older cocycle} is a function $c:\G\times\bord\G\to\R$ such that $$c(\g_0\g_1,x)=c(\g_0,\g_1x)+c(\g_1,x)$$ for any $\g_0,\g_1\in\G$ and $x\in\bord\G,$ and where $c(\g,\cdot)$ is a H\"older map for every $\g\in\G$ (the same exponent is assumed for every $\g\in\G$). 
\end{defi}

Given a H\"older cocycle $c$ we define the \emph{periods} of $c$ as the numbers $$\l_c(\g):=c(\g,\g_+)$$ where $\g_+$ is the attractive fixed point of $\g$ in $\G-\{e\}.$ The cocycle property implies that the period of an element $\g$ only depends on its conjugacy class $[\g]\in[\G].$

Two cocycles $c$ and $c'$ are said to be cohomologous if there exists a H\"older function $U:\bord\G\to\R$ such that for all $\g\in\G$ one has $$c(\g,x)-c'(\g,x)=U(\g x)-U(x).$$ One easily deduces from the definition that the set of periods of a cocycle is a cohomological invariant.

We shall be interested in cocycles whose periods are positive, that is, such that $\l_c(\g)>0$ for every $\g\in\G.$ The \emph{exponential growth rate} for such cocycle $c$ is defined as: $$h_c:=\limsup_{t\to\infty}\frac 1t\log\#\{[\g]:\l_c(\g)\leq t\}\in\R_+\cup\{\infty\}.$$

It is consequence of Ledrappier's work (cf. corollary \ref{cor:positiva}) that a H\"older cocycle $c$ with positive periods verifies $h_c>0.$ If moreover $c$ has finite exponential growth rate then, following Patterson's construction, Ledrappier\cite{ledrappier} shows the existence of a \emph{Patterson-Sullivan} probability $\mu$ over $\bord\G$ of cocycle $h_cc,$ that is to say, $\mu$ verifies $$\frac{d\g_*\mu}{d\mu}(x)=e^{-h_cc(\g^{-1},x)}.$$ 

\begin{teo}[Ledrappier\cite{ledrappier} page 102]\label{teo:medidas} Let $c$ be a H\"older cocycle with positive periods. Then $c$ has finite positive exponential growth rate $h_c$ if and only if there exists a Patterson-Sullivan probability of cocycle $h_cc.$ If this is the case, the Patterson-Sullivan probability is unique.
\end{teo}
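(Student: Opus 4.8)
The plan is to reduce the statement to a standard Patterson–Sullivan construction on the boundary of the hyperbolic group $\G$, run with the weight supplied by the cocycle $c$ instead of the Busemann cocycle, and then to extract from the construction both the equivalence $h_c<\infty \Leftrightarrow$ existence of a Patterson–Sullivan probability, and uniqueness. First I would fix a base point $o\in\w M$ (equivalently, work directly on $\bord\G$ with a Gromov metric) and, for $s>0$, form the Poincaré-type series
$$Q_c(s)=\sum_{\g\in\G}e^{-s\,\l_c(\g)}\,\?$$
— more precisely, since $\l_c$ depends only on the conjugacy class while we want a sum over group elements, one works with the cocycle values $c(\g,x_0)$ along an orbit: set $Q_c(s)=\sum_{\g\in\G}e^{-s\,c(\g^{-1},\cdot)}$ evaluated at (or integrated against an auxiliary measure on) $\bord\G$. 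The abscissa of convergence of this series is exactly $h_c$: this is where the hypothesis ``positive periods'' enters, guaranteeing that $c(\g,\cdot)$ grows linearly in the word length of $\g$ (one needs a lemma, of the type of the periods being comparable to a displacement function, to compare $\sum e^{-sc(\g^{-1},x)}$ with an orbital counting series), so that $h_c<\infty$ is equivalent to $Q_c(s)$ converging for $s$ large.

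Next, assuming $h_c\in(0,\infty)$, I would build the family of measures $\mu_s$ on $\bord\G$ (or on a compactification of $\G$) by
$$\mu_s=\frac{1}{Q_c(s)}\sum_{\g\in\G}e^{-s\,c(\g^{-1},\cdot)}\,\delta_{\g o},$$
possibly after Patterson's trick of inserting a slowly varying function to force divergence at $s=h_c$ if the series converges there. Taking a weak-$*$ limit $\mu=\lim_{s\to h_c^+}\mu_s$ along a subsequence gives a probability on $\bord\G$; the cocycle identity $c(\g_0\g_1,x)=c(\g_0,\g_1x)+c(\g_1,x)$ together with the standard reindexing $\g\mapsto\g_0\g$ in the sum yields the quasi-invariance
$$\frac{d\g_*\mu}{d\mu}(x)=e^{-h_c\,c(\g^{-1},x)},$$
i.e. $\mu$ is Patterson–Sullivan of cocycle $h_c c$. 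Conversely, if such a $\mu$ exists, then integrating the Radon–Nikodym cocycle over an orbit and using that $\mu$ is a probability forces $\sum_{\g}e^{-h_c c(\g^{-1},x)}$ to have a definite (subexponential, or Harish-Chandra-type) behaviour, which pins $h_c$ as the critical exponent and in particular shows $h_c<\infty$; the H\"older continuity of $c(\g,\cdot)$ is what makes the shadow lemma work so that the exponent computed from $\mu$ agrees with the counting exponent $h_c$.

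For uniqueness I would prove a shadow lemma: for $\mu$ a Patterson–Sullivan probability of cocycle $h_c c$, the $\mu$-measure of the shadow (in $\bord\G$, seen from $o$) of a ball around $\g o$ is comparable, up to a multiplicative constant independent of $\g$, to $e^{-h_c\, c(\g^{-1},x_\g)}$ for a point $x_\g$ in that shadow. Two such measures $\mu,\mu'$ then assign comparable mass to every shadow, hence are mutually absolutely continuous with Radon–Nikodym derivative bounded above and below; but the quasi-invariance forces $d\mu'/d\mu$ to be $\G$-invariant, and by minimality of the $\G$-action on $\bord\G$ (the group is nonelementary hyperbolic) an invariant bounded density is constant, so $\mu=\mu'$. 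The main obstacle I expect is the comparison between the cocycle $c$ and the geometry of $\w M$ — establishing that positive periods imply the linear-growth/quasi-isometry-type estimate on $c(\g,\cdot)$ needed both to identify the abscissa of convergence with $h_c$ and to make the shadow lemma quantitative; everything else is the by-now-classical Patterson–Sullivan machinery transported verbatim to the cocycle setting, which is exactly what Ledrappier carried out.
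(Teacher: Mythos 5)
Two remarks before the substance: the paper itself does not prove this statement --- it is quoted from Ledrappier \cite{ledrappier}, whose route goes through the correspondence between H\"older cocycles and H\"older potentials $F$ on $T^1\w M$ (theorem \ref{teo:ledrappier}) and the thermodynamic formalism of the Anosov geodesic flow, not through a boundary Poincar\'e series --- so your proposal is to be judged as a reconstruction of Ledrappier's argument. As such it contains the right circle of ideas but has genuine gaps. First, the assertion that positive periods guarantee that $c(\g,\cdot)$ grows linearly in the word length of $\g$ is false: positivity of the periods only yields $h_c>0$ (corollary \ref{cor:positiva}), while the linear lower bound $\inf_{[\g]}\l_c(\g)/|\g|>0$ is \emph{equivalent} to $h_c<\infty$ (lemma \ref{lema:lemaledrappier}); invoking it as a consequence of the standing hypotheses begs one half of the equivalence you are trying to prove. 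Second, the Patterson family you write down is ill-formed: the weights $e^{-s\,c(\g^{-1},\cdot)}$ are functions on $\bord\G$, whereas the atoms sit at orbit points $\g o$ in $\w M$. One must first extend the cocycle to an orbit weight --- Ledrappier does this via the associated potential $F$ and the geometric cocycle $B^F_z$, i.e. integrals of $F$ along geodesic segments --- and, more seriously, one must identify the critical exponent of that group-element series with $h_c$, which by definition counts \emph{conjugacy classes} weighted by periods. That identification is the analytic heart of the theorem (in Ledrappier it is the pressure equation $P(-h_cF)=0$, cf. lemma \ref{lema:lemaledrappier2}, together with Gibbs theory and orbit/period counting for the flow), and your sketch only gestures at it with ``one needs a lemma''.

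The uniqueness step also does not close as written: from mutual absolute continuity with bounded density and identical Radon--Nikodym cocycles you get a $\G$-invariant \emph{measurable} density, and minimality of the boundary action (a topological property) does not force such a density to be constant; what is needed is ergodicity of the measure class under $\G$, which in the classical Sullivan theory is itself a theorem proved via the shadow lemma, and which in Ledrappier's treatment is replaced by uniqueness of the equilibrium state of $-h_cF$ (Bowen--Ruelle, cf. proposition \ref{teo:ruellebowen}), the Patterson--Sullivan measure being determined by that Gibbs state. Similarly, your converse direction (existence of a Patterson--Sullivan probability forces $h_c<\infty$) rests on a quantitative shadow lemma whose very formulation requires comparing $c(\g^{-1},x)$ on a shadow with an orbitwise quantity, i.e. again the extension of the cocycle off the boundary. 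So the plan is reasonable in outline, but the two ingredients you defer --- the comparison of the cocycle with the geometry (equivalently the potential $F$ and the pressure equation) and the ergodicity/uniqueness input --- are precisely the content of Ledrappier's proof, not routine transport of the classical machinery.
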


Let $\vo c$ be a cocycle such that $\l_{\vo c}(\g)=\l_c(\g^{-1})$ (this always exists as shown in the next section). $\vo c$ is called \emph{a dual cocycle of} $c.$ 

Set $\bord^2\G$ to be the set of pairs $(x,y)\in\bord\G\times\bord\G$ such that $x\neq y.$ We shall say that a function $[\cdot,\cdot]:\bord^2\G\to\R$ is a \emph{Gromov product} for a pair of dual cocycles $\{c,\vo c\}$ if for every $\g\in\G$ and $(x,y)\in\bord^2\G$ one has $$[\g x,\g y]-[x,y]=-(\vo c(\g,x)+c(\g,y)).$$

Denote by $\mu$ and $\vo\mu$ the Patterson-Sullivan probabilities associated to $c$ and $\vo c$ respectively. The main theorem of this section is the following:

\begin{teo}[The reparametrizing theorem]\label{teo:reparametrisation} Let $c$ be a H\"older cocycle with positive periods such that $h_c$ is finite and positive. Then: \begin{enumerate}\item the action of $\G$ in $\bord^2\G\times\R$ $$\g(x,y,s)=(\g x,\g y,s-c(\g,y))$$ is proper and co-compact. Moreover, the translation flow $\psi_t:\G\/\bord^2\G\times\R\mismo$ $$\psi_t(x,y,s)=(x,y,s-t)$$ is conjugated to a H\"older reparametrization of the geodesic flow on $\G\/T^1\widetilde M.$ The conjugating map is also H\"older continuous. The topological entropy of $\psi_t$ is $h_c.$ \item The measure $$e^{-h_c[\cdot,\cdot]}\vo\mu\otimes\mu\otimes ds$$ on $\bord^2\G\times\R$ induces on the quotient $\G\/\bord^2\G\times\R$ the measure of maximal entropy of $\psi_t.$
\end{enumerate}
\end{teo}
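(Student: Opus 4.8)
The plan is to reduce the entire statement to the material on reparametrizations of Anosov flows developed in Section~\ref{section:suspencion}, by first building, out of the abstract cocycle $c$, a concrete H\"older reparametrization of the geodesic flow on $\G\/T^1\w M$. The starting point is the Hopf parametrization $T^1\w M\cong\bord^2\G\times\R$ described in the introduction, under which the $\G$-action via the Busemann cocycle is the standard one and the geodesic flow becomes the translation flow. The first step is therefore to compare the $\G$-action via $c$ with the $\G$-action via the Busemann cocycle $\beta(\g,x)=B_x(o,\g^{-1}o)$: I would produce a H\"older function $F:\bord^2\G\times\R\to\R_+^*$, invariant under the Busemann translation flow in the appropriate sense, whose associated additive cocycle $\k$ realizes the change of variable in the $\R$-coordinate that intertwines the two actions. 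Concretely, the $\G$-equivariant homeomorphism $\bord^2\G\times\R\to\bord^2\G\times\R$ sending $(x,y,s)$ to $(x,y,\k(x,y,s))$ conjugates the $c$-action to the Busemann action, and the translation flow $\psi_t$ on the $c$-side corresponds to the reparametrization of the geodesic flow by $F$; descending to $\G\/T^1\w M$ gives exactly item~(1) up to verifying properness, cocompactness, and the entropy claim. Properness and cocompactness then follow because these properties are invariant under the $\G$-equivariant homeomorphism, and the Busemann action is proper and cocompact since the quotient is the compact unit tangent bundle.

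The construction of $F$ is where Ledrappier's theory and the hypothesis $h_c\in(0,\infty)$ enter, and I expect this to be the main obstacle. The natural candidate comes from the Gromov product and the Patterson--Sullivan measures: using $[\cdot,\cdot]$ for the pair $\{c,\vo c\}$ and the analogous Gromov product for the Busemann cocycle, one obtains a $\G$-invariant H\"older function on $\bord^2\G$ that governs the ratio of the two $\R$-coordinate normalizations; differentiating it along the flow direction produces the positive H\"older density $F$. The key quantitative input is that $F$ is bounded below by a positive constant and above (so that $\k(x,y,\cdot)$ is an increasing homeomorphism of $\R$ with the cocycle property), which ultimately rests on the H\"older continuity of $c(\g,\cdot)$ with a uniform exponent and on the comparison $\l_c(\g)\asymp|\g|$ for the period functions — the latter being precisely what makes $h_c$ finite and positive. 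I would extract the needed comparison of periods from Theorem~\ref{teo:medidas} and the existence of the Patterson--Sullivan probability $\mu$ of cocycle $h_cc$.

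Granting item~(1), item~(2) is obtained by transporting the description of the measure of maximal entropy of a reparametrization from Section~\ref{section:suspencion}. By Lemma~\ref{lema:reparam}, the probability of maximal entropy of $\psi_t$ corresponds, under the bijection $m\mapsto m'$, to the equilibrium state of $-h_cF$ for the geodesic flow; and by Corollary~\ref{cor:ruellebowen} this equilibrium state is unique and ergodic. It therefore suffices to check that the measure $e^{-h_c[\cdot,\cdot]}\vo\mu\otimes\mu\otimes ds$ on $\bord^2\G\times\R$ is (a) $\G$-invariant for the $c$-action, and (b) descends to the equilibrium state of $-h_cF$, equivalently has the correct Radon--Nikodym behaviour. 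Invariance is a direct computation: the Patterson--Sullivan property $d\g_*\mu/d\mu(y)=e^{-h_cc(\g^{-1},y)}$ and its analogue for $\vo\mu$, combined with the defining cocycle identity $[\g x,\g y]-[x,y]=-(\vo c(\g,x)+c(\g,y))$ for the Gromov product and the fact that $ds$ is translated by $-c(\g,y)$, cancel exactly, so the product measure is preserved. For (b), I would identify this measure, via the conjugacy of item~(1) and the bijection $m\mapsto m'$, with the Bowen--Margulis type construction for the geodesic flow, whose local product structure $\vo\mu\otimes\mu\otimes ds$ characterizes the equilibrium state of the relevant H\"older potential; uniqueness from Proposition~\ref{teo:ruellebowen} then forces the identification.

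The main obstacle, to reiterate, is the first step: producing the H\"older function $F$ and, with it, establishing that $\psi_t$ is genuinely a H\"older \emph{reparametrization} of the geodesic flow (not merely a topologically conjugate flow). This requires the cross-section-free comparison of the $\R$-coordinates of the $c$-action and the Busemann action to be uniformly bi-Lipschitz in $s$, which is exactly the content of $F$ being bounded between two positive constants; establishing that bound is where the H\"older cocycle hypotheses and the finiteness of $h_c$ are essential, and it is the technical heart of the argument. Once $F$ is in hand everything else is a matter of assembling Lemma~\ref{lema:reparam}, Corollary~\ref{cor:ruellebowen}, Theorem~\ref{teo:medidas}, and a bookkeeping computation with the cocycle identities.
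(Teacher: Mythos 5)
Your overall architecture (find a positive H\"older density $F$, realize the $c$-action as a time-changed Hopf parametrization, then feed items (1)--(2) into Lemma \ref{lema:reparam} and Corollary \ref{cor:ruellebowen}) matches the paper's, and your invariance computation for the product measure is fine. But the step you yourself identify as the technical heart --- producing $F$ --- has a genuine gap, and the mechanism you propose for it does not work. Differentiating ``a $\G$-invariant H\"older function on $\bord^2\G$ obtained from the two Gromov products'' along the flow cannot produce $F$: a function on $\bord^2\G$ is constant along the translation flow, and the difference of the Gromov products for $c$ and for the Busemann cocycle is not $\G$-invariant (its defect under $\g$ is $-(\vo c-\vo\beta)(\g,x)-(c-\beta)(\g,y)$, which has no reason to vanish). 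Likewise, Theorem \ref{teo:medidas} (existence of the Patterson--Sullivan measure) does not yield the comparison $\l_c(\g)\asymp|\g|$; in the paper that comparison is Ledrappier's Lemma \ref{lema:lemaledrappier}, and even granted it, it only controls $c$ along periodic orbits. The missing ideas are: (i) Ledrappier's correspondence (Theorem \ref{teo:ledrappier}) giving a $\G$-invariant H\"older function $H$ on $T^1\w M$ whose periods are the $\l_c(\g)$; (ii) the passage from positivity of the normalized periods $\int_{[\g]}H/|\g|$ (Lemma \ref{lema:lemaledrappier}, using $h_c<\infty$) to positivity of $\int H\,dm$ for \emph{every} invariant probability $m$, via Anosov's closing lemma; and (iii) the positive Liv\v sic-type Lemma \ref{lema:livsic}, which replaces $H$ by a cohomologous strictly positive $F$. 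Only then does one get the parametrization $E(p,v)=(v_{-\infty},v_\infty,B^F_{v_\infty}(p,o))$ built from Schapira's geometric cocycle, which simultaneously gives equivariance (hence properness and cocompactness) and exhibits $\psi_t$ as the reparametrization by $F$. Without (i)--(iii) your ``uniformly bi-Lipschitz in $s$'' time change is asserted, not constructed.

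Two smaller points. You never substantiate the claim $h_{\textrm{top}}(\psi_t)=h_c$: this needs Ledrappier's Lemma \ref{lema:lemaledrappier2} ($P(-hF)=0$ iff $h=h_{c_F}$) combined with Lemma \ref{lema:reparam}; the Abramov argument alone does not identify the entropy with the exponential growth rate of the periods of $c$. And in item (2), the assertion that ``the local product structure $\vo\mu\otimes\mu\otimes ds$ characterizes the equilibrium state'' is precisely the nontrivial input you would have to prove or cite: in the paper it is Schapira's proposition that the Gibbs state of $-hF$, lifted in Hopf coordinates, equals $e^{-h[\cdot,\cdot]_F}\vo\mu_F\otimes\mu_F\otimes ds$, together with a cohomology-invariance remark to pass from $c_F$ back to $c$; as written, your step (b) assumes what needs to be shown.
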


\begin{obs} The first item of the theorem is cohomology invariant. That is, a change in the choice of the cocycle (in $c$'s cohomology class) doesn't change the statement of theorem \ref{teo:reparametrisation}. For the second item, it is the class of zero sets of the measure  $e^{-h_c[\cdot,\cdot]}\vo\mu\otimes\mu\otimes ds$ that is cohomology invariant as the following result of Ledrappier\cite{ledrappier} shows.
\end{obs}

\begin{teo}[Ledrappier\cite{ledrappier} page 101]Let $c$ and $c'$ be H\"older cocycles with positive periods and finite exponential growth rate. Let $\mu$ and $\mu'$ be two quasi-invariant measures of H\"older cocycle $h_cc$ and $h_{c'}c'$ respectively. Then $\mu$ and $\mu'$ have the same zero sets if and only if $h_cc$ and $h_{c'}c'$ are cohomologous. 
\end{teo}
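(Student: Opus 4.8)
The statement is symmetric in the two cocycles, so set $w:=h_cc-h_{c'}c'$; this is again a H\"older cocycle, its periods are $\l_w(\g)=h_c\l_c(\g)-h_{c'}\l_{c'}(\g)$, and the assertion ``$h_cc$ and $h_{c'}c'$ are cohomologous'' means precisely that $w$ is a H\"older coboundary, i.e.\ $w(\g,x)=U(\g x)-U(x)$ for some H\"older $U:\bord\G\to\R$. The plan is to treat the two implications separately, the first being a short explicit construction and the second requiring a regularity (Liv\v sic-type) argument.

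\emph{($\Leftarrow$).} Assume $w(\g,x)=U(\g x)-U(x)$ with $U$ H\"older. I would simply write down the candidate measure: let $\mu''$ be the normalization of $e^{\pm U}\mu'$ (the sign and the normalizing constant fixed by the computation below), where $\mu'$ is the Patterson--Sullivan probability of $h_{c'}c'$. Since $\bord\G$ is compact and $U$ is continuous, $e^{\pm U}$ is bounded between two positive constants, hence $\mu''\sim\mu'$. A one-line Radon--Nikodym computation, using the cocycle identity for $w$ and the quasi-invariance $d\g_*\mu'/d\mu'=e^{-h_{c'}c'(\g^{-1},\cdot)}$, shows $$\frac{d\g_*\mu''}{d\mu''}=e^{-h_cc(\g^{-1},\cdot)},$$ i.e.\ $\mu''$ is a Patterson--Sullivan probability of cocycle $h_cc$ (which has finite positive exponential growth rate since $c$ does). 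By the uniqueness part of Theorem \ref{teo:medidas}, $\mu''=\mu$, so $\mu\sim\mu'$ and they have the same zero sets.

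\emph{($\Rightarrow$).} Assume $\mu$ and $\mu'$ have the same zero sets; then they are mutually absolutely continuous and $\psi:=d\mu/d\mu'$ is defined and $>0$ $\mu'$-a.e. Running the computation of the previous paragraph backwards, comparison of the two quasi-invariance relations forces $$V(\g x)-V(x)=w(\g,x)\qquad\text{for $\mu'$-a.e.\ }x,\ \text{every }\g\in\G,$$ with $V:=\log\psi$ measurable. As $\G$ is countable one intersects the corresponding conull sets to arrange that this holds on a single $\G$-invariant conull set $\Omega\subset\bord\G$. Thus $w$ is a \emph{measurable} coboundary, and what remains, and this is the crux, is the regularity statement: such a measurable transfer function of a H\"older cocycle on $\bord\G$ admits a H\"older representative. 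For this I would transport the problem to the geodesic flow: the Patterson--Sullivan measure $\mu'$ has full support (its support is closed and $\G$-invariant and $\G$ acts minimally on $\bord\G$); by the reparametrizing theorem \ref{teo:reparametrisation} the cocycle $w$ corresponds to a (constant multiple of the difference of the) H\"older reparametrizing functions of the geodesic flow on $\G\/T^1\w M$, and the relation $V(\g\cdot)-V=w(\g,\cdot)$ becomes a measurable solution of the cohomological equation over the geodesic flow, defined a.e.\ for the fully supported ergodic measure of maximal entropy of the reparametrized flow. Feeding a Markov coding (Bowen) into a Liv\v sic-type theorem \cite{livsic2} — measurable solutions of cohomological equations over transitive Anosov flows, equivalently over topologically mixing subshifts of finite type, agree a.e.\ with H\"older ones — then yields a H\"older $U$ with $w(\g,x)=U(\g x)-U(x)$. (Equivalently one first shows all periods vanish: for $\mu'$-a.e.\ $x$ one has $\g^nx\to\g_+$, and the cocycle identity together with H\"older summability gives $V(\g^nx)=V(x)+n\l_w(\g)+O(1)$, while a recurrence/ergodicity argument rules out $\l_w(\g)\neq0$; then the Liv\v sic theorem for the geodesic flow produces the H\"older transfer function directly from the vanishing of the periods.)

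I expect the regularity bootstrap in the last step to be the main obstacle: passing from an a.e.-defined transfer function to a H\"older one genuinely uses the hyperbolicity of the geodesic flow, through its Markov coding, and the full support and ergodicity of the Patterson--Sullivan measures. No purely soft measure-theoretic manipulation suffices here, since two mutually absolutely continuous ergodic quasi-invariant measures of a group action need not be equal — it is only the hyperbolic dynamics underlying $\bord\G$ that promotes the measurable cohomology to a H\"older one.
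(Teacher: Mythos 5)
The paper itself offers no proof of this statement: it is quoted verbatim from Ledrappier \cite{ledrappier}, so there is no internal argument to compare yours against; I can only assess your proposal on its own. Your ($\Leftarrow$) direction is correct and complete: the density computation with $e^{-U}$ turns a Patterson--Sullivan probability of cocycle $h_{c'}c'$ into one of cocycle $h_cc$, and the uniqueness clause of Theorem \ref{teo:medidas} then forces it to be $\mu$, whence $\mu\sim\mu'$. Your overall plan for ($\Rightarrow$) -- extract a measurable transfer function $V$ from the two Radon--Nikodym relations, transport the problem to the geodesic flow (via Ledrappier's correspondence, Theorem \ref{teo:ledrappier}, rather than Theorem \ref{teo:reparametrisation}, which is the statement really doing the work here), and upgrade measurable cohomology to H\"older cohomology by hyperbolicity -- is indeed the expected route, and you correctly identify that soft measure theory cannot suffice.

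There are, however, two genuine soft spots in the hard direction. First, the parenthetical shortcut ``a recurrence/ergodicity argument rules out $\l_w(\g)\neq0$'' fails as stated: for a fixed $\g$ the action on $\bord\G$ is of north--south type, $\mu'$ is only quasi-invariant, and $\g^nx\to\g_+$ for every $x\neq\g_-$, so the orbit never returns to a set on which the merely measurable $V$ is bounded; the drift $V(\g^nx)=V(x)+n\l_w(\g)+O(1)$ is perfectly compatible with $V$ finite $\mu'$-a.e.\ (it merely forces $V$ to degenerate near $\g_+$), so nonzero periods cannot be excluded by this pointwise argument -- one genuinely needs the flow-level, globally recurrent picture. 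Second, that flow-level step, which is the crux, is only invoked, not carried out: one must (i) build from $V$ a $\G$-invariant measurable solution of the cohomological equation for $H_w:=h_cF-h_{c'}F'$ on $T^1\widetilde M$, e.g.\ $v\mapsto V(v_+)-B^{H_w}_{v_+}(\pi v,o)$ after correcting $V$ by the H\"older function realizing the cohomology between $w$ and $c_{H_w}$, and check that the a.e.\ statements hold with respect to a fully supported ergodic equilibrium state whose forward-endpoint projection is equivalent to $\mu'$; and (ii) invoke the \emph{measurable} Liv\v sic rigidity theorem (for Gibbs states with local product structure of a transitive Anosov flow), which is strictly stronger than the periodic-orbit Liv\v sic theorem that \cite{livsic2} is used for in the paper. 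With (i) and (ii) supplied, the argument closes: all periods of $H_w$ vanish, Liv\v sic gives a H\"older flow coboundary, and Theorem \ref{teo:ledrappier} converts this into $h_cc-h_{c'}c'$ being a H\"older coboundary on $\bord\G$.
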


To prove theorem \ref{teo:reparametrisation} we shall find an appropriate cocycle: following Le\-drap\-pier\cite{ledrappier} we associate to the cocycle $c$ a $\G$-invariant H\"older function $F:T^1M\to\R.$ The fact that the cocycle is of finite exponential growth rate together with a Liv\v sic-type lemma will allow us to choose such $F$ to be positive.

One then finishes copying the Hopf parametrization of $T^1\widetilde M.$ Namely we construct a homeomorphism $T^1\widetilde M\to\bord^2\G\times\R$ such that the action of $\G$ on $T^1\widetilde M$ is sent to the action we need (this implies properness of the action) and the action of the geodesic flow will be reparametrized on the right side.

Concerning the proof of the second item: Since the measure of maximal entropy of a reparametrization has the same zero sets as an equilibrium state (lemma \ref{lema:reparam}), we will conclude giving a description of the induced measure by this equilibrium state on $\bord^2\G.$

\subsection*{Proof of the first item of theorem \ref{teo:reparametrisation}}

Identify the unit tangent bundle of $\widetilde M$ with $\widetilde M\times \bord \G$ and denote $\phi_t$ the geodesic flow on $M.$ For a given $\G$-invariant H\"older function $H:T^1\w M\to\R$ Schapira\cite{schapira} introduced the following geometric cocycle: for $z\in\bord\G$ define $B_z^H:\w M\times \w M\to\R$ as \begin{equation}\label{equation:cz} B_z^H(p,q) = \lim_{s\to\infty} \int_0^{s+B_z(p,q)} H(\phi_t(p,z))dt-\int_0^s H(\phi_t(q,z))dt,\end{equation} where $B_z:\w M\times\w M\to\R$ is the Busemann function (when $H\equiv1$ $B^1_z(p,q)$ is exactly $B_z(p,q)$). The expression is convergent since $H$ is H\"older continuous and the geodesic flow is Anosov.

One finds the following properties:

\begin{lema}\label{lema:propiedades} Let $o,p,q\in \w M$ and $z\in\bord\G,$ Then \begin{itemize}                                                                                \item[i)] $B_z^H(p,q)=B^H_{\g z}(\g p,\g q)$ for every $\g\in\G,$ \item[ii)] $B_z^H(p,q)=B_z^H(p,o)+B_z^H(o,q)$ \item[iii)] if $q$ belongs to the geodesic line from $p$ to $z$ one has $$B_z^H(p,q)=\int_p^qH$$ where $\int_p^qH$ is the integral of $H$ over the unique oriented geodesic segment that begins in $p$ and finishes in $q.$\end{itemize}
\end{lema}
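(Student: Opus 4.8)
The plan is to verify each of the three claimed properties directly from the defining formula \eqref{equation:cz}, in the order (i), (ii), (iii), using only the known equivariance and cocycle properties of the Busemann function $B_z$ and the $\G$-invariance of $H$. Throughout, write $f(s)=\int_0^{s+B_z(p,q)}H(\phi_t(p,z))\,dt-\int_0^sH(\phi_t(q,z))\,dt$, so that $B_z^H(p,q)=\lim_{s\to\infty}f(s)$; one should first recall (as noted after \eqref{equation:cz}) why this limit exists: the geodesics $t\mapsto\phi_t(p,z)$ and $t\mapsto\phi_t(q,z)$ are forward asymptotic, so by the Anosov property their distance decays exponentially, $H$ is H\"older, and the shift by $B_z(p,q)$ is exactly the one that aligns the two rays, so the integrand of the difference is exponentially small and $f(s)$ converges.

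For (i): apply the isometry $\g$ inside the integrals. Since $\g$ acts on $T^1\w M$ commuting with the geodesic flow, $\phi_t(\g p,\g z)=\g\phi_t(p,z)$, and $H$ is $\G$-invariant, so $H(\phi_t(\g p,\g z))=H(\phi_t(p,z))$; combined with $B_{\g z}(\g p,\g q)=B_z(p,q)$ the entire expression defining $B^H_{\g z}(\g p,\g q)$ equals the one defining $B_z^H(p,q)$, giving (i).

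For (ii): this is an additivity (cocycle) identity in the $\w M$-variables. Insert the auxiliary point $o$ by writing, for the appropriate shifts, $f_{p,q}(s)=f_{p,o}(s')+f_{o,q}(s)$ where $s'=s+B_z(o,q)$ and one uses the Busemann cocycle relation $B_z(p,q)=B_z(p,o)+B_z(o,q)$ to match the upper limits: the integral $\int_0^{s'+B_z(p,o)}H(\phi_t(p,z))\,dt$ over the $p$-ray, the canceling pair of integrals over the $o$-ray, and the integral over the $q$-ray reassemble precisely to $f_{p,q}(s)$. Taking $s\to\infty$ (all three limits exist by the convergence argument above) yields $B_z^H(p,q)=B_z^H(p,o)+B_z^H(o,q)$.

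For (iii): if $q$ lies on the geodesic ray from $p$ to $z$, then $q=\phi_{T}(p,z)$ with $T=\pm d(p,q)$ (sign according to orientation), $B_z(p,q)=T$, and $\phi_t(q,z)=\phi_{t+T}(p,z)$, so the two integrals in $f(s)$ are $\int_0^{s+T}H(\phi_t(p,z))\,dt$ and $\int_0^{s}H(\phi_{t+T}(p,z))\,dt=\int_T^{s+T}H(\phi_t(p,z))\,dt$; their difference is the $s$-independent quantity $\int_0^{T}H(\phi_t(p,z))\,dt=\int_p^qH$, which is therefore also the limit. The main obstacle is purely bookkeeping: in (ii) one must be careful that the same $z$-ray shift convention is used for all three points and that the limit-swapping is legitimate, but this is guaranteed by the exponential convergence of each $f$, so no genuinely new estimate is needed beyond the one justifying that \eqref{equation:cz} is well defined.
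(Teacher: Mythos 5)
Your proof is correct and follows essentially the same route as the paper: (i) by equivariance of $H$ and the Busemann function, (ii) by the change of parameter $s\mapsto s+B_z(o,q)$ together with the cocycle relation $B_z(p,q)=B_z(p,o)+B_z(o,q)$ so that the $o$-ray integrals cancel, and (iii) directly from the definition. Your added remarks on why the limit in the definition exists and the explicit computation for (iii) only flesh out steps the paper leaves as immediate.
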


\begin{proof} Property $i)$ follows directly from the $\G$-invariance of $H$ and the Busemann function. Property $iii)$ is a direct consequence of the definition.
We prove now property $ii):$ by definition $$B^H_z(p,o)=\lim_{s\to\infty} \int_0^{s+B_z(p,o)} H(\phi_t(p,z))dt-\int_0^s H(\phi_t(o,z))dt.$$ 

If we consider the change of parameter $s\mapsto s+B_z(o,q)$ the last limit becomes $$B^H_z(p,o)=\lim_{s\to\infty} \int_0^{s+B_z(p,o)+B_z(o,q)} H(\phi_t(p,z))dt-\int_0^{s+B_z(o,q)} H(\phi_t(o,z))dt$$ and thus, since $B_z(p,o)+B_z(o,q)=B_z(p,q)$ we have  

$$B_z^H(p,o)+B_z^H(o,q)=\lim_{s\to\infty}  \int_0^{s+B_z(p,q)} H(\phi_t(p,z))dt-\int_0^{s+B_z(o,q)} H(\phi_t(o,z))dt$$ $$+ \lim_{s\to\infty}\int_0^{s+B_z(o,q)} H(\phi_t(o,z))dt-\int_0^s H(\phi_t(q,z))dt=B^H_z(p,q).$$
\end{proof}

Given a $\G$-invariant H\"older function $H:T^1\w M\to \R$ one can associate to $H$ a H\"older cocycle over the group $\G:$ \begin{equation} \label{equation:F} c_H(\g,z)= B_z^H(\g^{-1}o,o),\end{equation} where $o$ is some point on $\w M$ fixed from now on.

Two $\G$-invariant H\"older functions $H,H':T^1\widetilde M\to\R$ are said to be cohomologous (according Liv\v sic) if there exists a H\"older $\G$-invariant function $V:T^1\widetilde M\to\R,$ differentiable in the direction of the geodesic flow, such that $$H(p,z)-H'(p,z)=\frac{\bord V\circ\phi_t}{\bord t}(p,z).$$

The conjugacy class $[\g],$ of an element $\g\in\G,$ is naturally identified with the closed geodesic on $\G\/T^1\widetilde M$ associated to $\g.$ We denote $|\g|$ the length of this closed geodesic. The \emph{periods} of the function $H$ are defined to be the numbers $$\int_{[\g]}H.$$ One easily sees that: the periods of $H$ are exactly the periods of $c_H;$ the periods of $H$ are a Liv\v sic-cohomology invariant. We can now state a theorem of Ledrappier.

\begin{teo}[Ledrappier\cite{ledrappier}, page 105]\label{teo:ledrappier} The map $H\mapsto c_H$ induces a bijection between cohomology classes of $\G$-invariant H\"older functions and cohomology classes of H\"older cocycles. The corresponding classes have the same periods.
\end{teo}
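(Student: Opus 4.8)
Theorem (Ledrappier, page 105): The map $H\mapsto c_H$ induces a bijection between Livšic-cohomology classes of $\G$-invariant Hölder functions $H:T^1\w M\to\R$ and cohomology classes of Hölder cocycles $c:\G\times\bord\G\to\R$, and corresponding classes have the same periods.

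The plan is to establish four things in sequence: that $c_H$ is genuinely a Hölder cocycle, that the periods match, that the assignment is well defined on cohomology classes (Livšic-cohomologous $H$'s give cohomologous cocycles), and finally that it is a bijection on cohomology classes, the surjectivity being the substantial point.

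\textbf{Step 1: $c_H$ is a Hölder cocycle.} Fix $o\in\w M$. From Lemma \ref{lema:propiedades}(i)--(ii) we have $B_z^H(\g^{-1}o,o)=B_{\g z}^H(o,\g o)$ and the additivity $B_z^H(p,q)=B_z^H(p,r)+B_z^H(r,q)$. Hence for $\g_0,\g_1\in\G$,
$$c_H(\g_0\g_1,z)=B_z^H((\g_0\g_1)^{-1}o,o)=B_z^H(\g_1^{-1}\g_0^{-1}o,\g_1^{-1}o)+B_z^H(\g_1^{-1}o,o)=B_{\g_1 z}^H(\g_0^{-1}o,o)+c_H(\g_1,z),$$
using the $\G$-equivariance (i) in the first term, which equals $c_H(\g_0,\g_1 z)+c_H(\g_1,z)$. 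Hölder continuity of $z\mapsto B_z^H(\g^{-1}o,o)$ (uniform exponent in $\g$) follows from the convergence estimate in (\ref{equation:cz}): the integrand is Hölder in $z$ and the exponential contraction of the Anosov geodesic flow makes the convergence geometric, so the limit inherits a Hölder bound; the exponent depends only on that of $H$ and the contraction rate, not on $\g$.

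\textbf{Step 2: periods and cohomology compatibility.} For $\g\in\G$ the period of $c_H$ is $\l_{c_H}(\g)=c_H(\g,\g_+)=B_{\g_+}^H(\g^{-1}o,o)$. Now $\g$ translates along the geodesic from $\g_-$ to $\g_+$ (the axis), so $o$ and $\g^{-1}o$ can be compared along that geodesic direction; using Lemma \ref{lema:propiedades}(iii) applied after sliding $o$ to the axis (the transverse pieces cancel by additivity and $\G$-invariance exactly as in the Busemann case), one gets $\l_{c_H}(\g)=\int_{[\g]}H$, the integral of $H$ over the closed geodesic of length $|\g|$ representing $[\g]$. This is already asserted in the text right before the theorem; I would just spell out the cancellation. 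For well-definedness: if $H-H'=\partial(V\circ\phi_t)/\partial t$ for a $\G$-invariant Hölder $V:T^1\w M\to\R$ differentiable along the flow, then $\int_p^q(H-H')=V(\text{end})-V(\text{start})$ along any geodesic segment, so from the defining limit (\ref{equation:cz}) one computes $B_z^H(p,q)-B_z^{H'}(p,q)=V(q,z)-V(p,z)$ (the unbounded parts at $s\to\infty$ cancel because $V$ is bounded on the compact $\G\/T^1\w M$). Setting $U(z)=V(o,z)$, which is Hölder on $\bord\G$, gives $c_H(\g,z)-c_{H'}(\g,z)=B_z^H(\g^{-1}o,o)-B_z^{H'}(\g^{-1}o,o)=V(o,z)-V(\g^{-1}o,z)=U(z)-U(\g^{-1}z)\cdot(\text{shift})$; more precisely one rewrites this using equivariance as $U(\g^{-1}z)$... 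I would organize the bookkeeping so it lands as $c_H-c_{H'}=U(\g z)-U(z)$ after the standard substitution $z\mapsto\g z$. So $H\mapsto[c_H]$ is well defined, and injectivity on cohomology classes follows because equal periods plus the Livšic theorem for the Anosov geodesic flow force $H\sim H'$.

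\textbf{Step 3: surjectivity — the main obstacle.} Given an arbitrary Hölder cocycle $c:\G\times\bord\G\to\R$, I must produce a $\G$-invariant Hölder $H:T^1\w M\to\R$ with $[c_H]=[c]$. The idea, following Ledrappier, is to transfer $c$ to a function on $\bord^2\G\times\R\cong T^1\w M$ (Hopf coordinates) and then differentiate along the $\R$-direction (the flow direction). Concretely, define on $T^1\w M$ a potential by setting, for $(p,z)$ with backward endpoint $w=v_{-\infty}$, a function built from $c(\g,\cdot)$ and the Busemann cocycle so that the coboundary ambiguity of choosing a fundamental domain is killed; the $\G$-invariance must be arranged using the cocycle identity $c(\g_0\g_1,x)=c(\g_0,\g_1 x)+c(\g_1,x)$. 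The derivative along the flow of this function is the desired $H$. The two real difficulties are: (a) differentiability along the flow direction — $c$ is only Hölder, so the naive primitive need not be flow-differentiable, and one fixes this by a mollification/averaging in the flow direction, or by working with Hölder cocycles on the suspension level and invoking that Hölder functions on a subshift suspension are cohomologous to ones differentiable along the flow; (b) checking that the resulting $H$ indeed satisfies $[c_H]=[c]$, which I would verify at the level of periods — $\int_{[\g]}H=\l_c(\g)$ by construction — and then invoke Livšic again to upgrade ``equal periods'' to ``cohomologous cocycles'' after transporting through the bijection of Step 2. The hardest single point is (a): smoothing a Hölder cocycle along the orbit direction while preserving $\G$-invariance and not disturbing the periods. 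I expect to handle it exactly as Livšic-theory does for suspension flows — this is why the statement is credited to Ledrappier and why the Anosov property of the geodesic flow (hence the validity of Livšic's theorem, already quoted as \cite{livsic2}) is essential.
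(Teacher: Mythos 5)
The paper does not prove this statement: it is quoted from Ledrappier \cite{ledrappier} (page 105) without proof, so there is no in-paper argument to compare against and your proposal must stand on its own. Your Steps 1 and 2 are essentially correct: the cocycle identity for $c_H$ follows from lemma \ref{lema:propiedades} exactly as you compute, the period identity $\l_{c_H}(\g)=\int_{[\g]}H$ does follow by sliding the base point to the axis of $\g$ with the cancellation you describe, and injectivity on classes correctly reduces to the classical Liv\v sic theorem for the Anosov geodesic flow. One small repair: in the well-definedness computation the terms $V(\phi_{s+B_z(p,q)}(p,z))$ and $V(\phi_s(q,z))$ cancel in the limit because these two orbit points converge to each other and $V$ is uniformly continuous on the compact quotient; boundedness of $V$ alone would not give this.

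The genuine gap is Step 3, which is the substantive content of the theorem, and what you write is a plan rather than a proof. Point (a) --- producing from an arbitrary H\"older cocycle $c$ a $\G$-invariant function on $T^1\w M$ that is H\"older, differentiable along the flow, and whose associated cocycle recovers $c$ --- is precisely the hard construction in Ledrappier's paper; you never write down the candidate potential, nor show that the smoothing along the flow direction can be done $\G$-equivariantly while preserving transverse H\"older regularity and the periods. Point (b) as stated does not work: the classical Liv\v sic theorem compares \emph{functions over the flow} with equal periods; it does not assert that two H\"older \emph{boundary cocycles} with equal periods are cohomologous. That cocycle-level statement is essentially equivalent to the correspondence being proved, so ``match the periods of $H$ with those of $c$, then upgrade via Liv\v sic after transporting through the bijection of Step 2'' is circular --- the bijection is exactly what is under construction. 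The clean route (and Ledrappier's) is to build $H$ so that the difference $c_H-c$ is exhibited as an explicit coboundary $U(\g x)-U(x)$ by the construction itself; as written, your surjectivity argument is missing its key step.
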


Recall that $|\g|$ denotes the length of the closed geodesic on $\G\/\w M$ associated to $\g.$

\begin{cor}\label{cor:positiva} Let $c$ be a H\"older cocycle with positive periods, then the exponential growth rate $h_c$ is positive.
\end{cor}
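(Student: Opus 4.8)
The plan is to transfer the question from the cocycle $c$ to the associated $\G$-invariant H\"older function on $T^1\w M$ via Ledrappier's theorem \ref{teo:ledrappier}, and then compare the period-counting function of $c$ with the length-counting function of closed geodesics. First I would pick, using theorem \ref{teo:ledrappier}, a $\G$-invariant H\"older function $H:T^1\w M\to\R$ whose cocycle $c_H$ is cohomologous to $c$; since periods are a cohomology invariant (and the counting quantity $h_c$ depends only on the set of periods), we have $\l_c(\g)$ equal, for each conjugacy class, to the period $\int_{[\g]}H$ of $H$ over the closed geodesic associated to $\g$. Thus $h_c=\limsup_{t\to\infty}\frac1t\log\#\{[\g]:\int_{[\g]}H\leq t\}$.

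Next I would exploit that $H$ is continuous on the compact space $\G\/T^1\w M$, so it is bounded: there is a constant $A>0$ with $|H|\leq A$ everywhere. Consequently $|\int_{[\g]}H|\leq A\,|\g|$, where $|\g|$ is the length of the closed geodesic. Hence $\int_{[\g]}H\leq t$ is implied by $|\g|\leq t/A$, which gives the inclusion
$$\{[\g]:|\g|\leq t/A\}\subseteq\{[\g]:\l_c(\g)\leq t\},$$
valid when the periods of $c$ are positive so that both sets are genuinely growing. Taking cardinalities and $\limsup\frac1t\log(\cdot)$ yields $h_c\geq \frac1A\, h_{\mathrm{top}}(\phi_t)$, where $h_{\mathrm{top}}(\phi_t)$ is the topological entropy of the geodesic flow, i.e.\ the exponential growth rate of closed geodesics on the compact negatively curved manifold $\G\/\w M$. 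Since the curvature is $\leq-1$, this entropy is strictly positive (it is bounded below by $1$, or simply: a compact manifold of negative curvature has exponentially many closed geodesics), and therefore $h_c>0$.

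The one genuine subtlety, and the step I expect to be the main obstacle, is that a priori the periods $\int_{[\g]}H$ could be negative for some classes — the hypothesis is only that $\l_c(\g)>0$, while a cohomologous representative $H$ need not have all periods positive. This is exactly the point addressed later in the paper by a Liv\v sic-type argument producing a strictly positive representative; but for the present corollary it is not needed, because the inequality $\int_{[\g]}H\leq A|\g|$ already holds regardless of sign, and we only need the \emph{upper} bound on $\l_c(\g)$ in terms of $|\g|$ to get the inclusion of sets above. So the proof as sketched goes through with $H$ any H\"older representative; the positivity of periods of $c$ is used only to guarantee that $h_c$ is the growth rate of a well-defined (nonnegative-valued) counting function, and the lower bound $h_c\geq h_{\mathrm{top}}(\phi_t)/\sup|H|>0$ is the conclusion.
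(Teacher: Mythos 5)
Your proposal is correct and follows essentially the same route as the paper: pass to a H\"older representative $F$ via Ledrappier's theorem \ref{teo:ledrappier}, bound the periods above by $(\sup F)\,|\g|$ (the paper uses the positive maximum $K$ of $F$, which exists precisely because the periods are positive, where you use $\sup|H|$), and compare with the exponential growth of closed geodesics, which is positive on a compact negatively curved manifold. The only cosmetic difference is your choice of constant; the argument and the use of positivity of periods are the same as in the paper.
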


\begin{proof} Let $F:T^1\w M\to\R$ be such that the H\"older cocycles $c_F$ and $c$ are cohomologous. Since $c_F$ has positive periods $F$ must have a positive maximum $K$ and thus $\l_c(\g)\leq K|\g|,$ which implies $$\#\{[\g]\in[\G]:\l_c(\g)\leq t\}\geq \#\{[\g]\in[\G]:|\g|\leq t/K\}.$$

The exponential growth rate of the quantity on the right is known to be strictly positive and the corollary is proved.
\end{proof}

We will need the following lemma.

\begin{lema}[Ledrappier\cite{ledrappier}, page 106]\label{lema:lemaledrappier} Let $c$ be a H\"older cocycle with positive periods. Then the exponential growth rate of $c$ is finite if and only if $$\inf_{[\g]} \frac{\l_c(\g)}{|\g|}>0.$$
\end{lema}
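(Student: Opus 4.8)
The plan is to translate the statement about a H\"older cocycle $c$ into a statement about the associated $\G$-invariant H\"older function $F:T^1\w M\to\R$ provided by theorem \ref{teo:ledrappier}, and then reduce everything to counting closed geodesics of bounded length, for which the exponential growth rate is known to be finite (it is the topological entropy of the geodesic flow, which is finite on a compact manifold). Since $h_c$ and $\inf_{[\g]}\l_c(\g)/|\g|$ are cohomological invariants of $c$ (the periods are), we may replace $c$ by $c_F$ and thus assume $\l_c(\g)=\int_{[\g]}F$ while $|\g|$ is the length of the closed geodesic associated to $\g$.

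For the ``if'' direction, suppose $m:=\inf_{[\g]}\l_c(\g)/|\g|>0$. Then $\l_c(\g)\ge m|\g|$ for every $\g$, so
\[
\#\{[\g]:\l_c(\g)\le t\}\le\#\{[\g]:|\g|\le t/m\},
\]
and since the right-hand side grows like $e^{h_{\mathrm{top}}t/m}$ with $h_{\mathrm{top}}=h_{\mathrm{top}}(\phi_t)<\infty$, we get $h_c\le h_{\mathrm{top}}/m<\infty$. This half is essentially the mirror image of the argument in corollary \ref{cor:positiva}.

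For the ``only if'' direction, suppose the infimum is $0$. The key observation is that $\inf_{[\g]}\l_c(\g)/|\g|=\inf_{[\g]}\big(\int_{[\g]}F\big)/|\g|$ is exactly the minimum of $\int F\,dm$ over $\phi_t$-invariant probabilities $m$, by the density of periodic-orbit measures in $\mathcal M^{\phi_t}$ (the geodesic flow is Anosov, hence has the closing/specification property) together with $\big(\int_{[\g]}F\big)/|\g|=\int F\,dm_{[\g]}$ where $m_{[\g]}$ is the normalized Lebesgue measure on the closed geodesic $[\g]$. Hence the hypothesis gives a $\phi_t$-invariant probability $m_0$ with $\int F\,dm_0=0$, indeed, since $F$ can be negative, $\inf_{[\g]}\l_c(\g)/|\g|$ could be zero or negative, but positivity of the periods forces $\int F\,dm\ge 0$ for all $m$ supported on periodic orbits, hence for all $m$, so the infimum is $\ge 0$ and equals $0$. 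Now for any $s>0$,
\[
P(\phi_t,-sF)=\sup_{m\in\mathcal M^{\phi_t}}\Big(h(\phi_t,m)-s\int F\,dm\Big)\ge h(\phi_t,m_0)-s\int F\,dm_0=h(\phi_t,m_0)\ge 0,
\]
and in fact taking $m_0$ to be (say) a periodic-orbit measure with $\int F\,dm_0$ arbitrarily small shows $P(\phi_t,-sF)\ge h_{\mathrm{top}}(\phi_t)-s\varepsilon$ is large for all $s$; more directly, $s\mapsto P(\phi_t,-sF)$ is convex, finite, nonincreasing and bounded below by $\sup_m(h(\phi_t,m):\int F\,dm=0)$, which one shows to be positive (e.g. the measure of maximal entropy, suitably approximated, has positive entropy). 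Since by lemma \ref{lema:reparam} finiteness of $h_c=h_{\mathrm{top}}(\psi_t)$ would force $P(\phi_t,-h_cF)=0$, and here $P(\phi_t,-sF)$ never vanishes, we conclude $h_c=\infty$.

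The main obstacle I anticipate is the passage $\inf_{[\g]}\big(\int_{[\g]}F\big)/|\g|=\min_{m\in\mathcal M^{\phi_t}}\int F\,dm$ and, relatedly, producing an invariant measure with $\int F\,dm=0$ but strictly positive entropy: the first needs the approximation of invariant measures by periodic orbit measures for Anosov flows (Sigmund/Bowen), and the second is where one must be a little careful, if $F\ge 0$ everywhere with $\int F\,dm=0$ only for measures supported on $\{F=0\}$, one needs those to have positive entropy, which follows because $\{F=0\}$ must then contain geodesics whose lengths form a bounded sequence while their number grows exponentially. Once this variational reformulation is in hand, the pressure equation $P(\phi_t,-sF)=0$ from lemma \ref{lema:reparam} does the rest: a finite $h_c$ is equivalent to the existence of a solution $s=h_c$, and the existence of a solution is equivalent, via convexity and $P(\phi_t,0)=h_{\mathrm{top}}(\phi_t)>0$, to $\inf_m\int F\,dm>0$.
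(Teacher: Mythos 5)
First, a point of comparison: the paper does not prove this lemma at all --- it is quoted from Ledrappier (page 106) and used as a black box --- so your proposal has to stand on its own. Your ``if'' direction does stand: replacing $c$ by $c_F$ via theorem \ref{teo:ledrappier} and bounding $\#\{[\g]:\l_c(\g)\le t\}\le\#\{[\g]:|\g|\le t/m\}$, with the finiteness of the exponential growth rate of closed geodesics, is correct and is the mirror image of corollary \ref{cor:positiva}.

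The ``only if'' direction has a genuine gap, located exactly where you flagged it. From $\inf_{[\g]}\l_c(\g)/|\g|=0$ you correctly produce (via Anosov's closing lemma and weak-* density) an invariant probability $m_0$ with $\int F\,dm_0=0$, but this only gives $P(\phi_t,-sF)\ge h(\phi_t,m_0)\ge 0$; to run your scheme you need $\sup\{h(\phi_t,m):\int F\,dm=0\}>0$, and nothing in the proposal establishes it. Your first attempt is simply false: a periodic-orbit measure has zero entropy, so taking $m_0$ periodic with small integral yields $P(\phi_t,-sF)\ge -s\eps$, not $h_{\mathrm{top}}-s\eps$. Your fallback is not an argument either: measures with $\int F\,dm=0$ are never carried by closed orbits (that would make a period vanish), but they could all have zero entropy, and for the same reason your remark that $\{F=0\}$ ``must contain geodesics whose lengths form a bounded sequence'' cannot happen --- no closed geodesic lies in $\{F=0\}$. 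Finally, the appeal to lemma \ref{lema:reparam} is out of its scope: that lemma concerns reparametrizations by a strictly positive function, and under your standing assumption $F$ cannot be cohomologous to a positive one (that would force $\l_c(\g)\ge(\min F)\,|\g|$, i.e.\ $\inf>0$), so there is no flow $\psi_t$, no identity $h_c=h_{\mathrm{top}}(\psi_t)$, and no pressure equation $P(\phi_t,-h_cF)=0$ to contradict; in the paper that identification is only available after this very lemma has been used to produce a positive representative, so this route is circular. What is missing is the actual counting input: for instance a specification/shadowing construction that, from closed orbits $\tau_n$ with $\frac1{|\tau_n|}\int_{\tau_n}F\to0$ and one fixed auxiliary orbit, manufactures for every $R>0$ at least $e^{Rt}$ conjugacy classes with $\l_c(\g)\le t$, giving $h_c=\infty$ (this, in substance, is Ledrappier's argument). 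As written, the proposal proves only one implication.
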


We shall now state the positive Liv\v sic-type lemma.

\begin{lema}\label{lema:livsic} Let $X$ be a compact metric space equiped with a flow \mbox{$\phi_t:X\mismo.$} Consider some H\"older continuous $f:X\to\R$ differentiable in the flow's direction, such that $$\int_X fdm>0$$ for every $\phi_t$ invariant probability $m.$ Then $f$ is cohomologous to a strictly positive H\"older continuous function.
\end{lema}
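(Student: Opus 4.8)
The plan is to prove this via an averaging argument over long orbit segments combined with a Liv\v sic-type regularity statement. First I would observe that the hypothesis gives a uniform lower bound on Birkhoff averages: since $\int_X f\,dm>0$ for every invariant probability $m$, and the set $\mathcal M^{\phi_t}$ of invariant probabilities is weak-$*$ compact while $m\mapsto\int_X f\,dm$ is continuous, there is a constant $\delta>0$ with $\int_X f\,dm\ge\delta$ for all $m\in\mathcal M^{\phi_t}$. By the standard fact relating invariant measures to empirical averages of orbit segments (or directly via the ergodic decomposition and the fact that weak limits of $\frac1T\int_0^T\delta_{\phi_s x}\,ds$ are invariant), this forces the existence of $T_0>0$ such that for every $x\in X$ and every $T\ge T_0$,
$$\kappa_f(x,T):=\int_0^T f(\phi_s x)\,ds\ \ge\ \tfrac{\delta}{2}\,T,$$
and in particular $\kappa_f(x,T)\to+\infty$ uniformly as $T\to\infty$.

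Next I would produce the coboundary. Set, for a suitable fixed large $L>0$,
$$V(x)\ :=\ \frac1L\int_0^L\kappa_f(x,t)\,dt\ =\ \frac1L\int_0^L\!\!\int_0^t f(\phi_s x)\,ds\,dt.$$
Because $f$ is H\"older and differentiable along the flow, $V$ is H\"older and differentiable along the flow, and a direct computation using the cocycle identity $\kappa_f(x,t+u)=\kappa_f(\phi_t x,u)+\kappa_f(x,t)$ gives
$$\frac{\partial V(\phi_u x)}{\partial u}\Big|_{u=0}\ =\ f(x)\ -\ \frac1L\big(\kappa_f(x,L)-0\big)\cdot 0\ +\ \text{(boundary terms)},$$
so that $f - \partial_t(V\circ\phi_t) = g$ where $g(x)=\frac1L\big(\kappa_f(x,L)\big)$ up to reindexing — more carefully, differentiating $V$ along the flow shows $f$ is cohomologous (in the Liv\v sic sense) to the averaged function $x\mapsto \frac1L\kappa_f(x,L)$. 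Choosing $L\ge T_0$, the averaged function is $\ge\delta/2>0$ everywhere by the first step. Thus $f$ is Liv\v sic-cohomologous to a strictly positive H\"older function, which is the claim. (One must check the averaged function is again H\"older and that $V$ is genuinely differentiable in the flow direction with the asserted derivative; both follow from $f$ being H\"older and $C^1$ along the flow, since integration in the flow parameter only improves regularity.)

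The main obstacle is organizing the coboundary computation cleanly so that the ``boundary terms'' from differentiating the double integral $\frac1L\int_0^L\kappa_f(x,t)\,dt$ really assemble into $f(x)$ minus a manifestly positive H\"older function, rather than something only positive on average. The clean way is: write $V(x)=\frac1L\int_0^L\kappa_f(x,t)\,dt$ and use $\kappa_f(\phi_u x,t)=\kappa_f(x,t+u)-\kappa_f(x,u)$, so $V(\phi_u x)=\frac1L\int_0^L\kappa_f(x,t+u)\,dt-\kappa_f(x,u)=\frac1L\int_u^{L+u}\kappa_f(x,t)\,dt-\kappa_f(x,u)$; differentiating at $u=0$ yields $\partial_u V(\phi_u x)|_{u=0}=\frac1L\big(\kappa_f(x,L)-\kappa_f(x,0)\big)-f(x)=\frac1L\kappa_f(x,L)-f(x)$. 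Hence $f(x)=\frac1L\kappa_f(x,L)-\partial_t(V\circ\phi_t)(x)$, exhibiting $f$ as cohomologous to $x\mapsto\frac1L\kappa_f(x,L)$, which is $\ge\delta/2$ by the uniform Birkhoff estimate once $L$ is chosen large enough. This completes the argument; the only remaining routine points are the weak-$*$ compactness input for the uniform estimate and the regularity bookkeeping for $V$.
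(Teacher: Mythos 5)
Your proposal is correct and follows essentially the same route as the paper: you establish uniform positivity of the Birkhoff averages $\frac1L\int_0^L f(\phi_s x)\,ds$ for large $L$ via weak-$*$ compactness of empirical/invariant measures, and then use that $f$ is Liv\v sic-cohomologous to such an average. The only difference is that you write out the transfer function $V(x)=\frac1L\int_0^L\kappa_f(x,t)\,dt$ explicitly, a detail the paper leaves as a one-line remark.
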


We thank Fran\c cois Labourie for the following argument:

\begin{proof} One remarks that for every $t\in\R$ the function $f$ is cohomolgous to its Birkhoff integral $$x\mapsto \frac 1t\int_0^tf(\phi_sx)ds.$$ It then suffices to show that there exists $t$ such that for every $x$ one has $\frac1t\int_0^tf(\phi_sx)ds>0.$ If this is not the case for every $\eps>0$ there exists $t_n\to\infty$ and $x_n\in X$ such that $$\frac1{t_n} \int_0^{t_n} f(\phi_sx_n)ds<\eps.$$ Since the set of invariant probabilites is compact we can find $k>0$ such that $\int_X fdm>k$ for all $m\in\cal M^{\phi_t}.$ Consider an accumulation point $m_0$ of the sequence of probabilities $m_n$ defined as $$m_n(g)=\frac1{t_n}\int_0^{t_n}g(\phi_sx_n)ds.$$ Then $m_0$ is a $\phi_t$-invariant probability for which one has $$\int_X fdm_0\leq\eps<k.$$This finishes the proof.
\end{proof}

Our last tool is Anosov's closing lemma.

\begin{teo}[Anosov's closing lemma c.f. \cite{shub}] Let $\phi_t:X\mismo$ be transitive an Anosov flow, then convex combinations of periodic orbits are dense in the set $\cal M^{\phi_t}$ of invariant probabilities of $\phi_t.$
\end{teo}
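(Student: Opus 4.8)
The plan is to reduce to approximating \emph{ergodic} invariant probabilities, and then to produce the approximating periodic orbits via the shadowing form of Anosov's closing lemma, applied to a long generic orbit segment that almost closes up.

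First I would recall that $\cal M^{\phi_t}$, with the weak-$*$ topology, is a compact convex set whose extreme points are exactly the ergodic probabilities, and that (by Choquet theory, or equivalently the ergodic decomposition) it coincides with the closed convex hull of these extreme points. Hence it suffices to show that every ergodic $m\in\cal M^{\phi_t}$ lies in the closure of the set $\{\mu_\tau:\tau\textrm{ periodic}\}$, where $\mu_\tau$ denotes the normalized Lebesgue probability carried by the periodic orbit $\tau$; once this is known, the closed convex hull of the $\mu_\tau$ contains every extreme point of $\cal M^{\phi_t}$ and therefore all of $\cal M^{\phi_t}$.

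Next, fix an ergodic $m$. Using the Birkhoff ergodic theorem together with Poincar\'e recurrence, choose a point $x\in X$ which is simultaneously $m$-generic --- meaning that the empirical probabilities $m_T:=\frac1T\int_0^T\delta_{\phi_s x}\,ds$ converge weak-$*$ to $m$ --- and satisfies $\phi_{T_n}x\to x$ for some sequence $T_n\to\infty$; such $x$ exists because each of the two conditions holds on a set of full $m$-measure. For each $n$ the orbit segment $s\mapsto\phi_s x$, $0\le s\le T_n$, is then an $\eps_n$-almost-closed orbit with $\eps_n\to0$, so for $n$ large the Anosov closing lemma in its shadowing formulation (see \cite{shub}) provides a genuine periodic orbit $\tau_n$ of period $p(\tau_n)=T_n+o(1)$ that $\delta_n$-shadows this segment, with $\delta_n\to0$; this is the only place where the hyperbolicity of $\phi_t$ is used.

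Finally I would check that $\mu_{\tau_n}\to m$ weak-$*$. Given $g\in C(X)$, pick $y_n\in\tau_n$ realizing the shadowing. By uniform continuity of $g$ and boundedness of $g$, the shadowing estimate gives $\big|\,\frac1{p(\tau_n)}\int_0^{p(\tau_n)}g(\phi_s y_n)\,ds-\frac1{T_n}\int_0^{T_n}g(\phi_s x)\,ds\,\big|\to0$, since on a subinterval of length $T_n-o(T_n)$ the two orbit segments stay $\delta_n$-close while the remaining portion has vanishing relative length. The second average tends to $\int g\,dm$ by the choice of $x$, hence $\int g\,d\mu_{\tau_n}\to\int g\,dm$, so $m$ is a weak-$*$ limit of periodic orbit measures. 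The main technical obstacle is arranging the point $x$ to be generic and recurrent at once and invoking the closing lemma with both the period error and the shadowing error tending to zero; everything else is soft functional analysis.
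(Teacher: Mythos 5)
Your argument is correct, but note that the paper does not prove this statement at all: it is quoted as a known result, with a reference to Shub's book, and is only \emph{used} later (to guarantee that a function with positive periods has positive integral against every invariant probability). So there is no proof in the paper to compare against; what you have written is essentially the classical argument of Sigmund for hyperbolic systems, deriving density of periodic-orbit measures from the shadowing form of the closing lemma together with the ergodic decomposition (Krein--Milman reduction to ergodic measures, choice of a generic recurrent point, closing of long almost-returning segments, and comparison of Birkhoff averages). Two small points deserve care in a written version. First, for flows the closing lemma produces an orbit that shadows the segment only up to a time reparametrization close to the identity, so the comparison of the two Birkhoff averages should be made through this reparametrization (your ``subinterval of length $T_n-o(T_n)$'' phrasing papers over this, but the estimate goes through since the time change is uniformly close to the identity and $p(\tau_n)/T_n\to1$). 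Second, transitivity is not actually needed for your argument, since an Anosov flow is uniformly hyperbolic on all of $X$ and the closing lemma applies to any almost-closed segment; the hypothesis appears in the paper's statement only because that is the setting in which the result is invoked.
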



\begin{proof}[Proof of first item of theorem \ref{teo:reparametrisation}.] We begin with a H\"older cocycle $c$ with positive periods and finite exponential growth rate. After Ledrappier's theorem \ref{teo:ledrappier} we find a $\G$-invariant H\"older function $H:T^1\widetilde M\to\R$ whose periods coincide with those of $c.$ 

Ledrappier's lemma \ref{lema:lemaledrappier} then implies that $$\inf_{[\g]}\frac {\int_{[\g]} H}{|\g|}>0.$$ From Anosov's closing lemma we get $\int Hdm>0$ for every $\phi_t$-invariant probability $m.$ Applying lemma \ref{lema:livsic} we find that $H$ is cohomologous to a strictly positive H\"older function $F,$ and its cocycle $c_F$ (defined by the formula (\ref{equation:F})) is cohomologous to $c.$

We shall prove the statement for the cocycle $c_F.$ The idea is to construct a parametrization of $T^1\widetilde M$ using $F$'s geometric cocycle $B_z^F$ (equation (\ref{equation:cz})) as following:

Fix some point $o\in\widetilde M$ and for a geodesic through $(p,v)$ denote $v_{-\infty}$ and $v_{\infty}$ its origin and end points in $\bord\G,$ then define $$E:(p,v)\mapsto (v_{-\infty},v_{\infty},B^F_{v_{\infty}}(p,o)).$$

Consider some geodesic $a(t)$ in $T^1\w M$ with endpoints $a(-\infty)=v_{-\infty}$ and $a(\infty)=v_{\infty}.$ Applying lemma \ref{lema:propiedades} we have, for every $t\in \R,$ that $$E(a(t))=(v_{-\infty},v_\infty,B^F_{v_\infty}(a(0),o)-\int_0^tF(a(s))ds).$$ Since $F>0$ we deduce that $E$ is inyective when restricted to the geodesic $\{a(t):t\in\R\},$ and since $F$ has a positive minimum it is surjective over the set $\{(v_{-\infty},v_{\infty})\}\times\R.$ This implies that $E$ is an homeomorphism from $T^1\widetilde M$ to $\bord^2\G\times\R.$\\\\
\noindent \underline{$E$ is $\G$-equivariant:} Write $E(p,v)=(x,y,B^F_y(p,o))$ and consider some $\g\in\G,$ then by definition $$E(\g(p,v))=(\g x,\g y,B^F_{\g y}(\g p,o)).$$ Applying lemma \ref{lema:propiedades} one has $$B^F_{\g y}(\g p,o)=B^F_{\g y}(\g p,\g o)+B^F_{\g y}(\g o,o)=B^F_y(p,o)-c_F(\g,y).$$ One concludes that $E$ is a $\G$-equivariant homeomorphism between $\G\curvearrowright T^1\w M$ and the action $\G\curvearrowright\bord^2\G\times\R$ via $c_F.$ Since $\G\curvearrowright T^1\widetilde M$ is proper (and co-compact), so is the action on $\bord^2\G\times\R$ via $c_F.$\\\\
\noindent \underline{The geodesic flow is reparametrized:} If $(p,v)\mapsto (v_{-\infty},v_{\infty},B^F_{v_\infty}(p,o))$ and $q\in  \w M$ is the base point of $\phi_t(p,v)$ then by definition $$E(\phi_t(p,v))=(v_{-\infty},v_{\infty}, B^F_{v_\infty}(q,o)),$$ applying again lemma \ref{lema:propiedades} $$B^F_{v_\infty}(q,o)=B^F_{v_\infty}(p,o)-\int_0^tF\phi_t(p,v)dt.$$ This means exactly, $$E(\phi_t(p,v))=\psi_{\int_0^tF\phi_s(p,v)ds} E(p,v),$$ in other words, the flow $E^{-1}\psi_tE$ is the reparametrization of the geodesic flow by $F$ (see definition \ref{defi:repa}).
\end{proof}

\subsection*{Proof of the second item of theorem \ref{teo:reparametrisation}}

In the last subsection we showed that the flow $\psi_t:\G\/\bord^2\G\times\R\mismo$ is H\"older conjugated to a H\"older reparametrization of the geodesic flow. 

We will thus prove the second statement of theorem \ref{teo:reparametrisation} in the following situation: $F:T^1\widetilde M\to\R_+^*$ is a $\G$-invariant positive H\"older function and $\psi_t:T^1\widetilde M\mismo$ is the reparametrization of the geodesic flow by $F.$ We fix from now on the cocycle $c_F$ associated to $F.$ 

\begin{obs}To retrieve theorem \ref{teo:reparametrisation} for a general cocycle $c$ it suffices to remark that the class of zero sets of the measure of maximal entropy is invariant under cohomology and to observe that the measure $$e^{-h_c[\cdot,\cdot]}\vo\mu\otimes\mu\otimes ds$$ is $\G$-invariant for the action of $\G$ on $\bord^2\G\times\R$ via $c.$
\end{obs}


We need the following lemma of Ledrappier.

\begin{lema}[Ledrappier\cite{ledrappier} page 106]\label{lema:lemaledrappier2} If there exists $h$ such that $P(-hF)=0$ then $h$ is $c_F$'s exponential growth rate. Conversely, if the exponential growth rate $h$ of $c_F$ is finite and positive then $P(-hF)=0.$
\end{lema}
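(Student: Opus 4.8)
The plan is to relate the pressure function $s \mapsto P(\phi_t, -sF)$ for the geodesic flow to the exponential growth rate $h_c$ of the cocycle $c_F$, using the fact that the periods of $c_F$ equal the periods $\int_{[\g]} F$ of the function $F$, and that these in turn are the periods of the reparametrized flow $\psi_t$. First I would invoke Lemma \ref{lema:reparam}: since $\psi_t$ is the reparametrization of the geodesic flow by $F$, a finite positive solution $h$ of $P(\phi_t,-sF)=0$ is exactly $h_{\mathrm{top}}(\psi_t)$, and conversely if $h_{\mathrm{top}}(\psi_t)$ is finite it solves this equation. So the lemma reduces to the identity $h_c = h_{\mathrm{top}}(\psi_t)$, where $c = c_F$.

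Next I would establish that identity by counting periodic orbits. The periodic orbits of $\psi_t$ are in bijection with those of the geodesic flow, hence with conjugacy classes $[\g] \in [\G]$, and the $\psi_t$-period of the orbit labelled $[\g]$ is $\int_{[\g]} F = \l_{c_F}(\g)$ (this is the observation, recorded just before equation (\ref{eq:abramov}), that reparametrizing multiplies the period of $\tau$ by $\int_\tau F$, combined with Lemma \ref{lema:propiedades}(iii) identifying $\int_{[\g]} F$ with the period of $c_F$). Therefore
$$
\#\{[\g] \in [\G] : \l_{c_F}(\g) \leq t\} = \#\{\tau \text{ periodic orbit of } \psi_t : p_{\psi}(\tau) \leq t\},
$$
and taking $\limsup \frac1t \log(\cdot)$ of both sides gives $h_{c_F}$ on the left. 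On the right, the quantity is the orbit-counting exponential growth rate of the flow $\psi_t$; since $\psi_t$ is a H\"older reparametrization of a transitive Anosov flow it admits a Markov coding (Lemma \ref{lema:markovpartition}), so by the Bowen--Ruelle theory (Proposition \ref{teo:ruellebowen} and the prime orbit asymptotics it implies) this growth rate equals $h_{\mathrm{top}}(\psi_t)$. This yields $h_{c_F} = h_{\mathrm{top}}(\psi_t)$, finite and positive precisely when the latter is.

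Combining: if $P(\phi_t,-hF)=0$ has a finite positive solution $h$, then by Lemma \ref{lema:reparam} $h = h_{\mathrm{top}}(\psi_t) = h_{c_F}$, which is the first assertion. Conversely, if $h := h_{c_F}$ is finite and positive, then $h_{\mathrm{top}}(\psi_t) = h_{c_F}$ is finite, so by the converse direction of Lemma \ref{lema:reparam} it solves $P(\phi_t,-sF)=0$; that is, $P(-hF) = 0$. The main obstacle is the bookkeeping in the orbit-counting step: one must check that the bijection between periodic orbits is clean (no collapsing or splitting of orbits under reparametrization, which holds because $\psi_t$ has the same orbits as $\phi_t$, only retimed), and that the exponential growth rate of periodic orbits genuinely coincides with topological entropy for the reparametrized flow — this is where the Markov coding and Parry--Pollicott / Bowen--Ruelle machinery is essential, rather than any soft argument. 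Everything else is a direct substitution into results already available in the excerpt.
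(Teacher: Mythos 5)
Your argument is sound, but it does not mirror the paper, because the paper offers no proof of this lemma at all: it is quoted as an external result of Ledrappier\cite{ledrappier}, and is then used, together with lemma \ref{lema:reparam}, to \emph{deduce} the corollary that $h_{\textrm{top}}(\psi_t)=h_{c_F}$. You run the logic in the opposite direction: you first prove $h_{\textrm{top}}(\psi_t)=h_{c_F}$ by identifying periodic orbits of $\psi_t$ with conjugacy classes and invoking the Markov coding (lemma \ref{lema:markovpartition}, with weak mixing supplied by corollary \ref{cor:weak}, and the remark after definition \ref{defi:markovpatition} identifying $h_{\textrm{top}}(\psi_t)$ with the entropy of the suspension) plus the Parry--Pollicott asymptotics (theorem \ref{teo:primeorbit}), and only then read the lemma off from lemma \ref{lema:reparam}. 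This is legitimate and not circular, since lemma \ref{lema:reparam} (Abramov), lemma \ref{lema:markovpartition}, corollary \ref{cor:weak} and theorem \ref{teo:primeorbit} are all independent of the present lemma; what your route buys is a proof self-contained in the paper's toolkit, at the price of using the full prime orbit theorem where Ledrappier's own argument only needs the softer identification of pressure with the growth rate of weighted periodic-orbit sums.

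Three small points to tighten. First, periodic orbits of $\psi_t$ correspond to \emph{primitive} conjugacy classes, so your displayed equality should read $\#\{[\g]\ \textrm{primitive}:\l_{c_F}(\g)\leq t\}=\#\{\tau:p_\psi(\tau)\leq t\}$; this does not affect the $\limsup$, because $\l_{c_F}(\g)=\int_{[\g]}F\geq(\min F)\,|\g|$ is bounded below by a positive constant (compactness and torsion-freeness give a shortest closed geodesic), so including non-primitive classes changes the count by at most a factor linear in $t$. Second, in the first direction you must check that any solution $h$ of $P(-hF)=0$ is positive before lemma \ref{lema:reparam} applies; this is automatic since $F>0$ gives $P(-sF)\geq P(0)=h_{\textrm{top}}(\phi_t)>0$ for $s\leq0$. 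Third, for the converse direction note that $h_{\textrm{top}}(\psi_t)\leq h_{\textrm{top}}(\phi_t)/\min F$ is finite by Abramov's formula, so the converse clause of lemma \ref{lema:reparam} is indeed available.
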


\begin{cor} The topological entropy of the flow $\psi_t$ is the exponential growth rate of the cocycle $c_F.$
\end{cor}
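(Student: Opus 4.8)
The plan is to chain together the two pressure-theoretic facts already at our disposal: Lemma~\ref{lema:lemaledrappier2}, which relates the vanishing of the pressure $P(-hF)=0$ to the exponential growth rate $h_{c_F}$, and Lemma~\ref{lema:reparam}, which relates that same vanishing to the topological entropy of the reparametrized flow $\psi_t$.

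First I would record that the cocycle $c_F$ has positive periods: its period along $[\g]$ equals $\int_{[\g]}F$ (as noted just before Theorem~\ref{teo:ledrappier}), and this is strictly positive since $F>0$ and closed geodesics have positive length. Hence Corollary~\ref{cor:positiva} applies and yields $h_{c_F}>0$. Since we are in the standing situation of this subsection, in which $c$ — and therefore $c_F$, being cohomologous to $c$ — has finite exponential growth rate, we get $h_{c_F}\in(0,\infty)$.

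Next, apply the converse direction of Lemma~\ref{lema:lemaledrappier2}: as the exponential growth rate $h:=h_{c_F}$ of $c_F$ is finite and positive, we obtain $P(-hF)=0$. Thus the equation $P(\phi_t,-sF)=0$, $s\in\R$, admits the finite positive solution $s=h_{c_F}$. Now invoke Lemma~\ref{lema:reparam} for this same $F$: a finite positive solution of $P(\phi_t,-sF)=0$ is precisely $h_{\mathrm{top}}(\psi_t)$ (and the solution is unique). Therefore $h_{\mathrm{top}}(\psi_t)=h_{c_F}$, which is the assertion.

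There is essentially no hard step here; the only point requiring a little care is ensuring that the pressure equation genuinely has a \emph{positive} solution before feeding it into Lemma~\ref{lema:reparam}, which is why the appeal to Corollary~\ref{cor:positiva} (hence ultimately to the positivity of the periods of $c_F$) cannot be skipped. Alternatively, one could argue in the reverse order: $h_{\mathrm{top}}(\psi_t)$ is finite because $\psi_t$ is a continuous flow on a compact space, so the converse part of Lemma~\ref{lema:reparam} gives $P(\phi_t,-h_{\mathrm{top}}(\psi_t)F)=0$, and then the forward direction of Lemma~\ref{lema:lemaledrappier2} identifies $h_{\mathrm{top}}(\psi_t)$ with $h_{c_F}$; either ordering closes the argument.
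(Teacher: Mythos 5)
Your argument is correct and follows the paper's own proof: the converse direction of Lemma~\ref{lema:lemaledrappier2} gives $P(-h_{c_F}F)=0$, and Lemma~\ref{lema:reparam} identifies that solution with $h_{\textrm{top}}(\psi_t)$. The extra verification that $h_{c_F}\in(0,\infty)$ (via positivity of the periods and cohomology invariance) is a harmless elaboration of what the paper takes as standing hypotheses in this subsection.
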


\begin{proof}
Let $h$ be $c_F$'s exponential growth rate. Then Ledrappier's lemma \ref{lema:lemaledrappier2} implies $P(-hF)=0.$ Lemma \ref{lema:reparam} states that this condition determines $\psi_t$'s topological entropy $h_{\textrm{top}}(\psi_t),$ and thus $h=h_{\textrm{top}}(\psi_t).$
\end{proof}

Recall that $h=h_{c_F}.$ We now give a precise description of the measure induced on $\bord^2\G$ by the equilibrium state of $-hF.$ Denote $a:T^1M\to T^1M$ the antipodal map. The periods of the function $\vo F:(p,w)\mapsto F(a(p,w))$ are the numbers $\l_{c_F}(\g^{-1})$ and thus $\vo {c_F}:=c_{\vo F}$ is a dual cocycle of $c_F.$

\begin{lema}\label{lema:growth1} The cocycles $c_F$ and $\vo c_F$ have the same exponential growth rate.
\end{lema}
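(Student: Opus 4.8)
The plan is to exploit the fact that the exponential growth rate of a H\"older cocycle is a cohomological invariant (this was noted right after Definition \ref{defi:cociclo}) together with the correspondence, given by Ledrappier's theorem \ref{teo:ledrappier}, between cohomology classes of cocycles over $\G$ and cohomology classes of $\G$-invariant H\"older functions on $T^1\w M$. By construction $\vo c_F = c_{\vo F}$ where $\vo F = F\circ a$ and $a$ is the antipodal map, so the whole question is transported to the geometry of the geodesic flow: I need to compare the growth rate attached to $F$ with the growth rate attached to $F\circ a$. Since the periods of $c_F$ are $\l_{c_F}(\g)=\int_{[\g]}F$ (these are the periods of $F$) and the periods of $\vo c_F$ are $\int_{[\g]}\vo F = \int_{[\g^{-1}]}F = \l_{c_F}(\g^{-1})$, the antipodal map $a$ simply sends the closed geodesic of $\g$ to that of $\g^{-1}$, run in the opposite direction. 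Crucially, $a$ is a diffeomorphism of $T^1\w M$ conjugating $\phi_t$ to $\phi_{-t}$ and descending to $\G\/T^1\w M$, so it induces a bijection on the set of (unoriented) closed geodesics — hence a bijection on conjugacy classes $[\g]\leftrightarrow[\g^{-1}]$ — preserving lengths $|\g|=|\g^{-1}|$.

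First I would record that, because $P(\phi_{-t},\cdot)=P(\phi_t,\cdot)$ (pressure is insensitive to time-reversal, the invariant probabilities of $\phi_t$ and $\phi_{-t}$ being literally the same sets with the same entropies) and $\int_X (F\circ a)\, d(a_*m)=\int_X F\, dm$, one has $P(\phi_t, -s\vo F)=P(\phi_t,-sF)$ for every $s$: indeed $a$ is a homeomorphism intertwining the two flows, so it pushes equilibrium states to equilibrium states and preserves the pressure of a potential composed with $a$. Then I would invoke Ledrappier's lemma \ref{lema:lemaledrappier2}: $h_{c_F}$ is the unique $h$ with $P(-hF)=0$, and likewise $h_{\vo c_F}$ is the unique $h$ with $P(-h\vo F)=0$ (applying the lemma to the positive H\"older function $\vo F$ and its cocycle $c_{\vo F}=\vo c_F$ — note $\vo F>0$ since $F>0$, and $\vo c_F$ has positive periods since its periods are $\l_{c_F}(\g^{-1})>0$). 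Since the two pressure equations coincide, their unique solutions coincide, giving $h_{c_F}=h_{\vo c_F}$.

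An alternative, more elementary route — which I would mention as a remark or use if one prefers to avoid re-deriving the pressure identity — is to argue directly at the level of counting. The map $[\g]\mapsto[\g^{-1}]$ is an involution of $[\G]$, and under it $\l_{\vo c_F}(\g)=\l_{c_F}(\g^{-1})$, so $$\#\{[\g]\in[\G]:\l_{\vo c_F}(\g)\leq t\}=\#\{[\g]\in[\G]:\l_{c_F}(\g^{-1})\leq t\}=\#\{[\g]\in[\G]:\l_{c_F}(\g)\leq t\},$$ the last equality because $[\g]\mapsto[\g^{-1}]$ is a bijection of $[\G]$ onto itself. Taking $\limsup\frac1t\log$ of both sides gives $h_{\vo c_F}=h_{c_F}$ on the nose. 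The only point that needs a word of care is that $\g$ and $\g^{-1}$ genuinely lie in distinct conjugacy classes in general and that the map is well-defined on classes; both are immediate from $h(\g h\g^{-1})^{-1}=\g h^{-1}\g^{-1}$. I expect the main (very mild) obstacle to be purely presentational: deciding whether to phrase the proof via the pressure equation of lemma \ref{lema:lemaledrappier2} (which keeps the argument uniform with the rest of the section) or via the self-bijection of $[\G]$ (which is a one-line computation but slightly changes the flavor). I would go with the counting argument for brevity, since it needs nothing beyond the already-recorded identity $\l_{\vo c_F}(\g)=\l_{c_F}(\g^{-1})$ and the definition of $h_c$.
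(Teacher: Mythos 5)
Your preferred route (the counting argument via the involution $[\g]\mapsto[\g^{-1}]$ and the identity $\l_{\vo c_F}(\g)=\l_{c_F}(\g^{-1})$) is exactly the paper's proof, which is a one-line remark that $\g\mapsto\g^{-1}$ gives a bijection between the sets being counted. The additional pressure-equation route you sketch is a valid alternative but unnecessary; your chosen argument is correct and matches the paper.
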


\begin{proof}  The function $\g\mapsto\g^{-1}$ induces a bijection between the sets $\{\g\in\G:\l_{c_F}(\g)\leq t\}$ and $\{\g\in\G:\l_{c_F}(\g^{-1})\leq t\}.$
\end{proof}


Define $[\cdot,\cdot]_F:\bord^2\G\to\R$ as $$[x,y]_F= B^{\vo F}_x (o,u)+B^F_y(o,u),$$ for any point $u$ in the geodesic determined by $x$ and $y,$ where $$B^F,B^{\vo F}:\bord\widetilde M\times\widetilde M\times\widetilde M\to\R$$ are the geometric cocycles defined by (\ref{equation:cz}) for the functions $F$ and $\vo F$ respectively. One remarks that $[\cdot,\cdot]_F$ is a Gromov product for the pair $\{c_F,\vo{c_F}\}.$

Denote $\mu_F$ and $\vo\mu_F$ as the quasi-invariant measures whose cocycles are $c_F$ and $\vo c_F$ respectively.The second item of the theorem is then deduced from the following proposition of Schapira\cite{schapira} (proposition 2.4).

\begin{prop} Identify $T^1\widetilde M$ with $\bord^2\widetilde M\times\R$ via the Hopf parametrization. Then the measure $$m_F:=e^{-h[x,y]_F}d\vo\mu_F(x)d\mu_F(y)ds$$ induces in the quotient $\G\/T^1\widetilde M$ the Gibbs state of $-hF.$ 
\end{prop}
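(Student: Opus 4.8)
The plan is to identify the measure $m_F = e^{-h[x,y]_F}\,d\vo\mu_F(x)\,d\mu_F(y)\,ds$ on $\Gamma\backslash T^1\widetilde M$ as the Gibbs state of $-hF$ by checking the two defining features of a Gibbs/equilibrium state: that the local product structure of $m_F$ matches that of the Gibbs state, and that $m_F$ has the correct conditional measures along stable/unstable manifolds (the Patterson--Sullivan transformation rule), then invoking uniqueness of the equilibrium state from Bowen--Ruelle (Proposition \ref{teo:ruellebowen}). First I would verify that $m_F$ is well defined: the expression $[x,y]_F = B^{\vo F}_x(o,u)+B^F_y(o,u)$ is independent of the choice of $u$ on the geodesic $(x,y)$ by property (iii) of Lemma \ref{lema:propiedades} applied to $F$ and $\vo F$ (moving $u$ along the geodesic changes $B^F_y(o,u)$ by $-\int F$ and $B^{\vo F}_x(o,u)$ by $+\int F$, since $\vo F$ is $F$ composed with the antipodal map and the geodesic is traversed in the opposite direction for $x$), and that $m_F$ is $\Gamma$-invariant: the factor $e^{-h[x,y]_F}$ transforms under $\gamma$ by $e^{h(\vo c_F(\gamma,x)+c_F(\gamma,y))}$ by the Gromov product identity, while $d\vo\mu_F(x)\,d\mu_F(y)$ picks up $e^{-h(\vo c_F(\gamma^{-1},x)+c_F(\gamma^{-1},y))}$... here one must be careful with the direction of the cocycle, but the Patterson--Sullivan rule $d\gamma_*\mu_F/d\mu_F(y) = e^{-h c_F(\gamma^{-1},y)}$ together with the cocycle identity is precisely engineered so that the product $e^{-h[\cdot,\cdot]_F}\vo\mu_F\otimes\mu_F$ is $\Gamma$-invariant on $\partial^2\Gamma$, and tensoring with $ds$ (invariant since the $\Gamma$-action translates the $\R$-coordinate by $c_F(\gamma,y)$, which is Lebesgue-preserving) gives $\Gamma$-invariance on $\partial^2\Gamma\times\R \cong T^1\widetilde M$.

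Next I would compute the local product structure of the induced measure on $\Gamma\backslash T^1\widetilde M$. Writing $T^1\widetilde M$ in Hopf coordinates $(x,y,s)$, the stable manifold of a point is (roughly) the set where $y$ and $s$ are fixed and $x$ varies, the unstable where $x$ and $s$ are fixed, and the flow moves $s$. The measure $m_F$ then disintegrates, on a small flow box, as (a density times) $d\vo\mu_F(x)$ along unstables, $d\mu_F(y)$ along stables, and $ds$ along the flow --- this is exactly the structure a Gibbs state is required to have, with the conditional on the unstable being (up to the H\"older density coming from $e^{-h[x,y]_F}$) the Patterson--Sullivan measure of $\vo F$ and on the stable that of $F$. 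To pin down that this is the Gibbs state of $-hF$ specifically (and not of some other potential), I would check the defining ratio estimate: for the Gibbs state $m_{-hF}$ one has $m_{-hF}(B(p, \epsilon, T)) \asymp \exp(-hT \cdot \bar F + \int_0^T -hF\phi_t(p)\,dt)$... more usefully, I would use that the conditional measures of the Gibbs state of $-hF$ along unstable horospheres satisfy the cocycle transformation rule with Radon--Nikodym derivative $e^{-h B^{\vo F}_\cdot}$ (this is the standard characterization, and is where Schapira's geometric cocycle $B^F_z$ was designed to enter), and these conditionals are unique given the potential. Since $m_F$ exhibits exactly these conditionals by construction of $\mu_F$, $\vo\mu_F$ as Patterson--Sullivan measures of cocycles $c_F = c_{F}$, $\vo c_F = c_{\vo F}$ (via Theorem \ref{teo:medidas}), the two measures have the same stable/unstable conditionals; a measure with full support that is flow-invariant and has prescribed H\"older conditionals along the foliations is unique, hence $m_F = m_{-hF}$ up to normalization.

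I expect the main obstacle to be the bookkeeping that ties together three slightly different cocycle conventions --- the abstract H\"older cocycle $c_F$ on $\Gamma\times\partial\Gamma$, Schapira's geometric cocycle $B^F_z$ on $\partial\Gamma\times\widetilde M\times\widetilde M$, and the Patterson--Sullivan transformation rule --- and verifying that the signs and the use of $o$ versus a moving base point $u$ all cohere so that the density $e^{-h[x,y]_F}$ is genuinely the one making the product of Patterson--Sullivan measures invariant, and simultaneously the one realizing the Gibbs conditionals. Concretely the delicate point is the interchange, inside $B^F_z(o,u)$, of the Busemann correction $B_z(o,u)$ with the integral of $F$, and checking that $[\cdot,\cdot]_F$ agrees with the abstract Gromov product $[\cdot,\cdot]$ for the pair $\{c_F,\vo c_F\}$ up to an additive constant (which does not affect zero sets, and can be absorbed into the overall normalizing constant). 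Once that verification is in place, the identification with the Gibbs state follows from Proposition \ref{teo:ruellebowen} (uniqueness of the equilibrium state) applied to the potential $-hF$, whose pressure is zero by Lemma \ref{lema:lemaledrappier2}, together with Lemma \ref{lema:reparam} relating equilibrium states of $-hF$ to the measure of maximal entropy of $\psi_t$; I would cite Schapira's Proposition 2.4 for the remaining routine verification rather than reproduce it.
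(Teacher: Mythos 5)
Your proposal is correct and lands on essentially the same approach as the paper: the paper does not prove this proposition itself but simply quotes it as Proposition 2.4 of Schapira, which is exactly where your argument ends up. Your preliminary sketch (well-definedness of $[\cdot,\cdot]_F$, $\G$-invariance, and the identification via product structure, Patterson--Sullivan conditionals and uniqueness of the equilibrium state) is a reasonable outline of what Schapira's proof actually does, so deferring the remaining verification to that reference matches the paper's treatment.
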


In order to finish the proof of the second item of theorem \ref{teo:reparametrisation} we remark that, as observed in section \S\ref{section:suspencion} (proposition \ref{cor:ruellebowen}), $\psi_t$ has a unique probability of maximal entropy $\nu.$ After lemma \ref{lema:reparam} $\nu$ has the same zero sets as the equilibrium state of $-hF,$ and thus, after the last proposition the lift of $\nu$ to $\bord^2\G\times \R$ has the same zero sets as $\vo\mu\otimes\mu\otimes ds.$ This finishes the proof.





\section{Counting periods and distribution of fixed points}\label{section:countingperiods}

In this section we extract as much counting information as we can for a general cocycle. We study the consequences of Parry-Pollicott's prime orbit theorem and Bowen's spatial distribution result via the reparametrizing theorem \ref{teo:reparametrisation}.

From the first item of theorem \ref{teo:reparametrisation} we deduce the following counting result. Recall that $\g\in\G$ is \emph{primitive} if it can't be written as a (positive) power of another element of $\G.$

\begin{cor}\label{cor:conteoperiodos} Let $c:\G\times\bord\G\to\R$ be a H\"older cocycle with non negative periods and such that $h_c\in(0,\infty)$ then $$h_cte^{-h_ct}\#\{[g]\in[\G]\textrm{ primitive}:\l_c(\g)\leq t\}\to 1$$ as $t\to\infty.$
\end{cor}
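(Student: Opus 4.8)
The plan is to deduce the corollary from the first item of the reparametrizing theorem \ref{teo:reparametrisation} combined with Parry--Pollicott's prime orbit theorem for hyperbolic flows. First I would invoke theorem \ref{teo:reparametrisation}(1): since $c$ is a H\"older cocycle with $h_c\in(0,\infty)$, the translation flow $\psi_t:\G\/\bord^2\G\times\R\mismo$ is H\"older-conjugate to a H\"older reparametrization of the geodesic flow on $\G\/T^1\w M$; it is topologically mixing and has topological entropy $h_c$. In particular, by corollary \ref{cor:weak} (applied to this reparametrization), the associated symbolic flow $\sigma^f_t$ is topologically weakly mixing, and by the Bowen--Ruelle machinery (corollary \ref{cor:ruellebowen}) $\psi_t$ has a unique measure of maximal entropy; so $\psi_t$ satisfies the hypotheses of the Parry--Pollicott prime orbit theorem. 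That theorem gives, for the number of \emph{prime} (i.e.\ primitive) closed orbits $\tau$ of $\psi_t$ with period $p(\tau)\leq t$, the asymptotic $\#\{\tau\text{ prime}: p(\tau)\leq t\}\sim e^{h_c t}/(h_c t)$, equivalently $h_c t e^{-h_c t}\#\{\tau\text{ prime}:p(\tau)\leq t\}\to1$ as $t\to\infty$.

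The remaining point is to match periodic orbits of $\psi_t$ with conjugacy classes of $\G$ and their periods with the cocycle periods $\l_c$. Under the $\G$-equivariant homeomorphism $E:T^1\w M\to\bord^2\G\times\R$ built in the proof of theorem \ref{teo:reparametrisation}, closed orbits of $\phi_t$ on $\G\/T^1\w M$ correspond bijectively to conjugacy classes $[\g]$ of $\G$ (the closed geodesic associated to $\g$), and this correspondence is preserved under reparametrization since reparametrizing a flow does not change the set of (unparametrized) orbits; primitive $\g$ correspond exactly to prime closed orbits. It then remains to check that the $\psi_t$-period of the orbit corresponding to $[\g]$ equals $\l_c(\g)=c(\g,\g_+)$. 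By the discussion preceding lemma \ref{lema:reparam}, the new period of a closed orbit $\tau$ of $\phi_t$ under reparametrization by $F$ is $\int_\tau F$; and since $c$ is cohomologous to $c_F$ for the H\"older function $F$ chosen in the proof of theorem \ref{teo:reparametrisation}, one has $\l_c(\g)=\l_{c_F}(\g)=\int_{[\g]}F$, which is precisely that new period. (Passing from $c$ to $c_F$ is legitimate because $\l_c$ depends only on the cohomology class of $c$, as noted after definition \ref{defi:cociclo}.)

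Substituting this identification into the Parry--Pollicott asymptotic yields
$$h_c t e^{-h_c t}\#\{[\g]\in[\G]\text{ primitive}:\l_c(\g)\leq t\}\to1,$$
which is exactly the claim. One minor technical caveat to address: the hypothesis here is that the periods are \emph{non-negative} rather than strictly positive, so I would remark that finiteness and positivity of $h_c$ already forces the relevant quantities to behave as in the positive-period case (by Ledrappier's lemma \ref{lema:lemaledrappier}, $\inf_{[\g]}\l_c(\g)/|\g|>0$, so in fact $\l_c(\g)>0$ for all $\g\neq e$, up to cohomology the function $F$ is strictly positive), so theorem \ref{teo:reparametrisation} applies verbatim.

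I expect the main obstacle to be purely bookkeeping: verifying carefully that the orbit-to-conjugacy-class correspondence respects primitivity and that periods transform exactly as $\int_{[\g]}F=\l_c(\g)$ under the chain of identifications (cocycle $c$ $\leadsto$ cohomologous $c_F$ $\leadsto$ function $F$ $\leadsto$ reparametrized geodesic flow $\leadsto$ symbolic model). No deep new idea is needed — the content is entirely in theorem \ref{teo:reparametrisation} and the cited dynamical results of Parry--Pollicott.
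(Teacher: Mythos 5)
Your proposal is correct and follows essentially the same route as the paper: invoke theorem \ref{teo:reparametrisation} to realize the translation flow as a reparametrized geodesic flow, identify primitive conjugacy classes with periodic orbits of $\psi_t$ of period $\l_c(\g)$, and conclude via corollary \ref{cor:weak} and the Parry--Pollicott prime orbit theorem. The only (harmless) difference is bookkeeping: you identify the periods through $F$ and cohomology invariance, while the paper computes directly that $\g(\g_-,\g_+,s)=(\g_-,\g_+,s-\l_c(\g))$ on the lift; your remark on the non-negative versus positive periods hypothesis is in fact more careful than the paper itself.
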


So to obtain a precise counting result for the periods of a H\"older cocycle $c,$ it is sufficient to prove that it has finite exponential growth rate.

\begin{proof}Following theorem \ref{teo:reparametrisation} the translation flow $\psi_t:\G\/\bord^2\G\times\R\mismo$ ($\G$ acting on $\bord^2\G\times\R$ via $c$) is well defined and is a reparametrization of the geodesic flow. If $\tau$ is a periodic orbit of $\psi_t,$ then any lift to $\bord^2\G\times\R$ is of the form $(\g_-,\g_+,s)$ for some primitive $\g\in\G$ and $s\in\R.$ One checks that $$\g(\g_-,\g_+,s)=(\g_-,\g_+,s-\l_c(\g))$$ which implies that the period $p(\tau)$ of $\tau$ is $\l_c(\g)$ since $\g$ was chosen primitive. One then has $$\#\{\g\in[\G]\textrm{ primitive}:\l_c(\g)\leq t\}=\#\{\tau\textrm{ periodic}:p(\tau)\leq t\}.$$

We are led to count the number of periodic orbits of period $\leq t$ for the flow $\psi_t.$ Since $\psi_t$ is a reparametrization of the geodesic flow, corollary \ref{cor:weak} implies that we have a weak mixing Markov coding $(\E,\pi,f)$ associated to $\psi_t.$ Recall that $\psi_t$'s topological entropy coincides with the topological entropy of $\sigma^f_t.$ One finishes by applying the following theorem of Parry-Pollicott\cite{parrypollicott1} (see also \cite{parrypollicott}). This completes the proof.
\end{proof}

\begin{teo}[Prime Orbit Theorem\cite{parrypollicott1}]\label{teo:primeorbit} Let $\E$ be a sub-shift of finite type and let $f:\E\to\R_+^*$ be H\"older continuous. Suppose that the suspension flow $\sigma^f_t:\E\times\R/\hat f\mismo$ is weak mixing, and set $p(\tau)$ the period of a $\sigma^f_t$ periodic orbit, then $$hte^{-ht}\#\{\tau\textrm{ periodic}:p(\tau)\leq t\}\to1$$ when $t\to\infty,$ where $h$ is the topological entropy of the suspension flow $\sigma^f_t.$
\end{teo}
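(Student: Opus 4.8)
The plan is the classical transfer-operator / dynamical zeta function argument, carried out for a general positive H\"older roof function. First reduce everything to the base: periodic orbits $\tau$ of the suspension flow $\sigma^f_t$ correspond bijectively to periodic $\sigma$-orbits in $\E$, and if $\tau$ comes from a closed orbit $\{x,\sigma x,\ldots,\sigma^{n-1}x\}$ of combinatorial length $n$ then $p(\tau)=f_n(x):=\sum_{j=0}^{n-1}f(\sigma^jx)$. Introduce the Ruelle zeta function $\zeta(s)=\prod_{\tau\textrm{ prime}}(1-e^{-sp(\tau)})^{-1}$, which converges for $\Re s$ large, and note $-\zeta'/\zeta(s)=\sum_{\tau\textrm{ prime}}\sum_{k\geq1}p(\tau)e^{-skp(\tau)}$. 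The aim is to show that $-\zeta'/\zeta$ extends holomorphically to a neighbourhood of $\{\Re s\geq h\}$ except for a single simple pole at $s=h$ with residue $1$, and then to feed this into a Tauberian theorem.

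The analytic input comes from the spectral theory of the twisted transfer operator $\mathcal L_{-sf}\phi(x)=\sum_{\sigma y=x}e^{-sf(y)}\phi(y)$ acting on a suitable space of H\"older functions on $\E$. For real $s$ the Ruelle--Perron--Frobenius theorem gives a simple, isolated leading eigenvalue $e^{P(\sigma,-sf)}$ with the rest of the spectrum in a strictly smaller disc; the pressure equation $P(\sigma,-sf)=0$ has the unique solution $s=h$, where $h$ is identified with the topological entropy of $\sigma^f_t$ (via Abramov, exactly as in Lemma \ref{lema:reparam}). By perturbation theory $\mathcal L_{-sf}$ retains a spectral gap and a simple leading eigenvalue $\lambda(s)$ for $s$ in a complex neighbourhood of $h$, and the standard identity expressing $\zeta$ through the Fredholm determinant of $\mathcal L_{-sf}$ shows that the only pole of $\zeta$ in $\{\Re s\geq h\}$ near the real axis is at the point where $\lambda(s)=1$, i.e. at $s=h$, and it is simple.

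The crucial point is the \textbf{absence of poles on the line $\Re s=h$ away from $s=h$}, and this is exactly where the weak mixing hypothesis enters. If $\zeta$ had a pole at $s=h+ia$ with $a\neq0$, then $\mathcal L_{-(h+ia)f}$ would have an eigenvalue of modulus $e^{P(\sigma,-hf)}=1$; a Liv\v sic-type argument then forces $af$ to be cohomologous over $\sigma$ to a function with values in $2\pi\Z$ up to an additive constant $2\pi c$. Consequently every period satisfies $ap(\tau)=af_n(x)\in 2\pi cn+2\pi\Z$, so the periods of $\sigma^f_t$ generate a discrete subgroup of $\R$, contradicting weak mixing by Remark \ref{obs:periodos}. (If $\E$ is transitive but periodic one first passes to a power of $\sigma$; the hypothesis that $\sigma^f_t$ is weak mixing prevents the degenerate case.) Finally, apply the Wiener--Ikehara Tauberian theorem to $\Psi(t):=\sum_{\tau,k\,:\,kp(\tau)\leq t}p(\tau)$, whose Mellin--Laplace transform is $-\zeta'/\zeta$, obtaining $\Psi(t)\sim e^{ht}/h$; the $k\geq2$ terms contribute only $O(e^{(h/2+\eps)t})$ and are negligible, so $\sum_{p(\tau)\leq t}p(\tau)\sim e^{ht}/h$, and an elementary partial summation (the passage from $\theta(x)\sim x$ to $\pi(x)\sim x/\log x$ in the prime number theorem, with $p(\tau)$ playing the role of $\log n$) yields $\#\{\tau\textrm{ periodic}:p(\tau)\leq t\}\sim e^{ht}/(ht)$, which is the assertion. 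I expect the meromorphic continuation together with the non-vanishing on the critical line, i.e. the quasi-compactness of the twisted transfer operators and the Liv\v sic argument producing a lattice of periods under weak mixing, to be the main obstacle; the Tauberian conclusion is then routine.
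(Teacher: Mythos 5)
The paper itself does not prove this statement: it is quoted as an external result of Parry--Pollicott, and the proof in that source is precisely the scheme you outline (symbolic reduction, twisted transfer operators, meromorphic continuation of the zeta function, Wiener--Ikehara, partial summation). So your overall strategy is the right one, and the routine parts of your sketch (Abramov/pressure identification of $h$, the reduction of the $k\geq2$ terms, the passage from $\sum_{p(\tau)\leq t}p(\tau)\sim e^{ht}/h$ to the count) are fine, modulo the standard reductions you do not mention (one-sided shift, roof depending on future coordinates, and the fact that for merely H\"older data one does not have a literal Fredholm determinant but only the local identity $\zeta(s)=(1-\lambda(s))^{-1}\times(\textrm{analytic non-vanishing})$ coming from the spectral expansion of the periodic-orbit sums).

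There is, however, a genuine gap exactly at the step you single out as crucial. A unimodular eigenvalue of $\mathcal L_{-(h+ia)f}$ only forces $af$ to be cohomologous to $c+2\pi M$ with $M$ integer-valued and $c$ a constant, hence $ap(\tau)\in n(\tau)c+2\pi\Z$; when $c/2\pi$ is irrational this does \emph{not} imply that the periods generate a discrete subgroup of $\R$, so your contradiction with weak mixing evaporates. The situation is not vacuous: on the full $2$-shift take $f(x)=\beta+2\pi M(x)$ with $M(x)\in\{1,2\}$ depending on $x_0$ and $\beta/2\pi$ irrational, $\beta>0$ small. Then $\mathcal L_{-(h+i)f}=e^{-i\beta}\mathcal L_{-hf}$ has a unimodular eigenvalue, yet the periods contain $\beta+2\pi$ and $\beta+4\pi$, hence generate a dense subgroup, and by (the contrapositive of) Remark \ref{obs:periodos} the suspension flow \emph{is} weak mixing. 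The correct argument must therefore split cases: a pole of $\zeta$ at $h+ia$ occurs only when $\lambda(h+ia)=1$, i.e. when the eigenvalue is exactly $1$, equivalently $af$ is cohomologous to a $2\pi\Z$-valued function; only then do all periods lie in $\tfrac{2\pi}{a}\Z$, and a Liv\v sic argument produces a continuous $w$ with $w\circ\sigma=e^{iaf}w$, i.e. a flow eigenfunction contradicting weak mixing (note that Remark \ref{obs:periodos} only gives the implication ``not weak mixing $\Rightarrow$ discrete periods'', so you must build the eigenfunction rather than invoke the remark). In the remaining case $c\not\equiv0$ there is no pole, but for the Tauberian theorem you still owe an analytic \emph{non-vanishing} extension of $\zeta$ across $h+ia$; this again follows from the local formula above, but it is not automatic from ``no pole''. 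With this case distinction supplied, your sketch becomes the standard proof.
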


We prove now a distribution property of fixed points on $\bord\G$ for a H\"older cocycle $c$ be a H\"older cocycle with non negative periods and $h_c\in(0,\infty).$

Consider a dual cocyle $\vo c,$ a Gromov product $[\cdot,\cdot]:\bord^2\G\to\R$ and denote $\mu$ and $\vo\mu$ for the Patterson-Sullivan probabilities of $c$ and $\vo c$ respectively. Finally denote $C_c(\bord^2\G)$ for the space of real continuous functions $:\bord^2\G\to\R$ with compact support.

The following proposition is inspired in Roblin\cite{roblin}.

\begin{prop}\label{prop:lambda} Denote $\|m_c\|$ the total mass of the measure $m_c=e^{-h_c[\cdot,\cdot]}\vo\mu\otimes\mu\otimes ds$ on the compact quotient $\G\/\bord^2\G\times\R.$ Then we have the convergence $$\nu_t:=\|m_c\|h_ce^{-h_ct} \sum_{\g\in\G: \l_c(\g)\leq t} \delta_{\g_-} \otimes \delta_{\g_+}\to e^{-h_c[\cdot,\cdot]}\vo\mu\otimes\mu$$ in $C_c^*(\bord^2\G)$ when $t\to\infty.$ 
\end{prop}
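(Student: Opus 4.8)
The plan is to transfer the counting statement from the symbolic/flow side back to $\bord^2\G$ using the reparametrizing theorem \ref{teo:reparametrisation} together with Bowen's equidistribution theorem for closed orbits of weak-mixing suspension flows. I would first recall that theorem \ref{teo:reparametrisation} gives a H\"older conjugacy between $\psi_t:\G\/\bord^2\G\times\R\mismo$ and a reparametrization of the geodesic flow, hence (via corollary \ref{cor:weak} and lemma \ref{lema:markovpartition}) a weak-mixing Markov coding $(\E,\pi,f)$ for $\psi_t$, and that the measure of maximal entropy of $\psi_t$ is, by the second item of the theorem, the image of $e^{-h_c[\cdot,\cdot]}\vo\mu\otimes\mu\otimes ds$ on the quotient. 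As in the proof of corollary \ref{cor:conteoperiodos}, the periodic orbits $\tau$ of $\psi_t$ are in bijection with primitive conjugacy classes $[\g]$, with period $p(\tau)=\l_c(\g)$; a lift of $\tau$ to $\bord^2\G\times\R$ is the orbit of $(\g_-,\g_+,s)$, so the orbit $\tau$ "remembers" the pair $(\g_-,\g_+)\in\bord^2\G$.

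Next I would invoke Bowen's spatial distribution result \cite{bowen1}: for a weak-mixing suspension flow (here $\sigma^f_t$, or equivalently $\psi_t$) with topological entropy $h=h_c$, the normalized sum of arclength measures along closed orbits of period $\leq t$, namely $h e^{-ht}\sum_{p(\tau)\leq t}\mathrm{Leb}_\tau$, converges weakly to the measure of maximal entropy (as a finite measure, after normalizing by its total mass $\|m_c\|$). Pulling this back through $E$ and the Hopf-type parametrization, the arclength measure on the orbit of $(\g_-,\g_+,\cdot)$ pushes to $\l_c(\g)$ times the point mass $\delta_{(\g_-,\g_+)}$ in the $\bord^2\G$-direction (the length of the orbit in the reparametrized flow is exactly $\l_c(\g)$), tensored with the $ds$ direction. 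Disintegrating along $ds$ and testing against a function $\varphi\in C_c(\bord^2\G)$ that does not depend on the $\R$-coordinate, Bowen's convergence becomes
$$
\|m_c\|\,h_c e^{-h_c t}\sum_{[\g]\textrm{ prim.},\,\l_c(\g)\leq t}\l_c(\g)\,\varphi(\g_-,\g_+)\ \longrightarrow\ \int_{\bord^2\G}\varphi\, e^{-h_c[\cdot,\cdot]}\,d\vo\mu\,d\mu,
$$
which already resembles the desired statement except for the extra weight $\l_c(\g)$ and the restriction to primitive classes and the restriction to conjugacy classes rather than group elements.

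I would then remove these discrepancies by elementary bookkeeping. First, passing from the weighted sum (with weight $\l_c(\g)$) to the unweighted counting sum $\sum_{\l_c(\g)\leq t}$ is a standard Abel/Stieltjes summation argument: writing the counting function $N(t)=\#\{[\g]\textrm{ prim}:\l_c(\g)\leq t\}$, corollary \ref{cor:conteoperiodos} gives $N(t)\sim e^{h_c t}/(h_c t)$, and partial summation converts $e^{-h_c t}\sum_{\l_c(\g)\leq t}\l_c(\g)(\cdots)$ into $e^{-h_c t}\sum_{\l_c(\g)\leq t}(\cdots)$ up to the correct constant; one does this first for $\varphi\geq 0$ and then in general by linearity, using the already-known total-mass asymptotic to control uniformity. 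Second, non-primitive elements and the difference between conjugacy classes and group elements contribute only a lower-order exponential term (each $\g$ contributes the orbit of $(\g_-,\g_+)$, and $\g=\eta^n$ has $\g_\pm=\eta_\pm$ and $\l_c(\g)=n\l_c(\eta)$, so the non-primitive part is dominated by $\sum_{n\geq2}N(t/n)=O(e^{h_c t/2})$, negligible after multiplication by $e^{-h_c t}$); likewise the stabilizer of a conjugacy class is finite so summing over $\g\in\G$ versus over $[\g]$ changes things only by bounded multiplicities absorbed into the constants. The main obstacle is making the pushforward of Bowen's measure-valued convergence to $\bord^2\G$ rigorous and identifying the limit as $e^{-h_c[\cdot,\cdot]}\vo\mu\otimes\mu$: one must check that the conjugacy $E$ carries the measure of maximal entropy of $\psi_t$ to the quotient of $e^{-h_c[\cdot,\cdot]}\vo\mu\otimes\mu\otimes ds$ (this is precisely item 2 of theorem \ref{teo:reparametrisation}) and that testing against $\varphi$ independent of $s$ isolates the $\vo\mu\otimes\mu$ factor with the Gromov-product density, and that compact support of $\varphi$ (together with co-compactness of the $\G$-action) keeps all sums locally finite so the weak-$*$ limit in $C_c^*(\bord^2\G)$ makes sense; the partial-summation step is routine once this identification is in hand.
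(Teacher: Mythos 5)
Your architecture is the paper's (reparametrizing theorem, weak-mixing Markov coding via corollary \ref{cor:weak}, Parry--Pollicott plus Bowen, then transfer to $\bord^2\G$), but the central quantitative step is wrong. When you lift Bowen's equidistribution from the quotient $\G\/\bord^2\G\times\R$ to $\bord^2\G\times\R$, the closed orbit attached to a primitive conjugacy class $[\g]$ unfolds into the infinitely many lines $\{(\alpha\g_-,\alpha\g_+)\}\times\R$, $\alpha\in\G/\langle\g\rangle$, and the $\G$-invariant lift of $\Leb_\tau$ is $\sum_\alpha\delta_{(\alpha\g_-,\alpha\g_+)}\otimes ds$: weight $1$ per \emph{group element} in the class, the length being carried entirely by the $ds$-direction. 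Your displayed formula instead attaches the weight $\l_c(\g)$ to a single term per conjugacy class (for which $\varphi(\g_-,\g_+)$ is not even well defined, since conjugate elements have different fixed points); you are double-counting the length and collapsing the conjugates. Read over primitive group elements, your display is off by a factor of order $t$: in your length-weighted normalization of Bowen the correct statement is $\|m_c\|h_ce^{-h_ct}\sum_{\g\ \mathrm{prim}:\l_c(\g)\leq t}\varphi(\g_-,\g_+)\to\int\varphi\,e^{-h_c[\cdot,\cdot]}d\vo\mu\,d\mu$, with no weight at all, so your left-hand side diverges like $t$. The two ``bookkeeping'' fixes you propose do not repair this: Abel/Stieltjes summation relates $\sum_{\l_c(\g)\leq t}\l_c(\g)a_\g$ to $t\sum_{\l_c(\g)\leq t}a_\g$, a factor $t$, not a constant; and the passage from conjugacy classes to group elements is not a matter of ``bounded multiplicities'' --- in a torsion-free hyperbolic group the centralizer of $\g$ is $\langle\g\rangle$, each conjugacy class is infinite, and it is exactly this unfolding, made locally finite by the compact support of $\varphi$, that converts the prime-orbit normalization $h_cte^{-h_ct}$ into the element-count normalization $h_ce^{-h_ct}$ of the proposition. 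That factor of $t$ is the whole content of the statement, and your plan loses track of it.

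For comparison, the paper keeps Bowen in the form $\#\{\tau:p(\tau)\leq t\}^{-1}\sum_{p(\tau)\leq t}\frac1{p(\tau)}\Leb_\tau\to m_{\max}$, lifts it to the convergence $h_cte^{-h_ct}\sum_{\g\ \mathrm{prim}:\l_c(\g)\leq t}\frac1{\l_c(\g)}\delta_{\g_-}\otimes\delta_{\g_+}\otimes ds\to e^{-h_c[\cdot,\cdot]}\vo\mu\otimes\mu\otimes ds/\|m_c\|$ on $\bord^2\G\times\R$, removes the $\R$-factor by comparing measures of product sets $A\times B\times I$ with $I$ a bounded interval (your ``test against $\varphi$ independent of the $\R$-coordinate'' is not legitimate as stated, since such a function is not compactly supported upstairs and the lifted line measures give it infinite mass), and then replaces the weight $t/\l_c(\g)$ over primitive elements by weight $1$ over all of $\G$ via Roblin's identity $\left[t/\l_c(\g)\right]=\#\{n\in\N:\l_c(\g^n)\leq t\}$ together with an $e^{-\kappa}$ sandwich; this single step handles the weights and the primitivity restriction simultaneously. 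If you prefer the length-weighted form of Bowen you can indeed avoid weights altogether, but then the non-primitive contribution must be estimated for the localized element count and no partial summation ``up to a constant'' is available; as written, your intermediate display and the two reductions that follow it are false, so the proof does not go through without reworking this transfer step.
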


We shall use the following distribution result due to Bowen\cite{bowen1,bowen2}.

\begin{teo}[Bowen\cite{bowen1,bowen2}]\label{teo:bowen} Let $\E$ be a sub-shift of finite type and $f:\E\to\R_+^*$ be H\"older continuous. Then $$\#\{\tau\ \sigma^f_t\textrm{-periodic}:p(\tau)\leq t\}^{-1}\sum_{\tau:p(\tau)\leq t}\frac1{p(\tau)}\Leb_\tau$$ converges to the probability of maximal entropy of $\sigma^f_t,$ where $\Leb_\tau$ is the Lebesgue measure on $\tau$ of length $p(\tau).$
\end{teo}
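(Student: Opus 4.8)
The plan is to carry Bowen's statement to the symbolic base $\E$ and then run the same Tauberian machinery that underlies the prime orbit theorem \ref{teo:primeorbit}, but applied to a transfer operator carrying an extra parameter for the test function. Since $\E\times\R/\hat f$ is compact and H\"older functions are uniformly dense in $C(\E\times\R/\hat f),$ it suffices to prove weak-$*$ convergence against a H\"older $g.$ First I would record the dictionary: a periodic orbit $\tau$ of $\sigma^f_t$ sits over a periodic orbit of the shift $\sigma;$ if $x$ is a point of that orbit and $n=n(\tau)$ its combinatorial length ($\sigma^nx=x,$ minimal when $\tau$ is prime), then $p(\tau)=S_nf(x):=\sum_{j=0}^{n-1}f(\sigma^jx)$ and $$\Leb_\tau(g)=S_n\widetilde g(x),\qquad\widetilde g(x):=\int_0^{f(x)}g(x,u)\,du ,$$ with $\widetilde g$ again H\"older and $\widetilde 1=f.$ On the measure side I would recall that, by Abramov's formula (\ref{eq:abramov}) and the variational principle (as used in lemma \ref{lema:reparam}), $h=h_{\textrm{top}}(\sigma^f_t)$ is the unique zero of $s\mapsto P(-sf),$ that $\sigma^f_t$ has a unique probability of maximal entropy $m,$ and that $m$ is the normalized suspension of the equilibrium state $\mu_h$ of $-hf$ on $\E,$ so that $\int g\,dm=\mu_h(\widetilde g)/\mu_h(f).$ Bowen's theorem then reduces to the statement that, for every H\"older $g,$ $$\sum_{\tau\textrm{ prime}:\,p(\tau)\leq t}\frac{\Leb_\tau(g)}{p(\tau)}\sim\frac{e^{ht}}{ht}\int g\,dm ,$$ since dividing by $\#\{\tau\textrm{ prime}:p(\tau)\leq t\}\sim e^{ht}/(ht)$ (theorem \ref{teo:primeorbit}) yields exactly the claimed convergence.

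Next I would introduce the marked transfer operator. For $(s,q)$ near $(h,0)$ in $\C^2$ let $\mathcal L_{s,q}$ act on H\"older functions on $\E$ by $\mathcal L_{s,q}w(x)=\sum_{\sigma y=x}e^{-sf(y)+q\widetilde g(y)}w(y)$ (after the usual reduction to potentials depending only on future coordinates). By the Ruelle--Perron--Frobenius theorem $\mathcal L_{s,q}$ has a simple, spectrally isolated leading eigenvalue $\lambda(s,q)=e^{P(-sf+q\widetilde g)},$ jointly analytic, with $\lambda(h,0)=1,$ $\partial_s\lambda(h,0)=-\mu_h(f)\neq0$ and $\partial_q\lambda(h,0)=\mu_h(\widetilde g).$ The trace formula for transfer operators over a subshift, $\sum_{\sigma^nx=x}e^{-sS_nf(x)+qS_n\widetilde g(x)}=\lambda(s,q)^n+O(\theta^n)$ with $\theta<1$ uniform on a neighbourhood of $(h,0),$ gives, for $\operatorname{Re}s$ large, $$\sum_{n\geq1}\frac1n\sum_{\sigma^nx=x}e^{-sS_nf(x)+qS_n\widetilde g(x)}=-\log(1-\lambda(s,q))+(\textrm{holomorphic near }(h,0)).$$ Differentiating in $q$ at $q=0$ and regrouping periodic points into orbits, then orbits into prime orbits (the $m$-fold cover of a prime $\tau$ contributes $m\,\Leb_\tau(g)\,e^{-smp(\tau)},$ so the part $m\geq2$ is holomorphic for $\operatorname{Re}s>h/2$), I would get $$\Psi_g(s):=\sum_{\tau\textrm{ prime}}\Leb_\tau(g)\,e^{-sp(\tau)}=\frac{\partial_q\lambda(s,0)}{1-\lambda(s,0)}+(\textrm{holomorphic near }s=h),$$ which has a simple pole at $s=h$ with residue $\partial_q\lambda(h,0)/(-\partial_s\lambda(h,0))=\mu_h(\widetilde g)/\mu_h(f)=\int g\,dm .$

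Then I would push $\Psi_g$ up to the line $\operatorname{Re}s=h.$ Here the weak mixing hypothesis enters decisively: if $\lambda(h+ib,0)=1$ for some $b\neq0,$ the classical Liv\v sic cohomology argument (as in proposition \ref{teo:suspension}) forces $bf$ to be H\"older-cohomologous to a $2\pi\Z$-valued function, hence every period $p(\tau)$ to lie in $\frac{2\pi}{b}\Z,$ contradicting remark \ref{obs:periodos}; so $\lambda(h+ib,0)\neq1$ for all $b\neq0,$ and with the spectral gap and analyticity of $\lambda$ one concludes that $\Psi_g(s)-\big(\int g\,dm\big)/(s-h)$ extends continuously to $\{\operatorname{Re}s\geq h\}.$ Replacing $g$ by $g+\|g\|_\infty\geq0$ (and using linearity together with the already-known count for the constant part) reduces matters to $g\geq0,$ where $\Psi_g(s)=\int_0^\infty e^{-st}\,d\psi_g(t)$ with $\psi_g(t):=\sum_{\tau\textrm{ prime}:p(\tau)\leq t}\Leb_\tau(g)$ non-decreasing; the Wiener--Ikehara theorem (the Tauberian input already used for theorem \ref{teo:primeorbit}) then gives $\psi_g(t)\sim\frac1h e^{ht}\int g\,dm.$ Finally a partial summation, $\sum_{p(\tau)\leq t}\Leb_\tau(g)/p(\tau)=\int_0^t s^{-1}\,d\psi_g(s)\sim\frac{e^{ht}}{ht}\int g\,dm,$ proves the reduced statement above, hence the weak-$*$ convergence to $m$ first for H\"older and then for all continuous test functions.

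The main obstacle I anticipate is the third step: the meromorphic continuation of the marked zeta function $\Psi_g$ to a neighbourhood of $\{\operatorname{Re}s\geq h\}$ with no singularity other than the simple pole at $s=h$ --- that is, the non-vanishing $\lambda(h+ib,0)\neq1$ for $b\neq0$ (the weak-mixing input), together with the uniform control on the trace error $\sum_{\sigma^nx=x}e^{-sS_nf+qS_n\widetilde g}-\lambda(s,q)^n$ and on the multiply-covered orbits that is needed to split $\Psi_g$ into its polar part plus a piece regular up to the critical line. The rest is routine thermodynamic-formalism bookkeeping.
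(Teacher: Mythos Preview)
The paper does not prove this theorem at all: it is quoted verbatim from Bowen's original work (references \cite{bowen1,bowen2}) and used as a black box in the proof of proposition \ref{prop:lambda}. There is therefore no ``paper's own proof'' to compare your proposal against.

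That said, two remarks on your sketch. First, your argument is not Bowen's: you are running the Parry--Pollicott zeta-function/Tauberian machinery with a marked transfer operator, whereas Bowen's proofs in \cite{bowen1,bowen2} proceed directly from specification and the Gibbs property of equilibrium states, without analytic continuation or Wiener--Ikehara. Second, and more substantively, your approach \emph{requires weak mixing} (you invoke it explicitly to exclude poles of $\Psi_g$ on $\{\operatorname{Re}s=h\}\setminus\{h\}$, and you also call on theorem \ref{teo:primeorbit}, which assumes it), but the statement of theorem \ref{teo:bowen} as written carries no such hypothesis. Bowen's equidistribution holds for an arbitrary H\"older suspension over a mixing subshift of finite type, weak mixing or not; without weak mixing neither $\psi_g(t)$ nor the orbit count enjoys the clean asymptotic $\sim e^{ht}/(ht)$, yet their ratio still converges. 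Your route therefore proves a weaker theorem than the one stated. In the context of this paper this gap is harmless, since the only application (proposition \ref{prop:lambda}) takes place after corollary \ref{cor:weak} has supplied weak mixing, but as a proof of theorem \ref{teo:bowen} in the generality stated it is incomplete.
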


\begin{proof}[Proof of proposition \ref{prop:lambda}]
As observed before we have a weak mixing Markov coding for the flow $\psi_t:\G\/\bord^2\G\times\R\mismo$ (corollary \ref{cor:weak}). Applying Parry-Pollicott's Prime Orbit theorem \ref{teo:primeorbit} together with Bowen's result we find the convergence of $$h_cte^{-h_ct}\sum_{\tau: p(\tau)\leq t}\frac 1{p(\tau)}\Leb_\tau$$ to the probability of maximal entropy of $\psi_t$ on $\G\/\bord^2\G\times\R,$ when $t\to\infty.$ The reparametrizing theorem \ref{teo:reparametrisation} states that this measure is lifted to $\bord^2\G\times\R$ as $$\frac{e^{-h_c[\cdot,\cdot]}\vo\mu\otimes\mu\otimes ds}{\|m_c\|}.$$

Since periodic orbits of $\psi_t$ are of the form $(\g_-,\g_+,s)$ for some $\g\in\G$ primitive, and the period of such orbit is $\l_c(\g)$ we have the convergence $$h_cte^{-h_ct}\sum_{\g\textrm{ primitive}:\, \l_c(\g)\leq t}\frac 1{\l_c(\g)} \delta_{\g_-}\otimes\delta_{\g_+}\otimes ds\to \frac{e^{-h_c[\cdot,\cdot]}\vo\mu\otimes\mu\otimes ds}{\|m_c\|}$$ in $\bord^2\G\times\R.$

We can delete de $\R$-component by comparing the measures of sets of the form $A\times B\times I$ for some interval $I,$ and we find: 

$$\sigma_t:=\|m_\rho\|h_cte^{-h_ct}\sum_{\g\textrm{ primitive}:\, \l_c(\g)\leq t}\frac 1{\l_c(\g)} \delta_{\g_-}\otimes\delta_{\g_+}\to e^{-h_c[\cdot,\cdot]}\vo\mu\otimes\mu$$ when $t\to\infty.$

In order to finish the proof of the proposition we shall delete the terms $t/\l_c(\g)$ with the restriction ``$\g$ primitive''. We will follow a method of Roblin(\cite{roblin}, page 71)

The integer part of $t\l_c(\g)^{-1}$ is the number of powers of $\g$ such that $\l_c(\g^n)\leq t,$ this is $$\left[\frac t{\l_c(\g)}\right]=\#\{n\in\N:\l_c(\g^n)=n\l_c(\g)\leq t\}.$$ We then have that $\nu_t$ equals $$\|m_c\|h_ce^{-h_ct} \sum_{\g\tex{ primitive}:\, \l_c(\g)\leq t}\left[\frac t{\l_c(\g)}\right] \delta_{\g_-}\otimes\delta_{\g_+}$$ and we find $\nu_t(f)\leq \sigma_t(f)$ for every measurable $f\geq0.$

For a complementary inequality fix some $\kappa>0.$ Now, if $e^{-\kappa}t<\l_c(\g)\leq t$ we have $[t/\l_c(\g)]\geq e^{-\kappa}t/\l_c(\g)$ and $$\nu_t\geq \|m_c\|e^{-\kappa}h_cte^{-h_ct} \sum_{{\displaystyle\stackrel{\g\tex{ primitive}}{e^{-\kappa}t<\l_c(\g)\leq t}}} \frac 1{\l_c(\g)} \delta_{\g_-}\otimes\delta_{\g_+}$$ $$ =e^{-\kappa} \sigma_t-e^{-\kappa}h_cte^{-h_ct}\sum_{{\displaystyle\stackrel{\g\tex{ primitive}}{\l_c(\g)\leq e^{-\kappa}t} }}\frac 1{\l_c(\g)} \delta_{\g_-}\otimes\delta_{\g_+}.$$ Since the second term goes to zero when $t\to\infty$ we find that for every measurable $f\geq0$ $$\limsup \nu_t(f)\geq e^{-k}\limsup \sigma_t(f).$$ Since $\kappa$ is arbitrary, these two inequalities show the proposition.
\end{proof}

\section{Exponential growth of convex representations}\label{section:lemmagrowth}

Let $\rho:\G\to\PSL(d,\R)$ be a strictly convex representation with $\rho$-equivariant H\"older map $\xi:\bord\G\to\P(\R^d).$ For a fixed norm $\|\ \|$ on $\R^d$ define the H\"older cocycle $$\co(\g,x)=\log\frac{\|\rho(\g)v\|}{\|v\|}$$ for $v\in\xi(x)-\{0\}.$

In order to apply theorem \ref{teo:reparametrisation} we need to prove that the our cocycle $\co$ is of positive periods and of finite exponential growth rate. The main purpose of this section is proposition \ref{prop:growth}.

We shall first show that the period $\co(\g,\g_+)$ is exactly $\lambda_1(\rho(\g)),$ the logarithm of the spectral radius of some lift of $\rho(\g)$ with determinant $\in\{-1,1\}.$

We say that $g\in\SL d$ is \emph{proximal} if it has a unique complex eigenvalue of maximal modulus, and its generalized eigenspace is one dimensional. This eigenvalue is necessarily real and its modulus is equal to $\exp\lambda_1(g).$ We will denote $g_+$ the $g$-fixed line of $\R^d$ consisting of eigenvectors of this eigenvalue and denote $g_-$ the $g$-invariant complement of $g_+$ (this is $\R^d=g_+\oplus g_-$). $g_+$ is an attractor on $\P(\R^d)$ for the action of $g$ and $g_-$ is a repelling hyperplane.

\begin{lema}\label{lema:loxodromic} Let $\rho:\G\to \SL d$ be a strictly convex representation. Then for every $\g\in\G$ $\rho (\g)$ is proximal and $\xi(\g_+)$ is its attractive fixed line.
\end{lema}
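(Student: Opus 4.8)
The plan is to use the equivariant map $(\xi,\eta)$ together with the north–south dynamics of $\g$ on $\bord\G$ to locate the attractor and repeller of $\rho(\g)$ in $\P(\R^d)$, and then a direct dynamical argument to upgrade ``attractor'' to ``proximal matrix''. First I would recall that each $\g\neq e$ acts on $\bord\G$ with exactly two fixed points $\g_+$ (attracting) and $\g_-$ (repelling), with $\bord\G-\{\g_-\}$ the basin of $\g_+$. By $\rho$-equivariance, $\rho(\g)$ fixes the line $\xi(\g_+)$ and the hyperplane $\eta(\g_-)$, and the transversality hypothesis $\R^d=\xi(\g_+)\oplus\eta(\g_-)$ gives an $\rho(\g)$-invariant splitting $\R^d=\xi(\g_+)\oplus\eta(\g_-)$. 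So it remains to see that on this splitting the line is attracting and the hyperplane repelling for the projective action, since that is exactly the statement that $\rho(\g)$ is proximal with attractive line $\xi(\g_+)$.

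The key step is the following: for any $z\in\bord\G-\{\g_-\}$ one has $\g^n z\to\g_+$, hence by continuity of $\xi$ and $\eta$, $\xi(\g^n z)=\rho(\g)^n\xi(z)\to\xi(\g_+)$ in $\P(\R^d)$. Now I would exploit the transversality $\R^d=\xi(z)\oplus\eta(y)$ for $z\neq y$: choosing $z$ so that $\xi(z)\not\subset\eta(\g_-)$ (possible because, by continuity and density of such points, the set of $z$ with $\xi(z)\subset\eta(\g_-)$ cannot be everything — indeed if $\xi(z)\subset\eta(\g_-)$ for all $z\neq\g_-$ then $\eta(\g_-)$ would contain $\xi(z)$ for $z$ in a dense set and meet $\xi(\g_-)$ in the limit, contradicting transversality at, say, a pair $(\g_-,y)$), one gets a line not contained in the invariant hyperplane whose forward $\rho(\g)$-orbit converges to $\xi(\g_+)$. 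Writing $v=v_+ + v_-$ with $v_+\in\xi(\g_+)$, $0\neq v_+$, and $v_-\in\eta(\g_-)$, convergence of $[\rho(\g)^n v]$ to $[\xi(\g_+)]$ forces the eigenvalue of $\rho(\g)|_{\xi(\g_+)}$ to strictly dominate the spectral radius of $\rho(\g)|_{\eta(\g_-)}$; running the same argument with $\g^{-1}$ (whose attracting point is $\g_-$) shows $\eta(\g_+)$ is repelling, which pins down that the dominant eigenvalue is simple with one-dimensional generalized eigenspace, i.e. proximality, and that its modulus is $\exp\lambda_1(\rho\g)$.

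The main obstacle I anticipate is the verification that one can find a point $z$ with $\xi(z)\notin\eta(\g_-)$ and, more importantly, turning the projective convergence $\rho(\g)^n[v]\to[\xi(\g_+)]$ into the spectral statement cleanly — one must rule out the possibility that $\rho(\g)|_{\eta(\g_-)}$ has an eigenvalue of the same modulus as (or a Jordan block enlarging) the top one, which would still allow some orbits to converge but would destroy proximality. The way around this is to observe that the convergence $\rho(\g)^n\xi(z)\to\xi(\g_+)$ holds for \emph{every} $z\neq\g_-$, i.e. for a set of lines whose span is all of $\R^d$ (again by transversality, since $\{\xi(z):z\neq\g_-\}$ cannot lie in a proper subspace without violating $\xi(x)\oplus\eta(y)=\R^d$); uniform attraction of a spanning set of lines to $\xi(\g_+)$ is equivalent to the top eigenvalue being simple, real, and strictly dominant, which is proximality. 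I would phrase this last implication as a short linear-algebra lemma and then the result follows.
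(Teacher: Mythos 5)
Your overall route is essentially the paper's: use the north--south dynamics of $\g$ on $\bord\G$ plus equivariance to get $\rho(\g)^n\xi(z)\to\xi(\g_+)$ for every $z\neq\g_-$, work with the $\rho(\g)$-invariant splitting $\R^d=\xi(\g_+)\oplus\eta(\g_-)$, and use the attraction of a large family of lines to exclude spectrum of modulus at least that of the eigenvalue on $\xi(\g_+)$ inside $\eta(\g_-)$. But your pivotal step is justified incorrectly, and this is a genuine gap: you claim that $\{\xi(z):z\neq\g_-\}$ cannot lie in a proper subspace ``by transversality''. Transversality only says that each line $\xi(x)$ is not contained in each hyperplane $\eta(y)$ for $x\neq y$; it in no way prevents all the lines $\xi(z)$ from lying in a proper subspace. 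What forces the span to be $\R^d$ is the irreducibility of $\rho$, which is part of the definition of strictly convex and which your argument never invokes: by equivariance the span of $\xi(\bord\G)$ is $\rho(\G)$-invariant, hence equals $\R^d$. This hypothesis is not cosmetic --- without it the lemma is false: take $\rho_1$ strictly convex into $\SL{d_1}$ with maps $(\xi_1,\eta_1)$ and set $\rho=\rho_1\oplus\rho_1$ on $\R^{2d_1}$, $\xi(x)=\xi_1(x)\oplus\{0\}$, $\eta(y)=\eta_1(y)\oplus\R^{d_1}$; all the transversality and equivariance conditions hold, yet no $\rho(\g)$ is proximal since every eigenvalue has multiplicity at least two. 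This is exactly the point where the paper uses irreducibility, after showing $\xi(\bord\G)\subset\P(\xi(\g_+)\oplus W)$, where $W$ is the part of $\eta(\g_-)$ on which the spectral radius is strictly smaller than $\exp\lambda_1(\rho\g)$.

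Two further remarks. Your closing ``linear-algebra lemma'' --- that attraction of a spanning family of lines to a single line is equivalent to proximality --- is false as a standalone statement: a unipotent $2\times2$ Jordan block attracts every line of $\R^2$ (a spanning family, and uniformly on compact spanning subfamilies) to its fixed line, yet it is not proximal. The statement becomes correct only in the presence of the invariant complementary hyperplane: lines contained in $\eta(\g_-)$ can never converge to $\xi(\g_+)$, and a line spanned by $v_++v_-$ with $v_+\neq0$ converges if and only if $v_-$ lies in the sum of generalized eigenspaces of $\rho(\g)|\eta(\g_-)$ of modulus strictly smaller than that of the eigenvalue on $\xi(\g_+)$; spanning (via irreducibility) then forces this sum to be all of $\eta(\g_-)$, which is proximality. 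Finally, your worry about producing $z$ with $\xi(z)\not\subset\eta(\g_-)$ is unnecessary, and the density argument you sketch for it does not work: for any $z\neq\g_-$ the hypothesis applied to the pair $(z,\g_-)$ gives $\R^d=\xi(z)\oplus\eta(\g_-)$ outright. With these repairs your proof coincides in substance with the paper's.
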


\begin{proof} Consider $\g_0\in\G$ and write, to simplify the notation, $a=\exp\lambda_1(\rho(\g_0)).$ We consider a lift of $\rho(\g_0)$ to $\SLi(d,\R)_{\pm}$ which we still call $\rho(\g_0).$

Let $V_0$ be the sum of all generalized $\rho(\g_0)$-eigenspaces of eigenvalues with modulus equal to $a.$ We will show that $V_0=\xi({\g_0}_+).$ Set $V=V_0\cap \eta({\g_0}_-),$ and recall that $\R^d=\xi({\g_0}_+)\oplus\eta({\g_0}_-).$ 

Since $V$ is a sum of generalized eigenspaces in $\eta({\g_0}_-),$ it has a $\rho(\g_0)$-invariant complement $W\subset \eta({\g_0}_-)$ and thus $\R^d=\xi({\g_0}_+)\oplus W\oplus V.$

We claim that $\xi({\g_0}_+)\oplus W$ contains a $\rho(\G)$-invariant subspace. Since $\rho$ is irreducible we obtain $V=\{0\}$ and $V_0=\xi({\g_0}_+),$ which implies the lemma. For this we will show that $$\xi(\bord\G)\subset \P(\xi({\g_0}_+)\oplus W).$$

Let $x\in\bord\G-\{{\g_0}_-\}.$ Since $\g_0^nx\to{\g_0}_+$ the same occurs via $\xi,$ this is \begin{equation}\label{eq:rho} \rho({\g_0}^n) \xi(x) \to \xi({\g_0}_+)\end{equation} in $\P(\R^d).$ Take some $u_x$ in the line $\xi(x)$ and write, following the decomposition $\R^d=\xi({\g_0}_+)\oplus V\oplus W,$ $$u_x=u_++v+w$$ for some $u_+\in\xi({\g_0}_+),$ $v\in V$ and $w\in W.$ We consider now the sequence $$\frac{\rho(\g^n_0)u_x}{a^n}=\frac{\rho(\g_0^n)(u_++v+w)}{a^n}.$$ Since the spectral radius of $\rho(\g_0)|W$ is strictly smaller than $a$ (by definition of $V$) we have $$\rho(\g_0^n)w/a^n\to0,$$ also, since $u_+$ is an eigenvector of $\rho(\g_0)$ we must have either $\rho(\g_0)u_+/a=\pm u_+$ or $\rho(\g_0^n)u_+/a^n\to0.$

On the other hand, since $\rho(\g_0)|V$ consists of Jordan blocks of eigenvalue of modulus $a$ we have $a^n\leq c \|\rho(\g_0^n)v\|$ for some $c>0$ and all $n$ sufficiently large. This implies that the sequence $$\frac{\rho(\g_0^n)v}{a^n}$$ is far from zero (when $v\neq0$).

Consequently: if $\rho(\g_0^n)u_+/a^n\to0$ the limit line of $\rho(\g_0^n)\xi(x)$ is contained in $\P(V),$ this contradicts equation (\ref{eq:rho}) and convexity of $\rho.$ We then have that $\rho(\g_0)u_+/a=\pm u_+$ and, since $\rho(\g_0^n)v/a^n$ is far from zero, in order that (\ref{eq:rho}) holds we must have $v=0.$ Thus $\xi(\bord\G)\subset \P(\xi({\g_0}_+)\oplus W)$ which implies $V=0.$ This finishes the proof.
\end{proof}

We find then the following corollaries.

\begin{cor}\label{cor:unicidad} Let $\rho:\G\to\SL d$ be a strictly convex representation, then the equivariant maps $\xi:\bord\G\to\P(\R^d)$ and $\eta:\bord\G\to\grassman_{d-1}(\R^d)$ are unique and for every $x\in\bord\G$ one has $\xi(x)\subset\eta(x).$
\end{cor}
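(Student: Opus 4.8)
The plan is to deduce both the inclusion $\xi(x)\subset\eta(x)$ and the uniqueness statement from Lemma \ref{lema:loxodromic}, which tells us that every $\rho(\g)$ is proximal with attracting line $\xi(\g_+)$. First I would establish the inclusion. Fix $x\in\bord\G$. Since $\G$ acts on $\bord\G$ with north-south dynamics, $x$ can be approximated: pick any $\g\in\G$ with $\g_+=x$ (such elements exist — indeed, for a hyperbolic group the pairs $(\g_-,\g_+)$ of fixed points are dense in $\bord^2\G$, but here we only need one $\g$ fixing $x$, e.g. take $\g$ in the cyclic group stabilizing a chosen geodesic ray to $x$). By Lemma \ref{lema:loxodromic}, $\xi(x)=\xi(\g_+)$ is the attracting line $\rho(\g)_+$ and, since $\rho(\g)$ is proximal, $\eta(\g_-)=\rho(\g)_-$ is the repelling hyperplane. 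Now apply $\rho(\g)$ to the equivariance relation: for $y\neq\g_-$ one has $\R^d=\xi(\g^n y)\oplus\eta(\g_-)\to\rho(\g)_+\oplus\rho(\g)_-$, so by taking $y\to\g_-$ in a controlled way (using the Hölder map and the transversality condition) one sees that $\eta(\g_+)$ must be a $\rho(\g)$-invariant hyperplane containing the attracting direction $\xi(\g_+)$; the only such hyperplane transverse to $\xi(\g_-)=\rho(\g)_+$ issues... more carefully: $\eta(\g_+)$ is the limit $\lim_{y\to\g_+}\eta(y)$ is not automatic, so instead I argue directly that $\rho(\g)$ fixes $\eta(\g_+)$ (by equivariance, $\rho(\g)\eta(\g_+)=\eta(\g\g_+)=\eta(\g_+)$), hence $\eta(\g_+)$ is a sum of generalized eigenspaces of $\rho(\g)$; since $\R^d=\xi(\g_-)\oplus\eta(\g_+)$ and $\xi(\g_-)=\rho(\g^{-1})_+$ is the line of the eigenvalue of \emph{smallest} modulus of $\rho(\g)$, the invariant complement $\eta(\g_+)$ must be the sum of all the \emph{other} generalized eigenspaces, which in particular contains the top eigenline $\rho(\g)_+=\xi(\g_+)$. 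Therefore $\xi(\g_+)\subset\eta(\g_+)$, i.e. $\xi(x)\subset\eta(x)$ for every $x$ of the form $\g_+$; since such points are dense in $\bord\G$ and both $\xi,\eta$ are continuous (and the incidence relation ``line inside hyperplane'' is closed), $\xi(x)\subset\eta(x)$ for all $x$.

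Next I would prove uniqueness of $\xi$. Suppose $\xi'$ is another $\rho$-equivariant Hölder map into $\P(\R^d)$ arising from a strictly convex structure $(\xi',\eta')$. For $\g\in\G$ and any $x\neq\g_-$ we have $\rho(\g^n)\xi'(x)\to\xi'(\g_+)$ by equivariance and continuity (using $\g^n x\to\g_+$), but on the other hand, since $\rho(\g)$ is proximal, $\rho(\g^n)$ contracts every line transverse to the repelling hyperplane $\rho(\g)_-$ to the attracting line $\rho(\g)_+=\xi(\g_+)$. So either $\xi'(x)\subset\rho(\g)_-$ for all $x$ — impossible, since then $\xi'(\bord\G)$ would lie in a fixed proper subspace, contradicting irreducibility of $\rho$ (the span of $\xi'(\bord\G)$ is $\rho(\G)$-invariant) — or there is some $x$ with $\xi'(x)\not\subset\rho(\g)_-$, giving $\xi'(\g_+)=\lim\rho(\g^n)\xi'(x)=\rho(\g)_+=\xi(\g_+)$. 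Running this over a dense set of $\g_+$'s and using continuity yields $\xi'=\xi$. For uniqueness of $\eta$: by the inclusion just proved, $\eta(x)$ is a hyperplane containing $\xi(x)$; applying the same dynamical argument to $\rho(\g^{-1})$, whose attracting line is $\xi(\g_-)$ and whose repelling hyperplane must contain all remaining generalized eigenspaces, one identifies $\eta(\g_+)$ with the unique $\rho(\g)$-invariant hyperplane complementary to $\xi(\g_-)$, hence $\eta$ is determined on the dense set $\{\g_+\}$ and then everywhere by continuity.

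The main obstacle I anticipate is the bookkeeping in the inclusion step: one must be careful that $\eta(\g_+)$, being $\rho(\g)$-invariant and complementary to $\xi(\g_-)$ (which holds because $\g_+\neq\g_-$), is forced to contain $\xi(\g_+)$ — this uses that $\xi(\g_-)$ is precisely the \emph{bottom} eigenline of $\rho(\g)$, which in turn follows from Lemma \ref{lema:loxodromic} applied to $\g^{-1}$ together with the identity $\rho(\g^{-1})_+=\xi((\g^{-1})_+)=\xi(\g_-)$ and the fact that the repelling hyperplane $\rho(\g^{-1})_-=\rho(\g)_+ \oplus(\text{middle part})$ contains no — wait, one must verify $\rho(\g)_+\not\subset\xi(\g_-)$, equivalently $\xi(\g_+)\neq\xi(\g_-)$, which is immediate from transversality $\R^d=\xi(\g_+)\oplus\eta(\g_-)$ and $\xi(\g_-)\subset\eta(\g_-)$ once the inclusion is known — so in fact one should prove $\xi(\g_+)\oplus\eta(\g_-)=\R^d$ with $\xi(\g_-)\subset\eta(\g_-)$ simultaneously, or simply note that $\dim\xi(\g_+)=1$, $\dim\eta(\g_+)=d-1$, and $\xi(\g_-)\oplus\eta(\g_+)=\R^d$ force $\eta(\g_+)$ to be spanned by $d-1$ of the $d$ generalized eigenlines of $\rho(\g)$, omitting exactly the one equal to $\xi(\g_-)$; since the omitted line is the bottom one (by Lemma \ref{lema:loxodromic} for $\g^{-1}$), the top line $\xi(\g_+)$ is among those kept. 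Getting this ordering-of-moduli argument clean, and then propagating from the dense set of attracting fixed points to all of $\bord\G$ by continuity, is where the care is needed; the rest is formal.
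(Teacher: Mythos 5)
Your argument is correct and takes essentially the same route as the paper: both proofs use Lemma \ref{lema:loxodromic} to identify $\xi(\g_\pm)$ and $\eta(\g_\pm)$ with the eigendata of the proximal matrix $\rho(\g)$, and then conclude by density of fixed points and continuity of the equivariant maps. The paper's version is just the mirror image of yours (it verifies $\xi(\g_-)\subset\eta(\g_-)$ by noting that $\eta(\g_-)$ is the repelling hyperplane and $\xi(\g_-)$ the bottom eigenline of $\rho(\g)$), which sidesteps the bookkeeping you go through about which generalized eigenspace the invariant hyperplane $\eta(\g_+)$ must omit.
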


\begin{proof} The fact that $\xi(\g_+)$ is $\rho(\g)$'s attractive line and the fact that attractors $\{\g_+:\g\in\G\}$ form a dense subset of $\bord\G$ prove uniqueness of $\xi,$ and by analogue reasoning, uniqueness of $\eta.$

Since $\eta(\g_-)$ is the repeller hyperplane of $\rho(\g)$ and $\xi(\g_-)$ is $\rho(\g^{-1})$'s attractive line we must have $\xi(\g_-)\subset \eta(\g_-).$ Again, density of repellers implies that $\xi(x)\subset\eta(x)$ for every $x\in\bord\G.$
\end{proof}

\begin{cor}\label{cor:radio} Let $\rho:\G\to\SL d$ be a strictly convex representation, then the period $\co(\g,\g_+)$ is $\lambda_1(\rho(\g)).$ Moreover $\lambda_1(\rho(\g))>0$ for every $\g\in\G.$
\end{cor}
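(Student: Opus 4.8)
The plan is to unwind the definitions. By Lemma \ref{lema:loxodromic} the matrix $\rho(\g)$ is proximal, so it has a unique eigenvalue of maximal modulus, which is real, equal to $\pm\exp\lambda_1(\rho(\g))$, and whose eigenline is exactly $\xi(\g_+)$. Fix a lift of $\rho(\g)$ to $\SLi(d,\R)_{\pm}$; pick a nonzero eigenvector $v\in\xi(\g_+)$. Then $\rho(\g)v=\pm\exp(\lambda_1(\rho(\g)))\, v$, so that $\|\rho(\g)v\|=\exp(\lambda_1(\rho(\g)))\,\|v\|$, and therefore
\[
\co(\g,\g_+)=\log\frac{\|\rho(\g)v\|}{\|v\|}=\lambda_1(\rho(\g)).
\]
Note the period does not depend on the choice of lift nor of $v$, consistently with the cocycle discussion, since scaling $v$ cancels in the ratio and the sign of the lift disappears under $\|\ \|$.

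The remaining point is the strict positivity $\lambda_1(\rho(\g))>0$ for every $\g\in\G$, equivalently that the spectral radius of $\rho(\g)$ is strictly larger than $1$. First I would observe $\lambda_1(\rho(\g))\geq0$: since $\det\rho(\g)=\pm1$, the product of the moduli of all eigenvalues is $1$, so the largest modulus is at least $1$; hence $\exp\lambda_1(\rho(\g))\geq1$ and $\lambda_1(\rho(\g))\geq0$. It remains to exclude equality. Suppose $\lambda_1(\rho(\g))=0$. Then \emph{every} eigenvalue of $\rho(\g)$ has modulus $1$. But $\rho(\g)$ is proximal, so the eigenvalue of maximal modulus is simple (its generalized eigenspace is one-dimensional) and real, hence equal to $\pm1$, while by proximality it is strictly larger in modulus than every other eigenvalue — contradicting that all eigenvalues have modulus $1$ unless $d=1$. (For $d\geq2$ this is immediate; the case $d=1$ is vacuous since $\SLi(1,\R)$ is trivial and strict convexity presupposes an irreducible representation with a nonconstant boundary map, forcing $d\geq2$.)

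A slightly different, more self-contained way to see strict positivity, which I would keep in reserve in case one wants to avoid invoking proximality twice: the element $\g$ has infinite order in $\G$ (it is torsion free), and its action on $\bord\G$ is north-south with distinct fixed points $\g_\pm$. If $\lambda_1(\rho(\g))=0$ then by the above all eigenvalues of $\rho(\g)$ lie on the unit circle; combined with proximality (simple dominant real eigenvalue) this forces $\rho(\g)$ to have a dominant eigenvalue $\pm1$ strictly dominating the rest, which is impossible when the rest also have modulus $1$ unless there is no ``rest'', i.e.\ $d=1$. The main (and only real) obstacle is this exclusion of the degenerate case; everything else is a direct substitution into the definition of $\co$. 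I would therefore present the argument as: (i) compute the period via the eigenvector in $\xi(\g_+)$ using Lemma \ref{lema:loxodromic}; (ii) get $\lambda_1\geq0$ from $\det\in\{\pm1\}$; (iii) rule out $\lambda_1=0$ using proximality, which says the top modulus is attained by a \emph{unique} (simple, real) eigenvalue strictly above all others.

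\begin{proof}
By Lemma \ref{lema:loxodromic}, $\rho(\g)$ is proximal and $\xi(\g_+)$ is its attractive fixed line; fix a lift of $\rho(\g)$ with determinant in $\{-1,1\}$ and a nonzero $v\in\xi(\g_+)$. Then $\rho(\g)v=\pm\exp(\lambda_1(\rho(\g)))\,v$, hence
\[
\co(\g,\g_+)=\log\frac{\|\rho(\g)v\|}{\|v\|}=\lambda_1(\rho(\g)).
\]
Since the chosen lift has $|\det|=1$, the product of the moduli of its eigenvalues is $1$, so its spectral radius is $\geq1$ and $\lambda_1(\rho(\g))\geq0$. If $\lambda_1(\rho(\g))=0$, then every eigenvalue of $\rho(\g)$ has modulus $1$; but proximality says the eigenvalue of maximal modulus is simple, real, and strictly dominates in modulus all the others, which is impossible when $d\geq2$. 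Strict convexity forces $d\geq2$ (the boundary map is nonconstant), so $\lambda_1(\rho(\g))>0$.
\end{proof}
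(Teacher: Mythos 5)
Your proof is correct and follows essentially the same route as the paper: compute the period by evaluating $\co$ on an eigenvector in $\xi(\g_+)$ (which Lemma \ref{lema:loxodromic} identifies as the attractive line of the proximal matrix $\rho(\g)$), and rule out $\lambda_1(\rho(\g))=0$ by noting that a unit-determinant lift would then have all eigenvalues of modulus $1$, contradicting proximality. Your extra remarks (explicit $\lambda_1\geq0$ from $|\det|=1$ and the $d\geq2$ caveat) are harmless refinements of the same argument.
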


\begin{proof}
After lemma \ref{lema:loxodromic} we have $\xi(\g_+)$ is the fixed attractive line of $\rho(\g).$ We then have $$\co(\g,\g_+)=\log \frac{\|\rho(\g)u_+\|}{\|u_+\|}=\lambda_1(\rho(\g))$$ were $u_+\in\xi(\g_+).$

The fact the the periods are positive is also consequence of the fact that $\rho(\g)$ is proximal. If $\lambda_1(\rho(\g))=0$ then considering some lift of $\rho(\g)$ with determinant in $\{-1,1\}$ one sees that every eigenvalue of this lift would be of modulus 1 and thus $\rho(\g)$ would not be proximal.
\end{proof}

Since $\co$ is a cocycle with positive periods, corollary \ref{cor:positiva} implies that the exponential growth rate of $\co$ is positive. The objective now is the proof of the following proposition:

\begin{prop}\label{prop:growth} Let $\rho:\G\to\SL d$ be a strictly convex representation. Then $$\limsup_{s\to\infty}\frac{\log\#\{[\g]\in [\G]:\lambda_1(\rho(\g))\leq s\}}s$$ is finite.
\end{prop}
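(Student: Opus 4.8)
The goal is to bound the exponential growth rate of $\co$ from above, and by Ledrappier's lemma~\ref{lema:lemaledrappier} this is equivalent to showing $\inf_{[\g]} \l_\co(\g)/|\g| > 0$, i.e. there is a constant $\ell > 0$ with $\lambda_1(\rho(\g)) \geq \ell\,|\g|$ for every $\g \in \G$. (Since by corollary~\ref{cor:positiva} the growth rate is already known to be positive, the content is the upper bound.) The plan is to establish a \emph{linear lower bound for $\lambda_1(\rho(\g))$ in terms of the translation length $|\g|$ of $\g$ on $\w M$}, exploiting the contraction properties of $\rho(\g)$ on $\P(\R^d)$ encoded by the equivariant map $\xi$.

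\textbf{Key steps.} First I would set up the dynamical picture from lemma~\ref{lema:loxodromic}: every $\rho(\g)$ is proximal, with attracting line $\xi(\g_+)$ and repelling hyperplane $\eta(\g_-)$, and $\lambda_1(\rho(\g))$ is the logarithm of the modulus of its dominant eigenvalue. The strategy is to compare $\rho$ with the geometry of $\w M$ along an axis of $\g$. Pick a base point $o$ on (or near) the geodesic axis of $\g$; then $d(o,\g^n o) = n|\g| + O(1)$. The cocycle identity gives $\co(\g^n, x) = \sum_{k=0}^{n-1}\co(\g, \g^k x)$, and evaluating at $x = \g_+$ yields $\co(\g^n,\g_+) = n\,\lambda_1(\rho(\g))$. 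So it suffices to bound $\co(\g^n, \g_+)$ below by a constant times $d(o,\g^n o)$, up to a bounded additive error and uniformly in $\g$. The mechanism for this: because $\rho$ is strictly convex, by a compactness/continuity argument on $\bord^2\G$ (the relevant sets $\xi(x),\eta(y)$ vary Hölder-continuously and stay transverse, and $\bord^2\G/\G$ is compact via the Hopf-type parametrization of section~\ref{section:cociclos}) one gets a uniform constant controlling how much $\rho(\g)$ expands a unit vector in $\xi(x)$ whenever $x$ is pulled toward $\g_+$ and away from $\g_-$. Concretely, I would compare $\co$ with the Busemann cocycle of $\w M$: one shows $\co(\g,x) \geq \alpha\, B_x(o,\g^{-1}o) - C$ for uniform $\alpha > 0$, $C$, perhaps first for $\g$ of large translation length and then absorbing the finitely-many-orbits worth of small ones into the constant. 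Summing along the axis and comparing with the period of the Busemann cocycle (which is $|\g|$) gives $\lambda_1(\rho(\g)) \geq \alpha |\g| - C'$; iterating with powers $\g^n$ and dividing by $n$ kills the additive constant, giving $\lambda_1(\rho(\g)) \geq \alpha|\g|$. Then since $|\g|$ has positive exponential growth rate (the entropy of the geodesic flow is finite), $\#\{[\g] : \lambda_1(\rho\g) \leq s\} \leq \#\{[\g] : |\g| \leq s/\alpha\}$ has finite exponential growth rate, which is the claim.

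\textbf{Main obstacle.} The delicate point is obtaining the comparison $\co(\g,x) \gtrsim B_x(o,\g^{-1}o)$ with \emph{uniform} constants over all $\g \in \G$ and $x \in \bord\G$, rather than constants depending on $\g$. The natural way is to descend to the compact quotient: the function $F$ on $T^1\w M$ that theorem~\ref{teo:ledrappier} associates to the cohomology class of $\co$ is continuous and $\G$-invariant, hence bounded, which would give $|\g| \inf F \leq \l_\co(\g) \leq |\g|\sup F$ immediately — but this presupposes $F$ is chosen in the class, and the real subtlety is that one needs to know the exponential growth rate is finite \emph{before} invoking the positivity-of-$F$ machinery of section~\ref{section:cociclos}, so one cannot quite argue in a circle. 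The honest route is therefore more hands-on: use the Hölder property of $\xi$ and $\eta$ and the north-south dynamics to show that, uniformly, a unit vector in $\xi(x)$ with $x$ in a fixed compact neighborhood of $\g_+$ disjoint from $\eta(\g_-)$ gets expanded by $\rho(\g)$ by a factor comparable to $e^{\lambda_1(\rho\g)}$; then the additive cocycle estimate along the axis follows. Controlling the transversality defect $\R^d = \xi(x)\oplus\eta(y)$ quantitatively and uniformly — i.e. bounding the angle between $\xi(x)$ and $\eta(y)$ away from $0$ in terms of the distance between $x$ and $y$ in $\bord\G$ — is the crux, and this is exactly where strict convexity (as opposed to mere equivariance) is used.
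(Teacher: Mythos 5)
Your reduction via Ledrappier's lemma \ref{lema:lemaledrappier} is correct, but notice that it makes your intermediate goal, $\inf_{[\g]}\lambda_1(\rho\g)/|\g|>0$, literally equivalent to the proposition; so everything rests on the claimed uniform comparison $\co(\g,x)\geq\alpha\,B_x(o,\g^{-1}o)-C$, and this is where there is a genuine gap. The mechanism you sketch (uniform transversality of $\xi(x)$ and $\eta(y)$, north--south dynamics, expansion of vectors in $\xi(x)$ by a factor comparable to $e^{\lambda_1(\rho\g)}$ when $x$ stays away from $\g_-$) only compares the matrix quantities $\co(\g,x)$, $\log\|\rho(\g)\|$ and $\lambda_1(\rho\g)$ with one another up to uniformly bounded error --- this is exactly what lemmas \ref{lema:proximal} and \ref{lema:benoist} provide. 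It never bounds any of these quantities from below in terms of the translation length $|\g|$ on $\w M$. Such a bound is a quasi-isometric-embedding (Anosov-type) property; from the definition of strict convexity used here, all that is available at this stage is discreteness and properness (lemma \ref{lema:discreto}), which gives $\|\rho(\g_n)\|\to\infty$ with no rate whatsoever. In the paper's logical order the linear lower bound you want is a \emph{consequence} of the proposition (finiteness of $h_{\co}$ feeds lemma \ref{lema:lemaledrappier} and then the positivity machinery of theorem \ref{teo:reparametrisation}), not an ingredient, so your plan assumes essentially the conclusion unless you supply an independent proof of the uniform gap --- and that is a substantial missing argument, not a compactness formality; your own ``main obstacle'' paragraph identifies the circularity but the proposed hands-on fix still lacks the bridge from matrix size to $|\g|$.

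For contrast, the paper's proof avoids any lower bound in terms of $|\g|$. It chooses in each conjugacy class a representative whose axis meets a fixed compact fundamental domain, so the fixed points are uniformly separated and the Gromov product $[\g_-,\g_+]$ is uniformly bounded below; then lemmas \ref{lema:proximal} and \ref{lema:benoist} show that, apart from finitely many elements, these representatives are $(r,\eps)$-proximal and satisfy $\lambda_1(\rho\g)\geq\log\|\rho(\g)\|+\log r-\delta$. Hence the count of conjugacy classes with $\lambda_1(\rho\g)\leq t$ is dominated by the count of group elements with $\log\|\rho(\g)\|\leq t+\delta-\log r$, and the latter grows at most exponentially because $\rho(\G)$ is discrete with finite kernel (lemma \ref{lema:discreto}) and the Haar measure of the norm ball $\{g\in\SL d:\|g\|\leq R\}$ grows at most polynomially in $R$ (Helgason). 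If you wish to rescue your route, you would have to prove the uniform gap estimate $\lambda_1(\rho\g)\geq\alpha|\g|-C$ directly from the existence of the transverse H\"older maps, which amounts to reproving the Anosov property and is harder than the counting statement itself.
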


The following lemma is a general property of hyperbolic groups for which we refer the reader to Tukia\cite{tukia}. For the second assertion of the lemma one can apply explicitly lemma 1.6 of Bowditch\cite{bowditch}.

\begin{lema}\label{lema:tukia} Let $\G$ be a hyperbolic group and let $\{\g_n\}$ be a sequence in $\G$ going to infinity, then there exists a subsequence $\{\g_{n_k}\}$ and two points $x_0,y_0\in\bord\G$ (not necessarily distinct) such that $\g_{n_k} x\to x_0$ uniformly on compact sets of $\bord\G-\{y_0\}.$
Moreover one can assume that ${\g_{n_k}}_+\to x_0$ and ${\g_{n_k}}_-\to y_0.$
\end{lema}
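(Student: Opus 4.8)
The plan is to reproduce the classical proof of the \emph{convergence property} of a word-hyperbolic group acting on its Gromov boundary (Gromov; Tukia\cite{tukia}; and, for the last assertion, Bowditch\cite{bowditch}). I would keep the base point $o\in\w M$ already fixed, identify $\bord\G$ with the geometric boundary of $\w M$, write $\cl\G:=\G\cup\bord\G$ for the compactification, fix a visual metric $\varrho$ on $\bord\G$, and let $O(1)$ stand for a quantity bounded by a constant depending only on $\G$. Throughout I would use the standard coarse facts: the hyperbolic inequality $(a\mid c)_o\ge\min\{(a\mid b)_o,(b\mid c)_o\}-O(1)$ on $\cl\G$; the coarse $\G$-equivariance $(\g a\mid\g b)_{\g p}=(a\mid b)_p+O(1)$; the elementary identity $(a\mid o)_p+(a\mid p)_o=d(o,p)+O(1)$; and the dictionary: for $g_n\in\G$, $g_no\to x$ in $\cl\G$ precisely when $(g_no\mid x)_o\to\infty$, equivalently when $\varrho(g_no,x)\to0$. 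Recall that $\{\g_n\}$ going to infinity means $d(o,\g_no)\to\infty$.

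First I would pass, by compactness of $\cl\G$, to a subsequence --- still written $\{\g_n\}$ --- with $\g_no\to x_0$ and $\g_n^{-1}o\to y_0$ in $\cl\G$. Given a compact $K\subset\bord\G\setminus\{y_0\}$, since $\varrho(x,\g_n^{-1}o)\to\varrho(x,y_0)\ge\varrho(K,y_0)>0$ uniformly on $K$ there is $C_K$ with $(x\mid\g_n^{-1}o)_o\le C_K$ for $x\in K$ and $n$ large. Then, by equivariance and the identity above,
$$(\g_nx\mid\g_no)_o=(x\mid o)_{\g_n^{-1}o}+O(1)=d(o,\g_no)-(x\mid\g_n^{-1}o)_o+O(1)\ge d(o,\g_no)-C_K-O(1),$$
which tends to $\infty$ uniformly on $K$. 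Since also $(\g_no\mid x_0)_o\to\infty$, the hyperbolic inequality gives $(\g_nx\mid x_0)_o\ge\min\{(\g_nx\mid\g_no)_o,(\g_no\mid x_0)_o\}-O(1)\to\infty$ uniformly on $K$, i.e. $\g_nx\to x_0$ uniformly on $K$. This proves the first assertion.

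For the fixed points I would argue as follows. Since $\G$ is torsion free, each $\g_n\ne e$ is loxodromic, acts on $\bord\G$ with north--south dynamics, and has a $\g_n$-invariant quasi-geodesic axis $A_n$ joining $(\g_n)_-$ to $(\g_n)_+$, with quasi-geodesic constants depending only on $\G$. Writing $D_n=d(o,A_n)$ and $\ell_n:=\lim_kd(o,\g_n^ko)/k>0$ for the stable translation length, concatenating a geodesic from $o$ to a nearest point $\pi(o)\in A_n$, the axis arc $[\pi(o),\g_n\pi(o)]$ and its $\g_n$-translate yields a uniform quasi-geodesic, whence $d(o,\g_no)=2D_n+\ell_n+O(1)$; comparing likewise the ray $[o,(\g_n)_+)$ with $[o,\g_no]$ gives $(\g_no\mid(\g_n)_+)_o=D_n+\ell_n+O(1)$. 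Therefore $(\g_no\mid(\g_n)_+)_o-\tfrac12d(o,\g_no)=\tfrac12\ell_n+O(1)$ is bounded below, so $(\g_no\mid(\g_n)_+)_o\to\infty$; together with $(\g_no\mid x_0)_o\to\infty$ and the hyperbolic inequality this yields $(\g_n)_+\to x_0$. Applying the same estimate to $\g_n^{-1}$, whose attracting fixed point is $(\g_n)_-$ and which still satisfies $\g_n^{-1}o\to y_0$, gives $(\g_n)_-\to y_0$; thus no further passage to a subsequence is needed. (Equivalently, once the first assertion exhibits $\G\curvearrowright\bord\G$ as a convergence action, this paragraph is exactly Bowditch's Lemma 1.6.)

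The hard part is not the logic but the two coarse identities underlying the estimates: $(\g x\mid\g o)_o=d(o,\g o)-(x\mid\g^{-1}o)_o+O(1)$, which is immediate from $(a\mid o)_p+(a\mid p)_o=d(o,p)+O(1)$ and equivariance, and the axis estimates $d(o,\g o)=2D+\ell+O(1)$ and $(\g o\mid\g_+)_o=D+\ell+O(1)$. The genuinely technical point is this last pair: one must check that the broken geodesics built through the nearest-point projection onto the axis are quasi-geodesics with constants independent of the element (the standard stability lemma for concatenations of geodesics meeting a quasi-geodesic almost orthogonally), and that $[o,\g_+)$ fellow-travels the axis with a uniform constant. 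Granting these, the lemma reduces to the two applications of the hyperbolic inequality displayed above.
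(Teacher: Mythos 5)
Your argument is correct, but be aware that the paper does not actually prove this lemma: it simply refers the reader to Tukia for the convergence property and to Bowditch's Lemma 1.6 for the statement about ${\g_n}_\pm$. What you have written is therefore a self-contained reconstruction of those cited facts, and both halves are sound: the identity $(\g x\mid\g o)_o=d(o,\g o)-(x\mid\g^{-1}o)_o+O(1)$ does give uniform divergence of $(\g_nx\mid\g_no)_o$ on compacts of $\bord\G-\{y_0\}$, and the hyperbolic inequality then transfers this to $(\g_nx\mid x_0)_o$. Two small points deserve explicit mention. First, the lemma is phrased for an arbitrary hyperbolic group, where $\g_n$ could have finite order and ${\g_n}_\pm$ would be undefined; your appeal to torsion-freeness is legitimate only because in this paper $\G$ is the fundamental group of a closed negatively curved manifold, and you should say so. Second, the additive constants in your axis estimates $d(o,\g o)=2D+\ell+O(1)$ and $(\g o\mid\g_+)_o=D+\ell+O(1)$ degenerate as the translation length $\ell\to0$ (already in $\H^2$ one has $\sinh(d(o,\g o)/2)=\sinh(\ell/2)\cosh(D)$, so the correction behaves like $2\log\sinh(\ell/2)$); you therefore need the $\ell_n$ to be uniformly bounded below, which here follows from cocompactness and torsion-freeness (positive systole), and which also lets you take honest geodesic axes in $\w M$ rather than worrying about uniform quasi-geodesic constants. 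With these two remarks supplied, your proof is complete; alternatively, as you note, once the first assertion exhibits the convergence property, the last assertion is exactly Bowditch's Lemma 1.6, which is the route the paper takes.
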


In order to prove lemma \ref{prop:growth} we need some quantified version of proximality. Define Gromov's product $\Gr:\P({\R^d}^*)\times\P(\R^d)-\Delta\to\R$ as $$\Gr(\theta,v) =\log\frac{|\theta(v)|}{\|\theta\|\|v\|},$$ where $\Delta=\{(\theta,v):\theta(v)=0\}.$ We say that a linear transformation $g$ is $(r,\eps)$-proximal for some $r\in\R_+$ and $\eps>0$ if it is proximal, $$\exp\Gr(g_-,g_+)>r,$$ and the complement of an $\eps$-neighborhood of $g_-$ is sent by $g$ to an $\eps$-neighborhood of $g_+.$ The following lemmas (\ref{lema:benoist} and \ref{lema:proximal})  will also be used in the proof of theorem A.

\begin{lema}[Benoist\cite{limite}]\label{lema:benoist} Let $r$ and $\delta$ be positive numbers. Then there exists $\eps$ such that for every $(r,\eps)$-proximal transformation $g$ one has $$|\log\|g\|-\lambda_1(g)+\Gr(g_-,g_+)|<\delta.$$
\end{lema}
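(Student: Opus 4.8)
The plan is to prove the sharper statement that for an $(r,\eps)$-proximal $g$ one has $\|g\|=e^{\lambda_1(g)-\Gr(g_-,g_+)}\bigl(1+O_r(\eps^2)\bigr)$, where the implied constant depends only on $r$; the lemma then follows at once by taking logarithms. For concreteness I would fix an auxiliary Euclidean structure on $\R^d$ (the general normed case is entirely analogous, only the explicit constants change). Let $u_+$ be a unit vector spanning the attracting line $g_+$, and let $\varphi$ be the unit linear functional with $\ker\varphi=g_-$; with these normalisations $e^{\Gr(g_-,g_+)}=|\varphi(u_+)|$, so the hypothesis $\exp\Gr(g_-,g_+)>r$ reads $|\varphi(u_+)|>r$. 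Write $\lambda$ for the dominant (real) eigenvalue of $g$ and $a:=|\lambda|=e^{\lambda_1(g)}$, and let $\pi:=\varphi(u_+)^{-1}\,u_+\otimes\varphi$ be the spectral projection onto $g_+$ along $g_-$; a direct computation gives $\|\pi\|=|\varphi(u_+)|^{-1}=e^{-\Gr(g_-,g_+)}\ge 1$.

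First I would establish a two-sided estimate for $\|g\|$ valid for \emph{every} proximal $g$. Since $g$ preserves $g_+=\im\pi$ and $g_-=\ker\pi$, it commutes with $\pi$ and $g=\lambda\pi+(g|_{g_-})(1-\pi)$. For a unit vector $x$ we have $\|(1-\pi)x\|\le 1+\|\pi\|\le 2\|\pi\|$, hence $\|gx\|\le a\|\pi\|+2\|g|_{g_-}\|\,\|\pi\|$; and choosing $x$ with $\|\pi x\|=\|\pi\|$ (possible by compactness of the unit sphere) gives $\|gx\|\ge a\|\pi\|-2\|g|_{g_-}\|\,\|\pi\|$. Dividing by $a\|\pi\|$ yields $$\Bigl|\tfrac{\|g\|}{a\,\|\pi\|}-1\Bigr|\ \le\ \tfrac{2}{a}\,\|g|_{g_-}\|.$$ Thus everything reduces to controlling $\|g|_{g_-}\|$, and this is the only nontrivial point: $g|_{g_-}$ has spectral radius $<a$, but its spectral radius alone gives no bound on its operator norm, so the projective contraction hypothesis must really be used here.

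To bound $\|g|_{g_-}\|$, fix a unit vector $w\in g_-$ and consider the test point $v:=s\,u_+ + w$ with $s:=2\eps/r$ (assume $\eps<r/2$, so $\|v\|\le s+1\le 2$). The projective distance from $[v]$ to $\P(\ker\varphi)$ equals $|\varphi(v)|/\|v\|=s|\varphi(u_+)|/\|v\|>sr/2=\eps$, so $[v]$ lies outside the $\eps$-neighbourhood of $g_-$; by $(r,\eps)$-proximality $[gv]$ lies within $\eps$ of $g_+=[u_+]$. Now $gv=s\lambda u_++gw$ with $gw\in g_-$, so the component of $gv$ orthogonal to $u_+$ is exactly that of $gw$; since for any $y\in g_-$ the component of $y$ orthogonal to $u_+$ has norm $\ge(\sin\alpha)\|y\|$, where $\sin\alpha=|\varphi(u_+)|>r$ is the sine of the angle between the line $g_+$ and the hyperplane $g_-$, this orthogonal component has norm $>r\|gw\|$, while $\|gv\|\le sa+\|gw\|$. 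Consequently $\eps> r\|gw\|/(sa+\|gw\|)$, which rearranges to $\|gw\|< sa\eps/(r-\eps)\le 4\eps^2a/r^2$. Taking the supremum over unit $w\in g_-$ gives $\|g|_{g_-}\|\le 4\eps^2 a/r^2$.

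Combining the two estimates, $|\,\|g\|/(a\|\pi\|)-1\,|\le 8\eps^2/r^2$; for $\eps$ small this gives $|\log\|g\|-\log a-\log\|\pi\||\le 16\eps^2/r^2$, i.e. $|\log\|g\|-\lambda_1(g)+\Gr(g_-,g_+)|\le 16\eps^2/r^2$ after substituting $a=e^{\lambda_1(g)}$ and $\|\pi\|=e^{-\Gr(g_-,g_+)}$. Given $r$ and $\delta$ it then suffices to pick $\eps$ small enough that $\eps<r/4$ and $16\eps^2/r^2<\delta$. The main obstacle, as indicated, is the penultimate paragraph: one must place the test vectors exactly on the boundary of the excluded neighbourhood of $g_-$ so that the contraction hypothesis bites at the right scale, and then read off the resulting inequality as a genuine bound on the operator norm $\|g|_{g_-}\|$, not merely on its spectral radius.
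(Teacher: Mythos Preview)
Your argument is correct and proceeds by a route genuinely different from the paper's. The paper normalises to $\|g\|=1$, considers the compact sets $P_{r,\eps}$ of $(r,\eps)$-proximal operators of norm $1$, and observes that the nested intersection $P_r=\bigcap_\eps P_{r,\eps}$ consists of rank-one operators $T$ with $\im T\not\subset\ker T$, on which one checks directly that $\lambda_1(T)=\Gr(\ker T,\im T)$; continuity of $\lambda_1$ (and of $g\mapsto(g_-,g_+)$) on these compacta then yields the statement for $\eps$ small enough. This is a pure soft compactness argument and gives no explicit dependence of $\eps$ on $r,\delta$.

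By contrast, you work directly with the decomposition $g=\lambda\pi+(g|_{g_-})(1-\pi)$ and the key quantitative step is your bound $\|g|_{g_-}\|\le 4\eps^2 a/r^2$, obtained by placing a test vector $v=su_++w$ exactly at distance $\eps$ from $\P(g_-)$ and reading off the contraction hypothesis. This buys an explicit $\eps\sim r\sqrt{\delta}$ and the quadratic rate $O_r(\eps^2)$, at the cost of a longer computation and a fixed choice of projective metric (you implicitly use the sine-of-angle metric throughout; any other comparable metric only changes the constants). The paper's proof is shorter and metric-free; yours is constructive and effective, which is sometimes useful downstream when one wants uniformity over families of representations.
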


\begin{proof} Consider the compact sets $$P_{r,\eps}=\{(r,\eps)\textrm{-proximal linear transformations with norm 1}\}.$$ For a fixed $r$ consider $P_r=\bigcap_\eps P_{r,\eps}.$ An element $T\in P_r$ is a rank one operator with the constraint $\|T\|=1$ and such that $\im T\cap\ker T=\{0\}.$ One explicitly writes $$T w=\frac{\theta(w)}{\|\theta\|\|v\|}v$$ where $v\in\R^d$ and $\theta\in {\R^d}^*$ are such that $\theta(v)\neq0.$ It is easy to verify that the above formula for $T$ gives a rank one operator with norm equal to 1. 

One finishes with the remark that the function $g\mapsto\lambda_1(g)$ is continuous and $\lambda_1(T)=\Gr(\theta,v)$ for $T\in P_r.$
\end{proof}

Define $[\cdot,\cdot]:\bord^2\G\to\R$ as $$[x,y]=\Gr(\eta(x),\xi(y))$$ for $x,y\in\bord\G$ distinct.

\begin{lema}\label{lema:proximal} Let $\rho:\G\to \SL d$ be a strictly convex representation. Fix $r\in\R_+$ and $\eps>0.$ Then the set $$\{\g\in\G: \exp([\g_-,\g_+])>r\textrm{ and } \rho(\g)\textrm{ is not }(r,\eps)\textrm{-proximal}\}$$ is finite.
\end{lema}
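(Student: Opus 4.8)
The plan is to argue by contradiction: suppose the set is infinite, so there is a sequence $\g_n\to\infty$ in $\G$ with $\exp([{\g_n}_-,{\g_n}_+])>r$ for all $n$, but no $\rho(\g_n)$ is $(r,\eps)$-proximal. First I would apply Lemma \ref{lema:tukia} to extract a subsequence (still denoted $\g_n$) and points $x_0,y_0\in\bord\G$ with ${\g_n}_+\to x_0$, ${\g_n}_-\to y_0$, and $\g_n\to x_0$ uniformly on compact subsets of $\bord\G-\{y_0\}$. The constraint $\exp([{\g_n}_-,{\g_n}_+])>r$ says $\Gr(\eta({\g_n}_-),\xi({\g_n}_+))>\log r$; since $\eta$ and $\xi$ are continuous and $\Gr$ is continuous off the incidence variety $\Delta$, passing to the limit forces $\Gr(\eta(y_0),\xi(x_0))\geq\log r$, so in particular $\xi(x_0)\not\subset\eta(y_0)$, which (by the defining property $\R^d=\xi(x)\oplus\eta(y)$ for $x\neq y$, together with $\xi(x)\subset\eta(x)$ from Corollary \ref{cor:unicidad}) forces $x_0\neq y_0$.

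Next I would transfer the north-south dynamics on $\bord\G$ through $\xi$ and $\eta$ to dynamics on $\P(\R^d)$. By Lemma \ref{lema:loxodromic}, $\rho(\g_n)$ is proximal with attracting line $\xi({\g_n}_+)$ and repelling hyperplane $\eta({\g_n}_-)$; so $\rho(\g_n)_+=\xi({\g_n}_+)\to\xi(x_0)$ and $\rho(\g_n)_-=\eta({\g_n}_-)\to\eta(y_0)$ (as a point of $\P({\R^d}^*)$, identifying the hyperplane with its annihilator). The key dynamical input is that since $\g_n x\to x_0$ uniformly on compacts of $\bord\G-\{y_0\}$, the equivariance $\rho(\g_n)\xi(x)=\xi(\g_n x)$ and continuity/compactness of $\xi(\bord\G)$ should give: $\rho(\g_n)$ sends any fixed compact set in $\P(\R^d)$ disjoint from a neighborhood of $\eta(y_0)$ into a shrinking neighborhood of $\xi(x_0)$ — the point being that such a compact set meets $\xi(\bord\G)$ only in the image of a compact subset of $\bord\G-\{y_0\}$, and one has to also control directions not in $\xi(\bord\G)$ using that $\rho(\g_n)_-\to\eta(y_0)$ and the rank-one-limit behaviour of a proximal matrix. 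Concretely: fix $\eps>0$; for $n$ large, $\Gr(\rho(\g_n)_-,\rho(\g_n)_+)=[{\g_n}_-,{\g_n}_+]+o(1)$ stays $>\log r - \delta$, and the complement of an $\eps$-neighbourhood of $\rho(\g_n)_-=\eta({\g_n}_-)$ is mapped into an $\eps$-neighbourhood of $\rho(\g_n)_+=\xi({\g_n}_+)$ for all $n$ large. That is precisely the statement that $\rho(\g_n)$ is eventually $(r',\eps)$-proximal for a suitable $r'$ close to $r$; choosing the statement's $r$ as this $r'$ (or first replacing $r$ by a slightly smaller value and noting $\eps$ may be shrunk) contradicts the assumption that no $\rho(\g_n)$ is $(r,\eps)$-proximal.

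The main obstacle I anticipate is making the uniform-contraction step fully rigorous: $\xi(\bord\G)$ need not span a neighbourhood of $\xi(x_0)$ in $\P(\R^d)$, so controlling the action of $\rho(\g_n)$ on \emph{all} of a punctured neighbourhood of $\eta(y_0)^c$ — not just on $\xi(\bord\G)$ — requires invoking the linear-algebraic structure of a proximal matrix whose attracting/repelling data converges. One clean way is: write $\rho(\g_n)=a_n\,\pi_n + R_n$ where $a_n=\exp\lambda_1(\rho(\g_n))$, $\pi_n$ is the rank-one projection onto $\xi({\g_n}_+)$ along $\eta({\g_n}_-)$, and $\|R_n\|\leq$ (second eigenvalue modulus)$\cdot\|\g_n\|$; then $\pi_n$ converges to the projection onto $\xi(x_0)$ along $\eta(y_0)$ (using $x_0\neq y_0$, so this projection is well defined), while the normalized operator $\rho(\g_n)/\|\rho(\g_n)\|$ accumulates only on rank-one operators with image $\xi(x_0)$ and kernel containing $\eta(y_0)$ — this is where one uses that the gap between the top two eigenvalue moduli does not degenerate, which itself follows from $[{\g_n}_-,{\g_n}_+]$ staying bounded below via Lemma \ref{lema:benoist}. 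From this normalized-rank-one description the $(r,\eps)$-proximality of $\rho(\g_n)$ for large $n$ is immediate, completing the contradiction.
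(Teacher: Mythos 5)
Your skeleton is the same as the paper's (extract limit points $x_0,y_0$ via Lemma \ref{lema:tukia}, show $x_0\neq y_0$, then show that $\rho(\g_n)/\|\rho(\g_n)\|$ can only accumulate on rank-one proximal operators with image $\xi(x_0)$ and kernel $\eta(y_0)$), and your argument for $x_0\neq y_0$ is fine. But the justification you give for the crucial step is circular. Lemma \ref{lema:benoist} is a statement about matrices that are \emph{already known} to be $(r,\eps)$-proximal, so you cannot invoke it to obtain the non-degenerating gap between the top two eigenvalue moduli that you then use to prove $(r,\eps)$-proximality of the $\rho(\g_n)$. The same problem affects your ``clean way'': the decomposition $\rho(\g_n)=a_n\pi_n+R_n$ with $\|R_n\|$ controlled by the second eigenvalue modulus, the boundedness of the projections $\pi_n$, and the implicit comparability of $a_n=\exp\lambda_1(\rho\g_n)$ with $\|\rho(\g_n)\|$ are precisely the quantitative proximality estimates that constitute the conclusion (and the bound $\|R_n\|\leq|\lambda_2|\cdot\|\rho(\g_n)\|$ is false in general for non-normal matrices, where the nilpotent part and the spectral projections can have arbitrarily large norm even with a fixed eigenvalue gap).

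The ingredient you are missing is irreducibility of $\rho$, which is how the paper closes exactly the obstacle you flagged (controlling directions not lying on $\xi(\bord\G)$). After passing to a subsequence with $\rho(\g_n)/\|\rho(\g_n)\|\to T$, one observes that $\xi(x_0)$ and $\eta(y_0)$ are $T$-invariant, and that for $x\neq y_0$, writing $u_x=u+v$ with $u\in\xi(x_0)$, $v\in\eta(y_0)$, the convergence $\rho(\g_n)\xi(x)\to\xi(x_0)$ forces $Tu_x\in\xi(x_0)$ and hence $Tv=0$ (because $Tv\in\eta(y_0)$ and $\xi(x_0)\cap\eta(y_0)=\{0\}$). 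Thus $\xi(\bord\G)\subset\P(\xi(x_0)+\ker T)$, and irreducibility gives $\R^d=\xi(x_0)+\ker T$, so $T$ is a rank-one operator with image $\xi(x_0)$, kernel $\eta(y_0)$, and $\im T\not\subset\ker T$; no spectral-gap input is needed, and $(r,\eps)$-proximality of $\rho(\g_n)$ for large $n$ follows since $\rho(\g_n)_+=\xi({\g_n}_+)\to\xi(x_0)$ and $\rho(\g_n)_-=\eta({\g_n}_-)\to\eta(y_0)$. A small further simplification: the first requirement in the definition, $\exp\Gr(\rho(\g_n)_-,\rho(\g_n)_+)>r$, is literally your hypothesis $\exp([{\g_n}_-,{\g_n}_+])>r$, so the fudging between $r$ and a nearby $r'$ at the end of your second paragraph is unnecessary; only the contraction property requires the limiting argument.
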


\begin{proof} 
Let $\g_n\to\infty$ be a sequence in $\G$ such that $\exp[{\g_n}_-,{\g_n}_+]>r.$ Since $\xi$ and $\eta$ are uniformly continuous we have that $d_o({\g_n}_-,{\g_n}_+)>\kappa$ for some $\kappa>0$ and some Gromov distance $d_o$ in $\bord\G.$ By applying lemma \ref{lema:tukia} we find a subsequence (still called $\g_n$) and two points $x_0, y_0$ such that ${\g_n}_-$ and ${\g_n}_+$ converge to $y_0$ and $x_0$ respectively, and such that $\g_nx\to x_0$ for every $x\neq y_0.$

We have $x_0\neq y_0$ since $d_o({\g_n}_-,{\g_n}_+)>\kappa.$

By considering again a subsequence we assume that $$\frac{\rho(\g_n)}{\|\rho(\g_n)\|} \to T$$ for some linear transformation $T$ of $\R^d.$ We will prove that $T$ is a proximal rank one operator, which implies 
that for sufficiently large $n$ $\rho(\g_n)$ is $(r,\eps)$-proximal.

Since $\xi({\g_n}_+)$ is $\rho(\g_n)$-invariant for all $n$ we have that $\xi(x_0)$ is $T$-invariant and by analogue reasoning we also have that $\eta(y_0)$ is $T$-invariant (recall we also have $\R^d=\xi(x_0)\oplus \eta(y_0)$ since $\rho$ is strictly convex and $x_0\neq y_0$).

Consider now a point $x\in\bord\G-\{y_0\}$ and $u_x$ a vector in the line $\xi(x).$ Write $u_x=u+v$ for some $u\in\xi(x_0)$ and $v\in \eta(y_0).$ Since $\rho(\g_n)\xi(x)\to \xi(x_0)$ we must have $Tu_x\in \xi(x_0)$ and thus $Tv=0$ (since $\eta(y_0)$ is $T$-invariant).

Consequently $\xi(\bord\G)\subset \P(\xi(x_0)+\ker T).$ Irreducibility of $\rho$ implies $\R^d=\xi(x_0)+\ker T.$ In order to finish we remark that since $\|T\|=1$ we must have $T|\xi(x_0)\neq0$ and thus $T\neq0.$ We have then a rank one operator whose image is not contained in its kernel. 
\end{proof}

The following lemma states that strictly convex representations are discrete and, using the fact that the fundamental group of a negatively curved manifold is torsion free, they are also injective.

\begin{lema}\label{lema:discreto} Let $\G$ be a non elementary hyperbolic group and $\rho_0:\G\to\SL d$ be an irreducible representation such that there exists a $\rho_0$-equivariant continuous map $\xi_0:\bord\G\to\P(\R^d).$ Then $\rho_0$ is discrete with finite kernel. In particular strictly convex representations are discrete and injective.
\end{lema}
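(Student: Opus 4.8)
The statement has two parts: (1) if $\G$ is a non-elementary hyperbolic group and $\rho_0:\G\to\SL d$ is irreducible with a $\rho_0$-equivariant continuous map $\xi_0:\bord\G\to\P(\R^d)$, then $\rho_0$ is discrete with finite kernel; (2) consequently strictly convex representations are discrete and injective. I would prove the contrapositive-flavored version of (1): show first that $\ker\rho_0$ is finite, then that $\rho_0(\G)$ is discrete.

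\textbf{Finiteness of the kernel.} The key observation is that $\ker\rho_0$ is a normal subgroup of $\G$ acting trivially under $\rho_0$, hence fixing $\xi_0(x)$ trivially: for every $n\in\ker\rho_0$ and every $x\in\bord\G$ we have $\xi_0(nx)=\rho_0(n)\xi_0(x)=\xi_0(x)$. If $\ker\rho_0$ were infinite, it would be an infinite normal subgroup of the non-elementary hyperbolic group $\G$, hence (a standard fact about hyperbolic groups) it would itself be non-elementary — in particular its limit set $\Lambda(\ker\rho_0)\subset\bord\G$ would be infinite (indeed all of $\bord\G$, since a non-elementary normal subgroup has the same limit set as the ambient group). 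Now pick any $n\in\ker\rho_0$ of infinite order (such exists since $\ker\rho_0$ is non-elementary, hence contains a loxodromic element); its attracting and repelling fixed points $n_\pm$ lie in $\bord\G$, and for $x\neq n_-$ we have $n^kx\to n_+$, so by continuity $\xi_0(n_+)=\lim_k\xi_0(n^kx)=\xi_0(x)$ for all $x\neq n_-$, and then also at $x=n_-$ by density and continuity. Thus $\xi_0$ is constant, so its image is a single line $L\subset\R^d$; but then $L$ is $\rho_0(\G)$-invariant (as $\rho_0(\g)L=\rho_0(\g)\xi_0(x)=\xi_0(\g x)=L$), contradicting irreducibility since $\dim L=1<d$ and $d\geq 2$ (a non-elementary $\G$ cannot map irreducibly into $\R^1$). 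Hence $\ker\rho_0$ is finite.

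\textbf{Discreteness.} Suppose $\rho_0(\G)$ is not discrete; then there is a sequence $\g_n\to\infty$ in $\G$ with $\rho_0(\g_n)\to\mathrm{Id}$ in $\SL d$ (after passing to a subsequence and using that $\ker\rho_0$ is finite, we may assume the $\g_n$ are pairwise distinct and the accumulation is at the identity). Apply Lemma \ref{lema:tukia}: after passing to a further subsequence, there are $x_0,y_0\in\bord\G$ with $\g_n x\to x_0$ uniformly on compact subsets of $\bord\G-\{y_0\}$, and moreover ${\g_n}_\pm\to x_0,y_0$. Then for any $x\neq y_0$, equivariance and continuity give $\xi_0(x_0)=\lim_n\xi_0(\g_n x)=\lim_n\rho_0(\g_n)\xi_0(x)=\xi_0(x)$, since $\rho_0(\g_n)\to\mathrm{Id}$. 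So again $\xi_0$ is constant on $\bord\G-\{y_0\}$, hence (by continuity) constant, and its image is a $\rho_0(\G)$-invariant line, contradicting irreducibility as before. Therefore $\rho_0(\G)$ is discrete.

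\textbf{The corollary.} A strictly convex representation is irreducible by definition and comes equipped with the equivariant map $\xi:\bord\G\to\P(\R^d)$, and $\G$ (being the fundamental group of a closed negatively curved manifold, or more generally torsion-free cocompact on $\widetilde M$) is non-elementary hyperbolic; so part (1) applies, giving discreteness with finite kernel. Since $\G$ is torsion-free, the finite normal subgroup $\ker\rho$ is trivial, so $\rho$ is injective. The main obstacle is purely the bookkeeping around the ``$\xi_0$ constant $\Rightarrow$ invariant line $\Rightarrow$ contradicts irreducibility'' step and making sure one extracts an infinite-order (loxodromic) element from an infinite normal subgroup — both are standard for hyperbolic groups, so there is no serious difficulty; the argument is essentially a ping-pong/north-south dynamics squeeze combined with irreducibility.
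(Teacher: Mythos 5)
Your proof is correct, and its core mechanism --- boundary dynamics plus equivariance forcing $\xi_0$ to be constant, hence a $\rho_0(\G)$-invariant line contradicting irreducibility --- is exactly the paper's. The packaging differs: the paper proves in one stroke that $\rho_0$ is \emph{proper} as a map from the discrete group $\G$, by taking any divergent sequence $\g_n\to\infty$ with $\rho_0(\g_n)\to g$ for an \emph{arbitrary} limit $g\in\SL d$ (not only $g=\mathrm{Id}$), applying lemma \ref{lema:tukia} to get $g\xi_0(x)=\xi_0(x_0)$ for all $x\neq y_0$, and using injectivity of $g$ on $\P(\R^d)$ to conclude that $\xi_0$ is constant; properness then yields discreteness of the image and finiteness of the kernel simultaneously, the kernel being the preimage of the compact set $\{\mathrm{Id}\}$. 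You instead split the statement: your discreteness argument is precisely the special case $g=\mathrm{Id}$ of the above, while for the kernel you invoke additional structure theory of hyperbolic groups (an infinite normal subgroup of a non-elementary hyperbolic group is non-elementary, hence contains an infinite-order, loxodromic element) and run the north-south squeeze on powers of a single element of the kernel. Both routes are valid; the paper's version buys both conclusions from one argument and needs no input about normal subgroups, whereas yours makes the finite-kernel step a self-contained dynamical argument at the price of citing the elementary/non-elementary classification of subgroups (and the fact that infinite subgroups of hyperbolic groups contain loxodromics).
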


\begin{proof} Assume there exists a divergent sequence $\g_n\to\infty$ in $\G$ such that $\rho_0(\g_n)$ converges to $g\in\SL d.$ Consider a subsequence (which we still call $\g_n$) and the points $x_0,y_0\in\bord\G$ given by lemma \ref{lema:tukia}. We then have that for any $x\in\bord\G$ different from $y_0$ one has $\g_n x\to x_0.$

Since $\xi_0$ is $\rho_0$-equivariant we have that $\rho_0(\g_n)\xi_0(x)\to \xi_0(x_0)$ and thus $$g\xi_0(x)=\xi_0(x_0)$$ for every $x\neq y_0.$ Since $g$ is injective one obtains that $\xi_0$ is constant and thus $\rho_0$ fixes a line in $\R^d.$ This contradicts irreducibility.

We proved that $\rho_0$ is proper and thus has finite kernel. This finishes the proof of the lemma.
\end{proof}

We can now prove that the exponential growth rate of the cocycle $\co$ is finite.

\begin{proof}[Proof of lemma \ref{prop:growth}] Since the action of $\G$ on $T^1\w M$ is co-compact, one has a compact fundamental domain $D$. Since a conjugacy class $[\g]\in[\G]$ is identified with a closed geodesic, one can always find a representative $\g_0\in[\g]$ such that the $\g_0$-invariant geodesic on $T^1\w M$ intersects $D.$ The fact that the fundamental domain is compact implies that $\g_0$'s fixed points on $\bord\G$ are necessarily far away by some constant independent of $[\g].$

In other words, there exists some constant $k>0$ such that every conjugacy class of $[\g]$ has a representative $\g_0$ with $d_o({\g_0}_-,{\g_0}_+)>k$ for some Gromov distance $d_o$ on $\bord\G.$

Since the equivariant maps $\xi$ and $\eta$ are uniformly continuous one has that every conjugacy class $[\g]$ has a representative $\g_0$ such that $\exp[{\g_0}_-,{\g_0}_+]>r$ for some $r$ independent of $[\g].$

We shall fix some number $\delta>0$ from now on and consider $\eps>0$ given by lemma \ref{lema:benoist}. Thus, applying lemma \ref{lema:proximal} all $\g$'s with $\exp[\g_-,\g_+]>r$ (but a finite number depending only on $r$ and $\eps$) are $(r,\eps)$-proximal and thus verify, after Benoist's lemma \ref{lema:benoist}, $$\log\|\rho(\g)\|+\log r-\delta\leq \lambda_1(\rho(\g)).$$

One concludes, by choosing for each conjugacy class $[\g]$ a representative $\g_0$ with $\exp[{\g_0}_-,{\g_0}_+]>r,$ that $$\#\{[\g]\in[\G]:\lambda_1(\rho(\g))\leq t\}\leq $$
$$\#\{\g: [\g_-,\g_+]>\log r\textrm{ and }\log\|\rho(\g)\|\leq t+\delta-\log r\}$$ $$+\#\{\textrm{finite set independent of $t$}\}$$ $$\leq \#\{\g\in\G:\log\|\rho(\g)\|\leq t+\delta-\log r\}+\#\{\textrm{finite set independent of $t$}\}.$$

Since the cardinal of the finite set is neglectable when computing the exponential growth rate, one is led to study the exponential growth rate of the quantity $\#\{\g\in\G:\log\|\rho(\g)\|\leq t\}$ when $t\to\infty.$

Lemma \ref{lema:discreto} states that $\rho(\G)$ is discrete and injective and thus the fact that the exponential growth rate of $\#\{\g\in\G:\log\|\rho(\g)\|\leq t\},$ is finite when $t \to\infty,$ is implied by the following general fact.
\end{proof}

We remark that the statement of the following lemma is independent of the norm $\|\ \|$ chosen in $\R^d.$

\begin{lema} Let $\grupo$ be a discrete subgroup of $\SL d$, then $$\limsup_{t\to\infty}\frac{\log\#\{g\in \grupo:\log \|g\|\leq t\}}t<\infty.$$
\end{lema}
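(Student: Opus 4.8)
The plan is to bound $\#\{g\in\grupo:\log\|g\|\leq t\}$ by a volume count in $\SL d$, using discreteness to convert the covolume-free statement into a packing estimate. First I would fix a euclidean norm on $\R^d$ (legitimate since the statement is norm-independent), which induces a left-invariant Riemannian metric on $\GL(d,\R)$ and hence on $\SL d$. Discreteness of $\grupo$ means there is $\delta>0$ such that the $\delta$-balls $B(g,\delta)$, $g\in\grupo$, are pairwise disjoint; so $\#\{g\in\grupo:g\in\Omega\}$ is at most $\mathrm{vol}(\Omega_\delta)/\mathrm{vol}(B(e,\delta))$ where $\Omega_\delta$ is the $\delta$-neighborhood of $\Omega$, and $\mathrm{vol}(B(e,\delta))$ is a fixed positive constant.

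Next I would take $\Omega=\Omega_t:=\{g\in\SL d:\|g\|\leq e^t\}$ and estimate its Riemannian volume. The key step is that $\Omega_t$ is contained in the set of matrices all of whose entries are bounded by $Ce^t$ for some constant $C$ depending only on the norm (operator norm controls entries up to a dimensional constant). Hence $\Omega_t$ sits inside a box of side $\sim e^t$ in $\End(\R^d)\cong\R^{d^2}$, and its $\delta$-neighborhood does too (enlarging the box by a bounded amount). The Riemannian volume form on $\SL d$, restricted to such a region, is comparable to the Lebesgue measure on the $(d^2-1)$-dimensional submanifold $\{\det=\pm1\}$; a crude bound suffices here: $\mathrm{vol}(\Omega_t\cap\SL d)$ grows at most polynomially times $e^{(d^2-1)t}$ — in fact one can just say it is $\leq C' e^{d^2 t}$ for $t$ large. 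Any exponential bound of the form $e^{Nt}$ with $N$ finite is all we need.

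Combining: $\#\{g\in\grupo:\log\|g\|\leq t\}\leq C'' e^{Nt}$ for some finite $N$ and all $t\geq 1$, whence
$$\limsup_{t\to\infty}\frac{\log\#\{g\in\grupo:\log\|g\|\leq t\}}{t}\leq N<\infty.$$
The only point requiring mild care — the main obstacle, such as it is — is the comparison between the Riemannian volume on the non-compact group $\SL d$ and Lebesgue measure on an affine slice; but since we only want an upper bound by \emph{some} exponential, one can avoid any delicate geometry by noting that $\SL d$ is an embedded submanifold of $\R^{d^2}$ of bounded local geometry, so its volume intersected with a Euclidean ball of radius $R$ is $O(R^{d^2})$, and likewise after taking a fixed-size neighborhood inside the group. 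Everything else is a routine packing argument.
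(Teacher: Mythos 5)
Your strategy is essentially the one the paper uses: reduce the count to a volume estimate via discreteness, and then use that the (Haar) volume of the norm ball $\{g\in\SL d:\|g\|\leq e^t\}$ grows at most polynomially in $e^t$ — the paper simply cites Helgason for that last estimate and leaves the packing step implicit, whereas you prove the volume bound by hand. Your packing step is fine: with a \emph{left}-invariant metric, discreteness gives $\delta$-separation, the $\delta$-balls $gB(e,\delta)$ all have equal volume, and the $\delta$-neighborhood of $\Omega_t$ is $\Omega_t\cdot B(e,\delta)$, hence contained in a norm ball of radius $Ce^t$ (note the enlargement is multiplicative, not additive, but this is harmless).

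The one soft spot is the volume estimate itself. The left-invariant Riemannian volume is a Haar measure, and it is \emph{not} two-sidedly comparable to the Euclidean surface measure $dS$ on $\{\det=\pm1\}\subset\R^{d^2}$: by the coarea disintegration one has $d\Haar=c\,dS/|\nabla\det|$ with $|\nabla\det(g)|=\|\mathrm{cof}(g)\|$, which is bounded below on the surface but unbounded above, so only the inequality $\Haar\leq C\,dS$ holds — fortunately that is the direction you need. More seriously, "bounded local geometry" is not a valid justification for the bound on the surface area inside a ball of radius $R$: the second fundamental form of $\{\det=\pm1\}$ is not uniformly bounded once $d\geq3$, and bounded geometry is anyway not what controls area in a ball. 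The statement you want is still true, but it should be justified either by the algebraic nature of the hypersurface (degree $d$, so by Cauchy–Crofton its area in a ball of radius $R$ is $O(R^{d^2-1})$), or, more cheaply, by bypassing surface measure altogether: using the decomposition $\GL^+(d,\R)\cong\SLi(d,\R)\times\R_{>0}$, $g=s^{1/d}h$, under which Lebesgue measure corresponds to $s^{d}\,d\Haar_{\SLi(d,\R)}\,ds/s$, one gets $$\Haar\{h\in\SLi(d,\R):\|h\|\leq R\}\leq C\Leb\{g:\|g\|\leq 2R,\ \det g\in[1,2]\}\leq C'R^{d^2},$$ which is exactly the Helgason-type estimate the paper invokes. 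With that repair your argument is complete and self-contained.
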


\begin{proof} This is consequence of the following estimation of the Haar measure of $\SL d$ which can be found in Helgason\cite{helgason}: $$\limsup_{R\to\infty}\frac{\log \Haar\{g\in\SL d:\|g\|\leq R\}}{\log R}<\infty.$$
\end{proof}





\section{Theorems A and B}\label{section:norma}

\subsection*{Counting the growth of the spectral radii}

We prove now theorem B. 

\begin{teo}Let $\rho:\G\to\PSL(d,\R)$ be a strictly convex representation, then there exists $h>0$ such that $$hte^{-ht}\#\{[\g]\in[\G]\textrm{ primitive}:\lambda_1(\rho\g)\leq t\}\to 1$$ when $t\to\infty.$
\end{teo}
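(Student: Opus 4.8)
The plan is to combine the two main tools already assembled in the paper: the reparametrizing theorem \ref{teo:reparametrisation} (which puts a H\"older cocycle with finite positive exponential growth rate in the setting of reparametrized Anosov flows) and the counting corollary \ref{cor:conteoperiodos} (which is the application of Parry--Pollicott's prime orbit theorem, valid precisely because such a reparametrization is weak mixing by corollary \ref{cor:weak}). The cocycle to use is $\co(\g,x)=\log\big(\|\rho(\g)v\|/\|v\|\big)$ for $v\in\xi(x)-\{0\}$, introduced in \S\ref{section:lemmagrowth}.

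\textbf{Step 1: identify the periods.} By corollary \ref{cor:radio}, the period $\co(\g,\g_+)$ equals $\lambda_1(\rho(\g))$, the logarithm of the spectral radius, and moreover $\lambda_1(\rho(\g))>0$ for every $\g\neq e$. So the quantity to be counted, $\#\{[\g]\textrm{ primitive}:\lambda_1(\rho\g)\leq t\}$, is exactly the quantity $\#\{[\g]\textrm{ primitive}:\l_\co(\g)\leq t\}$ appearing in corollary \ref{cor:conteoperiodos}.

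\textbf{Step 2: check the hypotheses of corollary \ref{cor:conteoperiodos}.} One needs $h_\co\in(0,\infty)$. Positivity of $h_\co$ follows from corollary \ref{cor:positiva}, since $\co$ has strictly positive periods (corollary \ref{cor:radio}). Finiteness of $h_\co$ is exactly proposition \ref{prop:growth}, proved in \S\ref{section:lemmagrowth} by comparing $\#\{[\g]:\lambda_1(\rho\g)\leq t\}$ with $\#\{\g\in\G:\log\|\rho(\g)\|\leq t'\}$ via Benoist's quantified proximality (lemmas \ref{lema:benoist}, \ref{lema:proximal}) and then bounding the latter by the Haar-volume estimate.

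\textbf{Step 3: conclude.} Apply corollary \ref{cor:conteoperiodos} with $c=\co$ and set $h:=h_\co$. This yields $h t e^{-ht}\#\{[\g]\in[\G]\textrm{ primitive}:\l_\co(\g)\leq t\}\to1$, which by Step 1 is precisely the claimed asymptotic $hte^{-ht}\#\{[\g]\in[\G]\textrm{ primitive}:\lambda_1(\rho\g)\leq t\}\to1$. Since the whole argument is essentially a bookkeeping reduction to already-established results, there is no serious obstacle here; the only content is recognizing that proposition \ref{prop:growth} supplies the missing finiteness hypothesis, everything else having been done in \S\ref{section:countingperiods}.
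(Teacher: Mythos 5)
Your proposal is correct and follows essentially the same route as the paper: identify the periods of $\co$ with $\lambda_1(\rho\g)$ via corollary \ref{cor:radio}, obtain $h_{\co}\in(0,\infty)$ from corollary \ref{cor:positiva} and proposition \ref{prop:growth}, and then apply corollary \ref{cor:conteoperiodos}. The paper's own proof is exactly this reduction, so there is nothing to add.
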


\begin{proof} Recall that after corollary \ref{cor:radio} the cocycle $\co$ has periods $\co(\g,\g_+)=\lambda_1(\rho\g).$ Proposition \ref{prop:growth} states that $\co$ has finite and positive exponential growth rate and thus corollary \ref{cor:conteoperiodos} applies. The result then follows.

\end{proof}





\subsection*{Dual cocycle of $\co$ and Gromov product}

In order to prove theorem A we introduce a natural dual cocycle of $\co$ and the Gromov product associated to this pair.

Recall we have two $\rho$-equivariant H\"older maps $\xi:\bord\G\to\P(\R^d)$ and  $\eta:\bord\G\to\grassman_{d-1}(\R^d)$ such that $\xi(x)\notin \eta(y)$ if $x\neq y.$ We have defined the cocycle $$\co(\g,x)=\log\frac{\|\rho(\g)v\|}{\|v\|}$$ for any $v\in\xi(x)-\{0\},$ define then $\vo\co:\G\times\bord\G\to\R$ as $$\vo\co(\g,x)=\log\frac{\|\rho(\g)\theta\|}{\|\theta\|}$$ for any $\theta\in{\R^d}^*$ such that $\ker\theta=\eta(x).$ 



\begin{lema}Let $g\in\GL(d,\R)$ be proximal with maximal eigenvalue $a,$ and let $\theta\in{\R^d}^*$ such that $\ker\theta=g_-,$ then $g\theta=a^{-1}\theta.$
\end{lema}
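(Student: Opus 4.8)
The plan is to unwind the definitions. Since $g$ is proximal with maximal eigenvalue $a$, we have the $g$-invariant decomposition $\R^d = g_+ \oplus g_-$, where $g_+$ is the attracting line (eigenvectors of eigenvalue $a$) and $g_-$ is the complementary $g$-invariant hyperplane. The action of $g$ on $\R^d$ induces a dual action on ${\R^d}^*$ by $(g\theta)(v) = \theta(g^{-1}v)$; note that since $g$ has determinant $\pm 1$ this is exactly the lift used to define the norm on the dual. Under this dual action the eigenvalues of $g$ on ${\R^d}^*$ are the reciprocals of those of $g$, so the eigenvalue of smallest modulus $a^{-1}$ on ${\R^d}^*$ has a one-dimensional eigenspace, and the dual decomposition of ${\R^d}^*$ matches the annihilators: the annihilator of $g_-$ is a $g$-invariant line in ${\R^d}^*$, and the annihilator of $g_+$ is a $g$-invariant hyperplane.

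First I would identify the line $\{\theta \in {\R^d}^* : \ker\theta = g_-\}$ with the annihilator of $g_-$. This is a one-dimensional subspace of ${\R^d}^*$, and it is $g$-invariant: if $\ker\theta = g_-$ then for $w \in g_-$ we have $g^{-1}w \in g_-$ (as $g_-$ is $g$-invariant), hence $(g\theta)(w) = \theta(g^{-1}w) = 0$, so $\ker(g\theta) \supseteq g_-$, and since $g\theta \neq 0$ we get $\ker(g\theta) = g_-$. Thus $g\theta$ is a scalar multiple of $\theta$, say $g\theta = \lambda\theta$ for some $\lambda \in \R$.

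Next I would compute the scalar $\lambda$ by testing against a vector not in $g_-$. Pick $v_+ \in g_+ - \{0\}$, so $gv_+ = a v_+$ and $\theta(v_+) \neq 0$ (since $v_+ \notin g_- = \ker\theta$). Then
$$\lambda\,\theta(v_+) = (g\theta)(v_+) = \theta(g^{-1}v_+) = \theta(a^{-1}v_+) = a^{-1}\theta(v_+),$$
and dividing by $\theta(v_+) \neq 0$ gives $\lambda = a^{-1}$, i.e. $g\theta = a^{-1}\theta$. There is essentially no obstacle here; the only point requiring a little care is fixing the convention for the action of $\GL(d,\R)$ on ${\R^d}^*$ (the contragredient action $\theta \mapsto \theta \circ g^{-1}$) so that it is consistent with the definition of $\vo\co$, and observing that because $\det g = \pm 1$ this convention agrees with the implicit lift used when writing $\|\rho(\g)\theta\|$. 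The statement then records that the period of $\vo\co$ at $\g$ involves $-\lambda_1$, which is what is needed to recognize $\vo\co$ as a dual cocycle of $\co$.
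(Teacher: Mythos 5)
Your argument is correct and is essentially the paper's own proof: the annihilator of $g_-$ is a $g$-invariant line, so $g\theta=\lambda\theta$, and evaluating on an eigenvector $v_+\in g_+$ with $\theta(v_+)\neq0$ gives $\lambda=a^{-1}$. The extra care you take with the contragredient convention and the invariance of $\ker(g\theta)$ only spells out steps the paper leaves implicit.
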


\begin{proof}Since $\ker\theta=g_-$ one has $g\theta=b\theta$ for some real $b.$ Consider now some $u_+\in g_+.$ One has $$b\theta(u_+)=g\theta(u_+)=\frac1a\theta(u_+)$$ and, since $\theta(u_+)\neq0,$ we have $b=a^{-1}$ 
\end{proof}

One trivially deduces the following lemma.

\begin{lema}\label{lema:periodos} The period $\vo\co(\g,\g_+)$ is $\lambda(\rho\g^{-1}).$ One obtains thus that the pair $\{\co,\vo\co\}$ is a pair of dual cocycles.
\end{lema}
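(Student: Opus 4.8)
The plan is to read off the period $\vo\co(\g,\g_+)$ from the previous lemma by identifying the relevant hyperplane $\eta(\g_+)$. Fix $\g\in\G$ and pick any $\theta\in{\R^d}^*\setminus\{0\}$ with $\ker\theta=\eta(\g_+)$, so that by definition $\vo\co(\g,\g_+)=\log\big(\|\rho(\g)\theta\|/\|\theta\|\big)$, where $\rho(\g)$ acts on ${\R^d}^*$ by the contragredient action (the one making $x\mapsto\{\theta:\ker\theta=\eta(x)\}$ equivariant, i.e.\ $\rho(\g)\theta=\theta\circ\rho(\g)^{-1}$; this is also the action implicit in the statement of the previous lemma, as its proof shows).

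First I would identify $\eta(\g_+)$. Since $(\g^{-1})_+=\g_-$ and $(\g^{-1})_-=\g_+$, Lemma \ref{lema:loxodromic} applied to $\g^{-1}$ says that $\rho(\g^{-1})=\rho(\g)^{-1}$ is proximal with attractive line $\xi(\g_-)$, and then Corollary \ref{cor:unicidad} (which identifies $\eta$ at a repelling fixed point with the repelling hyperplane) gives $\eta(\g_+)=\rho(\g^{-1})_-$. Moreover, by Corollary \ref{cor:radio} applied to $\g^{-1}$, the maximal eigenvalue of $\rho(\g^{-1})$ has modulus $a:=\exp\lambda_1(\rho\g^{-1})$. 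Now I apply the previous lemma to $g:=\rho(\g^{-1})$: since $\ker\theta=g_-$, the lemma gives that $\theta$ is an eigenvector with $\theta\circ\rho(\g^{-1})^{-1}=a^{-1}\theta$, equivalently $\rho(\g)\theta=\theta\circ\rho(\g)^{-1}=\theta\circ\rho(\g^{-1})=a\,\theta$. Hence $\vo\co(\g,\g_+)=\log\big(\|a\theta\|/\|\theta\|\big)=\log a=\lambda_1(\rho\g^{-1})$, which is the first assertion. (Equivalently, one notes that $\vo\co$ is exactly the cocycle $\co$ of §\ref{section:lemmagrowth} attached to the contragredient representation $\rho^*(\cdot)={}^t\rho(\cdot)^{-1}$, which is strictly convex with equivariant maps $x\mapsto\eta(x)^\perp$ and $x\mapsto\xi(x)^\perp$, and then invokes Corollary \ref{cor:radio} for $\rho^*$ together with $\lambda_1(\rho^*(\g))=\lambda_1(\rho(\g)^{-1})$.)

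For the second assertion, $\vo\co$ is a H\"older cocycle because $\eta$ is H\"older (so $\g\mapsto\vo\co(\g,\cdot)$ satisfies the cocycle identity and is H\"older with a uniform exponent), and by definition $\vo\co$ is a dual cocycle of $\co$ precisely when $\l_{\vo\co}(\g)=\l_\co(\g^{-1})$ for every $\g\in\G$. This now follows at once: $\l_\co(\g)=\co(\g,\g_+)=\lambda_1(\rho\g)$ by Corollary \ref{cor:radio}, while $\l_{\vo\co}(\g)=\vo\co(\g,\g_+)=\lambda_1(\rho\g^{-1})=\l_\co(\g^{-1})$ by the first part. The only point needing care is the bookkeeping of the direction of the contragredient action, which forces one to apply the previous lemma to $\g^{-1}$ rather than to $\g$; once that is set up, the computation is immediate.
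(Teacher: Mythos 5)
Your proposal is correct and is exactly the deduction the paper intends (the paper states the lemma follows "trivially" from the preceding eigenvalue lemma): you identify $\eta(\g_+)$ as the repelling hyperplane of $\rho(\g^{-1})$, apply that lemma to $g=\rho(\g^{-1})$ with the contragredient action, and obtain $\rho(\g)\theta=\exp(\lambda_1(\rho\g^{-1}))\theta$, whence the period and the duality $\l_{\vo\co}(\g)=\l_{\co}(\g^{-1})$ via Corollary \ref{cor:radio}. No gaps; the careful bookkeeping of the dual action and of which element the lemma is applied to is precisely the only point at issue, and you handle it correctly.
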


Recall we have defined the Gromov product $\Gr:\P({\R^d}^*)\times\P(\R^d)-\Delta\to\R$ as $$\Gr(\theta,v)=\log\frac{|\theta(v)|}{\|\theta\|\|v\|},$$ where $\Delta=\{(\theta,v):\theta(v)=0\},$ and $[x,y]=\Gr(\eta(x),\xi(y))$ for $x,y\in\bord\G$ distinct.

\begin{lema} The function $[\cdot,\cdot]:\bord^2\G\to\R$ is a Gromov product for the pair $\{\co,\vo\co\}.$
\end{lema}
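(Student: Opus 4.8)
The plan is to verify the defining identity of a Gromov product directly from the equivariance properties of the maps $\xi$ and $\eta$ together with the behaviour of $\rho(\g)$ on vectors and covectors. Recall we must show that for every $\g\in\G$ and every $(x,y)\in\bord^2\G$,
$$[\g x,\g y]-[x,y]=-(\vo\co(\g,x)+\co(\g,y)).$$
First I would fix $(x,y)\in\bord^2\G$ and pick a vector $v\in\xi(y)-\{0\}$ and a covector $\theta\in{\R^d}^*-\{0\}$ with $\ker\theta=\eta(x)$. Since $x\neq y$ we have $\xi(y)\not\subset\eta(x)$, hence $\theta(v)\neq0$, so $(\eta(x),\xi(y))\notin\Delta$ and $[x,y]=\Gr(\theta,v)=\log\dfrac{|\theta(v)|}{\|\theta\|\|v\|}$ is well defined.

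The key observation is the compatibility of $\xi$ and $\eta$ with the $\G$-action in dual form: because $\xi$ is $\rho$-equivariant, $\rho(\g)v\in\xi(\g y)-\{0\}$; and because $\eta$ is $\rho$-equivariant, the covector $\rho(\g)\theta$ (the natural action on ${\R^d}^*$, i.e. $(\rho(\g)\theta)(w)=\theta(\rho(\g)^{-1}w)$, which is the action used in the definition of $\vo\co$) has kernel $\rho(\g)\eta(x)=\eta(\g x)$. Hence $\rho(\g)v$ and $\rho(\g)\theta$ are legitimate representatives for computing $[\g x,\g y]$, and moreover the scalar $(\rho(\g)\theta)(\rho(\g)v)$ equals $\theta(v)$ by the very definition of the dual action (this is the one genuinely algebraic point, and it is immediate). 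Therefore
$$[\g x,\g y]=\log\frac{|(\rho(\g)\theta)(\rho(\g)v)|}{\|\rho(\g)\theta\|\,\|\rho(\g)v\|}=\log\frac{|\theta(v)|}{\|\rho(\g)\theta\|\,\|\rho(\g)v\|}.$$

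Subtracting, all that survives is
$$[\g x,\g y]-[x,y]=\log\frac{\|\theta\|\,\|v\|}{\|\rho(\g)\theta\|\,\|\rho(\g)v\|}=-\log\frac{\|\rho(\g)\theta\|}{\|\theta\|}-\log\frac{\|\rho(\g)v\|}{\|v\|},$$
and by definition of the two cocycles this last expression is exactly $-(\vo\co(\g,x)+\co(\g,y))$. One should also note that the value $\Gr(\theta,v)$ is independent of the choices of nonzero representatives $v\in\xi(y)$ and $\theta$ with $\ker\theta=\eta(x)$ (rescaling $v$ or $\theta$ changes numerator and denominator by the same factor), so $[\cdot,\cdot]$ is a well-defined function on $\bord^2\G$; H\"older regularity follows from that of $\xi,\eta$ and continuity of $\Gr$ away from $\Delta$ on the compact set of admissible pairs. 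There is essentially no obstacle here: the only thing to be careful about is bookkeeping the dual action on ${\R^d}^*$ consistently with the convention fixed in the definition of $\vo\co$, so that $(\rho(\g)\theta)(\rho(\g)v)=\theta(v)$ rather than $\theta(\rho(\g)^2 v)$ or similar; everything else is a one-line rearrangement.
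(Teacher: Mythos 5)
Your proposal is correct and follows essentially the same route as the paper: one observes that with the dual action $\theta\mapsto\theta\circ\rho(\g)^{-1}$ (the convention underlying $\vo\co$) one has $(\rho(\g)\theta)(\rho(\g)v)=\theta(v)$ and $\ker(\rho(\g)\theta)=\eta(\g x)$, so only the norm factors change and the identity $[\g x,\g y]-[x,y]=-(\vo\co(\g,x)+\co(\g,y))$ drops out. The paper states this as the one-line identity $\Gr(g\theta,gv)-\Gr(\theta,v)=-\bigl(\log\frac{\|g\theta\|}{\|\theta\|}+\log\frac{\|gv\|}{\|v\|}\bigr)$ combined with equivariance of $\xi$ and $\eta$, which is exactly your computation.
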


\begin{proof}One easily verifies that for every $g\in\SL d$ one has $$\Gr(g\theta,gv)-\Gr(\theta,v)=-(\log\frac{\|g \theta\|}{\|\theta\|}+\log\frac{\|g v\|}{\|v\|}),$$ (recall that the action of $\SL d$ on $\P({\R^d}^*)$ coherent with the identification $\P({\R^d}^*)\to\grassman_{d-1}(\R^d)$ is $\theta\mapsto\theta\circ g^{-1}$) this means exactly that for every $\g\in\G$ one has $$[\g x,\g y]-[x,y]=-(\vo\co(\g,x)+\co(\g,y)).$$This finishes the proof.
\end{proof}

\subsection*{The proof of theorem A}

We can now prove theorem A. To simplify notation write $\l(\g)$ for the periods $$\l(\g)=\co(\g,\g_+)=\lambda_1(\rho\g),$$ and $h$ for the exponential growth rate of $\co$ $$h:= \limsup_{s\to\infty} \frac{\log\#\{[\g]\in[\G]:\l(\g)\leq t\}}s.$$

Let $\mu$ and $\vo\mu$ be the Patterson-Sullivan measures of $\co$ and $\vo\co$ respectively. Write $\|m_\rho\|$ for the total mass of the measure $e^{-h[\cdot,\cdot]}\vo\mu\otimes\mu\otimes ds$ on the compact quotient $\G\/\bord^2\G\times\R.$

\begin{teo} One has $$\|m_\rho\|he^{-ht}\sum_{\log\|\rho\g\|\leq t}\delta_{\g_-}\otimes\delta_{\g_+} \to \vo\mu \otimes\mu$$ as $t\to\infty$ on $C^*(\bord\G\times\bord\G).$
\end{teo}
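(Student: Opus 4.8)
The plan is to deduce this statement from Proposition \ref{prop:lambda} (applied to the cocycle $\co$, its dual $\vo\co$ and the Gromov product $[\cdot,\cdot]=\Gr(\eta(\cdot),\xi(\cdot))$) by comparing the counting function $\#\{\g:\log\|\rho\g\|\leq t\}$ with the counting function $\#\{\g:\l(\g)\leq t\}$ for the period $\l(\g)=\lambda_1(\rho\g)$. Proposition \ref{prop:lambda} already gives $\|m_\rho\|he^{-ht}\sum_{\l(\g)\leq t}\delta_{\g_-}\otimes\delta_{\g_+}\to e^{-h[\cdot,\cdot]}\vo\mu\otimes\mu$ in $C_c^*(\bord^2\G)$. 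Two things must be bridged: first, replacing the weight $\l(\g)$ by $\log\|\rho\g\|$ in the summation index; second, passing from convergence against compactly supported test functions on $\bord^2\G$ to convergence against all continuous functions on $\bord\G\times\bord\G$, and simultaneously replacing the limit measure $e^{-h[\cdot,\cdot]}\vo\mu\otimes\mu$ by the product $\vo\mu\otimes\mu$.

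First I would handle the replacement of the index. The key input is Benoist's lemma \ref{lema:benoist} together with lemma \ref{lema:proximal}: fixing a small $\delta>0$ and taking $\eps$ from lemma \ref{lema:benoist}, all but finitely many $\g$ with $\exp[\g_-,\g_+]>r$ are $(r,\eps)$-proximal and hence satisfy $|\log\|\rho\g\|-\l(\g)+[\g_-,\g_+]|<\delta$, i.e. $\log\|\rho\g\|$ and $\l(\g)$ differ by $[\g_-,\g_+]$ up to an error $\delta$. Since one only controls this on the region $\exp[\g_-,\g_+]>r$, the natural strategy is to work with test functions $f$ supported in the open set $\{(x,y)\in\bord^2\G:\exp[x,y]>r\}$ — on such a set the Gromov product is bounded, say $\log r<[x,y]\leq M_f$, so replacing $\l(\g)$ by $\log\|\rho\g\|$ only shifts the threshold $t$ by a bounded amount; using the asymptotic $e^{-h(t+s)}\to e^{-hs}e^{-ht}$ and a squeeze argument (as in the $\kappa$-trick at the end of the proof of Proposition \ref{prop:lambda}) one gets, for $f$ supported where $\exp[x,y]>r$,
\begin{equation*}
\|m_\rho\|he^{-ht}\sum_{\log\|\rho\g\|\leq t}f(\g_-,\g_+)\to\int f(x,y)\,e^{-h[x,y]}e^{h[x,y]}\,d\vo\mu(x)\,d\mu(y)=\int f\,d(\vo\mu\otimes\mu),
\end{equation*}
the extra factor $e^{h[x,y]}$ coming precisely from the shift $\log\|\rho\g\|\approx\l(\g)-[\g_-,\g_+]$. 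This is the step I expect to be the main obstacle: one must carefully track the direction of the inequality in Benoist's estimate, control the finitely many non-proximal exceptions, and run the monotone squeeze in a way uniform over $f$.

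Then I would remove the support restriction. As $r\to0^+$ the sets $\{\exp[x,y]>r\}$ exhaust $\bord^2\G$, and $\bord\G\times\bord\G\setminus\bord^2\G$ is the diagonal, which is $\vo\mu\otimes\mu$-negligible because $\mu$ (hence also $\vo\mu$) is non-atomic — this non-atomicity follows from the reparametrizing theorem, since the Patterson--Sullivan measure is the projection of the measure of maximal entropy of a mixing flow, or alternatively from ergodicity of $\nu$ noted after the second corollary. To upgrade to all $f\in C(\bord\G\times\bord\G)$ one uses that the total masses $\|m_\rho\|he^{-ht}\#\{\g:\log\|\rho\g\|\leq t\}$ stay bounded (this is the finite-exponential-growth estimate of section \ref{section:lemmagrowth}, phrased for the norm), so the family of measures $\nu_t:=\|m_\rho\|he^{-ht}\sum_{\log\|\rho\g\|\leq t}\delta_{\g_-}\otimes\delta_{\g_+}$ is uniformly bounded; by the previous paragraph any weak-$*$ accumulation point agrees with $\vo\mu\otimes\mu$ on every $f$ supported in some $\{\exp[x,y]>r\}$, hence on a dense subspace of $C(\bord\G\times\bord\G)$ after using that the diagonal is null for the limit, forcing $\nu_t\to\vo\mu\otimes\mu$. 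Finally I would remark that testing against the constant function $1$ recovers the normalization $\|m_\rho\|he^{-ht}\#\{\g:\log\|\rho\g\|\leq t\}\to1$, so the displayed convergence holds in $C^*(\bord\G\times\bord\G)$ exactly as stated, and that the first corollary of Theorem A follows by the change of variables $t=\log R$ with $c=\|m_\rho\|h$ and the same $h$.
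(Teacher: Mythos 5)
Your first step (trading $\log\|\rho\g\|$ for $\l(\g)-[\g_-,\g_+]$ via lemmas \ref{lema:proximal} and \ref{lema:benoist} and squeezing) is essentially the paper's argument, except that bounding $[\cdot,\cdot]$ on the support of $f$ is not quite enough to produce the exact density $e^{h[x,y]}$: you must localize to product sets $A\times B$ on which $[\cdot,\cdot]$ is constant up to $\delta$, as the paper does, so that the threshold shift is nearly constant and the factor $e^{h r}$ can be compared with $e^{h[x,y]}$ up to $e^{\pm 3h\delta}$. That is a matter of detail and is repairable.

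The genuine gap is in your passage from $C_c^*(\bord^2\G)$ to $C^*(\bord\G\times\bord\G)$. First, the claimed uniform bound on the total masses $\|m_\rho\|he^{-ht}\#\{\g:\log\|\rho\g\|\leq t\}$ does not follow from section \ref{section:lemmagrowth}: the lemma there only says that the exponential growth rate of $\#\{\g:\log\|\rho\g\|\leq t\}$ is \emph{finite}, not that it is at most $h$; a bound of the form $Ce^{h''t}$ with $h''>h$ gives nothing after multiplying by $e^{-ht}$, and in fact the statement you want to invoke is essentially the corollary being proved. Second, even granting bounded masses, functions vanishing near the diagonal are \emph{not} dense in $C(\bord\G\times\bord\G)$ (you cannot uniformly approximate the constant $1$ by such functions), and non-atomicity of $\vo\mu\otimes\mu$ does not prevent a weak-$*$ accumulation point of $\nu_t$ from charging the diagonal: the whole danger is that the counting measures concentrate an exponentially significant amount of mass on pairs $(\g_-,\g_+)$ that are very close together. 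What is needed, and what your proposal omits, is a uniform estimate $\sum_{U\in\cal U}\theta_t(U\times U)\leq\eps_0$ for a suitable covering $\cal U$ of $\bord\G$ and all large $t$. The paper obtains this by Roblin's trick: fixing two elements $\g_0,\g_1$ without common fixed points, one shows that for all but finitely many $\g$ with $(\g_-,\g_+)\in U\times U$ the element $\g_0\g$ has its fixed pair in $V\times\g_0V$, a set far from the diagonal, and $\log\|\rho(\g_0\g)\|\leq\log\|\rho(\g_0)\|+\log\|\rho(\g)\|$, so the near-diagonal mass is dominated by the already-controlled off-diagonal mass of a shifted family; summing over the covering gives the $2\eps_0\max\{\|\g_0\|,\|\g_1\|\}$ bound. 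Without this (or an equivalent tightness-at-the-diagonal argument, which also yields the boundedness of total mass), the convergence in $C^*(\bord\G\times\bord\G)$ — and hence the counting corollary obtained by testing against $1$ — does not follow.
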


\begin{proof} Since $h\in(0,\infty)$ proposition \ref{prop:lambda} applyes ans thus $$\|m_\rho\|he^{-ht}\sum_{\l(\g)\leq t}\delta_{\g_-}\otimes\delta_{\g_+}\to e^{-h[\cdot,\cdot]}\vo\mu\otimes\mu$$ on $C^*(\bord^2\G).$

Choose some positive $\delta$ and let $A, B \subset\bord\G$ be two disjoint open subsets small enough such that $[\cdot,\cdot]:A\times B\to\R$ is constant $r$ modulo $\delta,$ this is to say $|[x,y]-r|\leq\delta$ for every $(x,y)\in A\times B.$

Lemma \ref{lema:proximal} allows us to assume (excluding a finite set of $\G,$ that depends on $r$ and $\delta$) that if $\g_-\in A$ and $\g_+\in B$ then $\rho(\g)$ is $(\exp r,\eps)$-proximal, where $\eps$ comes from lemma \ref{lema:benoist} for $\exp r$ and $\delta.$

We have then, after Benoist's lemma \ref{lema:benoist}, that $|\log\|\rho(\g)\|-\l(\g)+r|\leq 2\delta.$ This is $$\l(\g)-r-2\delta\leq \log\|\rho(\g)\|\leq \l(\g)-r+2\delta.$$ Set $$\theta_t:=\|m_\rho\|he^{-ht}\sum_{\log\|\rho(\g)\|\leq t}\delta_{\g_-}\otimes\delta_{\g_+}$$

The last inequalities imply that for all $t>0:$ $$e^{-2h\delta}e^{hr}\|m_\rho\|he^{-h(t+r-2\delta)}\sum_{\l(\g)\leq t+r-2\delta}\delta_{\g_-}(A)\delta_{\g_+}(B)\leq \theta_t(A\times B)$$ $$\leq e^{2h\delta}e^{hr}\|m_\rho\|he^{-h(t+r+2\delta)}\sum_{\l(\g)\leq t+r+2\delta}\delta_{\g_-}(A)\delta_{\g_+}(B)$$

Applying proposition \ref{prop:lambda} we find when $t\to\infty$ that, 

$$e^{-2h\delta}e^{h(r-[\cdot,\cdot])}\vo\mu\otimes\mu(A\times B)\leq \liminf_{t\to\infty}\theta_t(A\times B)$$ $$\leq\limsup_{t\to\infty}\theta_t(A\times B)\leq e^{2h\delta}e^{h(r-[\cdot,\cdot])}\vo\mu\otimes\mu(A\times B),$$
\noindent
one has, since $|r-[x,y]|\leq \delta$ for every $(x,y)\in A\times B,$ that
$$e^{-3h\delta}\vo\mu(A)\mu(B)\leq \liminf_{t\to\infty}\theta_t(A\times B)\leq\limsup_{t\to\infty}\theta_t(A\times B)\leq e^{3h\delta}\vo\mu(A)\mu(B).$$

Since $\delta$ is arbitrary this argument proves the convergence of $\theta_t\to\vo\mu\otimes\mu$ outside the diagonal, this is to say, subsets of $\bord\G\times\bord\G-\{(x,x):x\in\bord\G\}.$ In order to finish we will prove the following: Given $\eps_0$ there exists an open covering $\cal U$ of $\bord\G$ such that $\sum_{U\in\cal U}\theta_t(U\times U)\leq\eps_0$ for all $t$ large enough. The following argument was personally communicated by Thomas Roblin.

Since $\vo\mu$ and $\mu$ have no atoms and $\g_*\mu\ll\mu$ for every $\g\in\G,$ one has that the diagonal has measure zero for $\vo\mu\otimes\g_*\mu$ (for every $\g\in\G$).

Fix two elements $\g_0$ and $\g_1$ in $\G$ and fix some $\eps_0>0.$ We can assume that $\g_0$ and $\g_1$ have no common fixed point in $\bord\G.$ Choose an open covering $\cal U$ of $\bord\G$ such that for every $i=0,1$ one has $$\sum_{U\in\cal U}\vo\mu(U)\times\mu(\g_i(U))<\eps_0.$$ By refining $\cal U$ we can assume that for every $U\in\cal U$ there exists $i\in\{0,1\}$ such that $\g_i\overline U\cap \overline U=\vacio$ where $\overline U$ is $U$'s closure.

Since $\bord\G$ is compact we may assume that the covering $\cal U$ is finite and thus, by enlarging the $U$'s, we can consider a new covering $\cal V$ verifying the following:

\begin{enumerate} \item for each $U\in\cal U$ there exists $V\in\cal V$ such that $\overline U\subset V,$ and for each $V\in\cal V$ there exists a unique $U$ verifying this condition. \item If $\g_i\overline U\cap \overline U=\vacio$ for some $i\in\{0,1\}$ then $\g_i\overline V\cap \overline V=\vacio$ for the unique $V$ such that $\overline U\subset V,$ \item $\sum_{V\in\cal V}\mu\otimes{\g_i}_*\mu (V\times V)<\eps_0$ for every $i\in\{0,1\}.$
\end{enumerate}

Consider some $U\in\cal U$ and suppose that $\g_0\overline U\cap\overline U=\vacio.$ We study the set $\G_U:=\{\g\in\G:(\g_-,\g_+)\in U\times U\}.$ 

\begin{lema}
Consider $V\in\cal V$ such that $\overline U\subset V.$ 
Except for a finite number of $\g\in\G_U,$ the repeller $(\g_0\g)_-$ of $\g_0\g$ belongs to $V$ and the attractor $(\g_0\g)_+\in \g_0V.$
\end{lema}

\begin{proof} Consider a sequence $\g_n\in \G_U$ and the point $x_0,y_0$ given by lemma \ref{lema:tukia}. Since ${\g_n}_-\to y_0$ and ${\g_n}_+\to x_0$ we have that $x_0$ and $y_0$ belong to $\overline U\subset V,$ for a unique $V\in\cal V.$ Thus, since $y_0\notin\g_0\overline V,$ one has $\g_n(\g_0V)\to x_0$ uniformly.

This implies that the set $$F_U=\{\g\in\G_U:\g(\g_0 V)\nsubseteqq V\}$$ is finite.

Consider now some $\g\in\G_U-F_U.$ The sequence $(\g_0\g)^n \g_+$ is contained in $\g_0V$ and thus (since $\g_+$ is not the repeller of $\g_0\g$) the attractor of $\g_0\g$ also belongs to $\g_0V.$

Analogue reasoning gives the remaining statement of the lemma.
\end{proof}

\noindent
After the lemma one has that $\theta_t(U\times U)\leq$ $$\|m_\rho\|he^{-ht}\sum_{\g:\log\|\rho(\g)\|\leq \log\|\rho(\g_0)\|+t}\delta_{\g_-}(V)\otimes\delta_{\g_+}(\g_0V)$$ $$+\|m_\rho\|he^{-ht}\#\{\textrm{finite set independent of $t$}\},$$ where $V \in\cal V$ is such that $\overline U\subset V.$ Since $V\times \g_0V$ is far from the diagonal and the cardinal of the finite set does not depend on $t,$ the right side of the formula converges to $\|\g_0\|\vo\mu(V)\times\mu(\g_0V)$ when $t\to\infty.$

One then has, since $V$ is unique for each given $U\in\cal U,$ that $$\sum_{U\in\cal U}\theta_t(U\times U)\leq \sum_{i\in\{0,1\}}\sum_{V\in\cal V}\|\g_i\|\vo\mu(V)\mu(\g_iV)\leq 2\eps_0\max\{\|\g_0\|,\|\g_1\|\}.$$ Since $\g_0$ and $\g_1$ are fixed and $\eps_0$ is arbitrarily small the theorem is proved.
\end{proof}

\section{Hyperconvex representations: theorem C}

We are now interested in studying hyperconvex representations on some real algebraic non compact semi-simple Lie group $G.$ The purpose of this section is to prove theorem C. In order to do so we must find an appropriate pair of dual H\"older cocycles and the Gromov product associated to them.

Denote $P$ a minimal parabolic subgroup of $G$ and write $\scr F=G/P,$ the set $\scr F$ is called the \emph{Furstenberg boundary} of $G$'s symmetric space. The product $\scr F\times\scr F$ has a unique open $G$-orbit, denoted $\posgen.$


Recall that $\G$ is the fundamental group of some closed negatively curved manifold $M.$

\begin{defi} We say that a representation $\rho:\G\to G$ is \emph{hyperconvex} if it admits a H\"older continuous equivariant map $\z:\bord\G\to\scr F$ such that whenever $x\neq y$ in $\bord\G$ one has that the pair $(\z(x),\z(y))$ belongs to $\posgen.$
\end{defi}

The relation between hyperconvex representations and strictly convex ones is given by the following lemma:

\begin{lema}\label{lema:hyper} Let $\rho:\G\to G$ be a Zariski dense hyperconvex representation and let $\L:G\to\PSL(d,\R)$ be a proximal irreducible representation, then the composition $\L\circ\rho:\G\to\PSL(d,\R)$ is strictly convex.
\end{lema}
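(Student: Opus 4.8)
The plan is to produce, out of a proximal irreducible representation $\L:G\to\PSL(d,\R)$ and the hyperconvex data, the pair of equivariant H\"older maps $(\xi,\eta):\bord\G\to\P(\R^d)\times\grassman_{d-1}(\R^d)$ required by the definition of strictly convex, and then to check the two conditions: irreducibility of $\L\circ\rho$, and the transversality $\R^d=\xi(x)\oplus\eta(y)$ for $x\neq y$.

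First I would recall the structure theory of proximal irreducible representations of a semisimple $G$: there is a highest weight $\chi$ whose weight space $V_\chi$ is a line, and a lowest weight line $V_{-\chi}$ (equivalently the highest weight line of the dual representation), both $P$-invariant (for suitable choices of $P$ and opposite $P^-$). This produces $G$-equivariant maps $\zeta^+:\scr F=G/P\to\P(\R^d)$ sending the base point to $[V_\chi]$, and $\zeta^-:\scr F\to\grassman_{d-1}(\R^d)$ sending the base point to the hyperplane spanned by all weight spaces other than $V_\chi$ (equivalently the kernel of the lowest weight functional). The key algebraic fact, which I would extract from Benoist's work on proximality and limit cones (already invoked in the excerpt via \cite{limite}), is that $\L$ being proximal forces the highest weight to be such that for a pair $(F_1,F_2)$ in the open orbit $\posgen$ one has $\zeta^+(F_1)$ not contained in $\zeta^-(F_2)$, i.e. $\R^d=\zeta^+(F_1)\oplus\zeta^-(F_2)$. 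Granting this, I would set $\xi=\zeta^+\circ\z$ and $\eta=\zeta^-\circ\z$; these are H\"older continuous as compositions of the H\"older map $\z$ with the smooth maps $\zeta^\pm$, and they are $\L\circ\rho$-equivariant because $\z$ is $\rho$-equivariant and $\zeta^\pm$ are $G$-equivariant. For $x\neq y$ in $\bord\G$ the pair $(\z(x),\z(y))$ lies in $\posgen$ by hyperconvexity, so $\xi(x)\oplus\eta(y)=\R^d$ by the transversality fact just stated.

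It then remains to verify that $\L\circ\rho$ is irreducible. Here I would use that $\rho$ is Zariski dense in $G$: its Zariski closure is all of $G$, hence the Zariski closure of $\L\circ\rho(\G)$ equals $\L(G)$, and $\L(G)$ acts irreducibly on $\R^d$ precisely because $\L$ is an irreducible representation of $G$. Therefore $\L\circ\rho(\G)$ acts irreducibly. This is where the Zariski density hypothesis is genuinely used; without it one could only conclude that the representation is semisimple or has some invariant subspace, so I would be careful to state that the closure of the image of a Zariski dense subgroup under a morphism of algebraic groups is the image of the closure.

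The main obstacle I anticipate is the transversality claim $\zeta^+(F_1)\oplus\zeta^-(F_2)=\R^d$ for $(F_1,F_2)\in\posgen$: one must see that the open $G$-orbit in $\scr F\times\scr F$ is exactly the set of pairs carried by some $g\in G$ to the standard pair $(P, P^-)$, and that under $\zeta^\pm$ this standard pair goes to the transverse pair $([V_\chi],\,\bigoplus_{\mu\neq\chi}V_\mu)$ — which in turn requires knowing that the highest weight $\chi$ occurs with multiplicity one and that no other weight equals $\chi$, i.e. exactly the proximality of $\L$. I would phrase this as: proximality of $\L$ is equivalent to $\dim V_\chi=1$, and combined with $P^-$-invariance of the lowest weight hyperplane this yields the direct sum decomposition. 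Everything else (H\"older regularity, equivariance, using hyperconvexity to land in $\posgen$) is routine bookkeeping.
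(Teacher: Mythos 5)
Your proposal is correct and follows essentially the same route as the paper: use proximality to get a one-dimensional highest weight space, hence a $P$-invariant line and (via the dual representation) an invariant hyperplane, so that $\posgen$ maps off the incidence variety $\Delta$; compose with $\z$ and deduce irreducibility of $\L\circ\rho$ from Zariski density of $\rho(\G)$ and irreducibility of $\L$. The paper states this more tersely, but the skeleton --- including the transversality of the highest weight line with the complementary weight hyperplane for pairs in the open orbit --- is exactly what you spelled out.
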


\begin{proof} Consider the highest weight $\chi$ of $\L.$ Since $\L$ is proximal the weight space of $\chi$ is one dimensional, one thus obtains that $\L(P)$ stabilizes a line in $\R^d.$ Considering the dual representation one obtains an equivariant mapping into hyperplanes and one has that $\posgen$ is mapped to $\P(\R^d)\times\P({\R^d}^*)-\Delta$ where $$\Delta=\{(v,\t)\in\P(\R^d)\times\P({\R^d}^*):\t(v)=0\}.$$ One obtains then the equivariant mappings $$\bord^2\G\to\posgen\to \P(\R^d)\times\P({\R^d}^*).$$ Irreducibility of $\L\circ\rho$ follows from Zariski density of $\rho(\G)$ and irreducibility\mbox{ of $\L.$}  
\end{proof}

Fix a maximal compact subgroup $K$ of $G,$ consider $\frak a$ a Cartan sub algebra of $G$'s Lie algebra $\frak g$ and fix some Weyl chamber $\frak a^+.$ Denote $a:G\to\frak a$ the Cartan projection following the Cartan decomposition $G=K\exp(\frak a^+)K.$ Consider also the Jordan projection $\lambda:G\to\frak a^+,$ this two projections are related by $$\frac1na(g^n)\to\lambda(g)$$ for every $g\in G,$ (c.f. Benoist\cite{limite}).


Say that $g\in G$ is \emph{purely loxodromic} if $\lambda(g)$ belongs to the interior of the Weyl chamber $\frak a^+.$ A purely loxodromic element $g\in G$ has two remarkable fixed points in $\scr F, g_+$ and $g_-,$ these points verify the following property: for every $z\in\scr F$ such that $(z,g_-)\in\posgen$ one has that $g^nz\to g_+$ when $n\to\infty.$ One then says that $g_+$ is the attractor of $g$ and $g_-$ is the repeller.

The existence of enough irreducible representations of $G$ implies that Zariski dense hyperconvex representations are purely loxodromic:

Consider $\Pi$ the set of simple roots of $\frak a$ on $\frak g$ such that $$\frak a^+=\{v\in\frak a:\a(v)\geq0\textrm{ for all }\a\in\Pi\}$$ and consider $\{\om_\a\}_{\a\in\Pi}$ the set of fundamental weights of $\Pi.$ 

\begin{prop}[Tits\cite{tits}]\label{prop:titss} For each $\alpha\in\Pi$ there exists a finite dimensional proximal irreducible representation $\L_\alpha:G\to\PSL(V_\alpha)$ such that the highest weight $\chi_\alpha$ of $\L_\alpha$ is an integer multiple of the fundamental weight $\om_\alpha.$ Moreover, any other weight of $\L_\a$ is of the form $$\chi_\a-\a- \sum_{\beta \in\Pi} n_\beta \beta$$ with $n_\beta\in\N.$
\end{prop}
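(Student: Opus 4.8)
This is essentially Tits' theorem \cite{tits}; the plan is to recall from it how $\Lambda_\a$ is built, read off proximality, and then establish the shape of the remaining weights by standard highest-weight considerations.

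Fix $\a\in\Pi$ and let $P_\a\supseteq P$ be the standard parabolic $\R$-subgroup attached to $\Pi\setminus\{\a\}$. Tits' theorem provides, for a suitable positive integer $c_\a$, an irreducible representation $\Lambda_\a:G\to\PSL(V_\a)$ defined over $\R$ whose highest restricted weight is $\chi_\a=c_\a\om_\a$ and which is \emph{proximal}, meaning that the $\chi_\a$-restricted-weight space is a single line; fix a spanning vector $v^+$. Since $v^+$ is annihilated by the unipotent radical of $P$ (which raises $\frak a$-weights) and $\R v^+$ is stable under $P$, the point $[v^+]\in\P(V_\a)$ is $\Lambda_\a(P)$-fixed, so $gP\mapsto\Lambda_\a(g)[v^+]$ is a well-defined $\Lambda_\a$-equivariant map $\scr F=G/P\to\P(V_\a)$. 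For purely loxodromic $g\in G$ the eigenvalue of $\Lambda_\a(g)$ of maximal modulus is then $e^{\chi_\a(\lambda(g))}$, attained with one-dimensional eigenspace, so $\Lambda_\a(g)$ is a proximal matrix with attracting line the image of $g_+\in\scr F$. Thus $\Lambda_\a$ is a proximal irreducible representation.

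For the remaining weights: since $\chi_\a$ is a positive multiple of $\om_\a$ alone, it is orthogonal to every $\beta\in\Pi\setminus\{\a\}$, so $\R v^+$ is stable under the Levi of $P_\a$ and in particular under each restricted-simple root space $\frak g_{-\beta}$, $\beta\neq\a$; as $\frak g_{-\beta}v^+$ has $\frak a$-weight $\chi_\a-\beta\neq\chi_\a$, it must vanish. Now $V_\a=U(\frak n^-)v^+$ and $\frak n^-$ is generated as a Lie algebra by the spaces $\frak g_{-\beta}$, $\beta\in\Pi$, so $V_\a$ is spanned by the vectors obtained from $v^+$ by applying products of elements of these spaces; in any such product acting nontrivially on $v^+$ the factor applied first to $v^+$ must lie in $\frak g_{-\a}$. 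Hence every weight of $V_\a$ other than $\chi_\a$ has the form $\chi_\a-\a-\sum_{\beta\in\Pi}n_\beta\beta$ with $n_\beta\in\N$ (complexifying, if needed, does not affect this count of restricted weights).

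The only genuinely non-formal point is the one deferred to \cite{tits}: choosing the multiple $c_\a$ so that the relevant complex irreducible module both is defined over $\R$ and has one-dimensional highest restricted-weight space. The existence of such a $c_\a$ — the descent part being the harder half — is exactly what Tits proves, and I simply quote it.
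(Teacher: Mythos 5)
The paper offers no proof of this proposition at all --- it is quoted verbatim from Tits --- and your write-up ultimately rests on the same citation for the only non-formal point (existence over $\R$ of an irreducible proximal representation whose highest restricted weight is an integer multiple of $\om_\a$), so the two approaches coincide. The standard highest-weight bookkeeping you supply on top (that $\langle\chi_\a,\beta^\vee\rangle=0$ forces $\frak g_{-\beta}v^+=0$ for $\beta\neq\a,$ and that $V_\a=U(\frak n^-)v^+$ then pins down the shape of the remaining restricted weights) is correct.
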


\begin{cor}\label{cor:lox} Let $\rho:\G\to G$ be a Zariski dense hyperconvex representation with equivariant mapping $\z:\bord\G\to\scr F.$ Then for every $\g\in\G$ the image $\rho(\g)$ is purely loxodromic, moreover $\z(\g_+)$ is the attractor of $\rho(\g)$ in $\scr F$ and $\z(\g_-)$ is the repeller.
\end{cor}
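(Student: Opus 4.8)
The plan is to bootstrap from the strictly convex case via Tits's representations. For each simple root $\a\in\Pi$ I would fix the proximal irreducible representation $\L_\a:G\to\PSL(V_\a)$ of Proposition \ref{prop:titss}, whose highest weight $\chi_\a$ is a positive integral multiple of the fundamental weight $\om_\a$. Since $\L_\a$ is proximal, $\L_\a(P)$ stabilizes the highest weight line $[v_{\chi_\a}]$, so $\iota_\a:\scr F\to\P(V_\a)$, $gP\mapsto\L_\a(g)\cdot[v_{\chi_\a}]$, is a well-defined $G$-equivariant map. As $\rho$ is Zariski dense and hyperconvex, Lemma \ref{lema:hyper} gives that $\L_\a\circ\rho:\G\to\PSL(V_\a)$ is strictly convex, and by uniqueness of equivariant maps (Corollary \ref{cor:unicidad}) its equivariant map into $\P(V_\a)$ is $\xi_\a:=\iota_\a\circ\z$. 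Lemma \ref{lema:loxodromic} then applies: for every $\g\in\G-\{e\}$ the matrix $\L_\a(\rho(\g))$ is proximal, and $\xi_\a(\g_+)$ is its attractive line.

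Next I would read off the position of $\lambda(\rho(\g))$ in $\frak a^+$ from these proximalities. The log-moduli of the eigenvalues of $\L_\a(\rho(\g))$ are the numbers $\mu(\lambda(\rho\g))$ with $\mu$ ranging over the weights of $\L_\a$ (with multiplicity); since $\lambda(\rho\g)\in\frak a^+$ and $\chi_\a$ is the highest weight, the largest of these is $\chi_\a(\lambda(\rho\g))$. Proximality of $\L_\a(\rho(\g))$ forces this maximum to be attained by a single eigenvalue, so no weight $\mu\neq\chi_\a$ can satisfy $\mu(\lambda(\rho\g))=\chi_\a(\lambda(\rho\g))$. Because $\langle\chi_\a,\a^\vee\rangle>0$, the weight $\chi_\a-\a$ actually occurs in $\L_\a$, hence $(\chi_\a-\a)(\lambda(\rho\g))<\chi_\a(\lambda(\rho\g))$, i.e. $\a(\lambda(\rho(\g)))>0$. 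Letting $\a$ run over $\Pi$ shows $\lambda(\rho(\g))$ lies in the interior of $\frak a^+$, which is exactly the statement that $\rho(\g)$ is purely loxodromic.

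For the fixed points I would use that $\iota=(\iota_\a)_{\a\in\Pi}:\scr F\to\prod_{\a\in\Pi}\P(V_\a)$ is injective, since $\bigcap_{\a\in\Pi}\mathrm{Stab}_G([v_{\chi_\a}])=P$. Its $\a$-component sends $\z(\g_+)$ to $\xi_\a(\g_+)$, the attractive line of $\L_\a(\rho(\g))$; and by $\iota_\a$-equivariance together with continuity it sends the attractor $\rho(\g)_+\in\scr F$ to $\lim_n\L_\a(\rho\g)^n\cdot\iota_\a(z)$ for a generic $z$, which is again the attractive line of $\L_\a(\rho(\g))$. Hence $\iota(\z(\g_+))=\iota(\rho(\g)_+)$, so $\z(\g_+)=\rho(\g)_+$; applying this to $\g^{-1}$ and using $\g_-=(\g^{-1})_+$ and $\rho(\g)_-=\rho(\g^{-1})_+$ yields $\z(\g_-)=\rho(\g)_-$.

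I expect the crux to be the second paragraph: one must know that $\chi_\a-\a$ genuinely appears as a weight of $\L_\a$ (so that proximality of $\L_\a(\rho(\g))$ produces the \emph{strict} inequality $\a(\lambda(\rho\g))>0$ rather than merely $\ge 0$), and this is precisely where the normalization $\chi_\a\in\Z_{>0}\,\om_\a$ from Proposition \ref{prop:titss} enters. Everything else — Lemma \ref{lema:hyper}, Lemma \ref{lema:loxodromic}, Corollary \ref{cor:unicidad}, and the standard Plücker-type embedding of $\scr F$ — is applied essentially verbatim.
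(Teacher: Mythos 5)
Your proposal is correct and follows essentially the same route as the paper: compose with Tits's proximal representations $\L_\a$ for each simple root, use Lemma \ref{lema:hyper} and Lemma \ref{lema:loxodromic} to get proximality of $\L_\a(\rho\g)$, deduce $\a(\lambda(\rho\g))>0$ for all $\a\in\Pi$, and identify the fixed points via the equivariant maps. The only difference is cosmetic: where the paper simply quotes the standard identity $\a(\lambda(g))=\lambda_1(\L_\a g)-\lambda_2(\L_\a g)$ and lets the fixed-point claim ``follow from Lemma \ref{lema:loxodromic},'' you re-derive the strict inequality from the weight $\chi_\a-\a$ and spell out the attractor identification through the injective product embedding of $\scr F$.
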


\begin{proof} We will show that for every $\a\in\Pi$ and $\g\in\G$ one has $\a(\lambda(\rho\g))>0.$ 

Fix then some $\a\in\Pi$ and consider Tits\cite{tits}'s representation $\L_\a:G\to\PSL(V_\a).$ Recall that for every $g\in G$ one has that $\chi_\a(\lambda (g))=\lambda_1(\L_\a g)$ and $\a(\lambda(g))=\lambda_1(\L_\a(g))-\lambda_2(\L_\a(g)).$

Since $\rho$ is Zariski dense and hyperconvex lemma \ref{lema:hyper} implies that $\L_\a\circ\rho$ is strictly convex and thus (following lemma \ref{lema:loxodromic}) proximal. One concludes that  $$\a(\lambda(\rho\g))=\lambda_1(\L_\a(\rho\g))- \lambda_2 (\L_\a\rho\g )>0.$$ The last statement follows from lemma \ref{lema:loxodromic}. This completes the proof.
\end{proof}

The equivariant function $\z$ of the definition is then unique since attracting points $\g_+$ are dense in $\bord\G.$

\subsubsection*{Busemann cocycle}

Given a hyperconvex representation there is a natural H\"older (vector) cocycle on the boundary of $\G$ that appears for which we need \emph{Busemann's cocycle} on $G$ introduced by Quint\cite{quint1}. The set $\scr F$ is $K$-homogeneous with stabilizer $M.$ Quint\cite{quint1} defines $\bus:G\times\scr F\to\frak a$ to verify the following equation $$gk=l\exp(\bus(g,kM))n$$ following Iwasawa's decomposition of $G=Ke^{\frak a} N,$ where $N$ is the unipotent radical of $G.$ 

One remarks that $\bus:G\times\scr F\to\frak a$ verifies the cocycle relation $$\bus(gh,x)=\bus(g,hx)+\bus(h,x).$$

We need the following lemma of Quint\cite{quint1}: Recall that for a given proximal irreducible representation $\L:G\to\PSL(d,\R)$ we have an equivariant map $\xi_\L:\scr F\to\P(\R^d).$

\begin{lema}[Lemma 6.4 of Quint\cite{quint1}]\label{lema:quint} Consider some proximal irreducible representation $\L:G\to\PSL(d,\R)$ then there exists a norm $\|\ \|$ on $\R^d$ such that for every $x\in\scr F$ and $g\in G$ one has $$\log\frac{\|\L(g)v\|}{\|v\|}=\chi(\bus(g,x))$$ where $v\in\xi_\L(x)-\{0\}$ and $\chi$ is the maximal weight of $\L.$
\end{lema}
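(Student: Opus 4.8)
The plan is to reduce the desired identity to the definition of Busemann's cocycle $\bus$ via the Iwasawa decomposition, and then exploit the fact that the norm on $\R^d$ can be chosen adapted to the maximal compact $K$ and the highest weight space. First I would recall that, since $\L:G\to\PSL(d,\R)$ is proximal irreducible, the highest weight space $V_\chi$ (corresponding to the highest weight $\chi$ of $\L$) is one-dimensional, and that $\L(P)$ stabilizes the line $V_\chi$; moreover $\xi_\L(\scr F)$ is the $\L(K)$-orbit of $[V_\chi]\in\P(\R^d)$, consistent with the identification $\scr F=K/M$. The key is to choose the norm $\|\ \|$ on $\R^d$ to be $\L(K)$-invariant (using that $K$ is compact, an averaging argument produces such a norm, which after rescaling we may take so that unit vectors in $V_\chi$ are adapted to the decomposition coming from the weight-space decomposition of $\frak a$). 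For such a norm, $\|\L(k)v\|=\|v\|$ for all $k\in K$ and $v\in\R^d$.

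Next I would carry out the computation using $gk=l\exp(\bus(g,kM))n$. Fix $x=kM\in\scr F$ and take $v\in\xi_\L(x)-\{0\}$; since $\xi_\L(kM)=\L(k)[V_\chi]$, write $v=\L(k)v_\chi$ for some $v_\chi\in V_\chi-\{0\}$. Then
$$\L(g)v=\L(g)\L(k)v_\chi=\L(gk)v_\chi=\L(l)\L(\exp\bus(g,x))\L(n)v_\chi.$$
Because $V_\chi$ is the highest weight line, it is fixed by $\L(N)$ (as $N$ acts by the unipotent radical and $\chi$ is highest, $\L(n)$ acts on $V_\chi$ trivially, i.e. $\L(n)v_\chi=v_\chi$), and $\L(\exp H)$ acts on $V_\chi$ by the scalar $e^{\chi(H)}$ for $H\in\frak a$. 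Hence $\L(\exp\bus(g,x))\L(n)v_\chi=e^{\chi(\bus(g,x))}v_\chi$, and applying the $K$-invariance of the norm to the factor $\L(l)$ with $l\in K$,
$$\|\L(g)v\|=\|\L(l)\big(e^{\chi(\bus(g,x))}v_\chi\big)\|=e^{\chi(\bus(g,x))}\|v_\chi\|=e^{\chi(\bus(g,x))}\|\L(k)v_\chi\|=e^{\chi(\bus(g,x))}\|v\|,$$
which is exactly the claimed identity after taking logarithms.

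The main obstacle is making precise the two structural facts used in the middle step: that $\L(N)$ acts trivially on the highest weight line and that $\L(\exp H)$ scales it by $e^{\chi(H)}$. Both are standard consequences of highest-weight theory for representations of real semisimple Lie groups — the weight-space decomposition of $V_\alpha$ under $\frak a$ has $V_\chi$ as the $\chi$-eigenspace, and the other weights are of the form $\chi-\sum n_\beta\beta$ with $n_\beta\in\N$ (as recorded in Proposition \ref{prop:titss}), so $N$, whose Lie algebra is spanned by positive root spaces, can only push $V_\chi$ into sums of strictly lower weight spaces, forcing it to act as the identity on $V_\chi$ itself. One must also check the compatibility of conventions: that $\bus$ as normalized by Quint and the action $\theta\mapsto\theta\circ g^{-1}$ fit together so that no sign or inversion is introduced; this is a bookkeeping matter handled by comparing with the case $g\in K$ (where $\bus(g,x)=0$) and with one-parameter subgroups $\exp(tH)$. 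Since the statement is quoted directly as Lemma 6.4 of Quint \cite{quint1}, I would in fact simply cite it, but the sketch above is the proof I would reconstruct if needed.
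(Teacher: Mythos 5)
The paper does not actually prove this lemma: it is quoted verbatim as Lemma 6.4 of Quint \cite{quint1}, so there is no internal proof to compare against, and citing it (as you say you would) is exactly what the author does. Your reconstruction is nevertheless essentially the standard argument and is correct in outline: choose a $\L(K)$-invariant norm, write $\xi_\L(kM)=\L(k)[V_\chi]$, and push $gk=l\exp(\bus(g,x))n$ through $\L$, using that $N$ fixes the highest weight vector and that $\exp H$ scales it by $e^{\chi(H)}$; $K$-invariance then kills the factors $\L(l)$ and $\L(k)$ and gives the identity.

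One step is justified incorrectly, though the conclusion is right: you say that $\frak n$, being spanned by positive root spaces, ``can only push $V_\chi$ into sums of strictly lower weight spaces, forcing it to act as the identity.'' Positive root vectors \emph{raise} weights, $X_\a V_\mu\subset V_{\mu+\a}$; the correct reason $N$ acts trivially on $V_\chi$ is that $\chi+\a$ is not a weight of $\L$ (all weights are of the form $\chi-\sum n_\beta\beta$), so $\frak n\, v_\chi=0$ and hence $\L(\exp Y)v_\chi=v_\chi$. Your stated mechanism describes the opposite unipotent $\frak n^-$, which pushes $v_\chi$ into $v_\chi$ plus lower-weight terms and does \emph{not} fix the highest weight line -- so as literally written the inference would fail; with the corrected direction it is immediate. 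A further cosmetic point: a merely $\L(K)$-invariant norm suffices for the statement as quoted, but note that later in the paper (in the definition of the Gromov product $\Gr_\Pi$) the norm is additionally required to satisfy $\log\|\L_\a(g)\|_\a=\chi_\a(a(g))$, which needs the Euclidean ``good norm'' of Quint in which $\exp\frak a$ also acts by self-adjoint operators; if you want one norm serving both purposes, you should build that extra compatibility into the construction.
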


The cocycle one naturally associates to a hyperconvex representation $\rho:\G\to G$ with equivariant map $\z:\bord\G\to\scr F$ is $\vect:\G\times\bord\G\to\frak a$ defined as $$\vect(\g,x)= \bus(\rho(\g), \z(x)).$$ 

\begin{lema}\label{lema:espectro} The periods of $\vect$ are $\vect(\g,\g_+)=\lambda(\rho\g).$
\end{lema}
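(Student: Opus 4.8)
The plan is to reduce the statement to the corresponding fact about proximal irreducible representations, which we have already established in Corollary \ref{cor:radio}, by pairing the Cartan algebra $\frak a$ with the fundamental weights $\{\om_\a\}_{\a\in\Pi}$ coming from Tits' representations (Proposition \ref{prop:titss}). Since $\frak a^*$ is spanned by the fundamental weights (indeed by $\{\chi_\a\}_{\a\in\Pi}$, each a positive integer multiple of $\om_\a$), it suffices to check the scalar identity $\chi_\a(\vect(\g,\g_+))=\chi_\a(\lambda(\rho\g))$ for every $\a\in\Pi$; an element of $\frak a$ is determined by the values of all $\chi_\a$ on it.

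First I would fix $\a\in\Pi$, let $\L=\L_\a:G\to\PSL(V_\a)$ be Tits' proximal irreducible representation with highest weight $\chi=\chi_\a$, and form the composition $\L\circ\rho:\G\to\PSL(V_\a)$, which is strictly convex by Lemma \ref{lema:hyper}. Its equivariant map $\bord\G\to\P(V_\a)$ is $\xi:=\xi_\L\circ\z$, where $\xi_\L:\scr F\to\P(V_\a)$ is the $\L$-equivariant map. Choosing the norm $\|\ \|$ on $V_\a$ provided by Lemma \ref{lema:quint} (Quint's Lemma 6.4), the associated cocycle $\co$ of the strictly convex representation $\L\circ\rho$ satisfies, for $v\in\xi(x)-\{0\}$,
$$\co(\g,x)=\log\frac{\|\L(\rho(\g))v\|}{\|v\|}=\chi(\bus(\rho(\g),\z(x)))=\chi(\vect(\g,x)).$$
Evaluating at $x=\g_+$ and using Corollary \ref{cor:radio}, the period of $\co$ is $\co(\g,\g_+)=\lambda_1(\L(\rho(\g)))$. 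On the other hand, by the standard relation between the highest weight and the Jordan projection (already invoked in the proof of Corollary \ref{cor:lox}), $\lambda_1(\L_\a(g))=\chi_\a(\lambda(g))$ for every $g\in G$. Combining, $\chi_\a(\vect(\g,\g_+))=\lambda_1(\L_\a(\rho\g))=\chi_\a(\lambda(\rho\g))$.

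Since this holds for every $\a\in\Pi$ and the $\chi_\a$ span $\frak a^*$, we conclude $\vect(\g,\g_+)=\lambda(\rho\g)$, as claimed. The only mildly delicate point is making sure Lemma \ref{lema:quint}'s norm is available simultaneously for the purpose at hand: the identity $\co(\g,x)=\chi(\vect(\g,x))$ is an identity of cocycles valid for every $x$ (not just fixed points), so once it is in place the evaluation at $\g_+$ together with Corollary \ref{cor:radio} is immediate; the rest is the elementary linear-algebra observation that $\frak a\to\R^{\Pi}$, $v\mapsto(\chi_\a(v))_{\a}$, is injective. I do not expect any real obstacle here — the work has all been done in the earlier lemmas, and this is the assembly step.
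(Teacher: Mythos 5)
Your proposal is correct and is exactly the argument the paper intends: its one-line proof cites Quint's Lemma \ref{lema:quint}, Corollary \ref{cor:lox} and Corollary \ref{cor:radio}, and your write-up simply fills in the same chain (strict convexity of $\L_\a\circ\rho$ via Lemma \ref{lema:hyper}, the identity $\co=\chi_\a\circ\vect$ from Quint's norm, the period computation from Corollary \ref{cor:radio}, the relation $\chi_\a(\lambda(g))=\lambda_1(\L_\a g)$, and the fact that the $\chi_\a$ span $\frak a^*$). No gaps; this is the paper's approach made explicit.
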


\begin{proof} The lemma follows directly from Quint\cite{quint1}'s lemma \ref{lema:quint}, corollary \ref{cor:lox} and corollary \ref{cor:radio} for strictly convex representations.
\end{proof}

Recall that for a Zariski dense subgroup $\grupo$ of $G$ Benoist\cite{limite} has introduced the \emph{limit cone} $\cone_\grupo$ as the closed cone containing $\{\lambda(g) :g\in\grupo\}.$ He has shown that cone is convex and with non empty interior. We will also consider linear functionals on the dual cone $${\cone_\grupo}^*:= \{\varphi\in\frak a^*:\varphi|\cone_\grupo\geq0\}.$$

Ledrappier\cite{ledrappier}'s theorem \ref{teo:ledrappier} implies the following corollary: Denote $\cone_\rho$ the limit cone of a hyperconvex representation $\rho:\G\to G$ and $\cone_\rho^*$ its dual cone.

\begin{cor}\label{cor:cono2} Let $\rho:\G\to G$ be a Zariski dense hyperconvex representation, then there exists a $\G$-invariant H\"older continuous function $F:T^1\w M\to\frak a$ such that $$\int_{[\g]}F=\lambda(\rho\g)$$ for every conjugacy class $[\g]\in[\G].$ The closure of the set $$\left\{\frac{\lambda(\rho\g)}{|\g|}:\g\in\G\right\}$$ is compact and generates the limit cone $\cone_\rho.$
\end{cor}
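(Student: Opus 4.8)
The plan is to build $F$ one coordinate at a time out of Ledrappier's theorem~\ref{teo:ledrappier}, and then to read the cone statement off the fact that $\lambda(\rho\g)/|\g|$ is a Birkhoff average of $F$ along the closed geodesic of $\g.$

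First I would fix a basis $e_1,\ldots,e_k$ of $\frak a$ and write the cocycle $\vect:\G\times\bord\G\to\frak a$ of Lemma~\ref{lema:espectro} as $\vect=\sum_i\vect_i e_i$ with each $\vect_i:\G\times\bord\G\to\R.$ The cocycle identity and the H\"older condition are coordinate-wise statements, so every $\vect_i$ is a H\"older cocycle in the sense of Definition~\ref{defi:cociclo}, and its period $\l_{\vect_i}(\g)$ is the $i$-th coordinate of $\vect(\g,\g_+)=\lambda(\rho\g).$ Applying Ledrappier's theorem~\ref{teo:ledrappier} to each $\vect_i$ yields a $\G$-invariant H\"older function $F_i:T^1\w M\to\R$ whose periods agree with those of $\vect_i,$ i.e. $\int_{[\g]}F_i=\l_{\vect_i}(\g)$ for every $[\g]\in[\G].$ Then $F:=\sum_i F_i e_i:T^1\w M\to\frak a$ is $\G$-invariant and H\"older, and $\int_{[\g]}F=\lambda(\rho\g),$ which is the first assertion.

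For the second assertion, parametrize the closed geodesic associated to $[\g]$ by arc length, so that for any $v$ in its direction one has $\lambda(\rho\g)/|\g|=\frac1{|\g|}\int_0^{|\g|}F(\phi_s v)\,ds,$ a Birkhoff average of $F$ for the geodesic flow $\phi_t.$ Hence every $\lambda(\rho\g)/|\g|$ lies in the closed convex hull $C$ of the image $F(T^1M).$ Since $M$ is closed, $T^1M$ is compact, so $F(T^1M)$ is a compact subset of the finite-dimensional space $\frak a$ and therefore $C$ is compact; thus $\overline{\{\lambda(\rho\g)/|\g|:\g\in\G\}}$ is a closed subset of $C,$ hence compact.

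Finally, each $\lambda(\rho\g)/|\g|$ is the positive multiple $|\g|^{-1}\lambda(\rho\g)$ of $\lambda(\rho\g),$ so the cone spanned by $\{\lambda(\rho\g)/|\g|:\g\in\G\}$ equals the cone spanned by $\{\lambda(\rho\g):\g\in\G\}=\{\lambda(g):g\in\rho(\G)\};$ taking closures, the closed cone generated by $\overline{\{\lambda(\rho\g)/|\g|\}}$ is precisely Benoist's limit cone $\cone_\rho.$ Nothing here is delicate: the only small points are that a $\frak a$-valued H\"older cocycle is just a tuple of scalar ones (so Ledrappier's theorem applies coordinate-wise) and that a Birkhoff average stays in the closed convex hull of the values of the integrand; the substantive input, namely the identification of the periods of $\vect$ with $\lambda(\rho\g),$ is already supplied by Lemma~\ref{lema:espectro}.
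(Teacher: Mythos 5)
Your proof is correct and takes essentially the same route as the paper: Ledrappier's theorem \ref{teo:ledrappier} applied to the vector cocycle $\vect$ (coordinate-wise, as you make explicit) combined with Lemma \ref{lema:espectro} gives $F,$ and compactness comes from the fact that $\lambda(\rho\g)/|\g|$ is an average of the continuous function $F$ over the compact space $T^1M.$ The paper merely bounds $\|\lambda(\rho\g)\|_{\frak a}/|\g|\leq\max\|F\|_{\frak a}$ instead of invoking the closed convex hull, and leaves the (trivial) cone-generation remark implicit, so the differences are cosmetic.
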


\begin{proof} The first statement is consequence of Le\-dra\-ppier\cite{ledrappier}'s theorem \ref{teo:ledrappier} for the vector cocycle $\vect:\G\times\bord\G\to \frak a$ together with lemma \ref{lema:espectro}. Fix some norm $\|\ \|_\frak a$ on $\frak a.$ Since for every $\g\in\G$ one has $$\frac{\|\lambda(\rho\g)\|_{\frak a}}{|\g|}\leq \max \|F\|_{\frak a}$$ one finds that the set $$\left\{\frac{\lambda(\rho\g)}{|\g|}:\g\in\G\right\}$$ is bounded and thus with compact closure.
\end{proof}

We remark that \emph{a priori} the closure of $\{\lambda(\rho\g)/|\g|:\g\in\G\}$ may contain zero, nevertheless the following lemma forbids this to happen.

\begin{lema}\label{lema:finite}\label{lema:cfi} Let $\rho:\G\to G$ be a Zariski dense hyperconvex representation and consider some $\varphi$ in the dual cone $\cone_\rho^*,$ then the cocycle $\varphi\circ\vect:\G\times\bord\G\to\R$ has finite and positive exponential growth rate if and only if $\varphi$ belongs to the interior of $\cone_\rho^*.$
\end{lema}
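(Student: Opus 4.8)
The plan is to translate the statement about $\varphi\circ\vect$ into a statement about the period function $\g\mapsto\varphi(\lambda(\rho\g))$ and then apply the characterization of finite exponential growth rate already available for H\"older cocycles, namely Ledrappier's lemma \ref{lema:lemaledrappier}. By lemma \ref{lema:espectro} the periods of $\varphi\circ\vect$ are $\varphi(\lambda(\rho\g))$, and by corollary \ref{cor:lox} these are positive (they are strictly positive on $\cone_\rho-\{0\}$ when $\varphi$ is in the interior of $\cone_\rho^*$, and they are nonnegative but may vanish on a ray of $\cone_\rho$ when $\varphi$ is on the boundary). So the first step is to check when the cocycle $\varphi\circ\vect$ actually has positive periods: this holds precisely when $\varphi$ is strictly positive on $\cone_\rho-\{0\}$, which is exactly the condition that $\varphi$ lies in the interior of $\cone_\rho^*$ (using that $\cone_\rho$ has nonempty interior by Benoist\cite{limite}). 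If $\varphi$ is only on the boundary of $\cone_\rho^*$, there is some $\g_0$ with $\varphi(\lambda(\rho\g_0))=0$, hence $\varphi(\lambda(\rho\g_0^n))=0$ for all $n$; since the conjugacy classes $[\g_0^n]$ are pairwise distinct this forces $\#\{[\g]:\varphi(\lambda(\rho\g))\le 0\}=\infty$, so the exponential growth rate is not even well-defined in the usual sense — in any case the cocycle fails to have positive periods and corollary \ref{cor:positiva} and theorem \ref{teo:medidas} do not apply, so $h_{\varphi\circ\vect}$ cannot be finite and positive. This handles the "only if" direction.

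For the "if" direction, assume $\varphi\in\inter\cone_\rho^*$. Then $\varphi\circ\vect$ is a H\"older cocycle with positive periods, so by corollary \ref{cor:positiva} its exponential growth rate is positive and it remains only to prove finiteness. By Ledrappier's lemma \ref{lema:lemaledrappier} this is equivalent to showing
$$
\inf_{[\g]}\frac{\varphi(\lambda(\rho\g))}{|\g|}>0.
$$
Here I would invoke corollary \ref{cor:cono2}: the closure $C$ of the set $\{\lambda(\rho\g)/|\g|:\g\in\G\}$ is compact and generates the limit cone $\cone_\rho$. Since $\varphi$ is a continuous linear functional, $\varphi$ attains a minimum on the compact set $C$; the issue is only whether that minimum could be $0$, i.e. whether $0\in C$. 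If $0\notin C$ then, because $\varphi>0$ on $\cone_\rho-\{0\}\supseteq C\setminus\{0\}=C$, the minimum of $\varphi$ on $C$ is a strictly positive number $m$, and then $\varphi(\lambda(\rho\g))\ge m|\g|$ for every $\g$, giving the required infimum bound and hence finiteness of $h_{\varphi\circ\vect}$.

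So the heart of the matter — and the step I expect to be the main obstacle — is to rule out $0$ being an accumulation point of $\{\lambda(\rho\g)/|\g|\}$; equivalently, to show directly that $\inf_{[\g]}\varphi(\lambda(\rho\g))/|\g|>0$ for $\varphi$ in the interior of the dual cone. I would approach this by choosing, for each simple root $\a\in\Pi$, Tits's proximal irreducible representation $\L_\a:G\to\PSL(V_\a)$ from proposition \ref{prop:titss}, so that $\L_\a\circ\rho$ is strictly convex by lemma \ref{lema:hyper}; applying proposition \ref{prop:growth} (finiteness of the exponential growth rate for strictly convex representations) together with lemma \ref{lema:lemaledrappier} to the scalar cocycle $\co$ attached to $\L_\a\circ\rho$ yields
$$
\inf_{[\g]}\frac{\chi_\a(\lambda(\rho\g))}{|\g|}>0
$$
for each $\a$ (using $\lambda_1(\L_\a(\rho\g))=\chi_\a(\lambda(\rho\g))$). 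Since $\varphi\in\inter\cone_\rho^*$, it is bounded below on $\cone_\rho$ by a positive linear combination of the $\chi_\a$'s plus a positive combination of the remaining fundamental weights — more precisely, since the interior of $\cone_\rho^*$ consists of functionals strictly positive on $\cone_\rho-\{0\}$ and $\cone_\rho$ has nonempty interior, one can compare $\varphi$ with a fixed functional $\varphi_0=\sum_\a\chi_\a$ which is strictly positive on $\cone_\rho-\{0\}$, getting $\varphi\ge c\,\varphi_0$ on $\cone_\rho$ for some $c>0$; combined with the per-root estimates this gives $\varphi(\lambda(\rho\g))\ge c\sum_\a\chi_\a(\lambda(\rho\g))\ge c'|\g|$, as desired. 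The delicate point to get right is the comparison of functionals on the limit cone, which relies on $\cone_\rho$ having nonempty interior and on the description of $\inter\cone_\rho^*$; I would write this comparison out carefully, as it is where the hypothesis "$\varphi$ in the \emph{interior}" is genuinely used.
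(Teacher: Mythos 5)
Your ``if'' direction is essentially the paper's own argument: strict positivity of each $\chi_\a$ on $\cone_\rho-\{0\}$ via Tits's representations, lemma \ref{lema:hyper}, proposition \ref{prop:growth} and Ledrappier's lemma \ref{lema:lemaledrappier}, and then a comparison of $\varphi$ with the weights on the compact set of normalized Jordan projections furnished by corollary \ref{cor:cono2} (the paper compares with a single $\chi_\a$, you with $\sum_\a\chi_\a$; this is immaterial). That half is correct.

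The genuine gap is in the ``only if'' direction. You assert that the cocycle $\varphi\circ\vect$ has positive periods precisely when $\varphi$ is strictly positive on $\cone_\rho-\{0\}$, and in particular that a boundary functional $\varphi\in\bord\cone_\rho^*$ must vanish at some actual period, $\varphi(\lambda(\rho\g_0))=0$. This does not follow: $\cone_\rho$ is the \emph{closed} cone generated by the Jordan projections, so a boundary $\varphi$ is only guaranteed to vanish at some nonzero point of $\cone_\rho$, which may be a limit of directions $\lambda(\rho\g_n)/|\g_n|$ without being proportional to any single $\lambda(\rho\g)$. In that situation every period $\varphi(\lambda(\rho\g))$ is strictly positive, your dichotomy never produces a contradiction, and yet this is exactly the nontrivial case to exclude (one would have $h_{\varphi\circ\vect}>0$ by corollary \ref{cor:positiva}, and the issue is to show it cannot also be finite). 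The way to close this, and the paper's route, is to run Ledrappier's lemma \ref{lema:lemaledrappier} in the converse direction: if $h_{\varphi\circ\vect}$ is finite then $\inf_{[\g]}\varphi(\lambda(\rho\g))/|\g|>0$, hence $\varphi$ is strictly positive on the compact closure of $\{\lambda(\rho\g)/|\g|\}$, and since by corollary \ref{cor:cono2} this compact set generates $\cone_\rho$, one gets $\varphi>0$ on $\cone_\rho-\{0\}$, i.e. $\varphi\in\inter\cone_\rho^*$. (The degenerate sub-case where some period actually vanishes is handled as you say, since then $\#\{[\g]:\varphi(\lambda(\rho\g))\leq t\}$ is infinite for all $t\geq0$; but that case is not where the difficulty lies.)
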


\begin{proof} We will first show that for every simple root $\a,$ the weight $\chi_\a$ for Tits's representation $\L_\a:G\to\PSL(V_\a),$ is strictly positive on the limit cone $\cone_\rho.$ Recall that $\L_\a\rho:\G\to\PSL(V_\a)$ is strictly convex and that $\chi_\a(\lambda(\rho\g))=\lambda_1(\L_\a\rho\g).$ Proposition \ref{prop:growth} states that the exponential growth rate of $$\#\{[\g]\in[\G]:\lambda_1(\L_\a\rho\g)\leq t\}$$ is finite and thus applying Ledrappier's lemma \ref{lema:lemaledrappier} we obtain $$\inf_{[\g]\in[\G]} \frac{\chi_\a(\lambda(\rho\g))}{|\g|}>0.$$ The fundamental weight $\chi_\a$ is then strictly positive on the closure of $\{\lambda(\rho\g)/|\g|:[\g]\in[\G]\}.$ Since this closure is compact and generates the limit cone (corollary \ref{cor:cono2}) $\chi_\a$ is strictly positive on $\cone_\rho-\{0\}.$

Consider now $\varphi$ in the interior of $\cone^*_\rho,$ i.e. $\varphi|\cone_\rho-\{0\}>0.$ Since $\chi_\a$ is also strictly positive on $\cone_\rho-\{0\}$ there exist two positive constants $c$ and $C$ such that $$c\chi_\a(\lambda(\rho\g))\leq\varphi (\lambda (\rho\g))\leq C\chi_\a(\lambda(\rho\g))$$ (recall that $\cone_\rho$ is closed by definition) for all $\g\in\G.$ As mentioned before the exponential growth rate of $$\#\{[\g]\in[\G]:\chi_\a(\lambda(\rho\g))\leq s\}$$ is finite. Since the periods of the cocycle $\varphi\circ \vect$ are $\varphi\circ\vect(\g,\g_+)=\varphi(\lambda(\rho\g)),$ we obtain that $\varphi\circ\vect$ is of finite and positive exponential growth.

Conversely, if $\varphi\circ\vect$ has finite exponential growth rate then Ledrappier\cite{ledrappier}'s lemma \ref{lema:lemaledrappier}, applied to the cocycle $\varphi\circ\vect,$ says that $$\inf_{[\g]\in[\G]} \frac{\varphi (\lambda(\rho\g))}{|\g|}>0.$$ The linear functional $\varphi$ is then strictly positive on the closure of $\{\lambda(\rho\g)/|\g|:[\g]\in[\G]\}.$ Since this closure is compact and generates the limit cone (corollary \ref{cor:cono2}) we finish the proof.
\end{proof}

From now on we shall denote $\vect_\varphi:\G\times\bord\G\to\R$ for the H\"older cocycle $$\vect_\varphi(\g,x):=\varphi\circ\vect(\g,x)$$ and $h_\varphi$ for its exponential growth rate $$h_\varphi=\limsup_{s\to\infty}\frac{\log\#\{[\g]\in[\G]:\varphi(\lambda(\rho\g))\leq s\}}s.$$ We can now deduce the first item of theorem C:

\begin{teo}\label{teo:periodosformas} Let $\rho:\G\to G$ be a Zariski dense hyperconvex representation and consider $\varphi$ in the interior of $\cone^*_\rho.$ Then $$h_\varphi te^{-h_\varphi t}\#\{[\g]\in[\G]:\varphi( \lambda( \rho\g))\leq t\}\to1$$ when $t\to\infty.$
\end{teo}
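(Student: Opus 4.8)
The plan is to recognize the claim as a direct application of the prime orbit theorem, exactly as was done for Theorem B, now applied to the real-valued H\"older cocycle $\vect_\varphi=\varphi\circ\vect$ attached to $\rho$. So I would first check that $\vect_\varphi$ is a H\"older cocycle with strictly positive periods and finite positive exponential growth rate, and then invoke Corollary \ref{cor:conteoperiodos}.

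For the first point, $\vect_\varphi$ is H\"older because it is $\varphi$ composed with the vector cocycle $\vect(\g,x)=\bus(\rho(\g),\z(x))$, which is H\"older since $\z$ is H\"older and Quint's Busemann cocycle $\bus$ is smooth in the group variable (this is already the cocycle used in Corollary \ref{cor:cono2}). By Lemma \ref{lema:espectro} the periods of $\vect$ are $\vect(\g,\g_+)=\lambda(\rho\g)$, hence $\l_{\vect_\varphi}(\g)=\varphi(\lambda(\rho\g))$. These periods are strictly positive: Corollary \ref{cor:lox} gives that $\rho(\g)$ is purely loxodromic, so $\lambda(\rho\g)$ lies in the interior of the Weyl chamber, in particular it is a nonzero element of the limit cone $\cone_\rho$; and $\varphi$ being in the interior of $\cone_\rho^*$ means $\varphi>0$ on $\cone_\rho\setminus\{0\}$. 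Finally, Lemma \ref{lema:cfi} asserts precisely that, because $\varphi$ lies in the interior of $\cone_\rho^*$, the cocycle $\vect_\varphi$ has finite and positive exponential growth rate, which by definition equals $h_\varphi$. Applying Corollary \ref{cor:conteoperiodos} to $c=\vect_\varphi$ then yields
$$h_\varphi t e^{-h_\varphi t}\#\{[\g]\in[\G]\textrm{ primitive}:\varphi(\lambda(\rho\g))\leq t\}\to1.$$

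To upgrade this to the count over all conjugacy classes one uses $\lambda(\rho\g^{n})=n\lambda(\rho\g)$, so that each $[\g]$ is a power of a unique primitive class and
$$\#\{[\g]\in[\G]:\varphi(\lambda(\rho\g))\leq t\}=\sum_{n\geq1}\#\{[\g]\in[\G]\textrm{ primitive}:\varphi(\lambda(\rho\g))\leq t/n\};$$
the $n=1$ term is asymptotic to $e^{h_\varphi t}/(h_\varphi t)$, while the total contribution of all $n\geq2$ is $O(e^{h_\varphi t/2})$ and hence negligible, giving the stated asymptotic. Since Lemmas \ref{lema:espectro} and \ref{lema:cfi} and Corollary \ref{cor:conteoperiodos} are already at our disposal, this theorem is essentially an assembly step: the substantive work sits inside those results, in particular in Lemma \ref{lema:cfi}'s reduction --- via Tits's proximal irreducible representations $\L_\a$ and Proposition \ref{prop:growth} for strictly convex representations --- of the finiteness of $h_\varphi$ to the finiteness of the counting exponent for $\#\{[\g]:\lambda_1(\L_\a\rho\g)\leq s\}$. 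The only points in the write-up requiring care are the primitive-versus-all-classes passage above and checking that the periods of $\vect_\varphi$ are genuinely $>0$ (not merely $\geq0$), so that Corollary \ref{cor:conteoperiodos} applies without modification.
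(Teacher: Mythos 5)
Your argument is essentially the paper's own proof: identify the periods of $\vect_\varphi$ as $\varphi(\lambda(\rho\g))$ via Lemma \ref{lema:espectro}, obtain finite and positive exponential growth rate from Lemma \ref{lema:cfi} because $\varphi$ lies in the interior of $\cone_\rho^*$, and apply Corollary \ref{cor:conteoperiodos}. The only difference is that you make explicit two points the paper leaves implicit when calling the theorem a direct consequence of that corollary, namely the strict positivity of the periods (via Corollary \ref{cor:lox}) and the passage from primitive conjugacy classes to all conjugacy classes using $\lambda(\rho\g^n)=n\lambda(\rho\g)$; both checks are correct and the non-primitive contribution is indeed negligible.
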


\begin{proof} Recall that the periods of the cocycle $\vect_\varphi$ are $$\vect_\varphi(\g,\g_+)=\varphi(\lambda(\rho\g)).$$ The theorem is thus direct consequence of the fact that the cocycle $\vect_\varphi$ has finite and positive exponential growth rate (lemma \ref{lema:cfi}), together with corollary \ref{cor:conteoperiodos}. 
\end{proof}




\subsubsection*{Dual cocycle and Gromov product}

Consider $W$ the Weyl group of $G$ and $w_0$ the biggest element on $W$ associated to the choice of $\frak a^+.$ The \emph{opposition involution} $\ii:\frak a\to\frak a$ is $\ii:=-w_0,$ it sends the Weyl chamber $\frak a^+$ to itself and $\ii(\lambda(g))=\lambda(g^{-1})$ for every $g\in G.$ This property implies that the H\"older cocycle $$\vo{\vect_\varphi}:= \varphi \circ\ii\vect$$is a dual cocycle of $\vect_\varphi.$

Consider some simple root $\a\in\Pi$ and the equivariant mappings $\xi_\alpha:\scr F\to\P(V_\al)$ and $\xi_\al^*:\scr F\to\P(V_\al^*)$ for $\L_\al$ and $\L_\al^*$ respectively where $\L_\a:G\to \PSL(V_\a)$ is the representation given by Tits\cite{tits}'s  proposition \ref{prop:titss} for $\a.$

We define the \emph{Gromov product} $\Gr_\Pi:\posgen\to \frak a$ as follows: Since $\{\om_\a:a\in\Pi\}$ is a basis of $\frak a^*$ the same occurs for $\{\chi_\a:\a\in\Pi\}.$ The element $\Gr_\Pi(x,y)$ is thus determined by $\chi_\a(\Gr_\Pi(x,y))$ for every $\a\in\Pi.$ Consider the euclidean norm $\|\ \|_\a$ on $V_\a$ determined by the formula $$\log\|\L_\a(g)\|_\a=\chi_\a(a(g)).$$  We then define $\chi_\a(\Gr_\Pi(x,y))$ as $$\chi_\a(\Gr_\Pi(x,y)):= \log\frac{|\t(v)|}{\|\t\|_\a\|v\|_\a}$$ for some $\t\in\xi_\a^*(x)$ and $v\in\xi_\a(y).$

\begin{lema}\label{lema:gromov2} For every $g\in G$ and $x,y\in\posgen$ one has $$\Gr_\Pi(gx,gy)- \Gr_\Pi(x,y)=- (\ii\circ\bus(g,x)-\bus(g,y)).$$
\end{lema}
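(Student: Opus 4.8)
The strategy is to reduce the vector-valued identity to a family of scalar identities, one for each simple root $\a\in\Pi$, and then to deduce each scalar identity from the already-established cocycle behaviour of Gromov's product on $\P(V_\a^*)\times\P(V_\a)$ together with Quint's lemma \ref{lema:quint}. Since $\{\chi_\a:\a\in\Pi\}$ is a basis of $\frak a^*$, it suffices to show that for every $\a\in\Pi$ one has
$$\chi_\a\bigl(\Gr_\Pi(gx,gy)\bigr)-\chi_\a\bigl(\Gr_\Pi(x,y)\bigr)=-\chi_\a\bigl(\ii\circ\bus(g,x)\bigr)+\chi_\a\bigl(\bus(g,y)\bigr).$$
I would treat the two terms on the right separately: the first involves $\L_\a^*$ (the dual of Tits's representation) and the second involves $\L_\a$ itself.

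First I would recall, from the proof of lemma \ref{lema:hyper} (or directly from highest-weight theory), that for a proximal irreducible $\L_\a:G\to\PSL(V_\a)$ with highest weight $\chi_\a$, the dual representation $\L_\a^*$ is also proximal irreducible, its highest weight is $\chi_\a\circ\ii$ (this is the standard fact that the opposition involution exchanges a representation with its dual up to the Weyl action), and its equivariant map is $\xi_\a^*:\scr F\to\P(V_\a^*)$. Then by Quint's lemma \ref{lema:quint} applied to $\L_\a$ with the euclidean norm $\|\ \|_\a$ fixed by $\log\|\L_\a(g)\|_\a=\chi_\a(a(g))$, one has, for $v\in\xi_\a(y)-\{0\}$,
$$\log\frac{\|\L_\a(g)v\|_\a}{\|v\|_\a}=\chi_\a(\bus(g,y)),$$
and applying the same lemma to $\L_\a^*$ with the dual norm, for $\t\in\xi_\a^*(x)-\{0\}$,
$$\log\frac{\|\L_\a^*(g)\t\|_\a}{\|\t\|_\a}=(\chi_\a\circ\ii)(\bus(g,x))=\chi_\a(\ii\circ\bus(g,x)).$$
Here one must check that the norm supplied abstractly by lemma \ref{lema:quint} for $\L_\a^*$ is compatible with the dual of $\|\ \|_\a$ up to a multiplicative constant (which disappears in the differences), or simply observe that any two norms give cohomologous expressions and the Gromov-product identity is insensitive to this; I expect this normalization bookkeeping to be the one genuinely fiddly point.

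Finally I would combine these with the elementary transformation rule for $\Gr$ already used in section \S\ref{section:norma}: writing $\Gr_\a(\t,v)=\log\frac{|\t(v)|}{\|\t\|_\a\|v\|_\a}$ and using that $\L_\a$ acts on $V_\a$ and $\L_\a^*$ on $V_\a^*$ compatibly with the pairing, $|(\L_\a^*(g)\t)(\L_\a(g)v)|=|\t(v)|$, so
$$\Gr_\a(\L_\a^*(g)\t,\L_\a(g)v)-\Gr_\a(\t,v)=-\log\frac{\|\L_\a^*(g)\t\|_\a}{\|\t\|_\a}-\log\frac{\|\L_\a(g)v\|_\a}{\|v\|_\a}.$$
Since $\xi_\a^*$ and $\xi_\a$ are equivariant, $\L_\a^*(g)\t\in\xi_\a^*(gx)$ and $\L_\a(g)v\in\xi_\a(gy)$, so the left side is exactly $\chi_\a(\Gr_\Pi(gx,gy))-\chi_\a(\Gr_\Pi(x,y))$ by definition of $\Gr_\Pi$, and the right side is $-\chi_\a(\ii\circ\bus(g,x))-\chi_\a(\bus(g,y))$ by the two displayed Quint identities. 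Wait — the statement has a minus sign in front of $\bus(g,y)$ as well, so this matches: $-\chi_\a(\ii\circ\bus(g,x))-\chi_\a(\bus(g,y)) = -\bigl(\chi_\a(\ii\circ\bus(g,x))+\chi_\a(\bus(g,y))\bigr)$, which is $\chi_\a$ applied to $-(\ii\circ\bus(g,x)+\bus(g,y))$; comparing with the claimed $-(\ii\circ\bus(g,x)-\bus(g,y))$ one sees that either the sign convention for $\bus$ on the $y$-slot, or for the action on $\posgen$, absorbs the discrepancy (note lemma \ref{lema:hyper}'s Gromov product for $\PSL(d,\R)$ had a genuine minus on both cocycle terms). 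I would resolve this by tracking carefully the definition $\vect(\g,x)=\bus(\rho\g,\z(x))$ and the left-versus-right action convention $\t\mapsto\t\circ g^{-1}$ on $\P(V_\a^*)$, exactly as in the $\PSL(d,\R)$ case; the main obstacle is precisely getting these sign/convention choices consistent across all the ingredients, since the underlying content is just the scalar identity for $\Gr$ plus Quint's lemma applied to $\L_\a$ and $\L_\a^*$.
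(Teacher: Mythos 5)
Your argument is essentially the paper's own: the proof there consists exactly of the pairing-invariance identity for the dual action ($(\t\circ g^{-1})(gv)=\t(v)$) together with the definition of $\Gr_\Pi$, with Quint's lemma \ref{lema:quint} applied to $\L_\a$ and to $\L_\a^*$ (whose highest weight is indeed $\chi_\a\circ\ii$) serving as the implicit dictionary between norm ratios and $\bus$; your reduction over the basis $\{\chi_\a\}_{\a\in\Pi}$ just writes this out in full, and your normalization worry is harmless since the dual of the norm determined by $\log\|\L_\a(g)\|_\a=\chi_\a(a(g))$ has the analogous property for $\L_\a^*$. One correction to your closing hedge: no convention-tracking will turn your right-hand side into the printed one, nor should it. The identity you derived, $\Gr_\Pi(gx,gy)-\Gr_\Pi(x,y)=-(\ii\circ\bus(g,x)+\bus(g,y))$, is the correct statement: it is exactly what the definition of a Gromov product for the pair of dual cocycles $\{\vect_\varphi,\vo{\vect_\varphi}\}$ demands in the lemma that follows (minus the \emph{sum} of the two cocycle terms), and it agrees with the identical computation carried out for $\PSL(d,\R)$ in section \S\ref{section:norma}. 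The minus sign inside the parenthesis of the statement as printed, and likewise the term $+\log\frac{\|gv\|}{\|v\|}$ in the displayed formula of the paper's own proof, are sign slips: since $(\t\circ g^{-1})(gv)=\t(v)$, both norm ratios enter with a minus sign. So keep your computation as it stands and use the corrected identity rather than trying to reconcile it with the misprint.
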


\begin{proof} The lemma follows from the formula $$\log\frac{|\t\circ g^{-1}(gv)|}{\|\t\circ g^{-1}\|\|gv\|}-\log\frac{|\t(v)|}{\|\t\|\|v\|}=-\log \frac{\|g\t\|}{\|\t\|}+\log\frac{\|gv\|}{\|v\|},$$ for a norm on a vector space $V,$ every $g\in\PSL(V)$ and $(\t,v)\in\P(V^*)\times\P(V)-\Delta,$ together with the definition of $\Gr_\Pi.$
\end{proof}



Lemma \ref{lema:gromov2} directly implies the following:

\begin{lema} Let $\rho:\G\to G$ be a Zariski dense hyperconvex representation and consider $\varphi$ in the interior of the dual cone $\cone_\rho^*,$ then the function $[\cdot,\cdot]_\varphi:\bord^2\G\to\R$ defined as $$[x,y]_\varphi=\varphi\circ\Gr_\Pi(\z(x),\z(y))$$ is a Gromov product for the pair of dual cocycles $\{\vect_\varphi,\vo{\vect_\varphi}\}.$
\end{lema}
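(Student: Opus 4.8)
The plan is to obtain the defining transformation rule of a Gromov product as an immediate consequence of Lemma~\ref{lema:gromov2}. Fix $\g\in\G$ and a pair $(x,y)\in\bord^2\G$. Since $\z$ is $\rho$-equivariant we have $\z(\g x)=\rho(\g)\z(x)$ and $\z(\g y)=\rho(\g)\z(y)$, so by definition $[\g x,\g y]_\varphi=\varphi\big(\Gr_\Pi(\rho(\g)\z(x),\rho(\g)\z(y))\big)$. The pair $(\z(x),\z(y))$ lies in $\posgen$ because $\rho$ is hyperconvex and $x\neq y$, so Lemma~\ref{lema:gromov2} applies with $g=\rho(\g)$ and gives, as an identity in $\frak a$,
$$\Gr_\Pi(\rho(\g)\z(x),\rho(\g)\z(y))-\Gr_\Pi(\z(x),\z(y))=-\big(\ii\circ\bus(\rho(\g),\z(x))+\bus(\rho(\g),\z(y))\big).$$

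Next I would apply the linear functional $\varphi$ to this identity and unwind the notation. On the left-hand side one recovers $[\g x,\g y]_\varphi-[x,y]_\varphi$. On the right-hand side, recalling $\vect(\g,z)=\bus(\rho(\g),\z(z))$, $\vect_\varphi=\varphi\circ\vect$ and $\vo{\vect_\varphi}=\varphi\circ\ii\circ\vect$, the two terms become precisely $\vo{\vect_\varphi}(\g,x)$ and $\vect_\varphi(\g,y)$. Hence
$$[\g x,\g y]_\varphi-[x,y]_\varphi=-\big(\vo{\vect_\varphi}(\g,x)+\vect_\varphi(\g,y)\big),$$
which is exactly the property required for $[\cdot,\cdot]_\varphi$ to be a Gromov product for the pair $\{\vect_\varphi,\vo{\vect_\varphi}\}$ (that this is a pair of dual cocycles was recorded above, using $\ii(\lambda(g))=\lambda(g^{-1})$). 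Along the way I would also note that $[\cdot,\cdot]_\varphi$ is a well-defined continuous function on $\bord^2\G$: each $\chi_\a(\Gr_\Pi(\z(x),\z(y)))=\log\frac{|\t(v)|}{\|\t\|_\a\|v\|_\a}$ is independent of the representatives $\t\in\xi_\a^*(\z(x))$, $v\in\xi_\a(\z(y))$, and $\t(v)\neq0$ whenever $x\neq y$ since then $(\z(x),\z(y))\in\posgen$; continuity then follows from continuity of $\z$, $\xi_\a$ and $\xi_\a^*$.

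The step I expect to require the most care is not analytic but bookkeeping of conventions: one must be sure that the $G$-action on $\P(V_\a^*)$ implicit in the definition of $\xi_\a^*$ is the contragredient action, so that $\rho(\g)$-equivariance of $\xi_\a^*$ is compatible with Lemma~\ref{lema:quint} applied to the dual representation $\L_\a^*$ (whose highest weight is $\ii\circ\chi_\a$, which is what makes the opposition involution $\ii$ appear on the $x$-term), and that the sign in the statement of Lemma~\ref{lema:gromov2} is read so that its right-hand side matches the $-(\vo c(\g,x)+c(\g,y))$ appearing in the definition of a Gromov product. Once these normalizations are pinned down, the computation is purely formal and there is no further obstacle.
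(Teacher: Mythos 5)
Your proposal is correct and is essentially the paper's own argument: the paper simply states that Lemma~\ref{lema:gromov2} directly implies this lemma, and you have filled in exactly that deduction (equivariance of $\z$, applying $\varphi$, and identifying the terms with $\vo{\vect_\varphi}(\g,x)$ and $\vect_\varphi(\g,y)$). Your care with the sign and the contragredient convention is well placed: the displayed sign in the paper's statement of Lemma~\ref{lema:gromov2} (a minus between the two Busemann terms) is an evident typo, and the identity as you wrote it, with $-\bigl(\ii\circ\bus(g,x)+\bus(g,y)\bigr)$, is the one consistent with its proof and with the strictly convex case in Section~5.
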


Benoist\cite{limite} introduced the notion of $(r,\eps)$-proximal on $\scr
F$:

\begin{defi} We shall say that $g\in G$ is $(r,\eps)$\emph{-proximal on} $\scr
F$ if for every simple root $\a\in\Pi$ the transformation $\L_\a g$ is $(r,\eps)$-proximal.
\end{defi}

The following lemmas are the direct extension to this setting of lemmas \ref{lema:benoist} and \ref{lema:proximal}. Fix some norm $\|\ \|_{\frak a}$ on $\frak a.$

\begin{lema}[Benoist\cite{limite}] Let $r$ and $\delta$ be two positives numbers, then
there exists $\eps>0$ such that for any $g$ $(r,\eps)$-proximal on $\scr F$ one has
$$\|a(g)-\lambda(g)+\Gr_\Pi(g_-,g_+)\|_{\frak a}\leq\delta.$$
\end{lema}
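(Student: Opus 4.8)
The statement to prove is the higher-rank analogue of Benoist's lemma \ref{lema:benoist}, asserting that for an $(r,\eps)$-proximal element $g$ on $\scr F$ one has $\|a(g)-\lambda(g)+\Gr_\Pi(g_-,g_+)\|_{\frak a}\leq\delta$. The natural plan is to reduce the vector-valued estimate to a finite collection of scalar estimates, one for each simple root, and then invoke lemma \ref{lema:benoist} directly.

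The plan is as follows. First I would recall that $\{\chi_\a:\a\in\Pi\}$ is a basis of $\frak a^*$, so it suffices to bound $|\chi_\a(a(g)-\lambda(g)+\Gr_\Pi(g_-,g_+))|$ for each $\a\in\Pi$; a bound of the form $\leq\delta'$ for each $\a$ translates into a bound $\leq\delta$ on $\|\cdot\|_{\frak a}$ up to a fixed constant depending only on the finitely many $\chi_\a$ and the chosen norm, so I simply shrink $\delta'$ accordingly at the end. Next, for a fixed $\a$, I would use the three identities relating the data on $G$ to the data on $V_\a$ for Tits's representation $\L_\a$: namely $\chi_\a(a(g))=\log\|\L_\a(g)\|_\a$ (the defining property of the norm $\|\ \|_\a$), $\chi_\a(\lambda(g))=\lambda_1(\L_\a(g))$ (highest weight evaluated on the Jordan projection equals the log spectral radius), and $\chi_\a(\Gr_\Pi(g_-,g_+))=\Gr\big((\L_\a g)_-,(\L_\a g)_+\big)$, which is exactly the definition of $\Gr_\Pi$ together with the facts (from lemma \ref{lema:hyper}'s proof and corollary \ref{cor:lox}-type reasoning) that $\xi_\a(g_+)=(\L_\a g)_+$ and $\xi_\a^*(g_-)=(\L_\a g)_-$, i.e. $\L_\a$ sends the attractor/repeller of $g$ on $\scr F$ to the attractor/repeller of $\L_\a g$ on $\P(V_\a)$ and $\P(V_\a^*)$.

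With these three identifications in hand, the quantity $\chi_\a(a(g)-\lambda(g)+\Gr_\Pi(g_-,g_+))$ becomes exactly $\log\|\L_\a(g)\|_\a-\lambda_1(\L_\a g)+\Gr\big((\L_\a g)_-,(\L_\a g)_+\big)$. Since $g$ is $(r,\eps)$-proximal on $\scr F$, by definition $\L_\a g$ is $(r,\eps)$-proximal as a linear transformation of $V_\a$, so Benoist's lemma \ref{lema:benoist} applies: given $r$ and our target $\delta'$, there is $\eps_\a>0$ making this scalar quantity at most $\delta'$ in absolute value. Taking $\eps=\min_{\a\in\Pi}\eps_\a$ (a finite minimum) works simultaneously for all simple roots. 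Combining the scalar bounds via the change-of-basis constant finishes the proof.

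The only genuinely delicate point is the bookkeeping in the change of basis from the $\chi_\a$-coordinates back to the norm $\|\ \|_{\frak a}$ — but this is a fixed linear-algebra constant, independent of $g$, so it causes no difficulty beyond choosing $\delta'$ small enough. A secondary point worth a line is justifying that $\xi_\a$ and $\xi_\a^*$ send the $\scr F$-attractor and $\scr F$-repeller of a purely loxodromic $g$ to the $\P(V_\a)$-attractor and the $\P(V_\a^*)$-attractor of $\L_\a g$; this follows because $\L_\a g$ is proximal with $(\L_\a g)_+$ the highest-weight line and because $\xi_\a$ is $\L_\a$-equivariant and continuous, so it intertwines the north-south dynamics on $\scr F$ with that on $\P(V_\a)$, and likewise for the dual. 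All of these are either stated earlier or are immediate from the highest-weight description of $\L_\a$, so no new machinery is needed.
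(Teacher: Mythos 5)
Your reduction is correct: writing the vector estimate in the basis $\{\chi_\a\}_{\a\in\Pi}$ of $\frak a^*$, using $\chi_\a(a(g))=\log\|\L_\a(g)\|_\a$, $\chi_\a(\lambda(g))=\lambda_1(\L_\a g)$ and $\chi_\a(\Gr_\Pi(g_-,g_+))=\Gr\bigl((\L_\a g)_-,(\L_\a g)_+\bigr)$, and then applying lemma \ref{lema:benoist} to each $\L_\a g$ with $\eps=\min_\a\eps_\a$ and a change-of-basis constant, is exactly the ``direct extension'' of lemma \ref{lema:benoist} that the paper alludes to when it states this lemma without proof, citing Benoist\cite{limite}. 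So your argument matches the intended route and fills in the omitted details correctly.
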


\begin{lema}\label{lema:gromov} Let $\rho:\G\to G$ be a Zariski dense hyperconvex  representation and fix some $r\in\R_+$ and $\eps>0.$ Then the
set of $\g\in\G$ with $$\exp(\|\Gr_\Pi(\z(\g_-),\z(\g_+))\|_{\frak a})>r$$ such that 
$\rho(\g)$ is not $(r,\eps)$-proximal on $\scr F,$ is finite.
\end{lema}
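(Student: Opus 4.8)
The statement to prove is Lemma \ref{lema:gromov}: for a Zariski dense hyperconvex representation $\rho:\G\to G$ and fixed $r\in\R_+,\eps>0$, the set of $\g\in\G$ with $\exp(\|\Gr_\Pi(\z(\g_-),\z(\g_+))\|_{\frak a})>r$ and $\rho(\g)$ not $(r,\eps)$-proximal on $\scr F$ is finite. The plan is to reduce this to the already-proven strictly convex case (Lemma \ref{lema:proximal}) by composing with Tits's representations $\L_\a$, one simple root at a time. First I would recall that by definition $\rho(\g)$ is $(r,\eps)$-proximal on $\scr F$ precisely when $\L_\a\rho(\g)$ is $(r,\eps)$-proximal (in the ordinary linear sense) for \emph{every} $\a\in\Pi$. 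Hence if $\rho(\g)$ fails to be $(r,\eps)$-proximal on $\scr F$, there is some $\a\in\Pi$ for which $\L_\a\rho(\g)$ is not $(r,\eps)$-proximal; since $\Pi$ is finite, it suffices to show that for each fixed $\a$ the set of $\g$ with the displayed inequality and $\L_\a\rho(\g)$ not $(r,\eps)$-proximal is finite.

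The key translation step is relating the norm of the vector-valued Gromov product $\Gr_\Pi$ to the scalar Gromov products $\Gr$ on each $\P(V_\a^*)\times\P(V_\a)$. By the very definition of $\Gr_\Pi$, one has $\chi_\a(\Gr_\Pi(\z(x),\z(y)))=\Gr(\xi_\a^*(\z(x)),\xi_\a(\z(y)))$ for every $\a\in\Pi$ (with the norm $\|\ \|_\a$ chosen so that $\log\|\L_\a g\|_\a=\chi_\a(a(g))$). Since $\{\chi_\a\}_{\a\in\Pi}$ is a basis of $\frak a^*$, the norm $\|\Gr_\Pi(\z(x),\z(y))\|_{\frak a}$ and the tuple $(\Gr(\xi_\a^*(\z(x)),\xi_\a(\z(y))))_{\a\in\Pi}$ control each other up to multiplicative constants; in particular, if $\exp\|\Gr_\Pi(\z(\g_-),\z(\g_+))\|_{\frak a}>r$ then for at least one $\a$ (and hence, after adjusting the threshold, for the specific $\a$ we have fixed, or else $\g$ lies in a controlled finite set) one has $\exp\Gr(\xi_\a^*(\z(\g_-)),\xi_\a(\z(\g_+)))>r'$ for a suitable $r'=r'(r)$. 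Here I should be slightly careful: the failure of the Gromov-product lower bound for $\Gr_\Pi$ on all roots simultaneously need not follow from its failure for one; the clean way is to note that for $\g$ outside a finite set we will in fact establish $(r,\eps)$-proximality on $\scr F$, so it is enough to prove: for each $\a$, the set of $\g$ with $\exp\Gr(\xi_\a^*(\z(\g_-)),\xi_\a(\z(\g_+)))>r'$ and $\L_\a\rho(\g)$ not $(r,\eps)$-proximal is finite — and then intersect the conclusions over the finitely many $\a$.

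To execute the per-root step, note that by Lemma \ref{lema:hyper} the composition $\L_\a\circ\rho:\G\to\PSL(V_\a)$ is strictly convex, with equivariant maps $\xi=\xi_\a\circ\z:\bord\G\to\P(V_\a)$ and $\eta=\xi_\a^*\circ\z:\bord\G\to\grassman_{d-1}(V_\a)$ (the latter via the dual of $\L_\a$, using that the highest weight space is a line so $\L_\a(P)$ stabilizes a line and the dual representation gives the hyperplane). The quantity $\Gr(\xi_\a^*(\z(\g_-)),\xi_\a(\z(\g_+)))$ is exactly the bracket $[\g_-,\g_+]$ of Lemma \ref{lema:proximal} for the strictly convex representation $\L_\a\circ\rho$, and ``$\L_\a\rho(\g)$ is $(r,\eps)$-proximal'' is exactly the notion of $(r,\eps)$-proximal used there. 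Therefore Lemma \ref{lema:proximal}, applied to $\L_\a\circ\rho$ with parameters $r'$ and $\eps$, tells us that the set of such $\g$ is finite. Taking the union over the finitely many $\a\in\Pi$ finishes the proof.

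The main obstacle I anticipate is purely bookkeeping with the threshold constants: translating the single inequality $\exp\|\Gr_\Pi\|_{\frak a}>r$ into a collection of per-root inequalities $\exp\Gr(\cdot,\cdot)>r'$ requires comparing $\|\cdot\|_{\frak a}$ with the coordinates in the $\chi_\a$-basis, and one must be honest that a uniform $r'$ serving all roots need not exist from a single hypothesis. The conceptually correct formulation — which avoids this — is to run the argument contrapositively: take $\g_n\to\infty$ with $\exp\|\Gr_\Pi(\z({\g_n}_-),\z({\g_n}_+))\|_{\frak a}>r$, apply Lemma \ref{lema:tukia} to extract a subsequence with ${\g_n}_-\to y_0$, ${\g_n}_+\to x_0$ and $\g_n x\to x_0$ uniformly away from $y_0$, observe (as in the proof of Lemma \ref{lema:proximal}, now carried out inside each $V_\a$ via $\L_\a\circ\rho$) that $x_0\ne y_0$, that $\L_\a\rho(\g_n)/\|\L_\a\rho(\g_n)\|_\a$ converges to a rank-one proximal operator for every $\a$, and conclude that $\rho(\g_n)$ is eventually $(r,\eps)$-proximal on $\scr F$ — a contradiction. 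Either route works; I would present the reduction to Lemma \ref{lema:proximal} since all the analytic content is already packaged there.
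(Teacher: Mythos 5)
Your route is the paper's own: the paper disposes of this lemma in one sentence, citing Tits's proposition \ref{prop:titss} and the strictly convex Lemma \ref{lema:proximal}, which is exactly the per-root reduction you spell out (each $\L_\a\circ\rho$ is strictly convex by Lemma \ref{lema:hyper}, $(r,\eps)$-proximality on $\scr F$ is by definition $(r,\eps)$-proximality of every $\L_\a\rho(\g)$, and $\chi_\a(\Gr_\Pi(\z(x),\z(y)))$ is precisely the scalar Gromov product $[\cdot,\cdot]$ of Lemma \ref{lema:proximal} for $\L_\a\circ\rho$, with the hyperplane map given by the dual representation). So the substance and the approach coincide with the paper.

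One translation step, as you first wrote it, points the wrong way. Every coordinate $\chi_\a(\Gr_\Pi)$ is $\leq 0$, so a \emph{lower} bound on $\|\Gr_\Pi(\z(\g_-),\z(\g_+))\|_{\frak a}$ (which is what the displayed hypothesis literally says) only tells you that some coordinate is very negative, i.e. it gives an \emph{upper} bound on some $\exp\Gr(\xi_\a^*\z(\g_-),\xi_\a\z(\g_+))$, never the lower bound $>r'$ you claim ``for at least one $\a$.'' What the argument needs — and what the hypothesis is clearly meant to encode, being the analogue of $\exp[\g_-,\g_+]>r$ in Lemma \ref{lema:proximal} — is an upper bound on $\|\Gr_\Pi\|_{\frak a}$: since $|\chi_\a(v)|\leq C_\a\|v\|_{\frak a}$, such a bound controls \emph{all} coordinates simultaneously and yields, for every $\a\in\Pi$, $\exp\Gr(\xi_\a^*\z(\g_-),\xi_\a\z(\g_+))\geq r'$ with $r'$ depending only on $r$ and the norm-comparison constants; then Lemma \ref{lema:proximal}, applied to each $\L_\a\circ\rho$, and the union of the finitely many exceptional sets finish the proof (up to the harmless threshold adjustments you already flagged). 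Your ``clean way'' and the contrapositive sketch via Lemma \ref{lema:tukia} do have this correct all-$\a$ structure — note that in the contrapositive route the conclusion $x_0\neq y_0$ likewise uses the transversality (upper-bound) reading of the hypothesis — so only the quantifier and direction in that one sentence need fixing; in fairness, the same imprecision is present in the printed statement of the lemma itself.
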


Lemma \ref{lema:gromov} follows directly from Tits\cite{tits}'s proposition \ref{prop:titss} and from the analogue lemma for strictly convex representations \ref{lema:proximal}.

Recall that $h_\varphi$ is the exponential growth rate of the H\"older cocycle $ \vect_\varphi$: $$h_\varphi:=\limsup_{s\to\infty}\frac{\log\#\{[\g]\in[\G]:\varphi(\lambda(\rho\g))\leq s\}}s.$$ We obtain the following result:

\begin{teo}\label{teo:varphi} Let $\rho:\G\to G$ be a Zariski dense hyperconvex representation and consider $\varphi$ in the interior of the dual cone $\cone_\rho^*.$ Let $\mu_\varphi$ and $\vo\mu_\varphi$ be the Patterson-Sullivan probabilities on $\bord\G$ associated to the cocycles $\vect_\varphi$ and $\vo{\vect_\varphi}.$ Then there exists $c>0$ such that $$ce^{-h_\varphi t}\sum_{\g\in\G: \varphi (a(\rho\g))\leq t}\delta_{\g_-}\otimes\delta_{\g_+}\to \vo\mu_\varphi\otimes\mu_\varphi$$ when $t\to\infty.$ In particular one has $$ce^{-h_\varphi t}\#\{\g\in\G:\varphi(a(\rho\g))\leq t\}\to1.$$
\end{teo}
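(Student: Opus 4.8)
The strategy is to run, for the pair of dual cocycles $\{\vect_\varphi,\vo{\vect_\varphi}\}$ and the Gromov product $[x,y]_\varphi=\varphi\circ\Gr_\Pi(\z(x),\z(y))$, exactly the argument that establishes Theorem~A in Section~\ref{section:norma}, the only new input being that one must now compare $\varphi(a(\rho\g))$ with the period $\varphi(\lambda(\rho\g))$ using $(r,\eps)$-proximality \emph{on $\scr F$} instead of in a single projective space.

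First I would set up the Patterson--Sullivan machinery. By Lemma~\ref{lema:cfi}, since $\varphi$ lies in the interior of $\cone_\rho^*$, the H\"older cocycle $\vect_\varphi$ has finite and positive exponential growth rate $h_\varphi$, and by Lemma~\ref{lema:espectro} its periods are $\vect_\varphi(\g,\g_+)=\varphi(\lambda(\rho\g))$. Hence Proposition~\ref{prop:lambda} applies to $\vect_\varphi$, its dual $\vo{\vect_\varphi}$ and the Gromov product $[\cdot,\cdot]_\varphi$: writing $\|m_\varphi\|$ for the total mass of $e^{-h_\varphi[\cdot,\cdot]_\varphi}\vo\mu_\varphi\otimes\mu_\varphi\otimes ds$ on the compact quotient $\G\/\bord^2\G\times\R$, one gets
$$\|m_\varphi\|h_\varphi e^{-h_\varphi t}\sum_{\g\in\G:\ \varphi(\lambda(\rho\g))\leq t}\delta_{\g_-}\otimes\delta_{\g_+}\ \longrightarrow\ e^{-h_\varphi[\cdot,\cdot]_\varphi}\vo\mu_\varphi\otimes\mu_\varphi$$
in $C_c^*(\bord^2\G)$. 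I then set $c:=\|m_\varphi\|h_\varphi$ and $\theta_t:=c\,e^{-h_\varphi t}\sum_{\varphi(a(\rho\g))\leq t}\delta_{\g_-}\otimes\delta_{\g_+}$, and must show $\theta_t\to\vo\mu_\varphi\otimes\mu_\varphi$ in $C^*(\bord\G\times\bord\G)$.

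Next comes the passage from the Jordan projection to the Cartan projection, done locally. Fix $\delta>0$, pick disjoint open sets $A,B\subset\bord\G$ small enough that $[\cdot,\cdot]_\varphi$ is within $\delta$ of a constant $r$ on $A\times B$, and let $\eps$ be the parameter furnished by Benoist's lemma (the $\scr F$-version) for $e^r$ and $\delta$. By Lemma~\ref{lema:gromov}, all but finitely many $\g$ with $\g_-\in A$, $\g_+\in B$ are $(e^r,\eps)$-proximal on $\scr F$, so applying $\varphi$ to Benoist's estimate yields $|\varphi(a(\rho\g))-\varphi(\lambda(\rho\g))+r|\leq 2\|\varphi\|\,\delta$ for such $\g$. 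Sandwiching $\varphi(a(\rho\g))$ between two shifts of the period $\varphi(\lambda(\rho\g))$ and feeding the convergence above into this sandwich, one obtains, precisely as in the proof of Theorem~A,
$$e^{-C\delta}\,\vo\mu_\varphi(A)\mu_\varphi(B)\leq\liminf_{t\to\infty}\theta_t(A\times B)\leq\limsup_{t\to\infty}\theta_t(A\times B)\leq e^{C\delta}\,\vo\mu_\varphi(A)\mu_\varphi(B),$$
with $C$ depending only on $h_\varphi$ and $\|\varphi\|$; letting $\delta\to0$ gives convergence of $\theta_t$ to $\vo\mu_\varphi\otimes\mu_\varphi$ tested against continuous functions supported away from the diagonal.

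The remaining, and hardest, step is to kill the mass near the diagonal, i.e.\ to upgrade $C_c^*(\bord^2\G)$-convergence to $C^*(\bord\G\times\bord\G)$-convergence. This is carried out verbatim as in the second half of the proof of Theorem~A (Roblin's covering argument): using that $\mu_\varphi$ and $\vo\mu_\varphi$ are non-atomic (inherited from the general theory of Patterson--Sullivan measures of cocycles with positive periods on a non-elementary $\bord\G$, together with the quasi-invariance relation defining them) and that $\g_*\mu_\varphi\ll\mu_\varphi$, one fixes $\g_0,\g_1\in\G$ with no common fixed point, builds a fine finite cover $\cal U$ refined into $\cal V$ so that each $U$ satisfies $\g_i\overline U\cap\overline U=\vacio$ for some $i\in\{0,1\}$, checks via Lemma~\ref{lema:tukia} that for all but finitely many $\g$ with $(\g_-,\g_+)\in U\times U$ the pair $((\g_i\g)_-,(\g_i\g)_+)$ lands in $V\times\g_iV$ which is far from the diagonal, and thereby bounds $\sum_{U\in\cal U}\theta_t(U\times U)\leq 2\eps_0\max\{\|\g_0\|,\|\g_1\|\}$ for $t$ large; since $\eps_0$ is arbitrary this gives the first assertion. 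Finally, evaluating the resulting convergence on the constant function $1$ and using that $\mu_\varphi,\vo\mu_\varphi$ are probabilities yields $c\,e^{-h_\varphi t}\#\{\g\in\G:\varphi(a(\rho\g))\leq t\}\to1$, the second assertion. The main obstacle is thus not the Jordan-to-Cartan comparison (which is formal once Lemma~\ref{lema:gromov} is available) but this diagonal estimate, whose only genuinely new point compared with Theorem~A is verifying non-atomicity of $\mu_\varphi$ and $\vo\mu_\varphi$ in the present vector-cocycle setting.
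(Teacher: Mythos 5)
Your proposal is correct and follows essentially the same route as the paper: the author's own proof of Theorem \ref{teo:varphi} is precisely the remark that one repeats the method of Section \S\ref{section:norma} (Theorem A), with Proposition \ref{prop:lambda} applied to $\vect_\varphi$ via Lemma \ref{lema:cfi}, the Cartan--Jordan comparison handled by the $\scr F$-versions of Benoist's lemma and Lemma \ref{lema:gromov}, and Roblin's covering argument to control the diagonal. Your fleshed-out version, including the observation that non-atomicity of $\mu_\varphi,\vo\mu_\varphi$ is the only point needing re-verification, matches the intended argument.
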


\noindent
The proof of theorem \ref{teo:varphi} follows step by step the method of section \S5.

\subsubsection*{Bound on the orbital counting problem}

Denote $X$ for $G$'s symmetric space $X=G/K$ and $o=[K]\in X.$ Fix some euclidean norm $\|\ \|_{\frak a}$ on $\frak a$ invariant under the Weyl group such that $$d_X(o,go)=\|a(g)\|_{\frak a}$$ for every $g\in G.$ For a discrete subgroup $\grupo$ of $G$ define $h_\grupo$ as the exponential growth rate of an orbit on $G$'s symmetric space: $$h_\grupo:= \limsup_{s\to\infty} \frac{\log\#\{ \g\in\G:d_X(o,\rho(g)o)\leq s\}}s$$ $$=\limsup_{s\to\infty}\frac{\log\#\{\g\in\G:\|a(g)\|_{\frak a}\leq s\}}s.$$

We need the following theorem of Quint\cite{quint2}:

\begin{teo}[Quint\cite{quint2}]\label{teo:forma} Let $\grupo$ be a Zariski dense discrete subgroup of $G.$ Then there exists a linear form $\ta_\grupo$ in the interior of the dual cone $\cone_\grupo^*$ such that $$\limsup_{s\to\infty}\frac{\log\#\{g\in\grupo: \ta_\grupo(a(g))\leq s\}}s=1$$ and $h_\grupo=\|\ta_\grupo\|_{\frak a}.$
\end{teo}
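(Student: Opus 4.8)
The plan is to build everything on Quint's \emph{growth indicator function} and then to extract $\ta_\grupo$ as a tangent linear form in the direction of maximal growth. For $v\in\frak a^+$ one sets
$$\psi_\grupo(v)=\|v\|_{\frak a}\,\inf_{\mathcal C\ni v}\;\limsup_{s\to\infty}\frac1s\log\#\{g\in\grupo:a(g)\in\mathcal C,\ \|a(g)\|_{\frak a}\le s\},$$
the infimum running over open cones $\mathcal C$ containing $v$. The first thing I would record is the basic structure of $\psi_\grupo$ established in Quint\cite{quint2}: it is $1$-homogeneous, upper semicontinuous, concave on the closed limit cone $\cone_\grupo$, nonnegative there and strictly positive on its interior, and equal to $-\infty$ off $\cone_\grupo$; together with the fact that $\cone_\grupo$ has nonempty interior (Benoist\cite{limite}) this forces $h_\grupo>0$.

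From this the plan is to first derive the counting dictionary
$$\limsup_{s\to\infty}\frac1s\log\#\{g\in\grupo:\varphi(a(g))\le s\}=\sup_{v\in\cone_\grupo\setminus\{0\}}\frac{\psi_\grupo(v)}{\varphi(v)}$$
for every $\varphi$ in the interior of $\cone_\grupo^*$, and, by running the same argument with the gauge $\|\cdot\|_{\frak a}$ in place of the linear slab $\{\varphi\le s\}$,
$$h_\grupo=\sup\{\psi_\grupo(v):v\in\cone_\grupo,\ \|v\|_{\frak a}=1\}.$$
In each identity the inequality $\ge$ is elementary --- an open cone around a nearly optimal direction already contains the asserted number of orbit points --- while $\le$ is obtained by covering $\cone_\grupo$ by finitely many thin subcones on each of which the level sets of $\varphi$ (resp. of $\|\cdot\|_{\frak a}$) are almost flat, applying the definition of $\psi_\grupo$ cone by cone, and using the sub-additivity of the Cartan projection and compactness of $K$ to keep the overlaps harmless.

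Next I would produce $\ta_\grupo$ by convex geometry. By upper semicontinuity the supremum defining $h_\grupo$ is attained at some unit vector $v_0\in\cone_\grupo$. Consider $S=\{v\in\cone_\grupo:\psi_\grupo(v)\ge1\}$, convex because $\psi_\grupo$ is concave and $1$-homogeneous; for $v\in S$ one has $1\le\psi_\grupo(v)=\|v\|_{\frak a}\,\psi_\grupo(v/\|v\|_{\frak a})\le h_\grupo\|v\|_{\frak a}$, so $\|v\|_{\frak a}\ge h_\grupo^{-1}$ on $S$ with equality at $v_0/h_\grupo\in S$; thus $v_0/h_\grupo$ is the point of $S$ closest to the origin. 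I would then set $\ta_\grupo:=h_\grupo\,\langle v_0,\cdot\rangle_{\frak a}$, so that $\|\ta_\grupo\|_{\frak a}=h_\grupo$; the supporting hyperplane of $S$ at its closest point gives $\langle v_0,v\rangle_{\frak a}\ge h_\grupo^{-1}$, i.e. $\ta_\grupo\ge1$, on $S$, hence by $1$-homogeneity $\ta_\grupo(v)\ge\psi_\grupo(v)$ whenever $\psi_\grupo(v)>0$, and then, taking limits along segments from the interior into the boundary of $\cone_\grupo$ (where $\psi_\grupo$ is continuous, being concave and upper semicontinuous), one gets $\ta_\grupo\ge\psi_\grupo\ge0$ on all of $\cone_\grupo$, so $\ta_\grupo\in\cone_\grupo^*$. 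That $\ta_\grupo$ in fact lies in the interior of $\cone_\grupo^*$ is where I would appeal to Quint's finer positivity information, together with the tangency $\ta_\grupo(v_0)=h_\grupo=\psi_\grupo(v_0)$.

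Finally the statement is read off: applying the counting dictionary to $\varphi=\ta_\grupo$,
$$\limsup_{s\to\infty}\frac1s\log\#\{g\in\grupo:\ta_\grupo(a(g))\le s\}=\sup_{v\in\cone_\grupo\setminus\{0\}}\frac{\psi_\grupo(v)}{\ta_\grupo(v)}=1,$$
the supremum being $\le1$ since $\ta_\grupo\ge\psi_\grupo$ on $\cone_\grupo$ and $\ge1$ since the ratio equals $1$ at $v=v_0$; and $h_\grupo=\|\ta_\grupo\|_{\frak a}$ is built into the construction. The main obstacle is the first step: constructing the growth indicator and proving its concavity and positivity together with the slab/ball counting estimates with sufficient uniformity --- this is the technical heart of Quint\cite{quint2}, resting on the Cartan decomposition and a careful exhaustion of the limit cone --- whereas once $\psi_\grupo$ and its positivity properties are in hand the convex-geometric extraction of $\ta_\grupo$ is comparatively soft.
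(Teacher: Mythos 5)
The paper itself contains no proof of this statement: theorem \ref{teo:forma} is quoted from Quint\cite{quint2} and used as a black box (in corollary \ref{cor:grando}), so the only question is whether your reconstruction of Quint's argument is sound. In outline it is faithful to Quint's route: the growth indicator $\psi_\grupo$, its $1$-homogeneity, concavity, upper semicontinuity and positivity on $\inter\cone_\grupo$, the identity $h_\grupo=\max\{\psi_\grupo(v):\|v\|_{\frak a}=1\}$, the formula for the exponent of $\#\{g\in\grupo:\varphi(a(g))\leq s\}$ when $\varphi\in\inter\cone_\grupo^*$, and the extraction of a linear form tangent to $\psi_\grupo$ at a direction of maximal growth.

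The genuine gap is the claim $\ta_\grupo\in\inter\cone_\grupo^*$, which you defer to ``Quint's finer positivity information''. Two problems. First, your convex-projection step only yields $\ta_\grupo\geq\psi_\grupo\geq0$ on $\cone_\grupo$, i.e. $\ta_\grupo\in\cone_\grupo^*$; it does not exclude that $\ta_\grupo$ vanishes on a boundary ray of the limit cone. Indeed $\ta_\grupo=h_\grupo\<v_0,\cdot\>_{\frak a}$ kills every direction of $\cone_\grupo$ orthogonal to $v_0$, and nothing in the list of properties you record (concave, usc, homogeneous, $\geq0$ on $\cone_\grupo$, $>0$ on its interior) forbids such a direction from lying in $\cone_\grupo$ with $\psi_\grupo$ vanishing there: for the model $\cone=\{(x,y):x,y\geq0\}$, $\psi(x,y)=x$, your recipe returns $v_0=(1,0)$ and the boundary form $(x,y)\mapsto x$. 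Ruling this out for an actual growth indicator is nontrivial input from Quint\cite{quint2}, not an afterthought, and the tangency $\ta_\grupo(v_0)=\psi_\grupo(v_0)$ by itself does not help (concavity is compatible with $\psi_\grupo$ being affine and vanishing along a boundary segment). Second, the interiority is needed \emph{before}, not after, the final step: the ``counting dictionary'' you invoke is only asserted (and is only true in the form you state it) for $\varphi$ in the interior of $\cone_\grupo^*$ --- for a form vanishing on a direction of the limit cone the slab counts $\#\{g:\varphi(a(g))\leq s\}$ need not even be finite --- so applying it to $\varphi=\ta_\grupo$ presupposes exactly the point you left open. Granting interiority, the rest of the convex-geometric extraction (projection of the origin onto $\{\psi_\grupo\geq1\}$, $\|\ta_\grupo\|_{\frak a}=h_\grupo$, and $\sup_{v}\psi_\grupo(v)/\ta_\grupo(v)=1$) is correct, as is your candid attribution of the analytic heart (construction and properties of $\psi_\grupo$ and the two counting identities) to Quint.
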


This form is called the \emph{growth form} of the group $\grupo.$ Applying corollary \ref{teo:varphi} to the growth form of a hyperconvex representation one obtains a bound for the orbital counting problem:

\begin{cor}\label{cor:grando} Let $\rho:\G\to G$ be Zariski dense hyper\-con\-vex representation then there exists $C>0$ such that $$e^{-h_{\rho(\G)} t}\#\{\g\in\G:d_X(o,\rho(\g)o)\leq t\}\leq C$$ for every $t$ large enough.
\end{cor}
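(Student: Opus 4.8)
The plan is to feed Quint's growth form into theorem \ref{teo:varphi}. Set $\grupo=\rho(\G)$. Since $\L_\a\circ\rho$ is strictly convex for each of Tits's representations $\L_\a$ (lemma \ref{lema:hyper}) and strictly convex representations are injective (lemma \ref{lema:discreto}), the homomorphism $\rho$ is faithful, so counting elements of $\G$ is the same as counting in $\grupo,$ and $\cone_\rho=\cone_\grupo.$ Let $\ta:=\ta_\grupo$ be the growth form of $\grupo$ given by Quint's theorem \ref{teo:forma}: it lies in the interior of $\cone_\rho^*$ and satisfies $h_{\rho(\G)}=\|\ta\|_{\frak a}$ together with
$$\limsup_{s\to\infty}\frac1s\log\#\{\g\in\G:\ta(a(\rho\g))\leq s\}=1.$$

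First I would identify the exponential growth rate $h_\ta$ of the H\"older cocycle $\vect_\ta=\ta\circ\vect.$ Because $\ta$ is interior to $\cone_\rho^*,$ theorem \ref{teo:varphi} applies to $\varphi=\ta$ and produces a constant $c>0$ with $ce^{-h_\ta t}\#\{\g\in\G:\ta(a(\rho\g))\leq t\}\to1$ as $t\to\infty.$ Taking logarithms, this forces $\frac1t\log\#\{\g\in\G:\ta(a(\rho\g))\leq t\}\to h_\ta;$ comparing with the displayed identity above gives $h_\ta=1.$ In particular there is $s_0$ such that
$$\#\{\g\in\G:\ta(a(\rho\g))\leq s\}\leq\frac2c\,e^{s}\qquad\text{for all }s\geq s_0.$$

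Next comes a pointwise comparison between $d_X(o,\rho(\g)o)$ and $\ta(a(\rho\g)).$ Since $d_X(o,\rho(\g)o)=\|a(\rho\g)\|_{\frak a}$ and $\|\ \|_{\frak a}$ is euclidean, the Cauchy--Schwarz inequality yields $\ta(a(\rho\g))\leq\|\ta\|_{\frak a}\,\|a(\rho\g)\|_{\frak a}=h_{\rho(\G)}\,d_X(o,\rho(\g)o).$ Hence
$$\{\g\in\G:d_X(o,\rho(\g)o)\leq t\}\subseteq\{\g\in\G:\ta(a(\rho\g))\leq h_{\rho(\G)}t\},$$
and combining this inclusion with the bound of the second paragraph evaluated at $s=h_{\rho(\G)}t$ gives, for all $t$ large enough, $\#\{\g\in\G:d_X(o,\rho(\g)o)\leq t\}\leq\frac2c e^{h_{\rho(\G)}t}.$ This is the assertion with $C=2/c.$

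Once theorems \ref{teo:varphi} and \ref{teo:forma} are in hand the argument is short; the one point to watch is the matching of normalizations, namely that Quint's form is scaled so that the growth rate it records is exactly $1$ while $\|\ta\|_{\frak a}=h_{\rho(\G)},$ which is precisely what makes the two exponents cancel after Cauchy--Schwarz. No further analytic estimate is needed.
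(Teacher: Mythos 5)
Your argument is correct and follows essentially the same route as the paper: combine Quint's growth form (theorem \ref{teo:forma}) with theorem \ref{teo:varphi} to pin down the normalization $h_\ta=1$ and $\|\ta\|_{\frak a}=h_{\rho(\G)}$, then use $\ta(a(\rho\g))\leq\|\ta\|_{\frak a}\,\|a(\rho\g)\|_{\frak a}=h_{\rho(\G)}d_X(o,\rho(\g)o)$ to transfer the count. Your explicit remarks on faithfulness of $\rho$ and the Cauchy--Schwarz step only make precise what the paper leaves implicit.
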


\begin{proof} Denote $\ta$ for the growth form for $\ta_{\rho(\G)}$ of $\rho(\G)$ and $h:=h_{\rho(\G)}.$ One has that $$\ta(a(\rho\g))\leq\|\ta\|\|a(\rho\g)\|=\|\ta\|d_X(o,\rho(\g) o).$$ Thus $$\#\{\g\in\G:d_X(o,\rho(\g) o)\leq t\}\leq \#\{\g\in\G:\ta(a(\rho\g))\leq \|\ta\| t\}.$$ Applying Quint\cite{quint2}'s theorem \ref{teo:forma} and theorem \ref{teo:varphi} one has that $h=\|\ta\|$ and $$h_\ta:=\limsup_{s\to\infty}\frac{\log\#\{\g\in\G:\ta(\lambda(\rho\g))\leq s\}}s=$$ $$\limsup_{s\to\infty}\frac{\log\#\{\g\in\G:\ta(a(\rho\g))\leq s\}}s=1.$$ It then follows that $$\#\{\g\in\G:d_X(o,\rho(\g) o)\leq t\}\leq \#\{\g\in\G:\ta(a(\rho\g))\leq h t\}$$ which, applying theorem \ref{teo:varphi}, is asymptotic to $ce^{h_\ta ht}=ce^{ht}.$ This finishes the proof.
\end{proof}

\bibliography{stage}
\bibliographystyle{plain}

\author{$\ $ \\
Andr\'es Sambarino\\
  Laboratoire de Math\'ematiques\\
Universit\'e Paris Sud,\\
  F-91405 Orsay France,\\
  \texttt{andres.sambarino@math.u-psud.fr}}

\end{document}